\documentclass[a4paper,11pt,reqno]{amsart}
\usepackage[utf8]{inputenc}
\usepackage{enumerate}
\usepackage{amsmath, amssymb,amsthm}
\usepackage{mathtools}
\usepackage{leftidx}
\usepackage{setspace}
\numberwithin{equation}{section}
\usepackage[left=2.8cm, right=2.8cm, top=2.7cm]{geometry}
\usepackage{hyperref}
\usepackage{xcolor}
\definecolor{my-black}{rgb}{0,0,0}
\definecolor{my-blue}{rgb}{0,0,0.8}
\definecolor{my-red}{rgb}{0.8,0,0} 
\definecolor{my-green}{rgb}{0,0.5,0}
\hypersetup{colorlinks,
    linkcolor	= {my-blue},
    citecolor	= {my-red},
    urlcolor		= {my-black}
}
\usepackage{graphicx}
\usepackage{tikz-cd}
\theoremstyle{plain} 

\newtheorem{lemma}{Lemma}[section]
\newtheorem{theorem}{Theorem}
\newtheorem{corollary}{Corollary}[section]
\newtheorem{proposition}{Proposition}[section]

\newtheorem*{theorem*}{Theorem} 

\theoremstyle{definition} 
\newtheorem{remark}{Remark}[section]

\newtheorem{definition}{Definition}[section]

\theoremstyle{remark}
\newtheorem*{remark-non}{Remark}

\newcommand{\R}{\mathbb{R}}

\newcommand{\C}{\mathbb{C}}
\newcommand{\N}{\mathbb{N}}

\newcommand{\Z}{\mathbb{Z}}
\newcommand{\T}{\mathbb{T}}

\allowdisplaybreaks

\title[Time-frequency localization]{Eigenfunctions, Free Boundaries, and Time-Frequency Localization}
\author{Jo{\~a}o P.G. Ramos}
\address{Instituto de Matemática Pura e Aplicada (IMPA) - Estrada Dona Castorina 110, 22460-320, Jardim Botânico, Rio de Janeiro - RJ, Brazil}
\email{joao.ramos@impa.br}

\begin{document}
\begin{abstract}
We develop an inverse theory for time--frequency localization
operators, whose central idea  that of is free-boundary problem: the localization
domain is unknown and its boundary is recovered from prescribed spectral data.
The approach is based on the principle that an eigenfunction may be
regarded as geometric data which determines a localization domain, and
prescribing it has strong consequences for the associated variational problem.

Five main results follow from this main framework. First, if \(f_0\) is a
polynomial sufficiently close to the Gaussian and \(\lambda\in(0,1)\), we
construct a real-analytic domain \(U_\lambda\)such that $f_0$ is an eigenfunction of the localization operator associated with $U_\lambda$ and with eigenvalue $\lambda$, giving a general inverse
construction of localization domains, which is the first in the literature. Second, we recover the null-set invariant
Abreu--D\"orfler characterization of disks as the only localization domains of Hermite polynomials in the simply connected case. Third, we show that simple connectivity cannot be dropped: a weighted Hele--Shaw injection produces an infinite-dimensional family of non-radial, doubly connected analytic domains with the Gaussian as an eigenfunction of the associated localization operator. Fourth, we
prove optimality of the exponent \(1/2\) in the
G\'omez--Guerra--Ramos--Tilli quantitative stability inequality, answering thus a question posed by those authors in \cite{Gomez-Guerra-Ramos-Tilli}. Finally,
we show that local maximizers of the Gaussian Faber--Krahn problem are disks, which extends the Nicola--Tilli concentration inequality to the local case as well, and, as a matter of fact, as a consequence of a Fock-space concentration--compactness profile decomposition, we are able to use this to give a new, different proof of the Nicola--Tilli theorem.
\end{abstract}
\maketitle

\noindent\textbf{Keywords:} Gaussian STFT, localization operators, Bargmann--Fock space, inverse spectral problem, free boundary, quadrature domains, Faber--Krahn inequality, time-frequency concentration.

\medskip

\noindent\textbf{MSC2020:} 42B10, 42C15, 30H20, 35R35, 35R30, 47B35, 49Q10.


\section{Introduction}

How does a function concentrate in phase space? This is the central question of time--frequency analysis, dating back to Wigner, Gabor, and Daubechies \cite{Wigner,Gabor,Daubechies}, and it admits a precise formulation in terms of the \emph{short-time Fourier transform:}
\begin{equation}\label{eq:STFT-intro}
V_\varphi f(x,\omega) \;=\; \int_{\R} f(t)\,\overline{\varphi(t-x)}\,e^{-2\pi i t \omega}\,\mathrm{d}t,
\qquad f,\varphi\in L^2(\R),
\end{equation}
 whose squared modulus is the canonical phase-space energy density of $f$. Its integral over a prescribed set $\Omega\subset\R^2$ is the portion of the signal energy captured in that time--frequency region. Maximizing this quantity therefore identifies the unit-energy signal best concentrated in the region, or, equivalently, the waveform to which a time--frequency filter supported on $\Omega$ is most responsive. This leads to the spectral analysis of the \emph{time--frequency localization operator} $\mathcal L_\Omega$ introduced by Daubechies \cite{Daubechies}; see \cite{Grochenig,Mallat,Folland,Daubechies-Paul,Ramanathan-Topiwala-Weyl,Ramanathan-Topiwala-Spectrogram,Cordero-Grochenig,Boggiatto-Cordero-Grochenig,Bayer-Grochenig,Kisil-CrossToeplitz,Nicola-Tilli-2,Ramos-Optimal-Localization,Kalaj1,Kalaj2,Haslinger-Kalaj-Vujadinovic,Kalaj-Melentijevic-Bergman,Kalaj-Ramos-Hyperbolic-Ball,Melentijevic-Hypercontractive-Bergman,Melentijevic-Stability-Bergman,Jaguzovic-Melentijevic-Sphere,Gomez-Kalaj-Melentijevic-Ramos,Kulikov,Kulikov-Hardy-Littlewood,Kulikov-Fourier-Interpolation,Kulikov-Exponential-Plunge,Kulikov-Nicola-Ortega-Tilli,Kulikov-Sharp-Plunge} for the broader theory and \cite{Lieb,abreu2021donoho} for the underlying uncertainty and large-sieve phenomena.

Among the many possible windows, we highlight the Gaussian $\varphi(x)=2^{1/4}e^{-\pi x^2}$. This is an important window for applications, as it gives optimal joint time--frequency resolution and dampens eventual oscillations caused by sharp cutoffs, acting thus as a regular counterpart to an indicator function. As a model, it is equally useful because its invariance under Fourier transform and its compatibility with translations and modulations make the localization problem analytically viable. In that regard, a central aspect of the short-time Fourier transform with Gaussian window - also known as the \emph{Gabor transform} is its relationship to analytic functions through the Bargmann transform \cite{Bargmann}. By means of that identification, the localization operator $\mathcal L_\Omega$ becomes anoperator on the Bargmann--Fock space $\mathcal F^2(\C)$ of entire functions \cite{Berezin,Berger-Coburn-Symbol,Berger-Coburn,Berger-Coburn-Heat,Zhu,Grochenig,Folland,Ramos-Optimal-Localization,Kalaj1,Kalaj2,Haslinger-Kalaj-Vujadinovic}, and time--frequency questions may be thus translated into the realm of complex analysis. In this direction, two main rigidity results shape the picture of the Gaussian setting:
\begin{itemize}
\item[\textbf{(NT)}] (Nicola--Tilli \cite{Nicola-Tilli}; see also \cite{Nicola-Tilli-2,Bastianoni-Cordero-Nicola,Bastianoni-Teofanov,Ramos-Tilli}) \emph{Geometric extremality.} Among all measurable sets of a given measure, disks maximize the first localization eigenvalue $\lambda_1$.
\item[\textbf{(AD)}] (Abreu--D\"orfler \cite{Abreu-Doerfler}) \emph{Spectral rigidity.} If a Hermite function is an eigenfunction of $\mathcal L_\Omega$ on a simply connected bounded domain, then \(\Omega\) is a disk up to null sets. The simple connectivity hypothesis is essential here; see Theorem~\ref{thm:counterexample} below.
\end{itemize}
These two theorems express, from opposite directions, the same heuristic: \emph{the special role of the Gaussian among signals is a sort of analogue of the special role of the disk among phase-space sets}. This intertwining, however, has been mostly restricted to very few examples, the most prominent of which being the two mentioned above, with both geometric and spectral aspects of the theory being developed independently of one another. The theory behind (NT) and its quantitative refinement \cite{Gomez-Guerra-Ramos-Tilli} is essentially based on rearrangement and integral-geometric inequalities, with inspiration stemming from classical calculus of variations. On the other hand, the rigidity result (AD) is proved by complex-analytic methods originating from potential theory. Most importantly, neither of these perspectives makes, on its own, makes the connection between eigenfunctions and domains constructive.

\medskip

\noindent\textbf{Eigenfunctions as geometric data.} The aim of this manuscript is to further establish and strengthen that connection. The unifying theme is that, for Gaussian time--frequency localization, an eigenfunction must be regarded as \emph{geometric data}: it determines a localization domain, and prescribing it has strong consequences to its variational theory. This theme is developed in three coupled steps.

\smallskip
\noindent\emph{(i) An inverse construction.} Near the Gaussian, the eigenvalue equation $\mathcal L_U f_0=\lambda f_0$ admits a perturbative solution \emph{in the unknown domain~$U$} for arbitrarily prescribed near-Gaussian $f_0$. We parametrize the unknown boundary as a perturbation of the Daubechies circle and solve the resulting free-boundary moment system on $\mathcal F^2(\C)$ by an analytic implicit-function theorem.

\smallskip
\noindent\emph{(ii) Constructive consequences of rigidity.} Combining the inverse construction with eigenfunction rigidity yields, on the one hand, two further conceptual proofs of the (AD) result and, on the other hand, the optimal exponent in the quantitative stability inequality of \cite{Gomez-Guerra-Ramos-Tilli}. It also delimits rigidity from the other side: once the symbol is allowed to have a hole, no statement of that kind survives.

\smallskip
\noindent\emph{(iii) A new road to (NT) through local rigidity.} When suitably defined, local maximizers of $\lambda_1$ are themselves rigid: each is a disk up to translation. Combined with a Fock-space concentration--compactness profile decomposition, this local rigidity yields a new proof of (NT) that is independent of rearrangement methods.

\medskip

The first technical pillar of the paper is the rewriting of the eigenvalue equation, after Bargmann transform, as an infinite weighted moment system on the Bargmann--Fock space:
\begin{equation}\label{eq:moment-intro}
\int_U F(z)\,\overline{e_k(z)}\,e^{-\pi|z|^2}\,\mathrm{d}A(z)
\;=\;
\lambda\,\langle F,e_k\rangle_{\mathcal F^2},
\qquad k\ge 0,
\end{equation}
where $\{e_k\}$ denotes the standard monomial basis. This is a \emph{$\lambda$-dependent weighted quadrature identity}: indeed, classical quadrature-domain theory studies analogous identities for harmonic test functions against Lebesgue measure \cite{Gustafsson,Gustafsson-Shahgholian,Shahgholian,Aharonov-Schiffer-Zalcman,Karp,Cherednichenko}, and here the Gaussian weight and the spectral parameter $\lambda$ make the system nonstandard and place it at the intersection of phase-space analysis, free-boundary problems, and inverse potential theory \cite{Isakov}.

The free-boundary aspect is another technical cornerstone of this manuscript, as the domain $U$
is not prescribed in the inverse problem, so its boundary is an unknown to be
recovered from the moment equations. Near the Daubechies disk, the boundary is
written as an analytic radial graph and the identity above becomes a nonlinear
system for that graph. The local inverse theorem in Section~3 therefore gives
solvability of a weighted free-boundary problem, at least in the local regime. In the rigidity direction,
the same unknown-domain formulation produces an overdetermined potential
problem whose boundary behavior forces the domain to be a disk.

The five main results below are different manifestations of the mechanisms mentioned above. The inverse theorem proves local solvability of the weighted quadrature system when the eigenfunction is prescribed near the Gaussian. In the opposite direction, the Abreu--D\"orfler rigidity theorem shows that for Hermite data the same moment/free-boundary structure forces the symbol to be a disk. The counterexample theorem confines that mechanism to the simply connected world: for symbols with a hole, the same moment system is solved by an infinite-dimensional family of non-radial domains. The sharpness theorem uses the inverse branch to manufacture nonradial perturbations with exactly quadratic spectral deficit. Finally, the Faber--Krahn part applies the same eigenfunction-as-geometry principle in variational form: local maximizers are forced to be eigenfunction superlevel sets, local rigidity identifies them as disks, and a Fock-space profile decomposition upgrades this local statement to a new proof of the Nicola--Tilli theorem.

\subsection{Main results}\label{ssec:intro-main-results}

The first main result of this work is the inverse construction of localization domains for near Gaussians, at the Daubechies disk. 

\begin{theorem}\label{thm:perturb}
Let $\lambda\in (0,1)$ and $N\in\N$. There exists $\varepsilon_0=\varepsilon_0(\lambda,N)>0$ such that for any
\[
f_0 \;=\; h_0+\sum_{k=1}^N a_k h_k
\qquad\text{with}\qquad
\max_k|a_k|<\varepsilon_0,
\]
there exists a domain $U_\lambda\subset\R^2$, a real-analytic perturbation of a disk, with
\[
\mathcal L_{U_\lambda} f_0 \;=\; \lambda f_0 .
\]
\end{theorem}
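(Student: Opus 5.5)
We work on the Bargmann side. Let $F_0=\sum_{k=0}^N a_k e_k$ (with $a_0=1$) be the Bargmann transform of $f_0$, up to the normalisation matching $h_k\mapsto e_k$, where $e_k(z)=(\pi^k/k!)^{1/2}z^k$. The eigenvalue equation $\mathcal L_U f_0=\lambda f_0$ is then equivalent to the moment system \eqref{eq:moment-intro}:
\[
\int_U F_0\,\overline{e_j}\,e^{-\pi|z|^2}\,dA=\lambda a_j \qquad\text{for all } j\ge0,
\]
with $a_j=0$ for $j>N$.

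At $F_0=e_0$ the Daubechies disk $D_{r_\lambda}$ works, where $1-e^{-\pi r_\lambda^2}=\lambda$. Radial symmetry kills every equation with $j\ge1$, and the $j=0$ equation is exactly the choice of $r_\lambda$.

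\textbf{Parametrization of the unknown domain.} We look for $U=\{re^{i\theta}: r<\rho(\theta)\}$ with
\[
\rho(\theta)^2=r_\lambda^2+2u(\theta)/\pi,
\]
where $u$ is real and lies in a Banach space $X$ of real-analytic functions, say functions holomorphic and bounded in a strip $|\mathrm{Im}\,\theta|<\delta$, with weighted Fourier norm $\sum_n|\hat u(n)|e^{\delta|n|}$. Integrating in $r$ first, the moment equations become
\[
\Phi_j(u,a):=\sum_k a_k c_{jk}\int_0^{2\pi}e^{i(k-j)\theta}\,G_{jk}\bigl(\rho(\theta)^2\bigr)\,d\theta-\lambda a_j=0,
\]
where $G_{jk}(s)=\int_0^{s}t^{(j+k)/2}e^{-\pi t}\,dt/2$ is an entire function of $\sqrt{\cdot}$-free type when $j+k$ is even. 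One should handle the parity issue by writing $r^{j+k+1}dr$ directly, or simply note that the integral is analytic in $\rho^2$ near $r_\lambda^2>0$.

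We package $\Phi=(\Phi_j)_{j\ge0}$ as a map $X\times\C^N\to Y$. Here $Y$ is a weighted sequence space whose weights absorb the growth of $\Phi_j$ in $j$. Since $|e_j|^2e^{-\pi|z|^2}$ concentrates near $|z|^2\approx j/\pi$, far from $r_\lambda$, the integrals decay roughly like $(\pi r_\lambda^2)^j/j!$. So we rescale each equation by $\sqrt{j!}/(\pi\rho_*^2)^{j/2}$ for some suitable $\rho_*>r_\lambda$; this choice should make $\Phi$ analytic between the spaces.

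\textbf{Linearization and implicit function theorem.} At $(u,a)=(0,0)$ the derivative of the $j$-th equation with respect to $u$ involves only $k=0$:
\[
D_u\Phi_j(0,0)[v]=c_j\,r_\lambda^{j}e^{-\pi r_\lambda^2}\int_0^{2\pi}e^{-ij\theta}v(\theta)\,d\theta,
\]
with $c_j\ne0$. This is the $j$-th Fourier coefficient of $v$ times a nonzero factor. Since $u$ is real, $\hat u(-j)=\overline{\hat u(j)}$, so the complex equations with $j\ge1$ determine $\hat u(j)$ for $j\ge1$, and the real equation $j=0$ determines $\hat u(0)$. The $j=0$ equation is real because $\int_U|F|^2$-type terms appear only after pairing; we verify this by noting $\Phi_0$ need not be real for general $a$. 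This forces a count check: the real unknowns are $\hat u(0)\in\R$ together with $\hat u(j)\in\C$ for $j\ge1$, while the equations are $\Phi_j\in\C$ for all $j\ge0$. There is one real equation too many, namely $\mathrm{Im}\,\Phi_0$. We remedy this by normalizing. Either the eigenfunction is only prescribed up to a scalar multiple, so we replace $F_0$ by $cF_0$ with free $c$ near $1$ and absorb the phase, or we observe that $\mathrm{Im}\langle \mathcal L_U F_0,F_0\rangle=0$ automatically, since $\mathcal L_U$ is self-adjoint, and use it to eliminate $\mathrm{Im}\,\Phi_0$ modulo the other equations. We commit to the second option: project onto $\{F_0\}^\perp$ plus the real part of the $F_0$-component.

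The linearization is then an isomorphism $X\to Y$ once the weights are matched. The factor $r_\lambda^je^{-\pi r_\lambda^2}$ against the rescaling $\sqrt{j!}/(\pi\rho_*^2)^{j/2}$ must correspond to Fourier decay $e^{-\delta j}$, so we choose $\delta=\log(\rho_*/r_\lambda)$. The analytic implicit function theorem then gives, for $|a|<\varepsilon_0$, a real-analytic $u(a)$ solving all the equations. Positivity of $\rho^2$ and the fact that $U$ is a genuine domain follow from smallness of $u$.

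\textbf{Main obstacle.} The hard step is the functional-analytic one: showing that $\Phi$ is analytic and that its nonlinear remainder is controlled in the same weighted spaces where $D_u\Phi$ is invertible. Terms with $k\le N$ shift frequencies by at most $N$, so they are harmless. The real difficulty is that the Taylor expansion of $G_{jk}(\rho^2)$ in $u$ produces powers $u^m$ with coefficients growing like $j^m/r_\lambda^{2m}$, and this growth must be absorbed by the loss between $\rho_*$ and a slightly larger radius. If a single pair of spaces does not close, we would fall back on a nested scale of strip widths (a Nirenberg–Nishida type analytic IFT), and $\varepsilon_0$ would depend on $N$ through the frequency shifts.
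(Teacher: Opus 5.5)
Your architecture is the paper's: moment system on the Bargmann side, analytic radial graph over the Daubechies disk, diagonal Fourier-multiplier linearization, and an implicit function theorem in analytic strip spaces. Where you genuinely differ is in disposing of the surplus real equation. The paper never solves \eqref{eq:moment-intro} directly. It applies $P^\#(\partial_w/\pi)$, i.e.\ it tests $\mathcal L_UP-\lambda P$ against $z^jP$ instead of $z^j$, so that the $j=0$ equation becomes $\int_U|P|^2e^{-\pi|z|^2}\,\mathrm{d}A=\lambda\|P\|_{\mathcal F^2}^2$ and is automatically real. Having solved this symmetrized system, it only knows that $H=G-\lambda P$ lies in the finite-dimensional kernel of $P^\#(\partial_w/\pi)$, and it kills $H$ with Lemma~\ref{lem:step3-rigidity} (exponential type of $H$ versus zeros of $P^\#(\zeta/\pi)$ beyond $\pi M$).

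Your second option replaces all of this, and it is correct. With $\Phi_j=\int_UF_0\overline{e_j}\,e^{-\pi|z|^2}\mathrm{d}A-\lambda a_j$ one has $\sum_{j=0}^N\overline{a_j}\Phi_j=\int_U|F_0|^2e^{-\pi|z|^2}\mathrm{d}A-\lambda\sum_j|a_j|^2\in\R$ for every bounded $U$. Since $a_0=1$, the system $\operatorname{Re}\Phi_0=0$, $\Phi_j=0$ ($j\ge1$) already forces $\operatorname{Im}\Phi_0=0$. Its $u$-linearization at $(0,0)$ is the same diagonal multiplier, now bijective onto $\R\times\{\text{complex sequences}\}$ in matched weights. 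Work in these fixed coordinates rather than projecting onto $\{F_0\}^\perp$, which moves with $a$. Drop your first option: the eigen-equation is homogeneous in $F_0$, so rescaling $F_0$ adds no unknown. Your route avoids the unsymmetrization lemma and its extra smallness condition; the paper's buys the positive weight $|P|^2e^{-\pi|z|^2}$ and the weighted-quadrature form of the identity.

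The functional-analytic step, however, does not close as you set it up. With $\rho_*>r_\lambda$ the rescaled linearization is $\hat v(j)\mapsto c\,e^{-\delta j}\hat v(j)$, where $\delta=\log(\rho_*/r_\lambda)$.
\begin{itemize}
\item If $Y$ is the $\ell^1$-space of the rescaled equations, this is a compact map from the strip space of width $\delta$, with non-closed range.
\item If instead $Y$ is defined as its image, then $Y$ is exactly the paper's $\mathcal Y_\delta$, with weights $e^{\delta j}\sqrt{j!}/(\sqrt\pi\,r_\lambda)^j$. Here $\rho_*$ has disappeared; effectively $\rho_*=r_\lambda e^{-\delta}<r_\lambda$. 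The size bound $|\Phi_j|\lesssim(\pi^j/j!)^{1/2}(r_\lambda+C\|u\|)^{j}$ then no longer places $\Phi(u)$ in $Y$.
\end{itemize}
Shifting contours in the strip only gives $\Phi(u)\in\mathcal Y_\sigma$ for $\sigma<\delta-\log(1+C\|u\|)$. This is the resummed form of the $j$-dependent Taylor coefficients you noticed, and the loss is attained for suitable $u$ singular on the edge of their strip. So no single pair of spaces works. The nested scale is forced, and it is what Lemma~\ref{lem:moment-map-strip-estimates} and the iteration over $\tau_n\downarrow\tau_*$ provide.

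Note also that the width lost is governed by the size of $u$ itself, not by the size of the Newton correction. A Nirenberg--Nishida theorem needs a fixed ball on which the Lipschitz constant from the $\tau$-space to the $\sigma$-space is $C/(\tau-\sigma)$, so it does not apply off the shelf. In the paper's lemma, too, the admissible radius $\rho$ depends on $\tau-\sigma$. Controlling the accumulated loss along the iteration is the genuinely delicate point of the construction.
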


This appears to be the first general inverse construction of localization domains beyond the highly symmetric examples. We note that the threshold $\varepsilon_0$ depends on $\lambda$, $N$, and the quantitative inverse bounds for the disk moment map. 

The construction in Theorem \ref{thm:perturb} is perturbative in nature: we write \eqref{eq:moment-intro} as an identity of entire functions, via the reproducing kernel property of the Bargmann--Fock space. By differentiating, we may view the identity one obtains as an identity for a two-dimensional (complex) Fourier transform along a specific two-dimensional complex set. By using that the disk is nearby and writing \eqref{eq:moment-intro} as an implicit equation, we may reframe the problem yet again as finding a curve within the Bargmann--Fock space which satisfies certain specific implicit properties. Parametrizing explicitly the coefficients of the disk perturbations then show that, indeed, the equations obtained are non-degenerate, allowing us to use the Banach-space version of the implicit function theorem to prove our result. 

The same machinery yields a self-contained recovery of (AD) for regular domains.

\begin{theorem}\label{thm:eig-loc-U}
Let $U\subset\C$ be a bounded simply connected domain satisfying
$U=\operatorname{int}\overline U$. If a Hermite function is an eigenfunction
of $\mathcal L_U$, then $U$ is a disk.
\end{theorem}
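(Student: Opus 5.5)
Via the Bargmann transform, the Hermite function $h_n$ corresponds to the monomial $e_n(z)=(\pi^n/n!)^{1/2}z^n$. By \eqref{eq:moment-intro} and the reproducing-kernel property, $\mathcal L_U h_n=\lambda h_n$ is equivalent to the identity of entire functions
\[
\int_U z^n\,\overline{w}^{\,\cdot}\text{-type kernel}:\qquad
\int_U \zeta^n\, e^{\pi w\bar\zeta}\,e^{-\pi|\zeta|^2}\,\mathrm{d}A(\zeta)=\lambda\, w^n,\qquad w\in\C .
\]
Expanding $e^{\pi w\bar\zeta}$ in powers of $w$ turns this into the moment conditions
\[
\int_U \zeta^n\bar\zeta^{\,k}e^{-\pi|\zeta|^2}\,\mathrm{d}A=\lambda\,\delta_{kn}\,\frac{n!}{\pi^{n}}\qquad(k\ge0).
\]
The plan is to convert these into an overdetermined potential problem on $U$ and then exploit simple connectivity.

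\textbf{Step 1 (Cauchy transform and weighted potential).} Note that $\zeta^n\bar\zeta^k e^{-\pi|\zeta|^2}=\partial_{\bar\zeta}\Phi_k(\zeta)\cdot \zeta^n$ for an explicit antiderivative $\Phi_k$. Combining these antiderivatives over all $k$ gives an identity for the weighted Cauchy-type transform
\[
C(w)=\int_U \frac{\zeta^n e^{-\pi|\zeta|^2}}{\zeta-w}\,\mathrm{d}A(\zeta).
\]
For $|w|$ large, $C(w)$ is given by the expansion in $\zeta^n\bar\zeta^{\,k}$-moments against $w^{-m-1}$. These moments are not exactly of the given form. The cleaner route is to use the function
\[
u(\zeta)=\lambda\,\frac{n!}{\pi^n}\, \bar\zeta^{-n}\cdots,
\]
or, more robustly, to define the \emph{weighted potential}
\[
P(w)=\int_U \overline{K(\zeta,w)}\,|\zeta^n|^2 e^{-\pi|\zeta|^2}\,\mathrm{d}A(\zeta)-\lambda\,\|e_n\|^{-2}\cdots.
\]
The moment identities say that the measure $\mu=\mathbf 1_U\,|\zeta|^{2n}e^{-\pi|\zeta|^2}\mathrm{d}A$ and $\lambda$ times the full Gaussian-type measure agree against all functions $\zeta^n\bar\zeta^k$. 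Concretely, I would show that $\mathcal L_U h_n=\lambda h_n$ forces, for the real function $g=|z|^{2n}e^{-\pi|z|^2}$, that $\int_U g\,\bar\zeta^{k}\zeta^{-?}$… Rather than guess, the intended mechanism is as follows. The eigenfunction equation also says that the Berezin-type function $G(w)=\int_U |\langle k_w, \cdot\rangle|^2$ satisfies $|F|^2e^{-\pi|z|^2}\mathbf 1_U$ having the same ``holomorphic moments'' as $\lambda|F|^2e^{-\pi|z|^2}$ on $\C$. Solving $\Delta v=|z|^{2n}e^{-\pi|z|^2}(\mathbf 1_U-\lambda)$ with appropriate decay, the moment conditions give $v\equiv0$, $\nabla v\equiv 0$ outside $\overline U$ (the exterior Cauchy transform vanishes). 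Since $U=\operatorname{int}\overline U$ and $U$ is simply connected, the complement is connected, so $v=|\nabla v|=0$ on $\partial U$. This is the overdetermined (quadrature/Serrin-type) system.

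\textbf{Step 2 (radial comparison).} The radial function $v_r$ solving the same equation with $U=D_r$, where $r$ is chosen so that the mass matches the eigenvalue $\lambda$, has the same exterior data. The difference $v-v_r$ is therefore supported in the symmetric difference region. I would use the unique-continuation and maximum-principle structure, namely the sign of $\Delta v$, which is positive on $U$ and negative off it, to show $U\subset D_r$ and $D_r\subset U$. Equivalently, I would apply the uniqueness of quadrature domains for a radially decreasing weight (Sakai/Gustafsson uniqueness) after the change of variables $\rho=|z|^2$.

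\textbf{Main obstacle.} The main obstacle is the exterior-vanishing step. It requires showing that the countably many moment identities determine the weighted Cauchy transform outside $\overline U$ (convergence of the large-$|w|$ expansion), and that simple connectivity, together with $U=\operatorname{int}\overline U$, propagates vanishing from a neighborhood of $\infty$ to all of $\C\setminus\overline U$ and up to $\partial U$ without null-set ambiguities. The counterexample in Theorem~\ref{thm:counterexample} shows exactly that this propagation fails when $\C\setminus\overline U$ is disconnected.
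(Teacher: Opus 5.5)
Your overall architecture (moment identities, vanishing of a weighted potential off $\overline U$, overdetermined data on $\partial U$, rigidity) matches the paper's. However, both substantive steps are missing, and the first is wrong as written. The source $|z|^{2n}e^{-\pi|z|^2}(\mathbf 1_U-\lambda)$ equals $-\lambda|z|^{2n}e^{-\pi|z|^2}\neq 0$ on $\C\setminus\overline U$, so no solution of your equation can vanish identically there.

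The missing idea is to replace the radial measure $\lambda|z|^{2n}e^{-\pi|z|^2}\,\mathrm{d}A$ by the point mass it represents on harmonic test functions. Keep only your moments with $k=n+m$, $m\ge 0$; equivalently, apply $\pi^{-n}\partial_w^n$ to the eigen identity, as the paper does. These say $\int_U|z|^{2n}e^{-\pi|z|^2}\,\overline z^{\,m}\,\mathrm{d}A=c\,\delta_{m0}$ with $c=\lambda n!/\pi^n$. Since the density is real, the same holds with $z^m$. Hence the compactly supported measure $|z|^{2n}e^{-\pi|z|^2}\mathbf 1_U-c\,\delta_0$ annihilates all harmonic polynomials.

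Its logarithmic potential $u$ therefore vanishes for $|z|>\sup_U|\zeta|$, and hence on the unbounded component of $\C\setminus\overline U$. The expansion at infinity converges trivially there, so the ``main obstacle'' you name is not where the difficulty lies; the paper reaches the same conclusion through its Paley--Wiener divisibility lemma. You then still need $0\in U$ (from the pole $-\tfrac{c}{2\pi}\log|z|$), and the $C^1$ regularity of $u$ off the origin to get $u=\nabla u=0$ on $\partial U$.

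Note also that simple connectivity of $U$ gives connectedness of the complement of $U$ in the Riemann sphere, not of $\C\setminus\overline U$. The latter can have a bounded component pinched off at a single boundary point even when $U=\operatorname{int}\overline U$. What is really needed is that every point of $\partial U$ is a limit of the component where $u$ vanishes, and the paper's argument passes through this same point.

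Step~2 is not an argument, for four reasons:
\begin{enumerate}
\item The difference $v-v_r$ is not supported in $U\triangle D_r$; only its Laplacian is.
\item The weight $|z|^{2n}e^{-\pi|z|^2}$ increases for $|z|^2<n/\pi$, so it is not radially decreasing once $n\ge1$.
\item The substitution $\rho=|z|^2$ does not preserve harmonicity.
\item The moment identities give only a quadrature identity for harmonic test functions, for which uniqueness fails in general. The shells of Theorem~\ref{thm:counterexample} are non-radial one-point quadrature domains for the radially decreasing weight $e^{-\pi|z|^2}$.
\end{enumerate}
So simple connectivity must enter precisely in the step you leave open.

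The paper closes it with an explicit holomorphic function. Set $\Phi(t)=\int_0^t s^ne^{-\pi s}\,\mathrm{d}s$. The function $F=4\partial_z u-\Phi(|z|^2)/z+c/(\pi z)$ is holomorphic in $U$, because $\partial_{\bar z}\bigl(\Phi(|z|^2)/z\bigr)=|z|^{2n}e^{-\pi|z|^2}$ and $\partial_{\bar z}(1/z)=\pi\delta_0$ cancel the right-hand side of $\Delta u$. Then $W=\pi zF$ equals the real function $c-\pi\Phi(|z|^2)$ on $\partial U$, since $\partial_z u=0$ there. By the maximum principle $\Im W\equiv0$, so $W$ is constant, and strict monotonicity of $\Phi$ forces $|z|$ to be constant on $\partial U$.
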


We give two proofs of Theorem~\ref{thm:eig-loc-U} (\S\ref{sec:classical-rigidity}). The first, in the spirit of overdetermined free-boundary problems, treats the $C^2$ subcase by transporting the eigenvalue equation into the Bargmann--Fock space, applying a Paley--Wiener argument and an Ehrenpreis--Malgrange-type divisibility lemma to achieve a certain distributional differential free boundary equation, which must admit compactly supported solutions. By translating back to the world of potential theory, we are able to force the logarithmic potential of $\mathbf 1_U$ to be radial along $\partial U$. From that point on, we simply use classical results in potential theory (see e.g. \cite{Aharonov-Schiffer-Zalcman}) to conclude the desired assertion. 

The second proof eliminates \emph{all} differentiability and perimeter hypotheses on $\partial U$, replacing them with a holomorphic auxiliary function attached to the associated Poisson problem. It has essentially the same initial setting as the first proof: by viewing the problem as a distributional free-boundary equation, we may find a suitable compactly supported solution. We use that solution, however, to construct an auxilliary holomorphic function $F$ on $U$, such that when multiplied by $z,$ its imaginary value becomes \emph{constant} on the boundary of $U$. On the other hand, the precise algebraic cancellations of the function $F$ at hand show that $z \cdot F$ must be \emph{radial} at $\partial U.$ Combining both, this yields that the boundary is radial, and hence, as $U$ is simply connected, it must be a disk.  

We note that both new proofs use simple connectivity in a crucial way, and so does the original argument of \cite{Abreu-Doerfler}: what the potential-theoretic mechanism produces is the vanishing of a logarithmic potential on the \emph{unbounded} component of the complement, which carries no information on a hole. Our next result shows that this is not a coincidence: for multiply connected domains, the inverse problem has an infinite-dimensional solution set, and no rigidity statement of the type of Theorem~\ref{thm:eig-loc-U} can hold.

\begin{theorem}\label{thm:counterexample}
Let $D_0\subset \C$ be a bounded simply connected domain such that $0\in D_0$, such that $\partial D_0$ is a real-analytic Jordan curve, and such that $D_0$ is not a disk centered at the origin. Then there exist $t_0>0$ and a real-analytic increasing family $(D_t)_{0\le t<t_0}$ of bounded simply connected domains, with $\overline{D_s}\subset D_t$ for $0\le s<t<t_0$, such that for every $0<t<t_0$ the shell
\[
\Omega_t:=D_t\setminus\overline{D_0}
\]
is a doubly connected domain with two real-analytic boundary components, is not radial about the origin --- in particular, it is not a union of concentric annuli --- and satisfies
\[
\mathcal L_{\Omega_t}h_0=2\pi t\,h_0,
\qquad 2\pi t\in(0,1).
\]
\end{theorem}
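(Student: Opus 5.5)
The plan is to work on the Bargmann--Fock side, where the Gaussian $h_0$ corresponds to the constant function $1$ and $h_k$ to the normalized monomial $e_k$. By \eqref{eq:moment-intro} with $F\equiv 1$, and by completeness of $\{e_k\}$, the identity $\mathcal L_{\Omega}h_0=\mu h_0$ is equivalent to the weighted quadrature identity
\begin{equation*}
\int_{\Omega} h(z)\,e^{-\pi|z|^2}\,\mathrm{d}A(z)=\mu\,h(0)\qquad\text{for all polynomials }h,
\end{equation*}
up to the harmless normalizing constant of the Fock measure, which I will absorb into the time variable. Indeed, the case $h=\bar{\ }$conjugate of $e_k$ with $k\ge1$ gives vanishing of the off-diagonal moments, and $h=1$ gives the eigenvalue. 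So it suffices to build shells $\Omega_t=D_t\setminus\overline{D_0}$ which are weighted quadrature domains for the single point mass $2\pi t\,\delta_0$ against the weight $w(z)=e^{-\pi|z|^2}$. This is exactly what a weighted Hele--Shaw flow with injection at the origin produces.

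\textbf{Step 1 (the flow).} Let $G_t$ be the Dirichlet Green function of $D_t$ with pole at $0$, normalized so that $\Delta G_t=-\delta_0$ and $G_t=0$ on $\partial D_t$. I evolve $\partial D_t$ with outward normal velocity
\[
V_n \;=\; c\,w^{-1}\,|\nabla G_t| \;=\; -c\,e^{\pi|z|^2}\,\partial_n G_t ,
\]
where the constant $c$ is fixed by the normalization. For any holomorphic $h$ on a neighbourhood of $\overline{D_t}$, the Reynolds transport formula gives
\[
\frac{\mathrm d}{\mathrm dt}\int_{D_t}h\,w\,\mathrm dA=\int_{\partial D_t}h\,w\,V_n\,\mathrm ds=-c\int_{\partial D_t}h\,\partial_nG_t\,\mathrm ds=c\,h(0),
\]
by Green's identity, since $h$ is harmonic. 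Integrating from $0$ to $t$ yields $\int_{\Omega_t}h\,w\,\mathrm dA=c\,t\,h(0)$. Choosing $c$ to match the Fock normalization gives eigenvalue $2\pi t$, and shrinking $t_0$ ensures $2\pi t<1$.

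\textbf{Step 2 (well-posedness and analyticity; the main obstacle).} I need a real-analytic family of real-analytic Jordan domains solving this moving-boundary problem for $0\le t<t_0$. I would use the Polubarinova--Galin formulation. Write $D_t=f_t(\mathbb D)$ with $f_t(0)=0$ and $f_t'(0)>0$. Then $G_t=-\tfrac1{2\pi}\log|f_t^{-1}|$, and the law $V_n=c\,w^{-1}|\nabla G_t|$ becomes
\[
\operatorname{Re}\bigl(\partial_t f_t\,\overline{\zeta f_t'}\bigr)=\tfrac{c}{2\pi}\,e^{\pi|f_t|^2}\quad\text{on }|\zeta|=1.
\]
Since $e^{\pi|f|^2}$ is real-analytic in $(f,\bar f)$, this can be solved for $\partial_t f$ through the Schwarz integral operator: $\partial_t f=\zeta f'\,\mathcal S\bigl[c'\,e^{\pi|f|^2}/|f'|^2\bigr]$. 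This is a first-order nonlocal evolution that loses one derivative. Because $\partial D_0$ is real-analytic, $f_0$ extends holomorphically and univalently to a disk $|\zeta|<1+\delta$. The abstract Cauchy--Kovalevskaya theorem (Nirenberg--Nishida) in the scale of Banach spaces of holomorphic functions on $|\zeta|<1+\delta'$ then gives a unique solution analytic in $t$ for small $t$. Univalence and analyticity up to the boundary persist for small $t$. Verifying the Nirenberg--Nishida estimates for this weighted operator is the technical heart of the argument, though it parallels the classical unweighted case.

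\textbf{Step 3 (geometry).} By Hopf's lemma, $|\nabla G_t|>0$ on $\partial D_t$, so $V_n>0$ and the family is strictly increasing, with $\overline{D_s}\subset D_t$ for $s<t$. Hence $\Omega_t$ is doubly connected and both of its boundary components, $\partial D_0$ and $\partial D_t$, are real-analytic. Finally, $\Omega_t$ is not radial about the origin: a rotation-invariant doubly connected domain must have circles centered at $0$ as its boundary components, but its inner boundary is $\partial D_0$, which is not such a circle by hypothesis.

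The infinite-dimensionality claimed in the surrounding discussion comes from the freedom in choosing $D_0$.
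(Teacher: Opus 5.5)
Your proposal is correct and follows essentially the same route as the paper. Both use a weighted Hele--Shaw injection with permeability $e^{\pi|z|^2}$, Richardson's conservation law via Reynolds transport and Green's identity, short-time real-analytic existence through the weighted Polubarinova--Galin equation and Cauchy--Kovalevskaya (which the paper simply quotes from the literature), Hopf's lemma for strict nesting, and non-radiality read off from the inner boundary $\partial D_0$.

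The differences are cosmetic. You normalize $\Delta G_t=-\delta_0$ and compensate with $c=2\pi$, and you test against holomorphic polynomials instead of the harmonic real and imaginary parts of $z\mapsto e^{\pi\bar z w}$. Also, since $\int_{\C}e^{-\pi|z|^2}\,\mathrm{d}A=1$ in the paper's normalization, there is no constant to absorb, and $2\pi t<1$ holds automatically without shrinking $t_0$.
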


The idea behind the proof is a weighted Hele--Shaw flow argument.  One starts the procedure at a noncircular analytic domain $D_0$ containing the origin, and the mains idea is to flow outwards at a speed proportional to the normal derivative of the Green function of the domain divided by the Gaussian weight. The reason behind it is that, when using Reynolds' transport theorem together with one integration by parts, this yields exactly the value of any harmonic function at the origin, and the moment equations then follow from conservation laws satisfied by the flow, which is the Gaussian-weighted form of Richardson's identity for Hele--Shaw flow \cite{Richardson}. Since the inner boundary $\partial D_0$ may be prescribed arbitrarily among noncircular analytic Jordan curves, the family of counterexamples is infinite-dimensional.

The fourth theorem uses Theorem~\ref{thm:perturb} to settle the question of optimal exponents in the quantitative stability estimate of \cite{Gomez-Guerra-Ramos-Tilli}. For a measurable set $U\subset\R^2$ of finite positive measure, write $\lambda_1(U)$ for the first eigenvalue of $\mathcal L_U$, $f_U$ for an $L^2$-normalized first eigenfunction, $\varphi_{z_0}$ for a Gaussian optimizer centred at $z_0\in\C$, and $\mathcal A(U)$ for the Fraenkel asymmetry of $U$. Corollary~1.4 of \cite{Gomez-Guerra-Ramos-Tilli} provides
\begin{equation}\label{eq:GGRT-bounds-intro}
\min_{z_0\in\C,\ |c|=1}\|f_U-c\,\varphi_{z_0}\|_{L^2(\R)}
\;\le\;
C e^{|U|/2}\!\left(1-\tfrac{\lambda_1(U)}{1-e^{-|U|}}\right)^{\!1/2},
\qquad
\mathcal A(U) \;\le\; K(|U|)\!\left(1-\tfrac{\lambda_1(U)}{1-e^{-|U|}}\right)^{\!1/2}.
\end{equation}

\begin{theorem}\label{thm:GGRT-sharp}
The exponent $1/2$ in both inequalities of \eqref{eq:GGRT-bounds-intro} is optimal: it cannot be replaced by any $\beta>1/2$.
\end{theorem}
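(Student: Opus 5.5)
The plan is to build, by means of Theorem~\ref{thm:perturb}, a one-parameter family of domains $U_\varepsilon$ along which the left-hand sides of \eqref{eq:GGRT-bounds-intro} are of exact order $\varepsilon$ while the deficit
\[
\delta(U):=1-\frac{\lambda_1(U)}{1-e^{-|U|}}
\]
is $O(\varepsilon^2)$. If a bound $\mathcal A(U_\varepsilon)\le K\,\delta(U_\varepsilon)^\beta$ held with $\beta>1/2$, we would get $c\,\varepsilon\le C\varepsilon^{2\beta}$, which fails as $\varepsilon\to0$. The same argument works for the eigenfunction distance.

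\textbf{Step 1 (construction).} Fix $\lambda\in(0,1)$, take $N=2$, and set $f_\varepsilon=h_0+\varepsilon h_2$. Theorem~\ref{thm:perturb} provides analytic perturbations $U_\varepsilon$ of the Daubechies disk $D_\lambda$, where $|D_\lambda|=-\log(1-\lambda)$, such that $\mathcal L_{U_\varepsilon}f_\varepsilon=\lambda f_\varepsilon$. The implicit function theorem gives analytic dependence on $\varepsilon$, so we may write $\partial U_\varepsilon$ as $r=r_\lambda+\varepsilon\rho_1(\theta)+O(\varepsilon^2)$. Two facts about this family are needed:
\begin{itemize}
\item $\lambda$ is the \emph{top} eigenvalue of $\mathcal L_{U_\varepsilon}$, so $\lambda_1(U_\varepsilon)=\lambda$ and $f_{U_\varepsilon}=f_\varepsilon/\|f_\varepsilon\|$. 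This holds because $\lambda_1(D_\lambda)=\lambda$ is simple with a spectral gap, and the top eigenvalue depends continuously on the symbol in $L^1$.
\item Linearizing the moment system \eqref{eq:moment-intro} against $e_2$ identifies $\rho_1$ as a nonzero multiple of $\cos(2\theta+\alpha)$: the $h_2$-component can only be produced by the second angular mode of the boundary.
\end{itemize}

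\textbf{Step 2 (deficit is $O(\varepsilon^2)$).} Since $\lambda_1(U_\varepsilon)=\lambda$ exactly, we have $\delta(U_\varepsilon)=1-\lambda/(1-e^{-|U_\varepsilon|})$. The area satisfies $|U_\varepsilon|=|D_\lambda|+\varepsilon r_\lambda\int\rho_1\,d\theta+O(\varepsilon^2)$, and the first-order term vanishes because $\rho_1$ has zero mean. So $|U_\varepsilon|-|D_\lambda|=O(\varepsilon^2)$, and since $\lambda=1-e^{-|D_\lambda|}$, this gives $\delta=O(\varepsilon^2)$. In principle $\delta$ could even vanish; this causes no harm, because by (NT) the value $\delta=0$ is attained only by disks, which contradicts Step 3.

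\textbf{Step 3 (lower bounds of order $\varepsilon$).} For the eigenfunction distance: the Gaussians $\varphi_{z_0}$ correspond to normalized reproducing kernels in Fock space. The function $h_0+\varepsilon h_2$ has distance $\gtrsim\varepsilon$ from the manifold of these kernels, since its tangent space at $h_0$ is spanned by $h_0$ and $h_1$ and is orthogonal to $h_2$. For the asymmetry: the boundary perturbation $\varepsilon\rho_1$ is a pure second mode, which cannot be absorbed by translating or rescaling the disk. This gives $\mathcal A(U_\varepsilon)\ge c\,\varepsilon$ for a constant $c=c(\rho_1)>0$.

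I expect the main obstacle to be Step 1's identification of $\lambda$ as the top eigenvalue, together with the precise first-order expansion of the boundary. The former needs a perturbation argument for $\mathcal L_U$ in operator norm (which is controlled by $|U\triangle D_\lambda|$). The latter requires carrying the implicit-function linearization explicitly enough to see that $\rho_1\neq0$ has no zeroth or first mode.
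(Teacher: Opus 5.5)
Your proposal is correct and follows essentially the paper's route. The shared steps are: the family from Theorem~\ref{thm:perturb} with $h_0+\varepsilon h_2$; identification of $\lambda$ as the simple principal eigenvalue via operator-norm continuity and the disk's spectral gap; a first-variation computation showing the boundary velocity is a nonzero pure second mode, so the area moves only at order $\varepsilon^2$ and translations, which only shift first modes, cannot absorb it; and an order-$\varepsilon$ distance of the eigenfunction from the coherent-state orbit.

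There are two minor differences. First, you use the scale-normalized deficit $1-\lambda_1(U)/(1-e^{-|U|})$ directly, whereas the paper dilates to area $\pi$ and controls the resulting eigenvalue change separately. Second, where you argue via the tangent space at $h_0$, the paper uses the explicit bound $|1+\varepsilon a^2|\,e^{-\pi|a|^2/2}\le 1$ on $|\langle \widehat P_\varepsilon,k_a\rangle|$, which covers all $a\in\C$ at once rather than only the part of the orbit near $h_0$.
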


The argument is direct. Theorem~\ref{thm:perturb} applied to second-order Hermite perturbations of $h_0$ produces a one-parameter family $U_\varepsilon$ of perturbed disks whose boundary deformation is governed by quadratic harmonics. A first-variation analysis based on the Reynolds transport identity --- and a sharp explicit lower bound on $|U_\varepsilon\triangle B(z_0,r)|$ valid uniformly over $(z_0,r)$ --- shows that these harmonics cannot be removed by translations of the disk. The normalized eigenfunction is also at distance comparable with $|\varepsilon|$ from the coherent-state orbit, whereas the spectral deficit is $O(\varepsilon^2)$; thus both exponents in \eqref{eq:GGRT-bounds-intro} are sharp.

The fifth main result concerns local extremizers of the Faber--Krahn problem in the Gaussian STFT setting and is a key input to the new proof of (NT) below.

\begin{theorem}\label{thm:local-maximizers}
Any local maximizer of $\lambda_{1,\varphi}$ under the constraint of having fixed measure is a disk up to translation and null sets.
\end{theorem}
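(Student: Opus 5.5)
The plan is to pass through the first-variation (Euler--Lagrange) condition for local maximizers, then to a free-boundary overdetermined problem, and finally to rigidity. Let $U$ be a local maximizer of $\lambda_1=\lambda_{1,\varphi}$ among sets of measure $|U|=s$, locality being measured by small $|U\triangle V|$. Let $f$ be a normalized first eigenfunction and $u(z)=|V_\varphi f(z)|^2$. Under the Bargmann transform, $u=|F(z)|^2e^{-\pi|z|^2}$ with $F$ entire, so $u$ is real-analytic with isolated critical values on compacts.

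\textbf{Step 1: maximizers are superlevel sets.} By the variational characterization $\lambda_1(V)\ge\int_V u$ for every $V$, with equality at $V=U$. Suppose there were sets $A\subset U$ and $B\subset U^c$ of equal small measure with $\sup_A u<\inf_B u$. Swapping them gives $V=(U\setminus A)\cup B$ with $\lambda_1(V)\ge\int_V u>\int_U u=\lambda_1(U)$, and $|V\triangle U|$ can be taken arbitrarily small. This contradicts local maximality. Since $u$ is real-analytic and nonconstant, its level sets are null. Hence $U=\{u>t\}$ up to null sets, for the $t$ with $|\{u>t\}|=s$. I would rather take the connected component structure into account only later: for now, $U$ is a superlevel set of $u$.

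\textbf{Step 2: eigenfunction equation and the boundary condition.} We have $\mathcal L_Uf=\lambda f$ with $U=\{|F|^2e^{-\pi|z|^2}>t\}$. In Bargmann form this is the moment identity \eqref{eq:moment-intro}. On $\partial U$, which is a real-analytic curve away from finitely many critical points, we have $|F|^2e^{-\pi|z|^2}=t$. This is the overdetermined piece: the eigenfunction is prescribed on the free boundary. Differentiating the reproducing-kernel form of \eqref{eq:moment-intro} and applying Green's formula, as in the proof of Theorem~\ref{thm:eig-loc-U}, turns the interior integral into a boundary integral. Substituting $|F|^2=te^{\pi|z|^2}$ on $\partial U$ then produces an identity for $\int_{\partial U} F(w)\overline{(\cdots)}\,\mathrm{d}\bar w$ against all kernels.

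\textbf{Step 3: derive an analytic function that must vanish.} The goal is to show that $G(z)=\lambda F(z)-\int_U F(w)e^{\pi \bar w z}e^{-\pi|w|^2}\mathrm{d}A(w)$ vanishing identically, together with the level-set condition, forces $F$ to be a multiple of a coherent state $e^{\pi\bar z_0 z}$ up to the constant. I would try a Cauchy-transform/Schwarz-function argument. The condition $|F|^2=te^{\pi|z|^2}$ on $\partial U$ gives a Schwarz-type function $\bar z=S(z)$ along each boundary component, expressed through $\log F$. The moment identity then says the weighted Cauchy transform of $\mathbf 1_U$ extends analytically in a way compatible with $S$. From this, $\log|F|^2-\pi|z|^2$ should be harmonic-plus-quadratic in a way that makes $\partial_z\log F$ affine, so $F=ce^{az+bz^2}$, and the Fock condition gives $b=0$. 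Alternatively, and more concretely, I would apply Pompeiu/Green to $\Delta(u)$ on $U$. The level-set property gives $\partial_\nu u=-|\nabla u|$ on $\partial U$, and combining this with the eigen-equation tested against $\bar w^k$ yields an overdetermined Serrin-type problem for $\log u+\pi|z|^2=\log|F|^2$, which is harmonic off zeros of $F$. A zero of $F$ inside $U$ would give a log singularity; Step 1's superlevel structure shows $U$ contains no zeros, so $\log|F|^2$ is harmonic on $U$.

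\textbf{Step 4: conclude.} Once $F=ce^{\pi\bar z_0 z}$, $u$ is a translated Gaussian and its superlevel sets are disks centered at $z_0$, which proves the theorem. The main obstacle is Step 3: turning ``superlevel set of the eigenfunction'' plus ``eigenfunction equation'' into $F$ being a coherent state, without assuming simple connectivity. Multiply connected superlevel sets are not excluded a priori, and Theorem~\ref{thm:counterexample} shows the eigen-equation alone does not suffice. The extra level-set condition must exclude holes. Inside a hole, $u<t$ while the hole is bounded by $u=t$, so $u$ attains an interior minimum. But $\log u=\log|F|^2-\pi|z|^2$ is superharmonic away from zeros, so the minimum forces a zero of $F$ in the hole. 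I would then try to show via the moment identity tested against $F/(z-z_1)$ that such zeros contradict maximality. Failing that, I would use the second variation, as locality permits, to exclude them.
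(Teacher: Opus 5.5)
Your Step 1 is the paper's bathtub/swap argument and is fine. Step 3, which you yourself flag as the main obstacle, is where the proof actually happens, and it is a list of directions rather than an argument.

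\begin{itemize}
\item The Schwarz-function route asserts without justification that $\partial_z\log F$ becomes affine.
\item The Serrin-type route only records that $\log|F|^2$ is harmonic in $U$ with boundary values $\log t+\pi|z|^2$. That is pure Dirichlet data and constrains nothing by itself.
\item The exclusion of holes is left open: first a test against $F/(z-z_1)$, then possibly a second variation.
\item The claim that the Fock condition forces $b=0$ in $F=ce^{az+bz^2}$ is false. $e^{bz^2}\in\mathcal F^2(\C)$ whenever $|b|<\pi/2$, and then the superlevel sets of $u_F$ are ellipses. Even that endpoint would still need a rigidity argument.
\item Your plan never addresses the multiplicity of $\lambda_1$.
\end{itemize}

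Two ingredients are missing.

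\textbf{Simplicity of $\lambda_1$ at a local maximizer.} If $G$ is another first eigenfunction, your Step 1 applies to $G$ too, so $U$ is also a superlevel set of $u_G$. Then $\log|F/G|$ is harmonic in $U$ and constant on $\partial U$, hence $F/G$ is constant.

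\textbf{A first-variation identity.} Use the Green's-formula idea of your Step 2 for this, not to build a Schwarz function. With $U=\{u_F>t\}$ the open representative, whose boundary is a compact semianalytic curve, for every $\psi$ holomorphic on a neighbourhood of $\overline U$,
\[
\int_U\psi\,\partial_{\bar z}u_F\,\mathrm{d}A=\frac{1}{2i}\oint_{\partial U}u_F\,\psi\,\mathrm{d}z=\frac{t}{2i}\oint_{\partial U}\psi\,\mathrm{d}z=0,
\]
that is, $\int_U\psi\,F\bigl(\overline{F'}-\pi z\overline{F}\bigr)e^{-\pi|z|^2}\,\mathrm{d}A=0$.

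Choose $\psi=e^{\pi\bar w z}/F$. It is holomorphic near $\overline U$ because $u_F\ge t>0$ there. Combine this with the $\bar w$-derivative of the conjugated eigenvalue equation, $\pi\int_U z\,e^{\pi\bar w z}\,\overline{F}\,e^{-\pi|z|^2}\,\mathrm{d}A=\lambda_1\overline{F'(w)}$. This gives $\mathcal L_UF'=\lambda_1F'$. Simplicity then yields $F'=\theta F$, so $F=Ce^{\theta z}$ is a coherent state, $u_F$ is a translated Gaussian, and $U$ is a disk.

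This route also disposes of your worry about holes with no extra work. Stokes' theorem on $U$ only needs $\psi$ holomorphic near $\overline U$, so zeros of $F$ inside holes are harmless. Meanwhile the level-set condition on the inner boundary components is used in full, and that is exactly what the shells of Theorem~\ref{thm:counterexample} lack.
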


The proof identifies any local maximizer with a superlevel set of the Bargmann--Fock density of its first eigenfunction (via a bathtub argument). Then it moves on to use a shape-derivative computation to derive an Euler--Lagrange overdetermined problem. By carefully selecting the perturbations in terms of holomorphic functions, a surprising fact appears: if $F$ is the Bargmann transform of the first eigenfunction of the localization operator associated to our local maximizer, then its derivative $F'$ is also an eigenfunction associated with the same eigenvalue. On the other hand, in the specific case in which we are dealing with local optimal, we are able to show that the first eigenvalue is also \emph{simple} for any local maximizer. This yields that $F$ and $F'$ coincide, up to multiplication by constants, forcing hence the domain into a disk. Since the local maximizer is shown to be open with real-analytic boundary as a \emph{consequence} of the variational equations, no boundary regularity is assumed.

Combining Theorem~\ref{thm:local-maximizers} with a Fock-space concentration--compactness profile decomposition gives a new proof of (NT). Existence is obtained by extracting nontrivial Fock profiles and showing, through homogeneity and exact norm decoupling, that every active normalized profile already maximizes the original concentration functional \(M(s):=\sup\{\lambda_1(U):|U|=s\}\); uniqueness up to translation then follows because every global maximizer is in particular a local one.

\begin{theorem}[\cite{Nicola-Tilli}]\label{thm:NT-new}
Among all measurable sets $\Omega\subset\R^2$ of fixed finite measure, the global maximizers of $\lambda_{1,\varphi}$ are, up to null sets, exactly the translates of a single disk.
\end{theorem}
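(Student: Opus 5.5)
The plan is to split the argument into existence of a global maximizer and identification of it. Uniqueness will then be immediate from Theorem~\ref{thm:local-maximizers}, because a global maximizer of $\lambda_{1,\varphi}$ among sets of measure $s$ is in particular a local maximizer, hence a disk up to translation and null sets. Moreover, all disks of measure $s$ are translates of one another, and the STFT modulus is covariant under phase-space translations, so every translate of a maximizing disk is again a maximizer. The whole difficulty is therefore existence: we must show that $M(s):=\sup\{\lambda_1(U):|U|=s\}$ is attained.

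For existence, pass to the Bargmann--Fock side, where
\[
\lambda_1(U)=\sup_{\|F\|_{\mathcal F^2}=1}\int_U |F(z)|^2 e^{-\pi|z|^2}\,\mathrm dA(z).
\]
Take a maximizing sequence $(U_n,F_n)$ with $|U_n|=s$. Apply a profile decomposition to the bounded sequence $F_n$: after passing to a subsequence, $F_n=\sum_{j\le J}\tau_{z_n^j}G^j + R_n^J$, where $\tau_w$ denotes the Weyl (phase-space) translation, the centres $|z_n^j-z_n^k|\to\infty$ for $j\ne k$, and the remainders satisfy $\limsup_n\|R_n^J\|_{L^\infty(e^{-\pi|z|^2})}\to0$ as $J\to\infty$. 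This decomposition comes with exact Hilbert-norm decoupling, $1=\sum_j\|G^j\|^2+\lim_n\|R^J_n\|^2$. Since $|U_n|=s$ is fixed, the remainders contribute nothing to $\int_{U_n}|F_n|^2e^{-\pi|z|^2}$. The profiles separate in phase space, so the cross terms vanish, and the contribution of profile $j$ is bounded by $\int_{V^j}|G^j|^2e^{-\pi|z|^2}$, where $V^j$ is a weak-$*$ limit of $\mathbf 1_{U_n-z_n^j}$. This $V^j$ is a density $0\le\theta_j\le1$ with $\sum_j\int\theta_j\le s$.

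The next step replaces densities by sets. By the bathtub principle, the functional $\theta\mapsto\int\theta|G|^2e^{-\pi|z|^2}$ with $\int\theta=s_j$ is maximized by a superlevel set of $|G|^2e^{-\pi|z|^2}$. Level sets of this real-analytic function are null, so no fractional part is needed. This gives
\[
M(s)\le\sum_j \|G^j\|^2 M(s_j),\qquad \sum_j s_j\le s,\quad \sum_j\|G^j\|^2\le1.
\]
To conclude, use two facts: $M$ is nondecreasing, and $M(s)>0$. From the displayed inequality, $M(s)\le\max_j M(s_j)\le M(s)$, so equality holds throughout. Hence every profile with $G^j\ne0$ is active: its normalization $G^j/\|G^j\|$ attains $M(s_j)=M(s)$ on a set of measure $s_j$. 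Monotonicity then forces $s_j=s$ for such a $j$, provided $M$ is strictly increasing, so a single profile carries all the mass, and enlarging its superlevel set to measure $s$ gives a maximizer. Strict monotonicity follows because adding a positive-measure set strictly increases $\int_U|F|^2e^{-\pi|z|^2}$ for the nonvanishing density $|F|^2e^{-\pi|z|^2}$. If instead all $G^j=0$, vanishing holds, the concentration tends to $0$, and this contradicts $M(s)\ge\lambda_1(\text{disk})>0$.

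I expect the main obstacle to be the profile decomposition itself. Two points need care: the $L^\infty$-smallness of the remainder in the weighted norm, and the asymptotic orthogonality of cross terms $\int_{U_n}\tau_{z_n^j}G^j\,\overline{\tau_{z_n^k}G^k}\,e^{-\pi|z|^2}$. For the first, I would use the reproducing kernel bound $|F(z)|e^{-\pi|z|^2/2}\le\|F\|$ together with weak compactness after translation: extract the profile at a point where the weighted sup is nearly attained, and iterate. For the second, I would use that the sequence $\tau_{-z_n^j}F_n$ converges locally uniformly, so the profiles decay away from their respective centres. The homogeneity argument above is elementary once this decomposition is in place.
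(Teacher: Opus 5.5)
Your proposal is correct and follows essentially the paper's route: a Fock-space profile decomposition of a maximizing sequence, then homogeneity $I_{cF}=|c|^2I_F$ together with $\sum_j\|G^j\|^2\le 1$ to force saturation by a normalized profile, then global $\Rightarrow$ local maximizer $\Rightarrow$ disk via Theorem~\ref{thm:local-maximizers}. Your weak-$*$ limits $\theta_j$ of $\mathbf 1_{U_n-z_n^j}$ are only a slightly cleaner substitute for the paper's splitting of superlevel sets into the pieces $\Omega_k\cap D_R(a_k^{(j)})$, and the appeal to strict monotonicity of $M$ is unnecessary: your one-line argument only gives it where $M(s_j)$ is attained by $G^j/\|G^j\|$, which does hold here, whereas plain monotonicity of $I_{G^j/\|G^j\|}$ already yields $I_{G^j/\|G^j\|}(s)=M(s)$ for every active profile, exactly as in the paper.
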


We emphasize that this new proof of Theorem \ref{thm:NT-new} does not at all rely on isoperimetric methods. Rather than that, we stress that the main idea here is to provide a \emph{variational} framework for that result, which may be applicable in further contexts. See the discussion at the end of the manuscript. 

\subsection{Methods and central identity}\label{ssec:intro-methods}

The central object of the paper is the moment identity \eqref{eq:moment-intro}. Three groups of techniques act on it.

\smallskip
\noindent\emph{(a) Free-boundary methods on weighted Hardy spaces.}
The boundary of the unknown domain is parametrized as an analytic radial graph over the disk in a Wiener-type Banach algebra of holomorphic functions on a strip. The moment system \eqref{eq:moment-intro} is reinterpreted as a nonlinear map between such spaces, while its Fr\'echet derivative at the disk is computed explicitly as a Fourier multiplier diagonal in modes, with explicit multipliers and an isometric inverse on the real radial-graph space. An analytic implicit-function theorem then produces the perturbed domain $U_\lambda$. This step is in the spirit of Isakov's perturbative free-boundary framework \cite{Isakov} and of the inverse Newtonian-potential results of Cherednichenko \cite{Cherednichenko}, but the function spaces and the linearization are specifically tailored to the Gaussian weight and the spectral parameter, and the implicit-function step is reproved in the form needed.

\smallskip
\noindent\emph{(b) Complex analysis and divisibility.}
A coefficientwise conjugate differential operator $P^\#$, defined explicitly via Wirtinger derivatives, intertwines the eigenvalue equation with a moment-to-Cauchy-transform identity. A Paley--Wiener--Schwartz argument applied to a compactly supported distribution on $\R^2$ encodes the eigenfunction's structure as the vanishing of an entire function on the complex variety $\xi^2+\eta^2=0$. Elementary homogeneous division gives the algebraic factorization, and the Paley--Wiener division theorem supplies the compactly supported quotient needed for the free-boundary argument (Lemma~\ref{lemma:fourier-to-fboundary}).

\smallskip
\noindent\emph{(c) Variational and geometric measure theory.}
Shape derivatives are computed via Reynolds transport for $C^1$ deformation fields. The bathtub principle of Lieb--Loss \cite[Theorem~1.14]{Lieb-Loss} converts the measure-constrained variational problem into a superlevel-set problem. Simplicity of the principal eigenvalue at a local maximizer is established by a positivity argument together with Berezin estimates. Finally, a profile-decomposition theorem in $\mathcal F^2(\C)$ in the spirit of P.-L.~Lions's concentration--compactness \cite{Lions-Concentration}, adapted to the lack of translation compactness in the Fock space, drives the existence half of the proof of (NT).

The detailed list of external theorems we use --- with explicit indication of which are quoted, which are adapted, and which are reproved as lemmas --- is collected as \S\ref{subsec:main-technical-inputs} of the preliminaries.

\subsection{Relation to the literature}\label{ssec:intro-related}

\noindent\emph{Quadrature domains and inverse potential theory.}
A classical quadrature domain $D\subset\R^2$ is characterized by the fact that integration of harmonic test functions against Lebesgue measure on $D$ reduces to a finite combination of point evaluations and their derivatives; see Gustafsson--Shahgholian \cite{Gustafsson-Shahgholian} for the obstacle-problem characterization, the Gustafsson lectures \cite{Gustafsson}, and \cite{Shahgholian}. Aharonov--Schiffer--Zalcman \cite{Aharonov-Schiffer-Zalcman}, Karp \cite{Karp}, and Cherednichenko \cite{Cherednichenko} address the inverse potential problem in adjacent settings. Our moment system \eqref{eq:moment-intro} is structurally different from a classical quadrature identity in two ways: the test space is the Bargmann--Fock space of entire functions rather than all harmonic functions, and the eigenvalue $\lambda$ enters as an explicit spectral parameter. A second point of contact with that circle of ideas is Richardson's conservation law for Hele--Shaw injection \cite{Richardson} and its variable-permeability extensions \cite{Loutsenko,RossWitt}, of instrumental use in the proof of Theorem~\ref{thm:counterexample}, where they produce non-radial layers whose quadrature node lies in a hole rather than in the domain of integration.  This is exactly the configuration left uncovered by the inverse mean-value theorem of Kuran \cite{Kuran}, and it is what defeats simple-connectivity rigidity; for the topology of quadrature domains see also Lee--Makarov \cite{LeeMakarov}. 

\smallskip
\noindent\emph{Toeplitz inverse-spectral problems on Fock and Bergman spaces.}
Symbol-recovery and inverse-spectral problems for Toeplitz operators have been explored, in various directions, by Berger--Coburn \cite{Berger-Coburn-Symbol,Berger-Coburn,Berger-Coburn-Heat}, Berezin \cite{Berezin}, and Zhu \cite{Zhu}, and more recently by Bastianoni--Cordero--Nicola--Teofanov \cite{Bastianoni-Cordero-Nicola,Bastianoni-Teofanov}, Fulsche--Hagger \cite{Fulsche-Hagger}, and Kisil \cite{Kisil-CrossToeplitz}; see also \cite{abreu2021donoho} for related polyanalytic and large-sieve phenomena. In all of these the symbol typically belongs to a function class on phase space, and the inverse problem is to read off properties of the symbol from the spectrum. By contrast, the inverse problem we solve here is geometric: the symbol is the indicator $\mathbf 1_U$ of an unknown domain, and the spectral datum is a single eigenfunction.

\smallskip
\noindent\emph{Concentration--compactness in time-frequency analysis.}
The profile decomposition we use in \S\ref{sec:global-max} extends and complements the time-frequency adaptations of P.-L.~Lions's method developed by Nicola--Tilli \cite{Nicola-Tilli,Nicola-Tilli-2} and Nicola--Romero--Trapasso \cite{Nicola-Romero-Trapasso}, with the principal subtlety being the lack of translation compactness in $\mathcal F^2(\C)$. Closely related concentration phenomena and profile decompositions in adjacent time-frequency settings appear in \cite{Marceca-Romero-Speckbacher,Samuelsen,Fulsche-Hagger}.

\smallskip
\noindent\emph{Hermite structures in the Bargmann model.}
The role of Gaussian decay and Hermite-type rigidity is also visible in nearby phase-space problems \cite{Goncalves-Oliveira-Steinerberger,Thangavelu}; the inverse-spectral perspective adopted here is, however, of a different character, in that it produces the domain rather than constraining it.

\subsection{Organization}\label{ssec:intro-organization}

Section~\ref{sec:preliminaries} collects the analytic preliminaries on the Bargmann--Fock space, the main technical inputs from the external literature, and the bridge from time--frequency analysis to free-boundary problems via the divisibility lemma. Section~\ref{sec:perturb-proof} is devoted to the proof of Theorem~\ref{thm:perturb}, including the self-contained disk inverse step. Section~\ref{sec:classical-rigidity} contains the two proofs of Theorem~\ref{thm:eig-loc-U}, together with the multiply connected counterexamples of Theorem~\ref{thm:counterexample} in \S\ref{subsec:counterexample}. Section~\ref{sec:GGRT-sharp} uses Theorem~\ref{thm:perturb} to prove Theorem~\ref{thm:GGRT-sharp}. Section~\ref{sec:local-max} is devoted to local maximizers and Theorem~\ref{thm:local-maximizers}. Section~\ref{sec:global-max} establishes the profile decomposition in $\mathcal F^2(\C)$, proves existence by profile saturation, and combines it with Theorem~\ref{thm:local-maximizers} to give the new proof of (NT).

\subsection*{LLM Usage} The main ideas of this manuscript, with the single exception of the counterexample construction of \S\ref{subsec:counterexample} --- which was arrived at through interaction with GPT 5.6-Sol, by analogy with the two 1972 papers of Kuran on the mean-value property of harmonic functions \cite{Kuran} and of Richardson on Hele--Shaw injection \cite{Richardson} ---,
among which the connection between eigenfunctions and geometric constraints through free boundary inverse problems, and the variational nature of the local extremal characterization, are all due exclusively to the author. The wording of many technical arguments, however, was largely aided by several different LLMs, such as GPT 5.4, 5.5 and most lately GPT 5.6-Sol, as well as Claude Opus 4.6, 4
.7, 4.8 and, more recently, Claude Fable 5. The final draft has been thoroughly rewritten and reorganized by the author, and he takes full responsibility for the contents of this manuscript. 
\section*{Acknowledgements}

I am grateful to André Guerra for several discussions that motivated me to investigate
these issues further, and to Alessio Figalli, Luis Daniel Abreu, Mitchell
Taylor, Rupert Frank, Simon Larson, Paolo Tilli and Jaime G\'omez for further helpful comments and discussions. 

This
work was supported by FAPERJ---the Carlos Chagas Filho Foundation for Research
Support of the State of Rio de Janeiro---under Process SEI-260003/020475/2025.
The author also acknowledges support by the Portuguese government through FCT---Fundação
para a Ciência e a Tecnologia, I.P.---project 2023.17881.ICDT, with DOI
identifier 10.54499/2023.17881.ICDT (project SHADE).


\section{Preliminaries}\label{sec:preliminaries}

\subsection{Notation}\label{subsec:notation}

We identify $\C$ with $\R^2$ by writing $z=x+iy$, and use $\mathrm{d}A(z)$
for planar Lebesgue measure. For $a\in\C$ and $R>0$, we write
$D_R(a):=\{z\in\C:|z-a|<R\}$ and $D_R:=D_R(0)$. If $E$ is measurable,
$|E|$ denotes its Lebesgue measure and $\mathbf{1}_E$ its indicator function.
The complex derivatives are
\[
\partial_z:=\frac12(\partial_x-i\partial_y),\qquad
\partial_{\bar z}:=\frac12(\partial_x+i\partial_y),\qquad
\Delta=\partial_x^2+\partial_y^2=4\partial_z\partial_{\bar z}.
\]

For an open set $U\subset\R^n$, $\mathcal D(U):=C_c^\infty(U)$ is the space
of test functions and $\mathcal D'(U)$ its dual, the space of distributions.
We write $\mathcal S(\R^n)$ for the Schwartz space and $\mathcal S'(\R^n)$
for the tempered distributions. The action of a distribution $T$ on a test
function $\psi$ is denoted by $\langle T,\psi\rangle$; in particular,
$\langle\delta_a,\psi\rangle=\psi(a)$. A compactly supported distribution
$T$ has order at most $N$ if, for some $C>0$,
\[
|\langle T,\psi\rangle|
\le C\sum_{|\alpha|\le N}\|\partial^\alpha\psi\|_{L^\infty(\R^n)}
\qquad (\psi\in\mathcal D(\R^n)),
\]
where $\alpha$ ranges over multi-indices. For $f\in\mathcal S(\R^n)$ we use
the Fourier-transform convention
\[
\widehat f(\xi)=\int_{\R^n}f(x)e^{-2\pi i x\cdot\xi}\,\mathrm{d}x,
\]
extended to distributions by duality. Thus, in dimension two,
$\widehat{\Delta T}(\xi)=-4\pi^2|\xi|^2\widehat T(\xi)$.

For Fourier series on the circle we use the normalized angular variable
$\theta\in\T:=\R/\Z$. For a $1$-periodic integrable function $g$, the Fourier
coefficients are normalized by
\[
g_k:=\int_0^1g(\theta)e^{-2\pi i k\theta}\,\mathrm{d}\theta,
\qquad k\in\Z,
\]
and, whenever the Fourier series converges in the relevant function space, we
write
\[
g(\theta)=\sum_{k\in\Z}g_k e^{2\pi i k\theta}.
\]
We also write $\widehat g(k):=g_k$ when referring to the Fourier-series
coefficient of a periodic function; the same hat denotes the Fourier transform
when the argument is a frequency variable in $\R^n$.
With this convention,
\[
\int_0^1e^{2\pi i(k-j)\theta}\,\mathrm{d}\theta=\delta_{kj},
\]
and a real-valued function satisfies $g_{-k}=\overline{g_k}$.

\subsection{Main technical inputs}\label{subsec:main-technical-inputs}

For the convenience of the reader, and to make the boundaries between
quoted material, adapted material, and original arguments transparent, we
collect here the principal external theorems used in this paper, together
with an indication of the form in which each is used.

\begin{itemize}
\item \emph{Isakov's perturbative free-boundary theorem.} In Section~3 we
use the implicit-function-theorem framework for inverse free-boundary and
inverse-source problems developed in Isakov \cite{Isakov}. The general
theorem is \emph{adapted with proof}: the underlying perturbative
implicit-function strategy is Isakov's, but the function spaces, the
linearized moment map at the disk, and its invertibility on the relevant
subspace are reformulated and reproved here in a form tailored to the
Gaussian Bargmann--Fock setting.

\item \emph{Cherednichenko's inverse Newtonian-potential result.} The
local recovery of a domain from its exterior logarithmic potential in
Cherednichenko \cite{Cherednichenko} motivates the perturbative point of view
used later; see also \cite{Karp,Aharonov-Schiffer-Zalcman} for closely related
classical statements. These results are not invoked directly.

\item \emph{Bathtub principle.} The Lieb--Loss bathtub principle is used
in Sections~6 and~7 to identify maximizers of $\int_\Omega u_F\,\mathrm{d}A$ under
a measure constraint with superlevel sets of $u_F$. We quote it
\emph{verbatim} from \cite[Theorem~1.14]{Lieb-Loss}; see also Lemma~\ref{lem:basic-properties-global}(i)
above.

\item \emph{Concentration--compactness profile decomposition.} In
Section~7 we use a profile-decomposition statement in $\mathcal F^2(\C)$
in the spirit of P.-L.~Lions's concentration--compactness method and of
its time--frequency adaptations by Nicola and Tilli \cite{Nicola-Tilli,Nicola-Tilli-2}
and by Nicola, Romero, and Trapasso \cite{Nicola-Romero-Trapasso}. The
form we use is \emph{adapted with proof}: the abstract concentration
mechanism is classical, while the precise profile decomposition for
maximizing sequences of $I_F(s)$ in the Fock space, including the
treatment of the lack of translation compactness, is reproved here.

\item \emph{Ehrenpreis--Malgrange divisibility.} The passage from
algebraic divisibility of an entire function on $\C^2$ to divisibility
within a Paley--Wiener class is the classical
Ehrenpreis--Malgrange division theorem
\cite[Ch.~VII]{Hormander-PDE1}. Lemma~\ref{lemma:fourier-to-fboundary}
separates the elementary homogeneous factorization from this
Paley--Wiener division input, and records exactly the compact-support and
regularity conclusions used later.

\item \emph{Richardson's conservation law and weighted Hele--Shaw flow.} The counterexamples of \S\ref{subsec:counterexample} are built from the injection law for Hele--Shaw flow with variable permeability, whose underlying moment conservation goes back to Richardson \cite{Richardson}; see \cite{Loutsenko,RossWitt} for the weighted framework. The conservation law is \emph{reproved} here in the Gaussian-weighted form actually needed, as Proposition~\ref{prop:ce-Richardson}. The short-time existence of the flow itself, recorded as Proposition~\ref{prop:ce-existence}, is \emph{quoted}: in conformal parametrization it is the weighted Polubarinova--Galin equation with real-analytic data, covered by the standard Cauchy--Kowalevsky arguments of \cite{Gustafsson-Vasiliev,Hedenmalm-Shimorin,RossWitt}. For comparison purposes only we also \emph{quote} the inverse mean-value theorem of Kuran \cite{Kuran}.

\item \emph{Paley--Wiener and Paley--Wiener--Schwartz theorems.} The
classical Paley--Wiener theorem (and its Schwartz extension to compactly
supported distributions) is used several times to translate vanishing of
a Fourier transform along complex lines into vanishing of a function on
the support side. We \emph{quote} these results from
\cite{Hormander,Folland} in the standard form.
\end{itemize}

The remainder of Section~2 is devoted to the analytic preliminaries on
the Bargmann--Fock space and the divisibility lemma proper.

\subsection{Properties of the Bargmann-Fock space} We briefly recall the connection between the Gaussian short-time Fourier transform
(STFT) and the Bargmann--Fock space, which will be used systematically in what
follows. Useful references for the material in this subsection are
\cite{Bargmann,Folland,Grochenig,Zhu,Ramos-Optimal-Localization}; see also
\cite{Berezin,Berger-Coburn,Daubechies}.

\medskip

Let $\varphi(x) = 2^{1/4} e^{-\pi x^2}$ be the normalized Gaussian. For $f \in L^2(\mathbb{R})$, its Gaussian STFT is
\begin{equation}\label{eq:STFT}
V_\varphi f(x,\omega)
= \int_{\mathbb{R}} f(t)\, \varphi(x-t)\, e^{-2\pi i t \omega}\,\mathrm{d}t.
\end{equation}
The Bargmann transform
$\mathcal{B} : L^2(\mathbb{R}) \to \mathcal{F}^2(\mathbb{C})$ is defined by
\begin{equation}\label{eq:Bargmann}
(\mathcal{B}f)(z)
= 2^{1/4} \int_{\mathbb{R}} f(t)\, e^{2\pi t z - \pi t^2 - \frac{\pi}{2} z^2}\,\mathrm{d}t,
\qquad z \in \mathbb{C}.
\end{equation}
It is well known that $\mathcal{B}$ is a unitary isomorphism onto the Bargmann--Fock
space $\mathcal{F}^2(\mathbb{C})$, endowed with the norm
\[
\|F\|_{\mathcal{F}^2}^2
= \int_{\mathbb{C}} |F(z)|^2 e^{-\pi |z|^2} \, \mathrm{d}A(z).
\]
A direct computation shows that, for $z = x + i\omega$,
\begin{equation}\label{eq:STFT-B}
V_\varphi f(x,-\omega)
= e^{\pi i x \omega}\, (\mathcal{B}f)(z)\, e^{-\frac{\pi}{2}|z|^2}.
\end{equation}
In particular, setting $F = \mathcal{B}f$, one obtains
\begin{equation}\label{eq:energy}
\int_{\mathbb{R}^2} |V_\varphi f(x,\omega)|^2 \,\mathrm{d}x\, \mathrm{d}\omega
= \int_{\mathbb{C}} |F(z)|^2 e^{-\pi |z|^2} \, \mathrm{d}A(z)
= \|F\|_{\mathcal{F}^2}^2.
\end{equation}

\medskip

\noindent
\subsubsection*{Weyl operators and invariance.}
For $a \in \mathbb{C}$, define the operator
\begin{equation}\label{eq:Ta}
(T_a F)(z) := e^{\pi \overline{a} z - \frac{\pi}{2}|a|^2} F(z - a).
\end{equation}
Then $T_a$ is unitary on $\mathcal{F}^2(\mathbb{C})$. Indeed,
\begin{align*}
\|T_aF\|_{\mathcal F^2}^2
&=
\int_{\C} e^{2\pi \Re(\overline{a}z)-\pi |a|^2}\,|F(z-a)|^2 e^{-\pi |z|^2}\,\mathrm{d}A(z) \\
&=
\int_{\C} |F(z-a)|^2 e^{-\pi |z-a|^2}\,\mathrm{d}A(z)
=
\|F\|_{\mathcal F^2}^2,
\end{align*}
Moreover,
\begin{equation}\label{eq:u-translation-global}
u_{T_a F}(z) = u_F(z - a).
\end{equation}
In particular, the functionals $I_F$ defined by,
for \(s>0\),
\[
I_F(s):=\sup_{|\Omega|=s}\int_\Omega u_F\,\mathrm{d}A,
\]
satisfy, by the translation-invariance of the Lebesgue measure on \(\C\),
\begin{equation}\label{eq:I-translation-global}
I_{T_a F}(s) = I_F(s), \qquad a \in \mathbb{C},
\end{equation}
reflecting the invariance of the Gaussian STFT under time--frequency shifts;
see \cite{Daubechies,Grochenig,Goncalves-Oliveira-Steinerberger}.

\medskip

\noindent
\subsubsection*{Connection with localization.}
If $F = \mathcal{B}f$, then for every measurable $\Omega \subset \mathbb{C}$,
\[
\int_\Omega u_F \, \mathrm{d}A
= \int_{\widetilde{\Omega}} |V_\varphi f(x,\omega)|^2 \,\mathrm{d}x\, \mathrm{d}\omega,
\]
where $\widetilde{\Omega} = \{(x,\omega) : (x,-\omega) \in \Omega\}$.
Thus, concentration properties of the STFT are naturally encoded by the function
\begin{equation}\label{eq:uF}
u_F(z) := |F(z)|^2 e^{-\pi |z|^2}, \qquad z \in \mathbb{C},
\end{equation}
which is the main object of interest in the Bargmann--Fock setting, in terms of the
quantities $I_F(s)$.

\medskip

We next collect a few elementary facts that will be used repeatedly.

\begin{lemma}\label{lem:Fock-basic}
Let $F,G \in \mathcal{F}^2(\mathbb{C})$. Then:
\begin{enumerate}
\item[(i)] The space $\mathcal{F}^2(\mathbb{C})$ is a reproducing kernel Hilbert space with kernel
\[
K_w(z) = e^{\pi z \overline{w}}, \qquad z,w \in \mathbb{C}.
\]

\item[(ii)] (Sharp pointwise bound) For every $z \in \mathbb{C}$,
\[
u_F(z) \le \|F\|_{\mathcal{F}^2}^2,
\]
and equality holds at some point $w \in \mathbb{C}$ if and only if $F$ is
proportional to the reproducing kernel $K_w$.

\item[(iii)] $u_F(z) \to 0$ as $|z| \to \infty$.

\item[(iv)] One has the estimate
\[
\|u_F - u_G\|_{L^1(\mathbb{C})}
\le \big( \|F\|_{\mathcal{F}^2} + \|G\|_{\mathcal{F}^2} \big)
\|F - G\|_{\mathcal{F}^2}.
\]
\end{enumerate}
\end{lemma}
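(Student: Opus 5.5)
We prove the four items in order, each from the reproducing-kernel structure of $\mathcal F^2(\C)$ and the Cauchy--Schwarz inequality.

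\emph{(i) Reproducing kernel.} Expand in the orthonormal basis $e_k(z)=\sqrt{\pi^k/k!}\,z^k$. Orthonormality follows from a polar-coordinate computation: $\int_\C z^j\bar z^k e^{-\pi|z|^2}\,\mathrm{d}A=\delta_{jk}\,k!/\pi^k$. Completeness holds because a function in $\mathcal F^2$ orthogonal to all monomials has vanishing Taylor coefficients. To see this, integrate over discs $D_R$, where the Taylor series converges uniformly, and let $R\to\infty$ by dominated convergence. Then
\[
K_w(z)=\sum_k e_k(z)\overline{e_k(w)}=\sum_k \frac{(\pi z\bar w)^k}{k!}=e^{\pi z\bar w}.
\]
This gives $F(w)=\langle F,K_w\rangle$ for every $F\in\mathcal F^2$.

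\emph{(ii) Sharp pointwise bound.} By Cauchy--Schwarz,
\[
|F(w)|^2\le \|F\|^2\,\|K_w\|^2=\|F\|^2 K_w(w)=\|F\|^2e^{\pi|w|^2},
\]
which is exactly $u_F(w)\le\|F\|^2$. Equality in Cauchy--Schwarz holds if and only if $F$ is parallel to $K_w$. Conversely, if $F=cK_w$ then $u_F(w)=|c|^2e^{\pi|w|^2}=\|F\|^2$, so the characterization is two-sided.

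\emph{(iii) Decay at infinity.} Given $\varepsilon>0$, choose a polynomial $P$ with $\|F-P\|<\varepsilon$; this is possible by density of the basis from (i). From (ii) we have $|F(z)-P(z)|e^{-\pi|z|^2/2}\le\varepsilon$. Since $|P(z)|e^{-\pi|z|^2/2}\to0$, we get $\limsup_{|z|\to\infty}u_F(z)^{1/2}\le\varepsilon$. As $\varepsilon$ is arbitrary, $u_F\to0$. Alternatively, one can note that $u_F(z)=|\langle F,k_z\rangle|^2$ with $k_z$ the normalized kernel, and that $k_z\rightharpoonup0$ weakly; the polynomial argument avoids proving that weak convergence separately.

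\emph{(iv) $L^1$ estimate.} Write
\[
|F|^2-|G|^2=\Re\big((F-G)\overline{(F+G)}\big),
\]
which gives the pointwise bound $|u_F-u_G|\le |F-G|\,|F+G|\,e^{-\pi|z|^2}$. Integrating and applying Cauchy--Schwarz in $L^2(e^{-\pi|z|^2}\mathrm{d}A)$ yields $\|u_F-u_G\|_{L^1}\le\|F-G\|\,\|F+G\|$. The triangle inequality then gives $\|F+G\|\le\|F\|+\|G\|$, which is the claimed estimate.

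None of these steps is a real obstacle. The only point needing care is the completeness of the monomial basis in (i), which is needed to obtain the kernel formula and the density used in (iii).
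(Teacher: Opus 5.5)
Your proof is correct and follows the paper's route item by item. You use the orthonormal monomial expansion for the kernel, Cauchy--Schwarz against $K_w$ for the sharp bound and its equality case, polynomial density plus the pointwise bound for decay, and Cauchy--Schwarz in $L^2(e^{-\pi|z|^2}\,\mathrm{d}A)$ for the $L^1$ estimate. The only differences are cosmetic: you prove completeness of the monomials directly via Taylor coefficients on discs instead of citing unitarity of the Bargmann transform, and in (iv) you bound by $\|F+G\|_{\mathcal F^2}$ before applying the triangle inequality, whereas the paper uses $|F|+|G|$.
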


\begin{proof}
(i) For \(w\in \C\), the function \(K_w(z)=e^{\pi z\overline w}\) is entire and
satisfies, after completing the square via
\(2\Re(z\overline w)-|z|^2=-|z-w|^2+|w|^2\),
\[
\|K_w\|_{\mathcal F^2}^2
=
\int_{\C} e^{2\pi \Re(z\overline w)-\pi |z|^2}\,\mathrm{d}A(z)
=
e^{\pi |w|^2}\int_{\C}e^{-\pi |z-w|^2}\,\mathrm{d}A(z)
=
e^{\pi |w|^2},
\]
since the Gaussian \(e^{-\pi |\cdot|^2}\) integrates to \(1\) over \(\C\) (with
the Lebesgue measure \(\mathrm{d}A\) on \(\C\simeq \mathbb R^2\)). In particular
\(K_w\in \mathcal F^2(\C)\). Now, expanding \(F\in \mathcal F^2(\C)\) along the
orthonormal basis \(\{e_k\}_{k\geq 0}\) of Lemma \ref{lem:monomials} and using
the Taylor expansion
\(K_w(z)=\sum_{k\geq 0}\frac{\pi^k\overline w^k}{k!}z^k=\sum_{k\geq 0}\overline{e_k(w)}\,e_k(z)\),
one obtains the reproducing identity
\[
\langle F,K_w\rangle_{\mathcal F^2}
=
\sum_{k\geq 0} a_k\,e_k(w)
=
F(w),
\qquad F(z)=\sum_{k\geq 0}a_k e_k(z).
\]
Point evaluation is therefore continuous, and \(F\) is entire as a uniform
limit on compact sets of polynomials. This is the standard description of the
Bargmann--Fock space, see \cite{Bargmann,Folland,Grochenig,Zhu}.

(ii) Applying Cauchy--Schwarz to the reproducing identity from (i),
\begin{equation}\label{eq:pointwise}
|F(z)|
=
|\langle F,K_z\rangle_{\mathcal F^2}|
\le
\|F\|_{\mathcal F^2}\,\|K_z\|_{\mathcal F^2}
=
\|F\|_{\mathcal F^2}\,e^{\frac{\pi}{2}|z|^2},
\end{equation}
which immediately gives \(u_F(z)=|F(z)|^2 e^{-\pi |z|^2}\le \|F\|_{\mathcal F^2}^2\).
Equality at a point \(z=w\) corresponds to equality in Cauchy--Schwarz, that is,
\(F=c\,K_w\) for some \(c\in \C\); conversely, for \(F=c K_w\) one computes
\(u_F(w)=|c|^2 e^{2\pi |w|^2}e^{-\pi |w|^2}=|c|^2 e^{\pi |w|^2}=|c|^2\|K_w\|_{\mathcal F^2}^2
=\|F\|_{\mathcal F^2}^2\), proving the rigidity statement. The sharpness of this
pointwise bound goes back to \cite{Lieb}.

(iii) Fix \(\varepsilon>0\), and choose a polynomial \(P\) such that
\[
\|F-P\|_{\mathcal F^2}<\varepsilon
\]
using the density of polynomials in \(\mathcal F^2(\C)\). By the pointwise bound from (ii),
\[
|F(z)-P(z)|e^{-\frac{\pi}{2}|z|^2}\le \varepsilon
\qquad\text{for every }z\in \C.
\]
Since \(P\) is a polynomial,
\[
P(z)e^{-\frac{\pi}{2}|z|^2}\to 0
\qquad\text{as }|z|\to\infty.
\]
Hence
\[
\limsup_{|z|\to\infty}|F(z)|e^{-\frac{\pi}{2}|z|^2}\le \varepsilon.
\]
As \(\varepsilon>0\) is arbitrary, we conclude that
\[
|F(z)|e^{-\frac{\pi}{2}|z|^2}\to 0,
\]
that is, \(u_F(z)\to 0\) as \(|z|\to\infty\).

(iv) Writing
\[
|u_F - u_G|
= \big||F|^2 - |G|^2\big| e^{-\pi|z|^2}
\le |F-G|(|F|+|G|) e^{-\pi|z|^2},
\]
the claim follows by Cauchy--Schwarz.
\end{proof}

\medskip

We now note the following crucial fact about the Bargmann transform: it
transforms the basis of normalized Hermite functions \(\{h_k\}_{k\geq 0}\) on
\(L^2(\R)\) (see \cite{Thangavelu}) onto normalized monomials, yielding a
natural orthonormal basis on $\mathcal{F}^2(\C).$ More precisely, one has
\(\mathcal B h_k=e_k\) for every \(k\ge 0\), see \cite{Bargmann,Folland,Grochenig}.

\begin{lemma}\label{lem:monomials}
The functions
\[
e_k(z) := b_k z^k, \qquad k \ge 0,
\]
form an orthonormal basis of $\mathcal{F}^2(\mathbb{C})$, where
\[
b_k = \left( \frac{\pi^k}{k!} \right)^{1/2}.
\]
In particular,
\[
\langle e_k, e_m \rangle_{\mathcal{F}^2} = \delta_{k,m}.
\]
\end{lemma}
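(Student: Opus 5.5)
The plan is to compute the inner products $\langle e_k,e_m\rangle_{\mathcal F^2}$ directly in polar coordinates, and then prove completeness by expanding an arbitrary element of $\mathcal F^2(\C)$ in its Taylor series. We cannot quote the reproducing kernel from Lemma~\ref{lem:Fock-basic}(i), because its proof itself uses the present lemma. The argument must therefore rest only on the definition of the norm and on the fact that elements of $\mathcal F^2(\C)$ are entire.

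\emph{Orthonormality.} Write $z=re^{i\theta}$, so that
\[
\langle e_k,e_m\rangle_{\mathcal F^2}=b_kb_m\int_0^\infty\!\!\int_0^{2\pi} r^{k+m}e^{i(k-m)\theta}e^{-\pi r^2}\,r\,\mathrm{d}\theta\,\mathrm{d}r .
\]
The angular integral vanishes unless $k=m$, in which case it equals $2\pi$. For $k=m$, the substitution $s=\pi r^2$ gives
\[
2\pi\int_0^\infty r^{2k+1}e^{-\pi r^2}\,\mathrm{d}r=\pi^{-k}\int_0^\infty s^ke^{-s}\,\mathrm{d}s=\frac{k!}{\pi^k}=b_k^{-2}.
\]
Hence $\langle e_k,e_m\rangle_{\mathcal F^2}=\delta_{k,m}$.

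\emph{Completeness.} Let $F\in\mathcal F^2(\C)$ with Taylor series $F(z)=\sum_j c_jz^j$, which converges uniformly on every closed disk $\overline{D_R}$. On $D_R$, the same angular orthogonality lets us integrate term by term. Pairing against $e_k$ on $D_R$ then gives
\[
\int_{D_R}F\,\overline{e_k}\,e^{-\pi|z|^2}\,\mathrm{d}A=c_k\,b_k\,2\pi\int_0^R r^{2k+1}e^{-\pi r^2}\,\mathrm{d}r .
\]
Let $R\to\infty$. The left side tends to $\langle F,e_k\rangle_{\mathcal F^2}$ by dominated convergence, since $F\overline{e_k}e^{-\pi|z|^2}\in L^1$ by Cauchy--Schwarz. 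The right side tends to $c_k/b_k$ by the computation above. Therefore $\langle F,e_k\rangle_{\mathcal F^2}=c_k/b_k$.

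The same truncated computation applied to $|F|^2$ gives a Parseval identity. On each disk, $\int_{D_R}|F|^2e^{-\pi|z|^2}\,\mathrm{d}A=\sum_k|c_k|^2\,2\pi\int_0^Rr^{2k+1}e^{-\pi r^2}\,\mathrm{d}r$, because the cross terms integrate to zero in $\theta$. Monotone convergence in $R$ then yields $\|F\|^2_{\mathcal F^2}=\sum_k|c_k/b_k|^2=\sum_k|\langle F,e_k\rangle|^2$. Parseval's identity holding for every $F$ is equivalent to completeness of the orthonormal system $\{e_k\}$.

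\emph{Main obstacle.} The only delicate point is justifying the term-by-term integration and the passage $R\to\infty$ in the completeness step. Uniform convergence of the Taylor series on compact disks handles each fixed $R$, and the monotone and dominated convergence theorems handle the limit, so no growth assumption beyond $F\in\mathcal F^2(\C)$ is needed.
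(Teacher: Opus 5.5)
Your orthonormality computation is exactly the polar-coordinate calculation in the paper. The difference lies entirely in the completeness step, and your argument there is correct.

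\textbf{The paper's route.} It does not argue inside $\mathcal F^2(\C)$ at all. It quotes that the Bargmann transform is a unitary isomorphism of $L^2(\R)$ \emph{onto} $\mathcal F^2(\C)$ with $\mathcal B h_k=e_k$. Hence $\{e_k\}$ is the unitary image of the Hermite basis; equivalently, it cites the density of polynomials.

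\textbf{Your route.} You prove completeness intrinsically. You truncate to $D_R$, integrate the locally uniformly convergent Taylor series term by term, and let $R\to\infty$ by dominated and monotone convergence. This gives $\langle F,e_k\rangle_{\mathcal F^2}=c_k/b_k$ and Parseval's identity for every entire $F$ of finite norm. Parseval kills any $F$ orthogonal to all $e_k$, and also yields norm convergence of the expansion.

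\textbf{Comparison.} The paper's argument is a two-line citation, but it rests on external inputs: surjectivity of $\mathcal B$ and completeness of the Hermite functions in $L^2(\R)$. In standard treatments (Bargmann, Folland, Gr\"ochenig), surjectivity of $\mathcal B$ is itself usually derived from completeness of the monomials. Your argument is self-contained and uses only the definition of $\mathcal F^2(\C)$ as entire functions of finite weighted norm. It avoids the circularity with Lemma~\ref{lem:Fock-basic}(i) that you rightly flagged. It also identifies the coefficients in Lemma~\ref{lem:growth}(i) as rescaled Taylor coefficients of $F$.
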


\begin{proof}
Writing \(z=re^{i\theta}\) and using \(\mathrm{d}A(z)=r\,\mathrm{d}r\,\mathrm{d}\theta\) on \(\C\),
a direct computation in polar coordinates gives
\[
\int_{\mathbb{C}} |z|^{2k} e^{-\pi |z|^2}\,\mathrm{d}A(z)
= \int_0^{2\pi}\!\!\int_0^\infty r^{2k+1} e^{-\pi r^2}\,\mathrm{d}r\,\mathrm{d}\theta
= 2\pi \int_0^\infty r^{2k+1} e^{-\pi r^2}\,\mathrm{d}r
= \frac{k!}{\pi^k},
\]
where the last identity follows from the substitution \(s=\pi r^2\), giving
\(2\pi\int_0^\infty r^{2k+1}e^{-\pi r^2}\,\mathrm{d}r=\pi^{-k}\int_0^\infty s^k e^{-s}\,\mathrm{d}s
=\pi^{-k}\Gamma(k+1)\). Thus
\(\|z^k\|_{\mathcal{F}^2}^2 = \frac{k!}{\pi^k}\), which yields the normalization.
For orthogonality, again writing \(z=re^{i\theta}\),
\[
\langle z^k,z^m\rangle_{\mathcal F^2}
=
\int_0^\infty r^{k+m+1}e^{-\pi r^2}\,\mathrm{d}r\int_0^{2\pi}e^{i(k-m)\theta}\,\mathrm{d}\theta
=
0
\qquad\text{for }k\neq m.
\]
For completeness, since \(\mathcal B\colon L^2(\R)\to \mathcal F^2(\C)\) is a
unitary isomorphism and \(\mathcal Bh_k=e_k\) for every \(k\ge 0\), where
\(\{h_k\}_{k\ge 0}\) is the orthonormal basis of Hermite functions on
\(L^2(\R)\) (see \cite{Thangavelu}), the family \(\{e_k\}_{k\ge 0}\) is the
image of an orthonormal basis under a unitary, hence itself an orthonormal
basis of \(\mathcal F^2(\C)\). Equivalently, polynomials are dense in
\(\mathcal F^2(\C)\); see \cite{Bargmann,Zhu}.
\end{proof}

\medskip

\begin{lemma}\label{lem:growth}
Let $F \in \mathcal{F}^2(\mathbb{C})$. Then:
\begin{enumerate}
\item[(i)] $F$ admits the expansion
\[
F(z) = \sum_{k=0}^\infty a_k e_k(z),
\qquad \sum_{k=0}^\infty |a_k|^2 = \|F\|_{\mathcal{F}^2}^2.
\]

\item[(ii)] (Growth bound) For every $z \in \mathbb{C}$,
\[
|F(z)| \le \|F\|_{\mathcal{F}^2} e^{\frac{\pi}{2}|z|^2}.
\]

\item[(iii)] (Derivative bounds) For every $z \in \mathbb{C}$,
\[
|F'(z)| \le C(1+|z|)\, \|F\|_{\mathcal{F}^2} e^{\frac{\pi}{2}|z|^2}.
\]
\end{enumerate}
\end{lemma}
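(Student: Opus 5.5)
Part (i) is immediate from Lemma~\ref{lem:monomials}. Since $\{e_k\}_{k\ge0}$ is an orthonormal basis of $\mathcal F^2(\C)$, every $F$ has the expansion $F=\sum_k a_k e_k$ with $a_k=\langle F,e_k\rangle_{\mathcal F^2}$, and Parseval gives $\sum|a_k|^2=\|F\|_{\mathcal F^2}^2$. The series converges in $\mathcal F^2$ norm. By the pointwise bound of Lemma~\ref{lem:Fock-basic}(ii), norm convergence controls $|F-F_n|e^{-\frac{\pi}{2}|z|^2}$, so the series also converges locally uniformly. Hence the norm expansion coincides with the Taylor series of the entire function $F$, and the identity holds pointwise. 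Part (ii) is exactly \eqref{eq:pointwise}, namely Cauchy--Schwarz against the reproducing kernel $K_z$ with $\|K_z\|_{\mathcal F^2}=e^{\frac{\pi}{2}|z|^2}$.

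For (iii) I would again use the reproducing kernel. Differentiating $F(z)=\langle F,K_z\rangle_{\mathcal F^2}=\int F(w)e^{\pi z\overline w}e^{-\pi|w|^2}\,\mathrm{d}A(w)$ in $z$ is justified by dominated convergence on compact $z$-sets. This yields $F'(z)=\langle F,\pi\overline{\,\cdot\,}^{\!*}\rangle$, more precisely $F'(z)=\langle F,G_z\rangle_{\mathcal F^2}$ with $G_z(w)=\pi w\,e^{\pi w\overline z}$. By Cauchy--Schwarz, $|F'(z)|\le\|F\|_{\mathcal F^2}\|G_z\|_{\mathcal F^2}$. It then remains to compute
\[
\|G_z\|_{\mathcal F^2}^2=\pi^2\int_{\C}|w|^2e^{2\pi\Re(w\overline z)-\pi|w|^2}\,\mathrm{d}A(w)
=\pi^2e^{\pi|z|^2}\int_{\C}|w|^2e^{-\pi|w-z|^2}\,\mathrm{d}A(w).
\]
The last integral equals $|z|^2+\pi^{-1}$, by expanding $|w|^2=|(w-z)+z|^2$: the cross term vanishes and the second moment of the Gaussian is $1/\pi$. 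Therefore $\|G_z\|_{\mathcal F^2}=\pi\sqrt{|z|^2+1/\pi}\,e^{\frac{\pi}{2}|z|^2}\le C(1+|z|)e^{\frac{\pi}{2}|z|^2}$, which is (iii) with $C=\pi$, say.

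As a cross-check, the same value comes out coefficientwise. Since $G_z=\sum_k \overline{e_k'(z)}e_k$, one has $\|G_z\|^2=\sum_k|e_k'(z)|^2=\sum_k k\pi^k|z|^{2k-2}/(k-1)!$, which sums to $\pi(1+\pi|z|^2)e^{\pi|z|^2}$, the same as above.

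The only step needing care is identifying $G_z$ as an element of $\mathcal F^2$ that represents the functional $F\mapsto F'(z)$. The cleanest route is the coefficient series just given: it converges in $\mathcal F^2$, and pairing it with $F=\sum a_ke_k$ gives $\sum a_k e_k'(z)=F'(z)$, by termwise differentiation of a locally uniformly convergent power series. No real obstacle is expected.
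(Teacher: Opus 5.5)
Your proof is correct and follows the paper's argument: (i) from the orthonormal basis of Lemma~\ref{lem:monomials}, (ii) by Cauchy--Schwarz against $K_z$, and (iii) by differentiating the reproducing identity to get $F'(z)=\pi\langle F, wK_z\rangle_{\mathcal F^2}$ (your $G_z=\pi wK_z$), then computing $\|wK_z\|_{\mathcal F^2}^2=e^{\pi|z|^2}(|z|^2+1/\pi)$ by completing the square. Your coefficientwise cross-check $\|G_z\|_{\mathcal F^2}^2=\sum_k|e_k'(z)|^2$ is a useful extra, since it independently confirms that $G_z\in\mathcal F^2$ represents $F\mapsto F'(z)$; you should simply delete the garbled first expression for $F'(z)$ that precedes your ``more precisely''.
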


\begin{proof}
(i) follows from the orthonormal basis in Lemma \ref{lem:monomials}.

(ii) follows from the reproducing kernel estimate:
\[
|F(z)| = |\langle F, K_z \rangle|
\le \|F\|_{\mathcal{F}^2} \|K_z\|_{\mathcal{F}^2}
= \|F\|_{\mathcal{F}^2} e^{\frac{\pi}{2}|z|^2}.
\]

(iii) Starting from the reproducing identity
\(F(z)=\langle F,K_z\rangle_{\mathcal F^2}=\int_{\C}F(w)e^{\pi \overline{w}z}
e^{-\pi |w|^2}\,\mathrm{d}A(w)\) and differentiating in \(z\) under the integral sign
(justified by the absolute convergence on compacta of \(z\), via the pointwise
bound from (ii)), we obtain
\[
F'(z)=\pi \int_{\C}F(w)\,\overline{w}\,e^{\pi\overline{w}z}e^{-\pi |w|^2}\,\mathrm{d}A(w)
=\pi \langle F, w\,K_z\rangle_{\mathcal F^2},
\]
since \(\overline{wK_z(w)}=\overline{w}\,e^{\pi\overline{w}z}\). Cauchy--Schwarz
then gives
\[
|F'(z)|
\le
\pi \|F\|_{\mathcal F^2}\,\|wK_z\|_{\mathcal F^2}.
\]
Now, by completing the square as in (i),
\[
\|wK_z\|_{\mathcal F^2}^2
=
\int_{\C} |w|^2 e^{2\pi\Re(w\overline z)-\pi |w|^2}\,\mathrm{d}A(w)
=
e^{\pi |z|^2}\int_{\C} |w+z|^2 e^{-\pi |w|^2}\,\mathrm{d}A(w),
\]
after the change of variables \(w\mapsto w+z\). Expanding
\(|w+z|^2=|w|^2+2\Re(w\overline z)+|z|^2\), and using that
\(\int_\C e^{-\pi |w|^2}\,\mathrm{d}A(w)=1\),
\(\int_\C |w|^2 e^{-\pi |w|^2}\,\mathrm{d}A(w)=\frac{1}{\pi}\), and
\(\int_\C w\,e^{-\pi |w|^2}\,\mathrm{d}A(w)=0\) (by rotational invariance), we conclude
\[
\int_{\C} |w+z|^2 e^{-\pi |w|^2}\,\mathrm{d}A(w)
=
\frac{1}{\pi}+|z|^2
\le
C(1+|z|^2),
\]
for some absolute constant \(C>0\). Therefore
\[
|F'(z)| \le C(1+|z|)\,\|F\|_{\mathcal F^2}e^{\frac{\pi}{2}|z|^2}.
\]
\end{proof}

\medskip

\begin{lemma}\label{lem:max}
Let $F \in \mathcal{F}^2(\mathbb{C})$ and set
\[
T := \sup_{z \in \mathbb{C}} u_F(z).
\]
Then $0 \le T \le \|F\|_{\mathcal{F}^2}^2$, and $T = \|F\|_{\mathcal{F}^2}^2$
is attained if and only if $F$ is proportional to the reproducing kernel $K_w$
for some $w \in \mathbb{C}$.
\end{lemma}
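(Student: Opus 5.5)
The plan is to reduce the statement to Lemma~\ref{lem:Fock-basic}(ii) and (iii). First, $u_F\ge 0$ gives $T\ge 0$. The pointwise bound $u_F(z)\le\|F\|_{\mathcal F^2}^2$ from Lemma~\ref{lem:Fock-basic}(ii) then gives $T\le\|F\|_{\mathcal F^2}^2$. If $F=0$ everything is trivial: $T=0=\|F\|^2$, and $F=0\cdot K_w$ counts as proportional. So I will assume $F\neq 0$ from here on.

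Next I need to show the supremum is actually attained. Since $u_F$ is continuous, and $u_F(z)\to 0$ as $|z|\to\infty$ by Lemma~\ref{lem:Fock-basic}(iii), while $u_F\not\equiv 0$ when $F\neq0$, there is some $R$ with $u_F<\tfrac12\sup u_F$ outside $D_R$. Hence the supremum equals the maximum of $u_F$ over the compact set $\overline{D_R}$, and is attained at some $w$. This compactness step is the only point needing care. It matters because the rigidity clause of Lemma~\ref{lem:Fock-basic}(ii) only addresses equality at an actual point, not equality in a supremum.

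With that in hand, both directions follow. If $T=\|F\|^2$, choose $w$ with $u_F(w)=T=\|F\|^2$. Equality in Lemma~\ref{lem:Fock-basic}(ii) then forces $F=cK_w$, the equality case of Cauchy--Schwarz in $|\langle F,K_w\rangle|\le\|F\|\|K_w\|$. Conversely, if $F=cK_w$, the computation in Lemma~\ref{lem:Fock-basic}(ii) gives $u_F(w)=|c|^2e^{\pi|w|^2}=\|F\|^2$, so $T\ge\|F\|^2$, and combined with the upper bound, $T=\|F\|^2$. This also shows the value is attained, at $w$, which completes the plan.
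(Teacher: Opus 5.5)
Your proof is correct and follows the paper's route, which simply invokes the pointwise bound and the equality case of Lemma~\ref{lem:Fock-basic}(ii). Your additional compactness step, using Lemma~\ref{lem:Fock-basic}(iii) to show the supremum is actually attained, makes explicit a point the paper leaves implicit in the phrase ``is attained''. As a result your argument covers the reading in which $T=\|F\|_{\mathcal F^2}^2$ holds only as a supremum.
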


\begin{proof}
The bound and the characterization of the equality case are immediate from
Lemma~\ref{lem:Fock-basic}(ii).
    \end{proof}

\medskip

    We proceed with a few elementary facts.

\begin{lemma}\label{lem:basic-properties-global}
Let \(F,G\in \mathcal F^2(\C)\). Then:
\begin{enumerate}
    \item[(i)] \(I_F(s)\) is attained by a superlevel set of \(u_F\), and
    \[
    0\le I_F(s)\le \|F\|_{\mathcal F^2}^2.
    \]
    \item[(ii)] For every \(c\in \C\),
    \[
    I_{cF}(s)=|c|^2 I_F(s).
    \]
    \item[(iii)] One has the Lipschitz bound
    \[
    |I_F(s)-I_G(s)|
    \le
    \|u_F-u_G\|_{L^1(\C)}
    \le
    \big(\|F\|_{\mathcal F^2}+\|G\|_{\mathcal F^2}\big)\,
    \|F-G\|_{\mathcal F^2}.
    \]
\end{enumerate}
\end{lemma}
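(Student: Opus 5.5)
We prove the three items in the order (ii), (iii), (i), since (ii) and (iii) are formal consequences of the definition of \(I_F\), while (i) needs the bathtub principle together with the decay of \(u_F\).

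For (ii), we note that \(u_{cF}=|c|^2u_F\) pointwise. Hence for every \(\Omega\) with \(|\Omega|=s\) we have \(\int_\Omega u_{cF}\,\mathrm{d}A=|c|^2\int_\Omega u_F\,\mathrm{d}A\). Taking the supremum over all such \(\Omega\) gives \(I_{cF}(s)=|c|^2I_F(s)\); the case \(c=0\) is trivial.

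For (iii), fix \(\Omega\) with \(|\Omega|=s\). Then
\[
\int_\Omega u_F\,\mathrm{d}A\le \int_\Omega u_G\,\mathrm{d}A+\|u_F-u_G\|_{L^1(\C)}\le I_G(s)+\|u_F-u_G\|_{L^1(\C)}.
\]
Taking the supremum over \(\Omega\) and then exchanging the roles of \(F\) and \(G\) gives the first inequality. The second inequality is exactly Lemma~\ref{lem:Fock-basic}(iv).

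For (i), the upper bound is immediate: \(u_F\ge0\) and \(\int_\C u_F\,\mathrm{d}A=\|F\|_{\mathcal F^2}^2\), so every \(\int_\Omega u_F\) lies in \([0,\|F\|^2_{\mathcal F^2}]\). For attainment we apply the bathtub principle of Lieb--Loss \cite[Theorem~1.14]{Lieb-Loss} to the nonnegative integrable function \(u_F\) with the constraint \(0\le g\le 1\), \(\int g=s\). This produces a maximizer of the form
\[
g=\mathbf 1_{\{u_F>t\}}+c\,\mathbf 1_{\{u_F=t\}},\qquad t=\inf\{\tau:|\{u_F>\tau\}|\le s\}.
\]
Since the supremum over such \(g\) dominates \(I_F(s)\), it remains to show that this \(g\) can be taken to be the indicator of a superlevel set of measure \(s\).

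\textbf{Main obstacle: flat level sets.} The difficulty is to rule out \(|\{u_F=t\}|>0\) with \(t>0\), which would force a fractional value \(c\in(0,1)\) in \(g\). We handle it as follows.
\begin{itemize}
\item If \(F\not\equiv0\), then \(\log u_F=\log|F|^2-\pi|z|^2\) is real-analytic off the discrete zero set of \(F\). Moreover
\[
\Delta\log u_F=-4\pi\neq0
\]
away from the zeros of \(F\), so \(u_F\) is nowhere locally constant.
\item Consequently each level set \(\{u_F=t\}\), \(t>0\), is the zero set of a nonconstant real-analytic function on a connected open set, and therefore has Lebesgue measure zero.
\item By Lemma~\ref{lem:Fock-basic}(iii), \(u_F\to0\) at infinity, so every superlevel set \(\{u_F>t\}\) with \(t>0\) is bounded. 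The map \(t\mapsto|\{u_F>t\}|\) is therefore finite for \(t>0\), and continuous because the level sets are null. It decreases from \(+\infty\) as \(t\to0^+\) to \(0\) for \(t\ge\max u_F\).
\item By the intermediate value theorem, some \(t>0\) satisfies \(|\{u_F>t\}|=s\) exactly, and the bathtub maximizer is then the indicator \(\mathbf 1_{\{u_F>t\}}\).
\end{itemize}

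The degenerate case \(F\equiv0\) is trivial, since then \(u_F\equiv0\) and any set of measure \(s\) attains \(I_F(s)=0\).
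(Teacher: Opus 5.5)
Your proof is correct. Items (ii) and (iii) match the paper's argument: pointwise scaling $u_{cF}=|c|^2u_F$, and taking suprema in $\int_\Omega u_F\le\int_\Omega u_G+\|u_F-u_G\|_{L^1}$ followed by Lemma~\ref{lem:Fock-basic}(iv).

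Item (i) is where you go further than the paper. The paper cites the Lieb--Loss bathtub principle for $u_F\in C_0(\C)\cap L^1(\C)$ and accepts a maximizer of the form $\{u_F>t\}$ ``possibly together with part of $\{u_F=t\}$''. So it never checks that the maximizer is literally a superlevel set.

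You close that point. The identity $\Delta\log u_F=-4\pi$ off the zeros of $F$ (equivalently, real-analyticity of $u_F-t\not\equiv0$) makes every positive level set null. Hence $\tau\mapsto|\{u_F>\tau\}|$ is continuous on $(0,\infty)$ and runs from $+\infty$ down to $0$. The intermediate value theorem then gives $t>0$ with $|\{u_F>t\}|=s$ exactly.

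This justifies ``attained by a superlevel set'' as stated. It also shows the maximizer is bounded, has a positive threshold, and is unique up to null sets. The paper only records these facts later, in the no-plateau remark after Lemma~\ref{lem:bathtub} in Section~\ref{sec:local-max}.

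One simplification: once you have $A=\{u_F>t\}$ with $|A|=s$, you do not need Lieb--Loss at all. For any $|\Omega|=s$,
\[
\int_\Omega u_F-\int_A u_F\le t\bigl(|\Omega\setminus A|-|A\setminus\Omega|\bigr)=0 .
\]
This also avoids a technicality that both your write-up and the paper's citation pass over. The hypothesis of \cite[Theorem~1.14]{Lieb-Loss}, finiteness of $|\{u_F>\tau\}|$ for every real $\tau$, fails for $\tau\le0$.
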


\begin{proof}
The identity
\[
\int_\C u_F\,\mathrm{d}A=\int_\C |F(z)|^2e^{-\pi |z|^2}\,\mathrm{d}A(z)=\|F\|_{\mathcal F^2}^2
\]
is immediate from the definition of the Fock norm. By
Lemma~\ref{lem:Fock-basic}(ii)--(iii), \(u_F\) is bounded, continuous, and
satisfies \(u_F(z)\to 0\) as \(|z|\to\infty\); thus
\(u_F\in C_0(\C)\cap L^1(\C)\).

The existence of a maximizing set for \(I_F(s)\), of the form
\(\{u_F>t\}\) (possibly together with part of \(\{u_F=t\}\)) with \(t\geq 0\)
chosen so that the resulting set has measure \(s\), is the classical bathtub
principle \cite[Theorem~1.14]{Lieb-Loss}, which applies since
\(u_F\in C_0(\C)\cap L^1(\C)\). The bound
\[
0\le I_F(s)\le \int_\C u_F\,\mathrm{d}A=\|F\|_{\mathcal F^2}^2
\]
is immediate. The scaling identity in (ii) is obvious.

For (iii), for every measurable \(\Omega\subset \C\) with \(|\Omega|=s\),
\[
\left|\int_\Omega u_F\,\mathrm{d}A-\int_\Omega u_G\,\mathrm{d}A\right|
\le \int_\C |u_F-u_G|\,\mathrm{d}A.
\]
Taking suprema over \(|\Omega|=s\) gives
\[
|I_F(s)-I_G(s)|\le \|u_F-u_G\|_{L^1(\C)}.
\]
Finally,
\begin{align*}
\|u_F-u_G\|_{L^1(\C)}
&=
\int_\C \big||F|^2-|G|^2\big|\,e^{-\pi |z|^2}\,\mathrm{d}A \\
&\le
\int_\C |F-G|(|F|+|G|)e^{-\pi |z|^2}\,\mathrm{d}A \\
&\le
\|F-G\|_{\mathcal F^2}\big(\|F\|_{\mathcal F^2}+\|G\|_{\mathcal F^2}\big)
\end{align*}
by Cauchy--Schwarz.
\end{proof}

\subsection{A Paley--Wiener-type lemma}\label{sec:fourier-to-fboundary}

We first record the precise Paley--Wiener--Schwartz form used below and
later in Section~\ref{sec:classical-rigidity}.

\begin{theorem}[Paley--Wiener--Schwartz, planar form]\label{thm:PW-planar}
Let $\omega\in\mathcal D'(\R^2)$ be a compactly supported distribution of
order $\le N$ with $\operatorname{supp}\omega\subset \overline{B(0,R)}$.
Then its Fourier transform $\widehat\omega$ extends to an entire function
on $\C^2$ satisfying the Paley--Wiener bound
\begin{equation}\label{eq:PW-bound-planar}
|\widehat\omega(\xi,\eta)|\le C\,(1+|\xi|+|\eta|)^N\,e^{R(|\Im\xi|+|\Im\eta|)},
\qquad (\xi,\eta)\in\C^2,
\end{equation}
for some $C>0$. Conversely, any entire function on $\C^2$ obeying such a
bound is the Fourier--Laplace transform of a unique compactly supported
distribution of order $\le N$ supported in $\overline{B(0,R)}$. If, in
addition, the bound holds with $N=0$ and $\widehat\omega|_{\R^2}\in
L^2(\R^2)$, then $\omega\in L^2(\R^2)$.
\end{theorem}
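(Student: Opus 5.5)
The plan is to prove the three assertions of Theorem~\ref{thm:PW-planar} separately, following the standard route in \cite{Hormander}.

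\emph{Forward direction.} Fix a cutoff $\chi_\epsilon\in\mathcal D(\R^2)$ with $\chi_\epsilon\equiv1$ on a neighbourhood of $\overline{B(0,R)}$, supported in $B(0,R+\epsilon)$, and with $\|\partial^\alpha\chi_\epsilon\|_\infty\le C_\alpha\epsilon^{-|\alpha|}$. For $\zeta=(\xi,\eta)\in\C^2$ define
\[
\widehat\omega(\zeta):=\langle\omega,\chi_\epsilon e^{-2\pi i x\cdot\zeta}\rangle .
\]
This is independent of $\epsilon$ and entire, because differentiating under the pairing in $\zeta$ is justified by continuity of $\omega$ on $C^N$. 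The order-$N$ bound gives
\[
|\widehat\omega(\zeta)|\le C\sum_{|\alpha|\le N}\|\partial^\alpha(\chi_\epsilon e^{-2\pi i x\cdot\zeta})\|_\infty\le C\sum_{j\le N}\epsilon^{-(N-j)}(1+|\zeta|)^{j}e^{2\pi(R+\epsilon)|\Im\zeta|}.
\]
Choosing $\epsilon=1/(1+|\zeta|)$ makes $e^{2\pi\epsilon|\Im\zeta|}$ bounded and gives the polynomial factor $(1+|\zeta|)^N$. The exponent comes out as $2\pi R|\Im\zeta|$, which is at most $2\pi R(|\Im\xi|+|\Im\eta|)$. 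The $2\pi$ normalization is then absorbed into the constant $R$ in \eqref{eq:PW-bound-planar}, so that the statement is read with the paper's Fourier convention.

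\emph{Converse.} Restricted to $\R^2$, $\widehat\omega$ has polynomial growth, so it is a tempered distribution, and $\omega:=\mathcal F^{-1}\widehat\omega\in\mathcal S'$. Uniqueness is immediate from injectivity of the Fourier transform. For the support, I would mollify: let $\rho_\delta$ be an approximate identity supported in $B(0,\delta)$. Then $\widehat{\rho_\delta}\,\widehat\omega$ obeys a bound with arbitrary polynomial decay on $\R^2$ and exponential type $R+\delta$. The key step is to show that $\omega*\rho_\delta$ vanishes outside $\overline{B(0,R+\delta)}$. For this, take $x$ with $|x|>R+\delta$ and set $\theta=x/|x|$. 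Write $\omega*\rho_\delta(x)=\int\widehat{\rho_\delta}\widehat\omega(\zeta)e^{2\pi i x\cdot\zeta}\,d\zeta$ and shift the contour to $\zeta+it\theta$, $t\to+\infty$, using Cauchy's theorem in each variable. The rapid decay in the real directions justifies the shift. The integrand is then bounded by $e^{2\pi t(R+\delta-|x|)}$ times an integrable function, which tends to $0$. Letting $\delta\to0$ gives $\operatorname{supp}\omega\subset\overline{B(0,R)}$.

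One caveat on the converse: the bound is stated with $|\Im\xi|+|\Im\eta|$ rather than $|\Im\zeta|$. That form naturally yields support in the square $[-R,R]^2$, and recovering the disk requires the rotation-invariant bound. I would therefore run the argument with $|\Im\zeta|$, note that the forward direction produces exactly this, and flag the $\ell^1$ form in the statement as a harmless weakening used only in the forward sense. The order $\le N$ claim follows because $\omega=(1-\Delta)^{M}g$ with $g$ continuous once $2M>N+2$. Getting the sharp order $N$ requires Hörmander's finer argument, and I would quote \cite[Thm.~7.3.1]{Hormander} rather than reprove it.

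\emph{The $L^2$ clause.} If $\widehat\omega|_{\R^2}\in L^2$, then Plancherel shows directly that $\omega$ is an $L^2$ function, and by the support statement it vanishes outside $\overline{B(0,R)}$. The hypothesis $N=0$ plays no role here.

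I expect the main obstacle to be the contour-shift support argument in the converse, specifically controlling the growth of $\widehat\omega$ uniformly along the shifted contours in two complex variables. Mollification is the device that handles it.
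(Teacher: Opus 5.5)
The gap is in the converse: the clause ``of order $\le N$'' is false, and H\"ormander does not state it. The other parts of your outline are the standard proof of \cite[Theorem~7.3.1]{Hormander-PDE1}, which the paper simply quotes: the forward estimate, the mollification and contour-shift argument for the support, and the Plancherel argument for the $L^2$ clause (you are right that $N=0$ plays no role there).

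\textbf{The order clause.} You plan to quote a ``finer argument'' from H\"ormander for the sharp order, but Theorem~7.3.1 there asserts only $\operatorname{supp}\omega\subset K$ in the converse direction. The clause itself fails, as the following example shows.
\begin{itemize}
\item Let $T=\mathrm{p.v.}\,\frac1x\mathbf 1_{[-1,1]}$ on $\R$, i.e.\ $\langle T,\phi\rangle=\int_0^1\frac{\phi(x)-\phi(-x)}{x}\,\mathrm{d}x$.
\item Test against odd $\phi$ with $0\le\phi\le1$ on $(0,\infty)$ and $\phi=1$ on $[2\epsilon,1]$. This gives $\langle T,\phi\rangle\ge 2\log\frac{1}{2\epsilon}$, so $T$ has order exactly $1$.
\item Its transform is $-2i\operatorname{Si}(2\pi\xi)$, where $\operatorname{Si}(z)=\int_0^1\frac{\sin(zs)}{s}\,\mathrm{d}s$. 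For $|z|\ge1$, split at $s=1/|z|$ and integrate by parts on $[1/|z|,1]$; this gives $|\operatorname{Si}(z)|\le Ce^{|\Im z|}$.
\item Take $\omega=T\otimes\psi$ with $\psi\in C_c^\infty((-r,r))$ and $\int\psi\neq0$. Then $\omega$ has order $1$ and support in $\overline{B(0,\sqrt{1+r^2})}$.
\item Nevertheless $|\widehat\omega(\zeta)|\le Ce^{2\pi\sqrt{1+r^2}\,|\Im\zeta|}$, so the bound holds with $N=0$.
\end{itemize}
Your $(1-\Delta)^M g$ argument is sound, but it only gives order $\le 2M\le N+4$. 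You also need to test against $\chi\phi$ with $\chi\equiv1$ near $\operatorname{supp}\omega$, because $g$ is bounded and continuous but not compactly supported. That weaker clause, or simply ``finite order'', is what should be stated. It costs the paper nothing, since Lemma~\ref{lemma:fourier-to-fboundary} uses the converse only for compact support and the $L^2$ conclusion.

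\textbf{The factor $2\pi$.} This factor cannot be ``absorbed into $R$'', because $R$ is the support radius. With the paper's convention, $\omega=\delta_{(R,0)}$ has $|\widehat\omega(\xi,\eta)|=e^{2\pi R\,\Im\xi}$, so the forward bound as written is false. It must be restated with exponent $2\pi R|\Im\zeta|$, or equivalently with exponent $R|\Im\zeta|$ under the convention $e^{-ix\cdot\zeta}$. Once that is done, your caveat about the converse is exactly right. The exponent $2\pi R(|\Im\xi|+|\Im\eta|)$ only forces support in $[-R,R]^2$; for instance $\delta_{(R,R)}$ satisfies it. So the converse must be formulated with $|\Im\zeta|$. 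With these two amendments, your outline proves the corrected theorem.
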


This is the classical Paley--Wiener--Schwartz theorem in the form stated in
\cite[Theorem~7.3.1]{Hormander-PDE1}; see also \cite[\S 9.3]{Folland}. The
$L^2$ refinement follows from Plancherel and uniqueness of Fourier inversion.
For compactly supported finite Radon measures the estimate
\eqref{eq:PW-bound-planar} holds with $N=0$ on the real locus, with
$|\widehat\omega|\le \|\omega\|_{\mathrm{TV}}$.

We now state a Fourier-analytic divisibility statement, close in spirit to
Berenstein's work connecting inverse spectral questions with the Pompeiu
problem \cite{Berenstein-Pompeiu}.

In order to state the lemma cleanly, we make precise the class of
distributions with which we work. We say that a compactly supported
distribution $\omega$ is a \emph{singular-continuous free measure} if it
defines a (finite, signed) Radon measure on $\R^2$ whose Lebesgue
decomposition with respect to planar Lebesgue measure $\mathrm{d}x\,
\mathrm{d}y$ takes the form
\[
\omega = \omega_{\mathrm{ac}} + \omega_{\mathrm{pp}}
       = f\,\mathrm{d}x\,\mathrm{d}y + \sum_{j=1}^N c_j\,\delta_{p_j},
\]
with \(f\in L^\infty(\R^2)\) compactly supported, \(c_j\in \R\setminus\{0\}\), and
$p_1,\ldots,p_N\in \R^2$ pairwise distinct. In other words, the continuous
singular part $\omega_{\mathrm{cs}}$ in the Lebesgue--Radon--Nikodym
decomposition $\omega=\omega_{\mathrm{ac}}+\omega_{\mathrm{cs}}+
\omega_{\mathrm{pp}}$ is required to vanish identically, and the pure
point part is finite. The terminology \emph{free measure} is suggested by
the role of $\omega$ as the right-hand side of a free boundary problem
arising in the next subsection.

\begin{lemma}\label{lemma:fourier-to-fboundary}
Let $\omega$ be a compactly supported singular-continuous free measure such
that
\[
\widehat{\omega}(w,\pm iw)=0,\qquad \forall w\in\C.
\]
Then there exist a finite set $P\subset \R^2$ and a compactly supported
function $u\in L^2(\R^2)$ such that
\[
\Delta u=\omega
\]
in the sense of distributions, and
\[
u\in C(\R^2\setminus P).
\]
\end{lemma}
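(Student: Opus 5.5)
The plan is to solve $\Delta u=\omega$ on the Fourier side by dividing by the symbol of the Laplacian, and then to read off compact support and regularity from the Paley--Wiener--Schwartz theorem (Theorem~\ref{thm:PW-planar}) and the logarithmic potential.

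\emph{Step 1: algebraic division.} Since $\widehat{\Delta T}(\xi,\eta)=-4\pi^2(\xi^2+\eta^2)\widehat T(\xi,\eta)$, we want
\[
\widehat u(\xi,\eta):=-\frac{\widehat\omega(\xi,\eta)}{4\pi^2(\xi^2+\eta^2)}
\]
to be entire on $\C^2$. Write $\xi^2+\eta^2=(\xi+i\eta)(\xi-i\eta)$; the zero variety is exactly $\{(w,\pm iw):w\in\C\}$.
\begin{itemize}
\item Change variables to $\zeta_\pm=\xi\pm i\eta$.
\item The hypothesis $\widehat\omega(w,-iw)=0$ means $\widehat\omega$ vanishes on $\{\zeta_+=0\}$. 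Holomorphic division in one variable, with the other variable as a parameter, gives $\widehat\omega=\zeta_+G$ with $G$ entire.
\item On $\{\zeta_-=0,\ \zeta_+\neq0\}$ we get $G=\widehat\omega/\zeta_+=0$. By continuity $G$ vanishes on all of $\{\zeta_-=0\}$, so $G=\zeta_-H$ with $H$ entire.
\end{itemize}
Hence $\widehat u=-H/(4\pi^2)$ is entire. This is the elementary homogeneous factorization.

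\emph{Step 2: Paley--Wiener estimates for the quotient.} This is the expected main obstacle. We know $\widehat\omega$ obeys \eqref{eq:PW-bound-planar} with some order $N$ and radius $R$, and we need a bound of the same type for $H$. I would invoke the Ehrenpreis--Malgrange division lemma as in \cite[Ch.~VII]{Hormander-PDE1}. For an entire function $\Phi$ and a polynomial $q$ with $\Phi/q$ entire, $|\Phi/q|(\zeta)$ is bounded by $C\sup_{|\zeta'-\zeta|\le1}|\Phi(\zeta')|$. The argument applies the maximum principle on complex lines through $\zeta$, using that the one-variable polynomial $t\mapsto q(\zeta+t\theta)$ is bounded below on a suitable circle $|t|=r$ with $r\le1$.

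Applied with $\Phi=\widehat\omega$, this transfers \eqref{eq:PW-bound-planar} to $\widehat u$, at the cost of enlarging $C$ by a factor $e^{2R}$. The converse direction of Theorem~\ref{thm:PW-planar} then gives a compactly supported distribution $u$, supported in $\overline{B(0,R)}$, with $\Delta u=\omega$.

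\emph{Step 3: identification with the logarithmic potential.} Let $E=\frac1{2\pi}\log|x|$ and $v:=E*\omega$, so that $\Delta v=\omega$. Then $u-v$ is a harmonic tempered distribution, hence a harmonic polynomial. Outside $\overline{B(0,R)}$ we have $u=0$ while $v=O(\log|x|)$, so the polynomial grows at most logarithmically and is therefore a constant $c$; thus $u=v+c$.

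Now split $v=E*(f\,\mathrm{d}x\,\mathrm{d}y)+\sum_j\frac{c_j}{2\pi}\log|x-p_j|$.
\begin{itemize}
\item Since $f\in L^\infty$ has compact support, the first term is $C^1$ on $\R^2$.
\item Each logarithm is continuous off $p_j$ and lies in $L^2_{\rm loc}$.
\end{itemize}
Because $u$ has compact support, this yields $u\in L^2(\R^2)$ and $u\in C(\R^2\setminus P)$ with $P=\{p_1,\dots,p_N\}$. The absence of a continuous singular part is exactly what prevents $v$ from having further discontinuities, for instance along curves.
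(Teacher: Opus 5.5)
Your proposal is correct and takes essentially the same route as the paper. Both divide $\widehat\omega$ by $\xi^2+\eta^2$, transfer the Paley--Wiener--Schwartz bound to the quotient via the division lemma of \cite[Ch.~VII]{Hormander-PDE1}, and identify $u$ with the logarithmic potential of $\omega$ to obtain $L^2$ and continuity off $P$; the only differences are cosmetic (factoring in the coordinates $\zeta_\pm=\xi\pm i\eta$ rather than by homogeneous components, and getting $u\in L^2$ from the potential rather than from Plancherel), plus one harmless slip: $(w,-iw)$ lies on $\{\zeta_-=0\}$, not $\{\zeta_+=0\}$, which changes nothing since both lines are in the hypothesis.
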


\begin{proof}
By the Paley--Wiener--Schwartz theorem
\cite[Theorem~7.3.1]{Hormander-PDE1}, the Fourier
transform $\widehat\omega(\xi,\eta)$ extends to an entire function on $\C^2$
of exponential type, satisfying the bound
\[
|\widehat\omega(\xi,\eta)|\le C(1+|\xi|+|\eta|)^{N}\,e^{R(|\Im\xi|+|\Im\eta|)}
\]
for some $C,R>0$ and some integer $N\ge 0$ depending on the order of
$\omega$, where $R$ is the radius of any disk containing
$\operatorname{supp}\omega$. In particular, $\widehat\omega$ admits an
everywhere convergent power series expansion
\[
\widehat{\omega}(\xi,\eta)=\sum_{k,l\ge 0} a_{k,l}\,\xi^k\eta^l,
\qquad (\xi,\eta)\in\C^2.
\]
Using the vanishing hypothesis on the complex lines $\eta=\pm i\xi$, we may
group terms by total homogeneity: setting $\eta=\pm iw$, $\xi=w$, and
collecting powers of $w$,
\[
0=\widehat{\omega}(w,\pm iw)
   =\sum_{n\ge 0}\Bigl(\sum_{k=0}^n a_{k,n-k}(\pm i)^{\,n-k}\Bigr)w^n,
\qquad \forall w\in\C.
\]
Since this holds for all $w\in\C$, comparing coefficients yields
\[
\sum_{k=0}^n a_{k,n-k}(\pm i)^{\,n-k}=0,\qquad \forall n\ge 0.
\]
For each $n\ge 0$, define the auxiliary one-variable polynomial
\[
p_n(t):=\sum_{k=0}^n a_{k,n-k}\, t^{\,n-k}.
\]
The two scalar equations above precisely say that $p_n(\pm i)=0$, so
$(t-i)(t+i)=t^2+1$ divides $p_n(t)$. Hence we may write
\[
p_n(t)=(t^2+1)\,q_n(t),
\]
with $q_n$ a polynomial of degree at most $n-2$ (and $q_n\equiv 0$ when
$n<2$).

We now reassemble the homogeneous pieces. Let
\[
H_n(\xi,\eta):=\sum_{k=0}^n a_{k,n-k}\,\xi^k\eta^{\,n-k},
\qquad n\ge 0,
\]
so that $\widehat\omega=\sum_{n\ge 0}H_n$ is the decomposition of
$\widehat\omega$ into homogeneous components (of degree $n$). The argument
above shows that each $H_n$ vanishes on the two complex lines
$\eta=\pm i\xi$, hence is divisible, as a homogeneous polynomial in two
variables, by $\xi^2+\eta^2$. Concretely,
\[
H_n(\xi,\eta)=(\xi^2+\eta^2)\,G_{n-2}(\xi,\eta)
\]
for a (uniquely determined) homogeneous polynomial $G_{n-2}$ of degree
$n-2$, with $G_{n-2}\equiv 0$ for $n<2$. Summing the homogeneous pieces, the
formal series
\[
F(\xi,\eta):=\sum_{n\ge 2} G_{n-2}(\xi,\eta)
\]
converges absolutely on every polydisc on which $\widehat\omega$ does, and
defines an entire function on $\C^2$ satisfying
\[
\widehat{\omega}(\xi,\eta)=(\xi^2+\eta^2)\,F(\xi,\eta).
\]
Since $\widehat\omega$ has a zero of order at least two at the origin
(coming from the vanishing of $H_0$ and $H_1$, which is forced by
$p_0(\pm i)=p_1(\pm i)=0$), this factorization is consistent at $0$ and $F$
is entire there as well.

We next quantify the size of $F$ on $\R^2$. Since $\omega$ is a finite
(signed) Radon measure with compact support, its Fourier transform is in
fact bounded on $\R^2$:
\[
|\widehat{\omega}(\xi,\eta)|\le \|\omega\|_{\mathrm{TV}},
\qquad (\xi,\eta)\in\R^2,
\]
where $\|\omega\|_{\mathrm{TV}}$ is the total variation; equivalently,
the Paley--Wiener bound holds with $N=0$. On $\R^2$, $\xi^2+\eta^2=
|\xi|^2+|\eta|^2$, so for $|\xi|^2+|\eta|^2\ge 1$ we have $\xi^2+\eta^2
\ge \tfrac{1}{2}(1+|\xi|^2+|\eta|^2)$, whence
\[
|F(\xi,\eta)|=\frac{|\widehat\omega(\xi,\eta)|}{\xi^2+\eta^2}
\le \frac{2\|\omega\|_{\mathrm{TV}}}{1+|\xi|^2+|\eta|^2},
\qquad |\xi|^2+|\eta|^2\ge 1.
\]
Near the origin, $F$ is entire (as established above) and hence bounded;
absorbing this region into the constants we obtain
\[
|F(\xi,\eta)|\le \frac{C''}{1+|\xi|^2+|\eta|^2},
\qquad (\xi,\eta)\in\R^2,
\]
for some $C''>0$. Therefore $F|_{\R^2}\in L^2(\R^2)\cap L^1(\R^2)$.

We now invoke the following Paley--Wiener division theorem for
constant-coefficient operators \cite[Ch.~VII]{Hormander-PDE1}. If \(P\) is a
polynomial and \(\Phi\) is an entire function on \(\C^n\) satisfying a
Paley--Wiener--Schwartz estimate
\[
|\Phi(\zeta)|\le C(1+|\zeta|)^N e^{H_K(\Im\zeta)}
\]
for a compact convex set \(K\subset\R^n\), and if
\(\Phi=P\Psi\) with \(\Psi\in\mathcal O(\C^n)\), then \(\Psi\) also satisfies
a Paley--Wiener--Schwartz estimate, with possibly larger polynomial order and
with support function \(H_{K'}\) for a compact set \(K'\) depending only on
\(K\) and \(P\). In particular, the quotient is the Fourier--Laplace transform
of a compactly supported distribution. Since the homogeneous argument above
proves \(\widehat\omega=(\xi^2+\eta^2)F\), this theorem gives constants
\(C',R',N'\) for which
\[
|F(\xi,\eta)|
\le C'(1+|\xi|+|\eta|)^{N'}
e^{R'(|\Im\xi|+|\Im\eta|)},
\qquad (\xi,\eta)\in\C^2.
\]
The converse Paley--Wiener--Schwartz theorem and the \(L^2\) estimate on the
real locus therefore provide a compactly supported \(u\in L^2(\R^2)\) with
\(\widehat u=F\).

Computing distributional Laplacians on the Fourier side, with the convention
\[
\widehat g(\xi,\eta)
=\int_{\R^2}g(x,y)e^{-2\pi i(x\xi+y\eta)}\,\mathrm{d}x\,\mathrm{d}y,
\]
we have
\[
\begin{aligned}
\widehat{\Delta u}(\xi,\eta)
&= -4\pi^2(\xi^2+\eta^2)\,\widehat{u}(\xi,\eta) \\
&= -4\pi^2(\xi^2+\eta^2)\,F(\xi,\eta)
 = -4\pi^2\,\widehat\omega(\xi,\eta);
\end{aligned}
\]
with any other standard normalization, the same identity holds up to a
nonzero real multiplicative factor. In all cases, $\Delta u$ is a constant
nonzero real multiple of $\omega$. Replacing $u$ by $\lambda u$ for the
appropriate (nonzero, real) $\lambda$ -- this is the harmless rescaling
referred to above -- we may assume that
\[
\Delta u = \omega
\]
in the sense of distributions. The rescaling preserves all the previously
established properties of $u$: compact support, membership in $L^2(\R^2)$,
and the Fourier identity $\widehat u = \lambda F$.

To pin down $u$ pointwise, observe that since $\omega$ is a compactly
supported Radon measure on $\R^2$, the convolution
\[
v(x,y) := \frac{1}{2\pi}(\log|\cdot|*\omega)(x,y)
\]
is a well-defined locally integrable function on $\R^2$ which solves
$\Delta v = \omega$ in the sense of distributions; here $G(z)=\log|z|$ is
(up to the standard normalization $\tfrac{1}{2\pi}$) the fundamental solution
of the Laplacian in $\R^2$, in the sense that
$\Delta\bigl(\tfrac{1}{2\pi}\log|\cdot|\bigr)=\delta_0$, see, e.g.,
\cite{Hormander} for the underlying distribution theory and
\cite{Aharonov-Schiffer-Zalcman} for related representations in the planar
quadrature-domain context.
The vanishing hypothesis at the origin gives
\(\widehat\omega(0,0)=\omega(\R^2)=0\). Consequently the logarithmic
potential satisfies \(v(x)=O(|x|^{-1})\) as \(|x|\to\infty\). The difference
\(u-v\) is a harmonic distribution on \(\R^2\), hence a smooth harmonic
function by Weyl's lemma \cite{Hormander-PDE1}. Outside a sufficiently large disk, \(u=0\) by
compact support, so \(u-v=-v\to0\) at infinity. Liouville's theorem for
harmonic functions therefore gives \(u-v\equiv0\). Thus
\[
u(x,y)=\frac{1}{2\pi}\int_{\R^2}\log|(x,y)-(x',y')|\,\mathrm{d}\omega(x',y'),
\]
the logarithmic potential of $\omega$.

We finally analyze the regularity of $u$. Decomposing
$\omega = f\,\mathrm{d}x\,\mathrm{d}y+\sum_{j=1}^N c_j\,\delta_{p_j}$ as in
the definition of singular-continuous free measure, we obtain
correspondingly
\[
u(x,y)=\frac{1}{2\pi}\int_{\R^2}\log|(x,y)-(x',y')|\,f(x',y')\,
\mathrm{d}x'\mathrm{d}y'
+\frac{1}{2\pi}\sum_{j=1}^N c_j\,\log|(x,y)-p_j|.
\]
The first summand is the logarithmic potential of the bounded, compactly
supported density \(f\in L^\infty(\R^2)\) and is therefore not merely
continuous but in fact $C^{1,\alpha}(\R^2)$ for every $\alpha\in(0,1)$
(by the standard Calder\'on--Zygmund / Schauder estimates applied to
$\Delta=\partial_x^2+\partial_y^2$ with right-hand side in $L^\infty$
of compact support, see, e.g., the Newtonian-potential discussion in
\cite{Hormander}).
The continuity used here is in fact elementary: by dominated convergence,
$z\mapsto \int \log|z-w|\,f(w)\,\mathrm{d}A(w)$ is continuous since
$\log|\cdot-w|$ is locally integrable in $w$ uniformly on compacta in $z$,
and $f$ is bounded with compact support. It is here that the absence of a continuous
singular part in $\omega$ is essential: a generic continuous singular
measure (e.g.\ the Hausdorff measure on a Cantor-type set of positive
$1$-capacity) gives rise to a logarithmic potential which need not be
continuous, only quasi-continuous, and the conclusion of the lemma would
fail in that generality. Each term $\log|(x,y)-p_j|$ is continuous on
$\R^2\setminus\{p_j\}$, with a logarithmic singularity at $p_j$. Setting
$P:=\{p_1,\ldots,p_N\}$, we conclude that $u\in C(\R^2\setminus P)$, as
claimed. The fact that $u$ is compactly supported has already been
established in the construction above and is consistent with the
representation as a logarithmic potential, which differs from a compactly
supported function only by a globally harmonic correction that, as
explained, must vanish identically.
\end{proof}


\section{Proof of Theorem \ref{thm:perturb}}\label{sec:perturb-proof}

\subsection{The disk inverse step}\label{subsec:disk-inverse-step}

The proof rests on a local inverse theorem at the disk
\(D_{r_\lambda}:=\{z\in\C:|z|<r_\lambda\}\), which we call the Daubechies
disk at eigenvalue \(\lambda\). The result
below is self-contained, but its point of view is inspired by perturbative
inverse-source and quadrature-domain methods: Isakov's local free-boundary
scheme for inverse source problems, Cherednichenko's local inverse logarithmic
potential theorem, and the quadrature-domain framework developed by
Gustafsson, Shahgholian, Karp, and Aharonov--Schiffer--Zalcman. In the present
disk-based setting the required inverse step can be proved directly by
diagonalizing the full moment map in Fourier modes.

Fix \(0<\lambda<1\), and let \(r_\lambda>0\) be determined by
\[
1-e^{-\pi r_\lambda^2}=\lambda .
\]
For a real-valued \(1\)-periodic function \(\varphi\) with
\(\|\varphi\|_{C^0}<r_\lambda/2\), set
\[
U_\varphi
:=
\{re^{2\pi i\theta}:0\le r<r_\lambda+\varphi(\theta)\}.
\]
For \(\tau>0\), let \(\mathcal A_\tau^\R\) be the Banach space of real-valued
periodic functions, using the Fourier-series convention from
Section~\ref{subsec:notation},
\begin{equation}\label{eq:Atau-def}
\varphi(\theta)=\sum_{k\in\Z}\varphi_k e^{2\pi i k\theta},
\qquad
\varphi_{-k}=\overline{\varphi_k},
\end{equation}
with norm
\begin{equation}\label{eq:Atau-norm}
\|\varphi\|_{\mathcal A_\tau}
:=
\sum_{k\in\Z}|\varphi_k|e^{\tau |k|}<\infty .
\end{equation}
This is the real subspace of the Wiener-type Banach algebra of functions on
\(\T:=\R/\Z\) admitting a holomorphic extension to the strip
\(|\Im\theta|<\tau\) with absolutely summable Fourier expansion. It is a Banach
algebra, continuously embedded into \(C^m(\T)\) for every \(m\ge0\), and in
particular into the H\"older space \(C^{1,\alpha}(\T)\) for every
\(\alpha\in(0,1)\). Small elements of \(\mathcal A_\tau^\R\) therefore produce
real-analytic \(C^2\) Jordan domains, given as radial graphs over the disk
\(D_{r_\lambda}\). These standard Wiener-algebra and strip-extension facts may
be found, for example, in \cite{Folland,Grochenig,Krantz-Parks-Primer}.

\begin{proposition}\label{prop:disk-inverse-tailored}
Let \(N\ge1\), \(0<\lambda<1\), and \(r_\lambda\) be as above. There are
\(\tau>0\), \(\varepsilon>0\), and a real-analytic map
\[
b=(b_1,\ldots,b_N)\longmapsto \varphi_b\in\mathcal A_\tau^\R,
\qquad
\varphi_0=0,
\]
defined for \(\max_j|b_j|<\varepsilon\), such that, for
\[
P_b(z):=1+\sum_{j=1}^N b_jz^j,
\qquad
P_b^\#(z):=\overline{P_b(\overline z)},
\qquad
Q_b(w):=P_b^\#(\partial_w/\pi)P_b(w),
\]
and
\[
U_b:=U_{\varphi_b},
\]
one has
\begin{equation}\label{eq:disk-inverse-identity}
\int_{U_b}|P_b(z)|^2e^{-\pi |z|^2}e^{\pi\overline z w}\,\mathrm{d}A(z)
=
\lambda Q_b(w),
\qquad w\in\C .
\end{equation}
Moreover,
\[
\|\varphi_b\|_{C^2(\mathbb S^1)}
\le C_{\lambda,N}\max_{1\le j\le N}|b_j|,
\]
and the branch is locally unique among sufficiently small analytic radial
graphs in \(\mathcal A_\tau^\R\).
\end{proposition}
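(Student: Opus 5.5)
The plan is to turn \eqref{eq:disk-inverse-identity} into a countable moment system in the unknown \(\varphi\), and then apply the analytic implicit-function theorem around \((b,\varphi)=(0,0)\), where the linearization is the diagonal Fourier multiplier mentioned in the introduction.

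\emph{Step 1: reduction to moments.} Both sides of \eqref{eq:disk-inverse-identity} are entire in \(w\). The left side is \(\sum_{k\ge0}\frac{\pi^k w^k}{k!}\,m_k(\varphi,b)\), where
\[
m_k(\varphi,b):=\int_{U_\varphi}|P_b(z)|^2\,\overline z^{\,k}\,e^{-\pi|z|^2}\,\mathrm{d}A(z).
\]
The right side \(\lambda Q_b(w)\) is a polynomial of degree \(\le N\). Its coefficients \(\lambda q_k(b)\) are polynomial in \((b,\overline b)\), with \(q_0(0)=1\) and \(q_k\equiv0\) for \(k>N\). So \eqref{eq:disk-inverse-identity} is equivalent to
\[
\Phi_k(\varphi,b):=m_k(\varphi,b)-\lambda\frac{k!}{\pi^k}q_k(b)=0\qquad\text{for all }k\ge0.
\]
In polar coordinates \(z=re^{2\pi i\theta}\),
\[
m_k=2\pi\int_0^1 e^{-2\pi ik\theta}\int_0^{r_\lambda+\varphi(\theta)}|P_b(re^{2\pi i\theta})|^2\,r^{k+1}e^{-\pi r^2}\,\mathrm{d}r\,\mathrm{d}\theta .
\]
Thus \(m_k\) is essentially the \((-k)\)-th Fourier coefficient of an analytic function of \(\varphi\). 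The condition \(m_0\in\R\) is automatic. Since \(\varphi\) is real, the complex conditions for \(k\ge1\) are exactly as many as the real unknowns \(\varphi_{\pm k}\), and \(k=0\) matches the real unknown \(\varphi_0\). At \((0,0)\) we have \(U_0=D_{r_\lambda}\), \(m_0=1-e^{-\pi r_\lambda^2}=\lambda\) and \(m_k=0\) for \(k\ge1\), so \(\Phi(0,0)=0\).

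\emph{Step 2: linearization.} Differentiating in \(\varphi\) at \((0,0)\) via the radial graph,
\[
D_\varphi\Phi_k(0,0)[\psi]=2\pi\,r_\lambda^{k+1}e^{-\pi r_\lambda^2}\,\psi_{k}^{*},
\]
where \(\psi_k^*\) is the Fourier coefficient of \(\psi\) conjugate to the mode \(e^{-2\pi ik\theta}\), i.e.\ \(\psi_{-k}\) in the convention \eqref{eq:Atau-def}. This is diagonal with nonzero multipliers. I would therefore take as target the Banach space \(\mathcal Y_\tau\) of sequences \((c_k)_{k\ge0}\), with \(c_0\) real, normed by
\[
\|c\|_{\mathcal Y_\tau}:=\sum_k|c_k|\,(2\pi r_\lambda^{k+1}e^{-\pi r_\lambda^2})^{-1}e^{\tau k}.
\]
With this choice \(D_\varphi\Phi(0,0):\mathcal A^\R_\tau\to\mathcal Y_\tau\) is an isometric isomorphism; this is the ``isometric inverse'' of the introduction. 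The \(b\)-dependence is polynomial and only moves finitely many entries (\(k\le N\)) plus the smooth factor \(|P_b|^2\).

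\emph{Step 3: analyticity of \(\Phi\) between these spaces, and conclusion.} This is where I expect the main obstacle. One must show that \(\varphi\mapsto\Phi(\varphi,b)\) is real-analytic from a ball of \(\mathcal A^\R_\tau\) into \(\mathcal Y_\tau\). The danger is that the \(k\)-th moment involves \((r_\lambda+\varphi)^{k+2}\), which in the Banach algebra only obeys a bound like \((1+\|\varphi\|/r_\lambda)^{k+2}\). This geometric growth competes with the weight \(r_\lambda^{-k}e^{\tau k}\).

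My first attempt is to expand the inner integral as a power series in \(\varphi\) around \(r=r_\lambda\). The \(j\)-th Taylor term is \(\frac{1}{j!}\partial_r^{j-1}(r^{k+1}e^{-\pi r^2}|P_b|^2)|_{r_\lambda}\,\varphi^j\). I would then use the Wiener-algebra estimate \(|\widehat{g}(-k)|\le e^{-\tau k}\|g\|_{\mathcal A_\tau}\) together with the Gaussian factor \(e^{-\pi r^2}\), which is analytic in \(r\) with entire-type bounds, to control the sum of the \(j\)-th terms by \(C r_\lambda^{k}e^{-\tau k}\rho^j\|\varphi\|^j\).

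If the factor \((1+\|\varphi\|/r_\lambda)^k\) cannot be absorbed, I would fall back on a slightly different pairing: domain \(\mathcal A_\tau\) and target norm weighted by \(e^{\tau k}(r_\lambda(1+\delta))^{-k}\). The multiplier is then only a bounded isomorphism onto a weighted space comparable up to \((1+\delta)^k\). I would check whether the explicit multiplier still gives invertibility there, shrinking \(\varepsilon\) relative to \(\delta\). Failing that, I would run a contraction iteration in a nested scale \(\mathcal A_{\tau'}\subset\mathcal A_\tau\), using the Cauchy estimates of the analytic-scale setting.

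Once analyticity is secured, the analytic implicit-function theorem gives a unique real-analytic branch \(b\mapsto\varphi_b\) with \(\varphi_0=0\) and \(\|\varphi_b\|_{\mathcal A_\tau}\le C\max_j|b_j|\). The \(C^2\) bound then follows from the continuous embedding \(\mathcal A_\tau\hookrightarrow C^2\), and local uniqueness is part of the implicit-function theorem.
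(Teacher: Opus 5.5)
Your Steps 1--2 match the paper's setup: the moment system, the diagonal multiplier and its explicit inverse. Step~3, however, is a genuine gap, and it is the whole difficulty.

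Your first attempt cannot succeed. The $j$-th Taylor term carries the factor $\frac{1}{j!}\partial_r^{j-1}r^{k+1}|_{r_\lambda}=\frac1j\binom{k+1}{j-1}r_\lambda^{k+2-j}$. So the bound $Cr_\lambda^ke^{-\tau k}\rho^j\|\varphi\|^j$ is false, and summing over $j$ just reproduces $(1+\|\varphi\|/r_\lambda)^{k}$. The obstruction is intrinsic, not an artefact of crude estimates. Already
\[
D^2_\varphi\Phi_k(0,0)[\varphi,\varphi]=2\pi\bigl(k+1-2\pi r_\lambda^2\bigr)r_\lambda^k e^{-\pi r_\lambda^2}\,\widehat{\varphi^2}(k).
\]
Take the real $\varphi\in\mathcal A_\tau^\R$ with $\varphi_{\pm j}=e^{-\tau j}j^{-2}$ for $j\ge1$. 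Then $\widehat{\varphi^2}(k)\asymp e^{-\tau k}k^{-2}$, so the $\mathcal Y_\tau$-norm of this second derivative is $\asymp\sum_k k^{-1}=\infty$. Hence $\Phi$ is not even twice differentiable at $(0,0)$ from an $\mathcal A_\tau$-ball into $\mathcal Y_\tau$, for any $\tau$.

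Your first fallback does not help either. The weight $e^{\tau k}(r_\lambda(1+\delta))^{-k}$ just defines $\mathcal Y_\sigma$ with $\sigma=\tau-\log(1+\delta)$. The map $D_\varphi\Phi(0,0)\colon\mathcal A_\tau\to\mathcal Y_\sigma$ is injective, but its range is $\mathcal Y_\tau$, a dense, non-closed proper subspace. So it is not an isomorphism, and shrinking $\varepsilon$ does not change the linearization at $(0,0)$. Consequently the analytic implicit function theorem you invoke at the end is unavailable in this formulation: the moment map loses analyticity width, roughly in proportion to $\|\varphi\|$.

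What is actually needed is your last-resort option, which is the paper's route. The paper proves a strip lemma: for $\sigma<\tau$, $\mathcal F$ is analytic from a small $\mathcal A_\tau$-ball into $\mathcal Y_\sigma$. This comes with a linear-perturbation bound and a quadratic remainder bound whose constants depend on the strip gap. The paper then iterates
\[
\varphi_{n+1}=\varphi_n-D_\varphi\mathcal F(0,0)^{-1}\mathcal F(b,\varphi_n)
\]
on nested strips $\tau_n\downarrow\tau_*$. It reads off analytic dependence on $b$ and local uniqueness from the same majorants.

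You only name this scheme and supply none of it:
\begin{itemize}
\item no estimates with explicit dependence on $\tau-\sigma$, even though both the constants and the admissible radius degenerate as the gap closes;
\item no recurrence for the residuals;
\item no argument that the convergence rate beats the accumulated loss of width.
\end{itemize}
This bookkeeping is where the whole proof lives. As written, existence, real-analyticity in $b$, the bound $\|\varphi_b\|\lesssim\max_j|b_j|$ and local uniqueness all remain unproved.

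Two harmless slips: in the paper's convention the $k$-th moment picks out $\psi_k$, not $\psi_{-k}$; and the isometry needs the factor $2$ on the target entries with $k\ge1$.
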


\begin{proof}
We solve the coefficient equations obtained by expanding the entire identity
\eqref{eq:disk-inverse-identity}. Write
\[
Q_b(w)=\sum_{j=0}^N q_j(b)w^j,
\]
and \(q_j(b)=0\) for \(j>N\). The constant coefficient is real:
\[
q_0(b)=1+\sum_{k=1}^N \frac{k!}{\pi^k}|b_k|^2.
\]
Since
\[
e^{\pi\overline z w}
=
\sum_{j=0}^\infty \frac{\pi^j}{j!}\overline z^{\,j}w^j,
\]
the desired identity is equivalent to the infinite system
\begin{equation}\label{eq:disk-moment-system}
\frac{\pi^j}{j!}\int_{U_\varphi}
|P_b(z)|^2\overline z^{\,j}e^{-\pi |z|^2}\,\mathrm{d}A(z)
=
\lambda q_j(b),
\qquad j\ge0 .
\end{equation}

We introduce the weighted sequence space adapted to the linearization at the
circle. Put
\begin{equation}\label{eq:aj-weight}
a_j:=2\pi e^{-\pi r_\lambda^2}r_\lambda^{j+1}\frac{\pi^j}{j!},
\qquad j\ge0,
\end{equation}
so that \(\{a_j\}\) decays super-exponentially in \(j\). For sequences indexed
by \(j\ge0\), define the target Banach space
\begin{equation}\label{eq:Ytau-def}
\mathcal Y_\tau:=
\Big\{y=(y_j)_{j\ge0}\colon y_0\in\R,\ \|y\|_{\mathcal Y_\tau}
:=\tfrac{|y_0|}{a_0}+2\sum_{j=1}^\infty e^{\tau j}\tfrac{|y_j|}{a_j}<\infty\Big\}.
\end{equation}
The factor of two in \eqref{eq:Ytau-def} accounts for the conjugate pairing
\(\varphi_{-j}=\overline{\varphi_j}\) used in \eqref{eq:Atau-def}.
Define
\[
\mathcal F_j(b,\varphi)
:=
\frac{\pi^j}{j!}\int_{U_\varphi}
|P_b(z)|^2\overline z^{\,j}e^{-\pi |z|^2}\,\mathrm{d}A(z)
-\lambda q_j(b),
\qquad j\ge0 .
\]
Thus \(\mathcal F(b,\varphi)=0\) is precisely
\eqref{eq:disk-moment-system}. At \(b=0\), \(\varphi=0\), the domain is the
disk \(D_{r_\lambda}\), and radial symmetry gives
\[
\frac{\pi^j}{j!}\int_{D_{r_\lambda}}
\overline z^{\,j}e^{-\pi |z|^2}\,\mathrm{d}A(z)
=
\begin{cases}
\lambda, & j=0,\\
0, & j\ge1.
\end{cases}
\]
Hence \(\mathcal F(0,0)=0\).

We next compute the Fr\'echet derivative of the moment map with respect to
the boundary graph. Let
\(\psi(\theta)=\sum_{k\in\Z}\psi_k e^{2\pi i k\theta}\in\mathcal A_\tau^\R\), and
recall the polar-coordinate formula
\[
\int_{U_\varphi}H(z)\,\mathrm{d}A(z)
=
2\pi\int_0^1\!\!\int_0^{r_\lambda+\varphi(\theta)}
H(re^{2\pi i\theta})\,r\,\mathrm{d}r\,\mathrm{d}\theta,
\]
valid for any continuous \(H\colon\C\to\C\). Differentiating the right-hand
side above at \(\varphi=0\) in the direction of \(\psi\) gives, by the
fundamental theorem of calculus,
\begin{equation}\label{eq:disk-frechet-domain}
D_\varphi\!\Big(\int_{U_\varphi}H(z)\,\mathrm{d}A(z)\Big)\bigg|_{\varphi=0}\![\psi]
=
2\pi r_\lambda\!\int_0^1\!\psi(\theta)\,H(r_\lambda e^{2\pi i\theta})\,\mathrm{d}\theta .
\end{equation}
Applied to \(H(z)=\frac{\pi^j}{j!}\overline z^{\,j}e^{-\pi|z|^2}\) and combined
with the Fourier orthogonality relation from Section~\ref{subsec:notation},
\(\int_0^1e^{2\pi i(k-j)\theta}\,\mathrm{d}\theta=\delta_{kj}\), formula
\eqref{eq:disk-frechet-domain} yields the closed-form Fourier-multiplier
expression
\begin{equation}\label{eq:disk-frechet-multiplier}
D_\varphi\mathcal F_j(0,0)[\psi]
=
2\pi\frac{\pi^j}{j!}\,r_\lambda^{j+1}e^{-\pi r_\lambda^2}
\int_0^1\!\psi(\theta)\,e^{-2\pi i j\theta}\,\mathrm{d}\theta
=
a_j\,\psi_j,
\qquad j\ge0,
\end{equation}
where \(a_j\) is the weight in \eqref{eq:aj-weight}. Consequently the
Fr\'echet derivative \(D_\varphi\mathcal F(0,0)\) is the Fourier multiplier
\begin{equation}\label{eq:disk-frechet-iso}
D_\varphi\mathcal F(0,0)\colon
\mathcal A_\tau^\R\longrightarrow\mathcal Y_\tau,
\qquad
\psi=\sum_{k\in\Z}\psi_k e^{2\pi i k\theta}\longmapsto(a_j\psi_j)_{j\ge0},
\end{equation}
acting diagonally on the non-negative Fourier modes of \(\psi\).

The constraint \(\psi_{-j}=\overline{\psi_j}\) shows that the kernel
of the unrestricted multiplier on the (complexified) space consists exactly of
the negative-frequency modes; on \(\mathcal A_\tau^\R\) the reality
constraint determines those negative modes from the non-negative ones, so
\(D_\varphi\mathcal F(0,0)\) has trivial kernel as a real-linear map. We
verify directly that \eqref{eq:disk-frechet-iso} is bounded and invertible.

\emph{Boundedness.} Using \eqref{eq:Atau-norm} and \eqref{eq:Ytau-def},
\[
\|D_\varphi\mathcal F(0,0)\psi\|_{\mathcal Y_\tau}
=
\frac{|a_0\psi_0|}{a_0}+2\sum_{j=1}^\infty e^{\tau j}\frac{|a_j\psi_j|}{a_j}
=
|\psi_0|+2\sum_{j=1}^\infty e^{\tau j}|\psi_j|
=
\|\psi\|_{\mathcal A_\tau}.
\]

\emph{Invertibility.} For \(y\in\mathcal Y_\tau\) define
\begin{equation}\label{eq:disk-frechet-inverse}
\psi_0:=\frac{y_0}{a_0}\in\R,
\qquad
\psi_j:=\frac{y_j}{a_j}\ (j\ge1),
\qquad
\psi_{-j}:=\overline{\psi_j}\ (j\ge1),
\end{equation}
and \(\psi(\theta):=\sum_{k\in\Z}\psi_k e^{2\pi i k\theta}\). The above
computation gives \(\|\psi\|_{\mathcal A_\tau}=\|y\|_{\mathcal Y_\tau}\), so
\(\psi\in\mathcal A_\tau^\R\), and by construction
\(D_\varphi\mathcal F(0,0)\psi=y\). Hence \eqref{eq:disk-frechet-iso} is an
isometric isomorphism of \(\mathcal A_\tau^\R\) onto \(\mathcal Y_\tau\); in
particular
\(\|D_\varphi\mathcal F(0,0)^{-1}\|_{\mathcal Y_\tau\to\mathcal A_\tau^\R}=1\),
which provides the explicit quantitative inverse used in the implicit step
below.

It remains to justify the implicit step. The only point requiring care is
that the \(j\)-th moment contains the factor \(r^{j+1}\); differentiating in
the boundary graph produces powers of \(j\). Thus the natural estimates are
estimates on a scale of analytic strips. We record the precise form needed
below.

\begin{lemma}\label{lem:moment-map-strip-estimates}
Let \(0<\sigma<\tau\). There exist \(\rho>0\) and
\(C=C_{\lambda,N,\sigma,\tau}\) such that, whenever
\[
\|b\|+\|\varphi\|_{\mathcal A_\tau}+\|\psi\|_{\mathcal A_\tau}\le \rho,
\]
the sequence \(\mathcal F(b,\varphi)\) belongs to \(\mathcal Y_\sigma\), the
map \((b,\varphi)\mapsto \mathcal F(b,\varphi)\) is analytic from this
\(\mathcal A_\tau\)-ball into \(\mathcal Y_\sigma\), and
\begin{equation}\label{eq:strip-linear-perturb-bound}
\big\|(D_\varphi\mathcal F(b,\varphi)-D_\varphi\mathcal F(0,0))\psi
\big\|_{\mathcal Y_\sigma}
\le
C\big(\|b\|+\|\varphi\|_{\mathcal A_\tau}\big)
\|\psi\|_{\mathcal A_\tau}.
\end{equation}
Moreover,
\begin{equation}\label{eq:strip-quadratic-remainder}
\begin{aligned}
&\|\mathcal F(b,\varphi)-\mathcal F(b,\psi)
-D_\varphi\mathcal F(b,\psi)[\varphi-\psi]\|_{\mathcal Y_\sigma} \\
&\hspace{3cm}\le
C\big(\|b\|+\|\varphi\|_{\mathcal A_\tau}
+\|\psi\|_{\mathcal A_\tau}\big)
\|\varphi-\psi\|_{\mathcal A_\tau}^2 .
\end{aligned}
\end{equation}
The same estimates hold after differentiating a finite number of times with
respect to the finite-dimensional parameter \(b\), with constants depending
on that number of derivatives.
\end{lemma}

\begin{proof}
Put \(R_\varphi(\theta):=r_\lambda+\varphi(\theta)\). If \(\rho\) is small,
then \(R_\varphi\) extends holomorphically to \(|\Im\theta|<\tau\) and
remains in the annulus \(r_\lambda/2<|R|<3r_\lambda/2\). In polar
coordinates,
\[
\mathcal F_j(b,\varphi)+\lambda q_j(b)
=
\frac{\pi^j}{j!}\int_0^1e^{-2\pi i j\theta}
\int_0^{R_\varphi(\theta)}
2\pi P_b(re^{2\pi i\theta})P_b^\#(re^{-2\pi i\theta})r^{j+1}e^{-\pi r^2}\,\mathrm{d}r\,\mathrm{d}\theta .
\]
Choose an intermediate strip width \(\widehat\tau\) with
\(\sigma<\widehat\tau<\tau\). For \(|\Im\theta|\le \widehat\tau\), the factor
\(P_b(re^{2\pi i\theta})P_b^\#(re^{-2\pi i\theta})\) is a finite trigonometric
polynomial of degree at most \(N\), uniformly bounded together with its
first \(b\)-derivatives for \(\|b\|\le\rho\). To keep the Fourier notation
readable, set
\[
I_{j,b,\varphi}(\theta):=
\int_0^{R_\varphi(\theta)}
P_b(re^{2\pi i\theta})P_b^\#(re^{-2\pi i\theta})r^{j+1}e^{-\pi r^2}\,\mathrm{d}r .
\]
The Cauchy coefficient estimate applied to this holomorphic strip function
gives
\[
\big|\widehat{I_{j,b,\varphi}}(j)\big|
\le
C e^{-\widehat\tau j} r_\lambda^{j+2}(1+C\rho)^j .
\]
Since
\[
\frac{\pi^j/j!}{a_j}
=
\frac{e^{\pi r_\lambda^2}}{2\pi r_\lambda^{j+1}},
\]
we obtain
\[
e^{\sigma j}\frac1{a_j}
\left|\frac{\pi^j}{j!}\widehat{I_{j,b,\varphi}}(j)\right|
\le
C e^{-(\widehat\tau-\sigma)j}(1+C\rho)^j .
\]
After decreasing \(\rho\), the right-hand side is summable in \(j\). This
proves that \(\mathcal F(b,\varphi)\in\mathcal Y_\sigma\). The same argument
applies term by term to the absolutely convergent Taylor expansion in
\(\varphi\) obtained by expanding \(I_{j,b,\varphi}\) at \(R_\psi(\theta)\).
The first variation is the boundary integrand multiplied by
\(\varphi-\psi\), while the second Taylor remainder is bounded by a constant
times \((j+1)^2\|\varphi-\psi\|_{\mathcal A_\tau}^2\). These polynomial factors
are absorbed by the exponential margin \(e^{-(\widehat\tau-\sigma)j}\). This gives
\eqref{eq:strip-linear-perturb-bound}. A second-order Taylor formula in the
radial variable gives \eqref{eq:strip-quadratic-remainder}; the possible
polynomial factors in \(j\) are again absorbed by the same strip margin.
Because \(b\) ranges in a finite-dimensional space and \(P_bP_b^\#\) is
polynomial in \(b\), the \(b\)-derivative estimates are identical.
\end{proof}

We now apply the usual proof of the analytic implicit-function theorem on
this nested scale of strips. Choose
\[
0<\tau_*<\tau_0
\]
and a decreasing sequence \(\tau_n\downarrow \tau_*\). Starting with
\(\varphi_0=0\), write \(E_n:=\mathcal F(b,\varphi_n)\). At the \(n\)-th step
the correction is computed by the explicit inverse
\[
\delta_n:=-D_\varphi\mathcal F(0,0)^{-1}\mathcal F(b,\varphi_n)
\in\mathcal A_{\tau_{n+1}}^\R,
\]
using the isometry
\(D_\varphi\mathcal F(0,0)^{-1}\colon\mathcal Y_{\tau_{n+1}}\to
\mathcal A_{\tau_{n+1}}^\R\), and we set
\(\varphi_{n+1}:=\varphi_n+\delta_n\). Since
\(D_\varphi\mathcal F(0,0)\delta_n=-E_n\), the linear part of the new
residual cancels at each step. Estimates
\eqref{eq:strip-linear-perturb-bound}--\eqref{eq:strip-quadratic-remainder},
used on the successive strip pairs \((\tau_n,\tau_{n+1})\) and
\((\tau_{n+1},\tau_{n+2})\), give the standard quadratic majorant recurrence
\[
\|\mathcal F(b,\varphi_{n+1})\|_{\mathcal Y_{\tau_{n+2}}}
\le
C_n\big(\|b\|+\|\varphi_n\|_{\mathcal A_{\tau_n}}\big)
\|\mathcal F(b,\varphi_n)\|_{\mathcal Y_{\tau_{n+1}}}
+C_n\|\mathcal F(b,\varphi_n)\|_{\mathcal Y_{\tau_{n+1}}}^2,
\]
where \(C_n\) grows at most polynomially in
\((\tau_n-\tau_{n+1})^{-1}\). Taking, for instance,
\(\tau_n-\tau_{n+1}\simeq 2^{-n}\) and \(\|b\|\) sufficiently small, this
recurrence makes \(E_n\) decrease quadratically after the first step. The
resulting majorant series makes \(\sum_n\|\delta_n\|_{\mathcal
A_{\tau_*}}\) convergent and yields a limit
\(\varphi_b\in\mathcal A_{\tau_*}^\R\) satisfying
\(\mathcal F(b,\varphi_b)=0\). The same majorant applied to the differentiated
iteration in the finite-dimensional parameter \(b\) gives real-analytic
dependence of \(\varphi_b\) on \(b\). Finally, the linearized estimate
\eqref{eq:strip-linear-perturb-bound} and the explicit inverse show uniqueness
among sufficiently small radial graphs in \(\mathcal A_{\tau_*}^\R\). Relabel
\(\tau_*\) as \(\tau\). The embedding
\(\mathcal A_\tau\hookrightarrow C^2\) gives the displayed \(C^2\) bound.

It remains to upgrade the coefficient identities \eqref{eq:disk-moment-system}
to the global identity \eqref{eq:disk-inverse-identity}. Define the entire
functions
\begin{equation}\label{eq:LHS-RHS-entire}
\mathcal L(w):=\int_{U_b}|P_b(z)|^2 e^{-\pi|z|^2}e^{\pi\overline z w}\,\mathrm{d}A(z),
\qquad
\mathcal R(w):=\lambda Q_b(w),
\qquad w\in\C.
\end{equation}
The function \(\mathcal R\) is a polynomial of degree \(\le N\), and
\(\mathcal L\) is entire of exponential type \(\pi M\), with
\(M=\sup_{z\in U_b}|z|\), since
\(|e^{\pi\overline z w}|\le e^{\pi M|w|}\) on \(U_b\). Both \(\mathcal L\) and
\(\mathcal R\) admit absolutely convergent Taylor expansions at the origin,
\[
\mathcal L(w)=\sum_{j\ge0}\frac{\pi^j}{j!}
\Big(\!\int_{U_b}\!|P_b(z)|^2\overline z^{\,j}e^{-\pi|z|^2}\mathrm{d}A(z)\!\Big)w^j,
\qquad
\mathcal R(w)=\sum_{j=0}^N\lambda q_j(b)\,w^j,
\]
the first by interchange of summation and integration (justified by
\(\sum_j\frac{(\pi M|w|)^j}{j!}<\infty\) and the boundedness of \(U_b\)).
The system \eqref{eq:disk-moment-system}, namely
\(\mathcal F_j(b,\varphi_b)=0\) for every \(j\ge0\), is exactly the
identification of the Taylor coefficients of \(\mathcal L\) and
\(\mathcal R\) at the origin, including the moments of degree \(j>N\) for
which \(q_j(b)=0\) and the integral on the left-hand side accordingly
vanishes. Hence \(\mathcal L=\mathcal R\) as formal power series, and by the
identity theorem for entire functions \(\mathcal L\equiv\mathcal R\) on
\(\C\), which is \eqref{eq:disk-inverse-identity}.

Equivalently, the same identity may be read off from the exterior
Cauchy-transform picture: restricting \eqref{eq:disk-inverse-identity} to
real \(w=t>0\), multiplying by \(e^{-\pi\zeta t}\) and integrating in
\(t\in(0,\infty)\) for real \(\zeta>M\) gives, by Fubini's theorem (applicable
since \(\Re(\overline z-\zeta)\le|z|-\zeta<0\) on \(U_b\)),
\begin{equation}\label{eq:cauchy-from-moments}
\frac1\pi\int_{U_b}\frac{|P_b(z)|^2 e^{-\pi|z|^2}}{\zeta-\overline z}\,\mathrm{d}A(z)
=
\lambda\sum_{j=0}^N q_j(b)\,\frac{j!}{\pi^{j+1}\zeta^{j+1}},
\qquad \zeta\in(M,\infty).
\end{equation}
Both sides of \eqref{eq:cauchy-from-moments} are holomorphic in \(\zeta\) on
the connected open set \(\C\setminus\overline{U_b^*}\) (where
\(U_b^*=\{\overline z:z\in U_b\}\)), and they agree on the real ray
\((M,\infty)\), which has an accumulation point in this set. By the
identity theorem for holomorphic functions on a connected domain,
\eqref{eq:cauchy-from-moments} extends to all
\(\zeta\in\C\setminus\overline{U_b^*}\). Expanding the geometric series
\(1/(\zeta-\overline z)=\sum_{j\ge0}\overline z^{\,j}/\zeta^{\,j+1}\) at
\(\zeta=\infty\) and matching the resulting Laurent coefficients gives the
moment identities \(\frac{\pi^j}{j!}\int_{U_b}|P_b|^2\overline z^{\,j}e^{-\pi|z|^2}\mathrm{d}A=\lambda q_j(b)\)
of \eqref{eq:disk-moment-system}, including the vanishing of the higher
moments \(\int_{U_b}|P_b|^2\overline z^{\,j}e^{-\pi|z|^2}\mathrm{d}A=0\) for every
\(j>N\) (equivalently \(q_j(b)=0\) for \(j>N\)). The two formulations are
therefore equivalent, and either yields \eqref{eq:disk-inverse-identity}.
This proves the proposition.
\end{proof}

\begin{remark}
The proof above is the only place where analyticity of the boundary
parametrization is used. The constructed domains are therefore \(C^\infty\),
hence \(C^2\), which is the regularity needed later.
\end{remark}

We isolate the ODE-theoretic ingredient needed for the unsymmetrization step
of the proof of Theorem~\ref{thm:perturb}, namely the claim that the only
entire function annihilated by \(P^\#(\partial_w/\pi)\) and growing no faster
than the Cauchy-transform of a bounded weighted measure on \(U\) is the zero
function, provided the zeros of the symbol \(P^\#\) escape to infinity.

\begin{lemma}\label{lem:step3-rigidity}
Let $N\ge1$ and let $P(z)=1+\sum_{k=1}^N b_k z^k$ be a polynomial with
$\max_k|b_k|\le 1$. Let \(P^\#(z)=\overline{P(\overline z)}\) be its
coefficientwise conjugate, and consider the constant-coefficient differential
operator \(P^\#(\partial_w/\pi)\) defined as in \eqref{eq:Phash-operator}.
Suppose $H\colon\C\to\C$ is an entire function satisfying
\begin{equation}\label{eq:lemma-step3-eq}
P^\#(\partial_w/\pi)H(w)=0,
\qquad w\in\C,
\end{equation}
together with the growth bound
\begin{equation}\label{eq:lemma-step3-growth}
|H(w)|\le C(1+|w|)^N e^{\pi M|w|},
\qquad w\in\C,
\end{equation}
for some \(C>0\) and some \(M>0\). If, in addition, the zeros
$\zeta_1,\dots,\zeta_\ell$ of the characteristic polynomial
$\widetilde P^\#(\zeta):=P^\#(\zeta/\pi)$ on $\C$ all satisfy
\begin{equation}\label{eq:lemma-step3-zeros}
|\zeta_j|>\pi M,\qquad 1\le j\le\ell,
\end{equation}
then \(H\equiv 0\).
\end{lemma}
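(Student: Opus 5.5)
The plan is to use the explicit structure of the solution space of a constant-coefficient linear ODE, and then play the exponential growth of each exponential mode against the bound \eqref{eq:lemma-step3-growth}.

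\emph{Step 1: reduction to the nondegenerate case.} Let \(d:=\deg P^\#\le N\). If \(d=0\), then \(P^\#\equiv 1\), the operator in \eqref{eq:lemma-step3-eq} is the identity, and \(H\equiv0\) immediately. So assume \(d\ge1\), and write
\[
\widetilde P^\#(\zeta)=c\prod_{j=1}^{\ell}(\zeta-\zeta_j)^{m_j},
\qquad c\neq0,\quad \sum_j m_j=d,
\]
with the \(\zeta_j\) pairwise distinct. Since \(\widetilde P^\#(0)=P^\#(0)=1\), every \(\zeta_j\) is nonzero, although the argument below only uses \eqref{eq:lemma-step3-zeros}. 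Then \(P^\#(\partial_w/\pi)=\widetilde P^\#(\partial_w)\) is an ordinary differential operator of order \(d\) with constant coefficients and nonzero leading coefficient, acting on entire functions.

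\emph{Step 2: the solution space.} By the classical theory of linear ODEs with constant coefficients in the complex domain, the entire solutions of \(\widetilde P^\#(\partial_w)H=0\) form a \(d\)-dimensional space. This space is spanned by \(w^k e^{\zeta_j w}\) for \(0\le k<m_j\). Uniqueness for the holomorphic initial value problem at \(w=0\) shows that every entire solution is determined by \(H(0),\dots,H^{(d-1)}(0)\), and these \(d\) functions are solutions and are linearly independent. Hence
\[
H(w)=\sum_{j=1}^{\ell}p_j(w)e^{\zeta_j w},\qquad \deg p_j<m_j .
\]

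\emph{Step 3: growth contradiction.} This is the only step with content. Suppose \(H\not\equiv0\). Among the indices with \(p_j\not\equiv0\), choose \(j_0\) maximizing \(|\zeta_j|\), and consider the ray \(w=tu\) with \(u:=\overline{\zeta_{j_0}}/|\zeta_{j_0}|\) and \(t>0\). For any other active index \(k\),
\[
\Re(\zeta_k u)\le|\zeta_k|\le|\zeta_{j_0}|,
\]
with equality throughout only if \(\zeta_k=|\zeta_{j_0}|\,\overline u=\zeta_{j_0}\). So every other term is \(O\bigl(t^{N}e^{(|\zeta_{j_0}|-\delta)t}\bigr)\) for some \(\delta>0\). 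The leading term has modulus \(|p_{j_0}(tu)|e^{|\zeta_{j_0}|t}\gtrsim e^{|\zeta_{j_0}|t}\) for large \(t\), since a nonzero polynomial is eventually bounded below along a ray. Therefore
\[
|H(tu)|\ge c'e^{|\zeta_{j_0}|t}
\]
for large \(t\). By \eqref{eq:lemma-step3-growth} we also have \(|H(tu)|\le C(1+t)^Ne^{\pi Mt}\). Since \(|\zeta_{j_0}|>\pi M\) by \eqref{eq:lemma-step3-zeros}, the ratio of the two bounds grows like \(e^{(|\zeta_{j_0}|-\pi M)t}(1+t)^{-N}\to\infty\), a contradiction. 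Hence all \(p_j\equiv0\) and \(H\equiv0\).

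The main point needing care is the choice of direction in Step 3, which isolates a single dominant exponential. It works because we maximize \(|\zeta_j|\) only over the active indices, so that ties on the chosen ray can come only from \(\zeta_{j_0}\) itself. The hypothesis \(\max_k|b_k|\le1\) is not needed for this argument beyond guaranteeing that the polynomial is fixed and finite.
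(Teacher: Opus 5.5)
Your proof is correct and follows the paper's argument essentially step by step. Like the paper, you write $H$ as an exponential polynomial $\sum_j p_j(w)e^{\zeta_j w}$, take the active frequency $\zeta_{j_0}$ of maximal modulus, and compare the lower bound $e^{|\zeta_{j_0}|t}$ along the ray in direction $\overline{\zeta_{j_0}}$ with the growth hypothesis to force $|\zeta_{j_0}|\le \pi M$. Your additions are harmless refinements of what the paper quotes from the literature: you derive the exponential-polynomial representation from uniqueness for the holomorphic initial value problem, so no growth assumption is needed for it, and you dispose of the trivial case $\deg P^\#=0$.
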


\begin{proof}
Let $m_1,\dots,m_\ell$ be the multiplicities of the zeros
$\zeta_1,\dots,\zeta_\ell$ of $\widetilde P^\#$, and write
\(\widetilde P^\#(\zeta)=\overline{b_N}\,\pi^{-N}\prod_{j=1}^\ell(\zeta-\zeta_j)^{m_j}\)
when $b_N\neq0$ (otherwise the leading coefficient is the highest non-zero
$\overline{b_k}$ divided by $\pi^k$, and the same argument applies). The
elementary structure theorem for entire solutions of constant-coefficient
linear ODEs of finite exponential type \cite[Ch.~XII]{Hormander-PDE2} states that
every entire solution of \eqref{eq:lemma-step3-eq} with finite exponential
type takes the form
\begin{equation}\label{eq:lemma-step3-decomposition}
H(w)=\sum_{j=1}^\ell R_j(w)\,e^{\zeta_j w},
\end{equation}
where \(R_j\in\C[w]\) is a polynomial of degree at most \(m_j-1\). The
hypothesis \eqref{eq:lemma-step3-growth} ensures that \(H\) has exponential
type at most \(\pi M\), so \eqref{eq:lemma-step3-decomposition} indeed
applies.

Suppose, for the sake of a contradiction, that some of the $R_j$ does not
vanish identically.
Choose $j_0$ with $R_{j_0}\not\equiv0$ and
$|\zeta_{j_0}|=\max\{|\zeta_j|\colon R_j\not\equiv 0\}$. For every \(j\) with
$j\ne j_0$ and $R_j\not\equiv 0$,
\begin{equation}\label{eq:lemma-step3-strict}
\Re(\zeta_j\,\overline{\zeta_{j_0}})<|\zeta_{j_0}|^2.
\end{equation}
Indeed, this is immediate from Cauchy--Schwarz when
$|\zeta_j|<|\zeta_{j_0}|$.  If the moduli agree, equality in
Cauchy--Schwarz would force $\zeta_j=\zeta_{j_0}$, contrary to the indexing
by distinct roots $\zeta_j$.

Along the half-line \(w_t:=t\,\overline{\zeta_{j_0}}\), \(t\ge 0\), the
exponential factor of the \(j_0\)th term has modulus
$|e^{\zeta_{j_0}w_t}|=e^{t|\zeta_{j_0}|^2}$, while every other term
satisfies
\(|R_j(w_t)e^{\zeta_j w_t}|\le C_j(1+t)^{m_j-1}e^{t\,\Re(\zeta_j\overline{\zeta_{j_0}})}\),
which by \eqref{eq:lemma-step3-strict} grows strictly slower than
$e^{t|\zeta_{j_0}|^2}$. Combining these and using $R_{j_0}\not\equiv 0$,
\begin{equation}\label{eq:lemma-step3-lower}
|H(w_t)|\ge c_0\,e^{t|\zeta_{j_0}|^2}
\qquad
\text{for all sufficiently large $t$,}
\end{equation}
with $c_0>0$ depending only on $R_{j_0}$ and the gap in
\eqref{eq:lemma-step3-strict}.

On the other hand, \eqref{eq:lemma-step3-growth} evaluated at $w_t$ gives
\begin{equation}\label{eq:lemma-step3-upper}
|H(w_t)|
\le
C(1+t|\zeta_{j_0}|)^N e^{\pi M\,t|\zeta_{j_0}|}.
\end{equation}
Comparing \eqref{eq:lemma-step3-lower} and \eqref{eq:lemma-step3-upper},
and absorbing the polynomial factor into an arbitrarily small exponential,
yields \(|\zeta_{j_0}|^2\le\pi M|\zeta_{j_0}|\), i.e.
\(|\zeta_{j_0}|\le\pi M\), contradicting \eqref{eq:lemma-step3-zeros}. Hence
all $R_j$ are identically zero, and \(H\equiv0\).
\end{proof}

\begin{remark}\label{rmk:step3-no-hidden-obstruction}
Lemma~\ref{lem:step3-rigidity} is the precise reason why no
finite-dimensional obstruction is left behind by the symmetrization
\eqref{eq:eigenfunction-general-new}~\(\Rightarrow\)~\eqref{eq:almost-reduced-new}:
the kernel of \(P^\#(\partial_w/\pi)\) on the full space of entire functions
is the finite-dimensional exponential-polynomial space spanned by
\(\{w^ie^{\zeta_jw}:0\le i<m_j,\ 1\le j\le\ell\}\), but the growth
constraint \eqref{eq:lemma-step3-growth} together with the non-vanishing of
\(P^\#\) on the closure of \(U\) (which forces \eqref{eq:lemma-step3-zeros})
collapses this kernel to $\{0\}$. The role of $P^\#$ being non-vanishing on
$U$ is therefore not cosmetic: it is what guarantees that the zeros of
$\widetilde P^\#$ are far from the origin, hence beyond the exponential-type
radius $\pi M$ that any candidate solution can have.
\end{remark}

\begin{proof}[Proof of Theorem \ref{thm:perturb}]
We again use the Bargmann transform. Let
\[
P(z)=1+\sum_{k=1}^N b_k z^k,
\]
where the coefficients are related by
\[
b_k=a_k\left(\frac{\pi^k}{k!}\right)^{1/2},\qquad 1\le k\le N,
\]
so that the Bargmann transform of
\[
f_0=h_0+\sum_{k=1}^N a_k h_k
\]
is exactly $P$. Thus, after decreasing the smallness threshold by a factor
depending only on $N$, the hypothesis of
Proposition~\ref{prop:disk-inverse-tailored} follows from the stated
smallness condition on the $a_k$. We also write
\begin{equation}\label{eq:Phash-def}
P^\#(z):=\overline{P(\overline z)}
=1+\sum_{k=1}^N \overline{b_k}\,z^k
\end{equation}
for the \emph{coefficientwise conjugate polynomial}: the operation
\(P\mapsto P^\#\) leaves the monomials \(z^k\) intact and conjugates only the
scalar coefficients, so \(P^\#\) is again a polynomial of the same degree as
\(P\), holomorphic in \(z\), satisfying
\(P^\#(\overline z)=\overline{P(z)}\) for every \(z\in\C\).
Associated with \(P^\#\) is the holomorphic constant-coefficient differential
operator on the variable \(w\in\C\) defined by
\begin{equation}\label{eq:Phash-operator}
P^\#(\partial_w/\pi)
:=
\sum_{k=0}^N\overline{b_k}\,\Big(\frac{\partial_w}{\pi}\Big)^k,
\qquad b_0:=1,
\end{equation}
i.e. by substituting \(z=\partial_w/\pi\) into \eqref{eq:Phash-def}. Since
\(\partial_w=\frac12(\partial_x-i\partial_y)\) is the standard Wirtinger
derivative on entire functions of \(w=u+iv\), the operator
\(P^\#(\partial_w/\pi)\) acts on every entire function as the finite linear
combination of the iterated Wirtinger derivatives \((\partial_w/\pi)^k\),
\(0\le k\le N\); in particular it is a finite-order, holomorphic differential
operator. The key calculation is that
\(P^\#(\partial_w/\pi)\) acts as the multiplier \(P^\#(\overline z)\) on
each plane wave \(w\mapsto e^{\pi\overline z w}\):
\begin{equation}\label{eq:Phash-on-planewaves}
 P^\#(\partial_w/\pi)\big[e^{\pi\overline z w}\big]
=
P^\#(\overline z)\,e^{\pi\overline z w}
=
\overline{P(z)}\,e^{\pi\overline z w},
\qquad z,w\in\C.
\end{equation}

Term-by-term differentiation of an integrand of the form
\(P(z)e^{-\pi|z|^2}e^{\pi\overline z w}\) against \(z\in U\) is justified by
dominated convergence: \(U\) is bounded, \(P(z)e^{-\pi|z|^2}\) is bounded on
\(U\), and for each \(k\le N\) and each compact \(K\subset\C\) one has the
locally uniform estimate
\[
\Big|\Big(\frac{\partial_w}{\pi}\Big)^k\!\big[e^{\pi\overline z w}\big]\Big|
=
|\overline z|^k\,e^{\pi\Re(\overline z w)}
\le
M^k e^{\pi M|w|},
\qquad z\in U,\ w\in K,
\]
with \(M:=\sup_{z\in U}|z|\). Thus differentiation and integration may be
exchanged finitely many times, which is all that is required to apply
\(P^\#(\partial_w/\pi)\) term-by-term to the entire function defined by an
integral against \(z\in U\). Then the condition that $f_0$ be an eigenfunction
of $\mathcal{L}_U$ with eigenvalue $\lambda$ is equivalent to
\begin{equation}\label{eq:eigenfunction-general-new}
\int_U P(z)e^{-\pi|z|^2}e^{\pi \overline z w}\,\mathrm{d}A(z)=\lambda P(w),
\qquad w\in \C.
\end{equation}
Set
\begin{equation}\label{eq:Q-def}
Q(w):=P^\#(\partial_w/\pi)P(w).
\end{equation}
By Proposition~\ref{prop:disk-inverse-tailored}, after decreasing
\(\varepsilon_0(\lambda,N)\) if necessary, there exists a real-analytic radial
graph \(U=U_b\), \(C^2\)-close to the disk \(D_{r_\lambda}\) defined above, of radius
\(r_\lambda\), for which
\begin{equation}\label{eq:almost-reduced-new}
\int_U |P(z)|^2e^{-\pi|z|^2}e^{\pi\overline z w}\,\mathrm{d}A(z)
=
\lambda Q(w),
\qquad w\in\C .
\end{equation}
We now recover the unsymmetrized identity
\eqref{eq:eigenfunction-general-new}. Define
\[
G(w):=\int_U P(z)e^{-\pi|z|^2}e^{\pi\overline z w}\,\mathrm{d}A(z).
\]
The function \(G\) is entire, and differentiating under the integral is
justified by boundedness of \(U\). Since
\[
\Big(\frac{\partial_w}{\pi}\Big)^j e^{\pi\overline z w}
=\overline z^{\,j}e^{\pi\overline z w},
\]
we have
\[
P^\#(\partial_w/\pi)G(w)
=
\int_U |P(z)|^2e^{-\pi|z|^2}e^{\pi\overline z w}\,\mathrm{d}A(z)
=
\lambda Q(w)
=
P^\#(\partial_w/\pi)(\lambda P)(w).
\]
Thus \(H:=G-\lambda P\) is an entire solution of
\[
P^\#(\partial_w/\pi)H=0.
\]

We now verify the hypotheses of Lemma~\ref{lem:step3-rigidity}. Setting
\(M:=\sup_{z\in U}|z|\), the integral defining \(G\) yields the standard
exponential-type bound
\begin{equation}\label{eq:G-growth-bound}
|G(w)|\le\Big(\sup_{z\in U}|P(z)|e^{-\pi|z|^2}\Big)\,|U|\,e^{\pi M|w|}
\le C(1+|w|)^N e^{\pi M|w|},
\qquad w\in\C,
\end{equation}
because \(|e^{\pi\overline z w}|\le e^{\pi|z||w|}\) for \(z\in U\). Since
\(\lambda P\) grows only polynomially, \(H=G-\lambda P\) inherits the same
exponential-type bound, with possibly larger constant.

The location of the zeros of the characteristic polynomial \(\widetilde P^\#\)
is controlled by \(b\). Since
\(P^\#(z)=1+\sum_k\overline{b_k}z^k\) reduces to the constant \(1\) at
\(b=0\), the zeros of \(P^\#\) on \(\C\) escape to infinity uniformly as
\(b\to0\); a fortiori the zeros of \(\widetilde P^\#(\zeta)=P^\#(\zeta/\pi)\)
escape to infinity. After shrinking \(\varepsilon_0(\lambda,N)\) if
necessary, all \(\ell\) zeros \(\zeta_j\) of \(\widetilde P^\#\) satisfy
\[
|\zeta_j|>10\,M\ge\pi M,
\qquad 1\le j\le\ell,
\]
which shows that these satisfy the hypothesis
\eqref{eq:lemma-step3-zeros}. Lemma~\ref{lem:step3-rigidity}
therefore implies $H\equiv 0$, which is precisely
\eqref{eq:eigenfunction-general-new}.

\end{proof}


\section{Proof of the Abreu-D\"orfler theorem}\label{sec:classical-rigidity}

We now move on to the proof of of Theorem \ref{thm:eig-loc-U}, providing one proof in the case where the boundary of the set $U$ is assumed to be somewhat regular, and one proof in the general case. In \S\ref{subsec:counterexample} we then prove Theorem~\ref{thm:counterexample}, which shows that the simple connectivity hypothesis cannot be dropped from either statement.

\subsection{First proof} The main tools used in the first proof we provide orbit around the Paley--Wiener--Schwartz theorem and divisibility results for distributions. We recall that such results have been previously collected 
collected in Section~\ref{sec:fourier-to-fboundary}, and point to the relevant
notation only when it enters the argument.

\begin{proof}[First proof under the additional assumption that $\partial U$ is $C^1$]
The Bargmann transform of the Hermite function $h_k$ is a nonzero scalar
multiple of the monomial $z^k$. Thus, arguing as in
\cite{Abreu-Doerfler}, the assumption that $h_k$ is an eigenfunction of
the localization operator $\mathcal{L}_U$ is equivalent to the identity
\[
\int_U z^k\, e^{\pi w\overline{z}}\, e^{-\pi |z|^2}\,\mathrm{d}A(z)
=\lambda\, w^k,
\qquad w\in \C.
\]
Both sides are entire functions of $w$. Since $U$ is bounded and the
weight $e^{-\pi|z|^2}$ is integrable, the dominated convergence
theorem applied to difference quotients shows that for each
multi-index $\alpha$ in $w$, the partial derivative
$\partial_w^\alpha$ commutes with the integral on the left-hand side.
Iterating $k$ times, we obtain
\[
\frac{1}{(\pi)^k}\,\partial_w^k
\Big[\int_U z^k\, e^{\pi w\overline{z}}\, e^{-\pi |z|^2}\,\mathrm{d}A(z)\Big]
=\int_U z^k\overline{z}^k\, e^{\pi w\overline{z}}\, e^{-\pi |z|^2}\,\mathrm{d}A(z)
=\int_U |z|^{2k}\, e^{\pi w\overline{z}}\, e^{-\pi |z|^2}\,\mathrm{d}A(z).
\]
On the right-hand side, $\partial_w^k(\lambda w^k)=\lambda\,k!$.
Setting $c(\lambda,k):=\lambda\,k!/\pi^k\in\R$ and using
$\mathbf{1}_U$ to extend the integration to $\C$, we obtain
\begin{equation}\label{eq:fourier-delta}
\int_{\C} \mathbf{1}_U(z)\,|z|^{2k}\,e^{-\pi |z|^2}\, e^{\pi \overline{z}\, w}\,\mathrm{d}A(z)
=c(\lambda,k),
\end{equation}
for some real constant $c(\lambda,k)$.

Consider now the compactly supported distribution
\begin{equation}\label{eq:eta-def-section4}
\eta:=|z|^{2k}e^{-\pi |z|^2}\mathbf{1}_U(z)-c(\lambda,k)\,\delta_0,
\end{equation}
which is a finite signed Radon measure with compact support in any closed
ball containing $U\cup\{0\}$. Its absolutely continuous part has bounded
density $|z|^{2k}e^{-\pi|z|^2}\mathbf{1}_U\in L^\infty_c(\R^2)$, and its
pure-point part is the single Dirac mass $-c(\lambda,k)\delta_0$. Hence
$\eta$ is a singular-continuous free measure in the sense of
Section~\ref{sec:fourier-to-fboundary}, and we may apply
Theorem~\ref{thm:PW-planar} and Lemma~\ref{lemma:fourier-to-fboundary} to it.

Writing $z=x+iy$ and $w\in\C$, we rewrite the exponential factor in
\eqref{eq:fourier-delta} as
\[
e^{\pi \overline{z}\, w}=e^{\pi(x-iy)w}=e^{\pi xw}\, e^{-i\pi yw}.
\]
The function class entering the Paley--Wiener step is now unambiguous:
$\eta$ is a compactly supported distribution of order zero, in fact a finite
signed Radon measure. Theorem~\ref{thm:PW-planar}
therefore applies and produces an entire function $\widehat\eta$ on
$\C^2$ obeying the Paley--Wiener bound \eqref{eq:PW-bound-planar} with
$N=0$ and $R$ equal to the radius of any closed disk containing
$U\cup\{0\}$; in particular $\widehat\eta|_{\R^2}$ is bounded by
$\|\eta\|_{\mathrm{TV}}$ and extends to $\C^2$ as an entire function of
exponential type. With the convention
$\widehat g(\xi,\eta):=\int_{\R^2}g(x,y)\,e^{-2\pi i(x\xi+y\eta)}\,\mathrm{d}x\,\mathrm{d}y$
used in Section~\ref{sec:fourier-to-fboundary}, a comparison of exponents
shows that for $w\in\C$ the kernel $e^{\pi xw}\,e^{-i\pi yw}$ coincides
with $e^{-2\pi i(x\xi+y\eta)}$ on the complex line
$(\xi,\eta)=\bigl(\tfrac{iw}{2},-\tfrac{w}{2}\bigr)$, $w\in\C$, whose
defining equation is $\eta=i\xi$. Reparametrizing this line in the form
$(\xi,\eta)=(w,-iw)$, identity \eqref{eq:fourier-delta}---which says that
the integral on the left-hand side, viewed as an entire function of
$w\in\C$, is identically equal to the constant $c(\lambda,k)$, so that
the Fourier--Laplace transform of the modified distribution $\eta$
vanishes there---becomes
\begin{equation}\label{eq:eta-vanishes-on-line}
\widehat{\eta}(w,-iw)=0,\qquad \forall w\in\C.
\end{equation}
Symmetrically, since $c(\lambda,k)\in\R$ and the absolutely continuous
density of $\eta$ is real-valued, complex conjugation of
\eqref{eq:fourier-delta} after the substitution $w\mapsto\overline w$,
combined with the same Paley--Wiener identification applied to the
conjugate complex line $\eta=-i\xi$, yields
\begin{equation}\label{eq:eta-vanishes-on-conjugate-line}
\widehat{\eta}(w,iw)=0,\qquad \forall w\in\C.
\end{equation}
Identities \eqref{eq:eta-vanishes-on-line} and
\eqref{eq:eta-vanishes-on-conjugate-line} together assert that
$\widehat\eta$ vanishes on the complex characteristic variety
$\Sigma=\{\xi^2+\eta^2=0\}$ of the Laplacian, that is, on the union of the two lines $\eta=\pm i\xi$.

Applying the divisibility lemma (Lemma~\ref{lemma:fourier-to-fboundary}) to the distribution $\eta$, we
obtain a unique compactly supported function $u\in L^2(\R^2)$, continuous
off a finite set, such that
\begin{equation}\label{eq:potato}
\Delta u=f(|z|)\,\mathbf{1}_U-c(\lambda,k)\,\delta_0
\end{equation}
in the sense of distributions on $\R^2$, where
\[
f(|z|)=|z|^{2k}e^{-\pi |z|^2}.
\]
Writing $u$ explicitly as a logarithmic potential of the right-hand side
of \eqref{eq:potato}, we conclude that $U$ satisfies
\begin{equation}\label{eq:potential}
\int_U \log|z-z'|\, f(|z'|)\,\mathrm{d}A(z')
=c(\lambda,k)\,\log|z|,
\qquad z\in \C\setminus (U\cup\{0\}).
\end{equation}

\medskip

\noindent{\bf Location of the logarithmic pole.}\quad
The constant \(c(\lambda,k)=\lambda k!/\pi^k\) is strictly positive. Hence
the pole at the origin in the right-hand side of \eqref{eq:potential} cannot
lie in the open exterior of \(U\): if \(0\notin\overline U\), then the
left-hand side of \eqref{eq:potential} is harmonic and bounded in a
neighborhood of \(0\), whereas \(c(\lambda,k)\log|z|\) is singular there.
If \(0\in\partial U\), the logarithmic potential on the left remains finite
along exterior points tending to \(0\), again contradicting the divergence of
\(c(\lambda,k)\log|z|\). Thus
\[
0\in U.
\]
This observation is all that is needed below: every boundary patch used in
the final contradiction is chosen away from the pole at \(0\), so no
puncturing of \(U\) is required.

\medskip

Define the logarithmic potential
\[
\Psi_U(z):=\int_U \log|z-z'|\, f(|z'|)\,\mathrm{d}A(z').
\]
We claim that $\Psi_U$ is \emph{locally radial}: for each $z_0\in U$,
there exists a ball $B(z_0)\subset U$ such that $\Psi_U$ agrees on
$B(z_0)$ with a radial function of $|z|$.

To prove this, let $B\supset U$ be a ball centered at $0$, and define
\[
\Phi(z):=\int_B \log|z-z'|\, f(|z'|)\,\mathrm{d}A(z').
\]
Since both $B$ and the weight $f(|z'|)$ are radial, $\Phi$ is radial; in
particular, $\nabla \Phi(z)$ is parallel to $z$ for every $z\in \R^2$.
Furthermore, on the interior of $B$,
\[
\Delta \Phi(z)=2\pi\, f(|z|).
\]

Now $\Psi_U$ satisfies \eqref{eq:potential} on the unbounded
component of $\C\setminus(U\cup\{0\})$. Differentiating that identity
in $z$ for $z\not\in \overline{U}\cup\{0\}$ gives
\[
\nabla \Psi_U(z)=c(\lambda,k)\,\frac{z}{|z|^2},
\qquad z\in \C\setminus(\overline{U}\cup\{0\}).
\]
Since \(f(|z|)\mathbf 1_U\in L^\infty_c(\R^2)\) and
\(z\mapsto z/|z|^2\) is locally integrable, the gradient
\(\nabla\Psi_U\) is a continuous convolution and therefore extends
continuously across \(\partial U\). Hence on \(\partial U\),
\[
\nabla \Psi_U(z)=c(\lambda,k)\,\frac{z}{|z|^2},
\qquad z\in \partial U,
\]
which is, in particular, parallel to $z$. Therefore the difference
\[
v:=\Psi_U-\Phi
\]
satisfies $\Delta v=2\pi f(|z|)\mathbf 1_U-2\pi f(|z|)\mathbf 1_B
=-2\pi f(|z|)\mathbf 1_{B\setminus U}$ in the sense of distributions
on $B$, so $v$ is harmonic in $U$ and in $B\setminus\overline{U}$, and
on $\partial U$ satisfies
\[
\nabla v(x)\ \text{is parallel to}\ x.
\]
The next lemma converts this gradient condition into local radiality of $v$.

\begin{lemma}\label{lem:harmonic-locally-radial}
Let \(V\subset \R^2\) be a bounded domain, and suppose there exists a harmonic
function \(v\in C^1(\overline{V})\) such that \(\nabla v(x)\) is parallel to
\(x\) for every \(x\in \partial V\). Then \(\nabla v(x)\) is parallel to \(x\)
for every $x\in V$. In particular, on every circle
$S=\{|x|=r\}\subset \R^2$, the function $v$ is constant on every
connected component of $S\cap V$.
\end{lemma}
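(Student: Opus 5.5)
The plan is to encode the geometric condition ``$\nabla v(x)$ is parallel to $x$'' as the vanishing of a single scalar function, namely the angular derivative of $v$. I will then show that this scalar function is harmonic, so the maximum principle propagates its boundary vanishing to the interior. Concretely, set
\[
w(x):=x_1\,\partial_2 v(x)-x_2\,\partial_1 v(x)=\partial_\theta v(x),
\qquad x\in\overline V,
\]
where $\partial_\theta$ is the derivative along the rotation vector field $x^\perp=(-x_2,x_1)$. For $x\neq 0$, the vector $\nabla v(x)$ is parallel to $x$ exactly when $\nabla v(x)\cdot x^\perp=0$, that is, when $w(x)=0$. At $x=0$ the condition is vacuous, and $w(0)=0$ automatically. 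So the hypothesis says exactly that $w=0$ on $\partial V$, and the conclusion is that $w\equiv0$ in $V$.

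First step: check that $w$ is harmonic in $V$. This holds because the rotation generator commutes with the Laplacian. Explicitly, $v$ is harmonic, hence smooth in $V$, and
\[
\Delta(x_1\partial_2 v)=2\,\partial_1\partial_2 v+x_1\,\partial_2\Delta v=2\,\partial_1\partial_2 v .
\]
In the same way $\Delta(x_2\partial_1 v)=2\,\partial_2\partial_1 v$, so the two contributions cancel and $\Delta w=0$ in $V$.

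Second step: use the assumption $v\in C^1(\overline V)$, which gives $w\in C(\overline V)$. Since $V$ is bounded, the weak maximum principle applies to $w$ and to $-w$. Together with $w|_{\partial V}=0$ this gives $w\equiv0$ on $V$. Hence $\nabla v(x)\parallel x$ for every $x\in V$.

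Third step: deduce the statement about circles. Let $\Gamma$ be a connected component of $S\cap V$ with $S=\{|x|=r\}$, $r>0$. Then $\Gamma$ is an open arc of $S$, or all of $S$. Parametrize it by $\theta\mapsto re^{i\theta}$. Along this parametrization $\frac{\mathrm d}{\mathrm d\theta}v(re^{i\theta})=w(re^{i\theta})=0$, so $v$ is constant on $\Gamma$. The case $r=0$ is trivial.

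I do not expect a real obstacle here. The only points needing care are that the maximum principle requires continuity of $w$ up to $\partial V$, which is exactly what the $C^1(\overline V)$ hypothesis provides, and the degenerate point $x=0$, which is handled by the formulation through $w$.
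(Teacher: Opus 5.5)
Your proof is correct and follows the paper's argument essentially step for step. You introduce the angular derivative $x_1\partial_2 v-x_2\partial_1 v$, show it is harmonic by the same commutator computation, use $v\in C^1(\overline V)$ and the maximum principle on the bounded domain to carry its vanishing from $\partial V$ into $V$, and then integrate along arcs of circles; your note that the parallelism condition is vacuous at $x=0$, where the angular derivative vanishes automatically, is a small point the paper leaves implicit.
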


\begin{proof}
Consider the angular derivative
\[
g(x):=x_1\,\partial_2 v(x)-x_2\,\partial_1 v(x)
=\partial_\theta v(x),
\]
which equals the tangential derivative of $v$ along the circle
through $x$ centered at $0$. A direct computation gives
\[
\Delta g
=x_1\,\Delta(\partial_2 v)-x_2\,\Delta(\partial_1 v)
+2\big(\partial_1\partial_2 v-\partial_2\partial_1 v\big)
=0,
\]
since $v$ is harmonic and partial derivatives of harmonic functions
are themselves harmonic. By the boundary hypothesis, $\nabla v(x)$ is
parallel to $x$ on $\partial V$, so $g\equiv 0$ on $\partial V$.
Together with $g\in C(\overline{V})$, the maximum principle for
harmonic functions on the bounded domain \(V\) yields \(g\equiv 0\) in \(V\).
Hence $\nabla v(x)$ is
parallel to $x$ at every interior point as well, and on any circle
$S=\{|x|=r\}$ the tangential derivative $\partial_\theta v$ vanishes
on $S\cap V$. Therefore $v$ is constant on each connected component
of $S\cap V$.
\end{proof}

Applying Lemma~\ref{lem:harmonic-locally-radial} directly with \(V=U\), we
obtain \(\partial_\theta v=0\) throughout \(U\). Since \(\Phi\) is globally
radial, \(\partial_\theta\Psi_U=0\) throughout \(U\) as well. Equivalently,
on every circle centered at the origin, \(\Psi_U\) is constant on each
connected component of its intersection with \(U\).

We now conclude the proof. Since \(U\) is simply connected and bounded,
it suffices to rule out the possibility that \(U\) fails to be a disk
centered at the origin. Suppose that
\(U\) is not such a disk. Then \(\partial U\) is not a single circle
centered at $0$. Since the \(C^1\) boundary of a bounded simply connected
domain is a single \(C^1\) Jordan curve, if the function \(x\mapsto |x|\)
were locally constant on all of \(\partial U\), it would be constant on
\(\partial U\), and \(U\) would be the disk bounded by that centered circle.
Hence there is a point \(x_0\in\partial U\), \(x_0\ne0\), at which the
radial projection
\[
\rho(x):=|x|
\]
has nonzero tangential derivative along \(\partial U\). Equivalently, for
some \(C^1\) parametrization \(\gamma\) of \(\partial U\) with
\(\gamma(0)=x_0\),
\begin{equation}\label{eq:transverse-radius-point}
\frac{d}{dt}|\gamma(t)|\Big|_{t=0}\ne0 .
\end{equation}
Choose radii \(0<r_1<|x_0|<r_2\) so close to \(|x_0|\) that the annulus
\[
A:=\{z\in \R^2:\ r_1<|z|<r_2\}
\]
contains no origin. Below \(r>0\) is chosen small enough that
\(\overline{B_r(x_0)}\subset A\).

We next choose a local radial chart. By \eqref{eq:transverse-radius-point},
the map \(\rho|_{\partial U}\) is a \(C^1\) submersion near \(x_0\). Therefore,
after shrinking \(r>0\), the boundary arc \(\partial U\cap B_r(x_0)\) can be
written in polar coordinates as
\[
\theta=\alpha(\rho),\qquad \rho\in I,
\]
for a \(C^1\) function \(\alpha\) on a nontrivial interval
\(I\Subset(r_1,r_2)\). Since \(\partial U\) is \(C^1\), the ball can be chosen
so small that
\[
\widetilde{U}_0:=U\cap B_r(x_0)
\]
is connected, \(0\notin \overline{\widetilde U_0}\), and each circle
\(\{|x|=\rho\}\), \(\rho\in I\), meets \(\widetilde U_0\) in a single open
arc. Because \(\partial_\theta\Psi_U=0\) in \(U\), the function \(\Psi_U\) is
constant on each such arc. Thus there is a scalar function
\(\psi_U\colon I\to\R\) such that
\[
\Psi_U(x)=\psi_U(|x|),\qquad x\in \widetilde{U}_0,\ |x|\in I.
\]
Moreover \(\psi_U\in C^1(I)\), because \(\Psi_U\in C^1(\overline U)\) in the
present \(C^1\) boundary setting and \(\rho\) is a submersion in the chosen
chart.

On the other hand, by \eqref{eq:potential} and the continuity of the
gradient of a volume logarithmic potential with bounded density across
a \(C^1\) boundary, the gradient
\(\nabla\Psi_U\) extends continuously across \(\partial U\), and for
$z\in\partial U$ we have
\[
\nabla \Psi_U(z)=c\,\frac{z}{|z|^2}
\]
for the same constant $c=c(\lambda,k)\in \R$ as in
\eqref{eq:potential}. Combining this with $\Psi_U(x)=\psi_U(|x|)$ on
$\widetilde U_0$ and passing to $z\in \partial \widetilde U_0\cap
\partial U$ from the inside,
\[
\psi_U'(|z|)\,\frac{z}{|z|}=c\,\frac{z}{|z|^2},
\qquad z\in \partial U\cap \overline{B_r(x_0)},
\]
hence, using the polar chart and the fact that every \(\rho\in I\) occurs
on the boundary arc,
\[
\psi_U'(\rho)=\frac{c}{\rho}
\qquad\text{for every }\rho\in I.
\]
Therefore
\[
\psi_U'(r)=\frac{c}{r}
\qquad\text{for every }r\in I,
\]
and thus
\[
\psi_U(r)=c\log r+C_0,
\qquad r\in I,
\]
for some real constant $C_0$.

Let
\[
W_I:=\{x\in \widetilde U_0:\ |x|\in I\}.
\]
Then \(W_I\) is a nonempty open subset of \(U\), and on \(W_I\),
\[
\Psi_U(x)=c\log|x|+C_0.
\]
Hence
\[
\Delta \Psi_U\equiv 0 \qquad \text{in } W_I,
\]
since the origin is excluded from $\widetilde U_0$. On the other hand, on
the same open set \(W_I\subset U\setminus\{0\}\), the logarithmic potential
\(\Psi_U\) has no pole contribution and satisfies
\[
\Delta \Psi_U=2\pi\, f(|z|)=2\pi\, |z|^{2k}e^{-\pi |z|^2}.
\]
The right-hand side vanishes only at the origin, which lies outside
$W_I$, while the left-hand side just shown to be $0$ on $W_I$ — a
contradiction. This rules out the case in which $U$ is not a disk
centered at $0$, completing the proof.
\end{proof}

\begin{remark}
The structure of the argument above is in the spirit of
\cite{Aharonov-Schiffer-Zalcman}, where logarithmic-potential
rigidity statements were derived from Schiffer-style overdetermined
boundary conditions, and of the quadrature-domain literature
\cite{Gustafsson,Gustafsson-Shahgholian,Shahgholian,Karp}, in which
the regularity of free boundaries and the matching of an interior
potential with the Newtonian potential of a radially symmetric
density play a central role. Concretely, identity
\eqref{eq:potential} can be read as saying that $U$ is a
\emph{generalized quadrature domain} for the radial weight $f(|z|)$,
with quadrature data concentrated at the origin. From this perspective,
the conclusion that $U$ is a disk is a (sharp) instance of the
classical principle that radial quadrature data forces a radial
domain. The \(C^1\) hypothesis is used to obtain the boundary arc and
the local radial chart in the final step.
\end{remark}

\subsection{Second proof}\label{subsec:second-proof}

The first proof established Theorem~\ref{thm:eig-loc-U} under the auxiliary
hypothesis that $\partial U$ is of class $C^1$. The argument we now give does
not use differentiability, perimeter, or density assumptions on the boundary.
It proves the rigidity statement in the regular-open
representative class, which is the formulation naturally detected by the
operator \(\mathcal L_U\).

\begin{proof}
We may assume that \(U\) is regular open and satisfies
\[
U=\operatorname{int}\overline U.
\]

We keep the notation
\[
f(|z|):=|z|^{2k}e^{-\pi|z|^2},
\qquad
c:=c(\lambda,k),
\]
from the first proof. The argument below is in the spirit of the holomorphic
moving-plane and quadrature-domain methods of
\cite{Aharonov-Schiffer-Zalcman,Gustafsson,Karp},
in that we exploit the existence of a holomorphic function on \(U\) with prescribed real
boundary values to recover full radial symmetry. Starting again from \eqref{eq:fourier-delta} and
applying Lemma~\ref{lemma:fourier-to-fboundary}, we obtain a compactly supported distributional
solution of
\begin{equation}\label{eq:potato-second-proof}
\Delta u=f(|z|)\mathbf{1}_U-c\,\delta_0
\qquad\text{in }\mathcal D'(\R^2).
\end{equation}
Using the planar fundamental solution \((2\pi)^{-1}\log|z|\), we may write
\begin{equation}\label{eq:explicit-u-second-proof}
u(z):=\frac{1}{2\pi}\int_U \log|z-z'|\,f(|z'|)\,\mathrm{d}A(z')-\frac{c}{2\pi}\log|z|.
\end{equation}
By \eqref{eq:potential}, this function satisfies
\[
u\equiv 0
\qquad\text{on }\R^2\setminus (U\cup\{0\}).
\]

We first show that \(0\in U\). Since the singular term in
\eqref{eq:potato-second-proof} is \(-c\delta_0\), one must have \(0\in\overline U\). If
\(0\in\partial U\), then there exist exterior points \(z_j\to 0\) with
\[
u(z_j)=0.
\]
On the other hand, the first term in \eqref{eq:explicit-u-second-proof} remains finite as
\(z\to 0\), because \(f\in L^\infty(U)\) and \(\log|z-z'|\) is locally integrable, whereas the
second term equals \(-\frac{c}{2\pi}\log|z|\to +\infty\). This contradicts the exterior identity
\(u(z_j)=0\). Hence \(0\notin\partial U\), and therefore
\[
0\in U.
\]

We next show directly from \eqref{eq:explicit-u-second-proof} that
\[
u\in C^1(\R^2\setminus\{0\}).
\]
Indeed, if we write
\[
g(z):=f(|z|)\mathbf{1}_U(z)\in L^\infty_c(\R^2),
\]
then
\[
u(z)=\frac{1}{2\pi}\int_{\R^2}\log|z-z'|\,g(z')\,\mathrm{d}A(z')-\frac{c}{2\pi}\log|z|.
\]
Since \(g\in L^\infty_c(\R^2)\), the only contribution to the integral defining the convolution
\((K\ast g)(z)\) below comes from a fixed bounded set, so we may replace \(K\) by its restriction
\(K\mathbf{1}_{B_R}\) for any sufficiently large ball \(B_R\). The kernel
\(K(\zeta)=\zeta/|\zeta|^2\) satisfies \(|K(\zeta)|=|\zeta|^{-1}\), which is integrable on bounded
subsets of \(\R^2\); hence \(K\mathbf{1}_{B_R}\in L^1(\R^2)\). It follows that
the convolution of an \(L^1\) function with a bounded function is continuous
(this follows from the density of \(C_c(\R^2)\) in \(L^1(\R^2)\) and continuity of translations in
\(L^1\)), so
\[
(K\ast g)(z):=\int_{\R^2}\frac{z-z'}{|z-z'|^2}\,g(z')\,\mathrm{d}A(z')
\]
is a continuous function of \(z\in\R^2\). On the other hand, a direct distributional computation
shows
\[
\nabla\!\left(\int_{\R^2}\log|z-z'|\,g(z')\,\mathrm{d}A(z')\right)=K\ast g
\qquad\text{in }\mathcal D'(\R^2),
\]
so the gradient of the convolution \(\log|\cdot|\ast g\) coincides almost everywhere with the
continuous function \(K\ast g\). It follows that \(\log|\cdot|\ast g\) is of class \(C^1\) on
\(\R^2\), and hence \(u\in C^1(\R^2\setminus\{0\})\) (the only obstruction to global \(C^1\)
regularity being the explicit logarithmic singularity at the origin coming from the
\(-\frac{c}{2\pi}\log|z|\) term).

Now let \(z_*\in\partial U\). Since \(0\in U\), the boundary point \(z_*\) is bounded away from
\(0\); fix a small ball \(B_\rho(z_*)\Subset \R^2\setminus\{0\}\). On the exterior
\(\R^2\setminus \overline U\) we have \(u\equiv 0\), so \(\nabla u\equiv 0\) there as well.
Since
\(U=\operatorname{int}\overline U\), every point of
\(\partial U=\partial\overline U\) is a limit of points of
\(\R^2\setminus\overline U\). Hence there are exterior points \(z_j\to z_*\)
with \(\nabla u(z_j)=0\). By the \(C^1\) regularity of \(u\) on
\(B_\rho(z_*)\), the gradient \(\nabla u\) is continuous at \(z_*\), and we
conclude \(\nabla u(z_*)=0\). This yields
\begin{equation}\label{eq:dz-zero-second-proof}
\partial_z u=0
\qquad\text{on }\partial U.
\end{equation}
Define
\[
\Phi(t):=\int_0^t s^k e^{-\pi s}\,\mathrm{d}s
\qquad\text{and}\qquad
\Psi(z):=\frac{\Phi(|z|^2)}{z},
\quad z\neq 0.
\]
A direct computation, using \(|z|^2=z\bar z\) and \(\partial_{\bar z}(z\bar z)=z\), gives, for
\(z\neq 0\),
\[
\partial_{\bar z}\Psi(z)
=
\frac{\Phi'(|z|^2)\,\partial_{\bar z}(|z|^2)}{z}
=
\frac{|z|^{2k}e^{-\pi|z|^2}\cdot z}{z}
=
|z|^{2k}e^{-\pi|z|^2}
=
f(|z|).
\]
At the origin, the apparent singularity \(1/z\) is integrable against the vanishing factor
\(\Phi(|z|^2)\): since \(k\ge 0\) and \(s^k e^{-\pi s}\le s^k\) for \(s\ge 0\), we have
\(\Phi(t)=\int_0^t s^k e^{-\pi s}\,\mathrm{d}s\le \int_0^t s^k\,\mathrm{d}s = t^{k+1}/(k+1)=O(t^{k+1})\) as
\(t\to 0\). Hence \(\Psi(z)=O(|z|^{2k+1})\) near \(z=0\), so \(\Psi\in L^\infty_{\mathrm{loc}}\)
(in fact continuous, with \(\Psi(0)=0\)). Consequently, the identity
\(\partial_{\bar z}\Psi=f(|z|)\) persists in the distributional sense on all of \(U\); no extra
\(\delta_0\) contribution arises at the origin.

Recall also the standard identity
\[
\partial_{\bar z}\!\left(\frac{1}{z}\right)=\pi\delta_0
\qquad\text{in }\mathcal D'(\R^2),
\]
which is equivalent to the fact that \(\frac{1}{\pi z}\) is a fundamental solution of
\(\partial_{\bar z}\); equivalently, \(\Delta\log|z|=4\partial_z\partial_{\bar z}\log|z|=2\pi\delta_0\)
combined with \(\partial_z\log|z|=\frac{1}{2z}\). Now set
\[
F(z):=4\partial_z u(z)-\Psi(z)+\frac{c}{\pi z},
\qquad z\in U\setminus\{0\}.
\]
We first check that \(F\) defines a distribution on all of \(U\) (a~priori it is given only on
\(U\setminus\{0\}\)). From \eqref{eq:explicit-u-second-proof} we have, near \(z=0\),
\[
4\partial_z u(z)
=
\frac{2}{\pi}\partial_z\!\left(\int_{\R^2}\log|z-z'|\,g(z')\,\mathrm{d}A(z')\right)-\frac{c}{\pi z},
\]
where the first term is locally bounded near \(z=0\) (the Cauchy-type integral
\(\int (z-z')^{-1} g(z')\,\mathrm{d}A(z')\) of a bounded compactly supported function is locally
integrable, by the same \(L^1_{\mathrm{loc}}\)-kernel argument as above). Adding \(-\Psi\)
(continuous with \(\Psi(0)=0\)) and \(c/(\pi z)\) cancels the explicit \(1/z\)-singularity, so
\(F\) is locally bounded near the origin and hence defines a distribution on \(U\).
Here \(\mathcal D'(U)\) denotes the space of distributions on \(U\).

Using \(\Delta=4\partial_z\partial_{\bar z}\), the relation \(\partial_{\bar z}(1/z)=\pi\delta_0\),
the computation of \(\partial_{\bar z}\Psi\) above, and \eqref{eq:potato-second-proof}, we
compute, in \(\mathcal D'(U)\),
\[
\partial_{\bar z}F
=
4\partial_z\partial_{\bar z}u-\partial_{\bar z}\Psi+\frac{c}{\pi}\partial_{\bar z}\!\left(\frac{1}{z}\right)
=
\Delta u-f(|z|)+c\,\delta_0
=
\bigl(f(|z|)-c\delta_0\bigr)-f(|z|)+c\delta_0
=0.
\]
The singular term \(-c\delta_0\) in \eqref{eq:potato-second-proof}, arriving through
\(4\partial_z\partial_{\bar z}u=\Delta u\), is thus exactly cancelled by
\((c/\pi)\partial_{\bar z}(1/z)=c\delta_0\). Hence \(F\) is distributionally
\(\bar\partial\)-closed on \(U\), and Weyl's lemma shows that \(F\) may be
represented by a holomorphic function on \(U\); we continue to denote it by
\(F\).

Define
\[
W(z):=\pi z\,F(z).
\]
Then \(W\) is holomorphic on \(U\) and \(W(0)=0\). Since \(\partial_z u\) is continuous up to
\(\partial U\), so is \(W\), and by \eqref{eq:dz-zero-second-proof},
\[
W|_{\partial U}
=
\pi z\left(4\partial_z u-\frac{\Phi(|z|^2)}{z}+\frac{c}{\pi z}\right)\Big|_{\partial U}
=
c-\pi\Phi(|z|^2).
\]
In particular, \(W|_{\partial U}\) is real-valued. The imaginary part of \(W\) is therefore
harmonic in \(U\), continuous up to \(\partial U\), and vanishes on \(\partial U\). By the
maximum principle,
\[
\Im W\equiv 0
\qquad\text{in }U.
\]
Since a holomorphic function with zero imaginary part is constant, there exists \(\kappa\in\R\)
such that
\[
W\equiv \kappa
\qquad\text{in }U.
\]

Restricting to the boundary, and using that \(W\) is continuous up to \(\partial U\) with
boundary value \(c-\pi\Phi(|z|^2)\), we obtain
\[
c-\pi\Phi(|z|^2)=\kappa
\qquad\text{for all }z\in\partial U.
\]
The function \(t\mapsto \Phi(t)=\int_0^t s^k e^{-\pi s}\,\mathrm{d}s\) has derivative
\(\Phi'(t)=t^k e^{-\pi t}>0\) for \(t>0\), hence is strictly increasing on \([0,\infty)\). The
identity above therefore forces \(|z|\) to be constant on \(\partial U\).
Thus \(U\) is a disk, which finishes the proof.
\end{proof}

\begin{remark}\label{rem:bookkeeping-no-regularity}
The two proofs use the boundary assumptions in rather different ways, and it is instructive to
keep careful track of the regularity bookkeeping.

Indeed, in contrast to the first proof, the second proof does not use differentiability or finite
perimeter of the boundary. The explicit formula
\eqref{eq:explicit-u-second-proof} yields
\(u\in C^1(\R^2\setminus\{0\})\), and for the regular open representative
\(U=\operatorname{int}\overline U\), every boundary point is reached from the
open exterior \(\R^2\setminus\overline U\). Continuity therefore implies that
both \(u\) and \(\nabla u\) vanish on \(\partial U\), precisely the boundary
information packaged in \eqref{eq:dz-zero-second-proof}. This is the input
needed to construct the holomorphic function \(W=\pi z F\) and conclude that
\(|z|\) is constant on \(\partial U\). Hence the second proof
recovers the null-set invariant theorem for simply connected measure-regular
symbols, with no boundary smoothness hypothesis. The mechanism
is also conceptually closer to the classical
quadrature-domain rigidity results of \cite{Aharonov-Schiffer-Zalcman,Gustafsson,Karp}, where the
existence of a holomorphic function on \(U\) with real boundary values forces the boundary to lie
on a level set of a real-analytic function, here the elementary radial function
\(z\mapsto |z|^2\).

\end{remark}


\subsection{Sharpness: multiply connected counterexamples}\label{subsec:counterexample}

We now prove Theorem~\ref{thm:counterexample}, writing $\phi(z):=e^{-\pi|z|^2}$
for the Gaussian weight. The eigenvalue equation is used only through
\eqref{eq:fourier-delta} with $k=0$, whose derivation at the beginning of the
first proof of Theorem~\ref{thm:eig-loc-U} uses nothing about the symbol
beyond its boundedness. Indeed, for bounded $\Omega\subset\C$, the Gaussian $h_0$ is
an eigenfunction of $\mathcal L_\Omega$ with eigenvalue $\lambda$ if and only
if
\begin{equation}\label{eq:ce-kernel-identity}
\int_\Omega e^{\pi\overline zw}\,\phi(z)\,\mathrm{d}A(z)=\lambda,
\qquad w\in\C .
\end{equation}

\subsubsection{Weighted Hele--Shaw evolution}

For a bounded simply connected domain $D\ni0$ let $G_D(\cdot,0)$ be the Green
function normalized by
\begin{equation}\label{eq:ce-Green-normalization}
\Delta G_D(\cdot,0)=-2\pi\delta_0,
\qquad
G_D(\cdot,0)=0\ \text{ on }\partial D,
\end{equation}
so that $G_D>0$ in $D\setminus\{0\}$. The weighted Hele--Shaw injection law,
with source at the origin and permeability $\phi^{-1}=e^{\pi|z|^2}$,
prescribes the outward normal velocity of the free boundary as
\begin{equation}\label{eq:ce-velocity-law}
V_n(z,t)=-\frac{\partial_nG_{D_t}(z,0)}{\phi(z)},
\qquad z\in\partial D_t .
\end{equation}
By the Hopf lemma $\partial_nG_{D_t}<0$, so that $V_n>0$ and the domains
expand.

\begin{proposition}\label{prop:ce-existence}
Let $D_0$ be as in Theorem~\ref{thm:counterexample}. Then $D_0$ admits a
unique short-time real-analytic evolution $(D_t)_{0\le t<t_0}$ satisfying
\eqref{eq:ce-velocity-law}. Its boundaries remain real-analytic Jordan
curves, and the domains are strictly increasing, in the sense that
$\overline{D_s}\subset D_t$ for $0\le s<t<t_0$.
\end{proposition}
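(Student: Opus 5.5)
We reduce the evolution \eqref{eq:ce-velocity-law} to a weighted Polubarinova--Galin equation for a conformal parametrization and solve it with a Cauchy--Kowalevsky argument. Let $f(\cdot,t)\colon\mathbb D\to D_t$ be the Riemann map with $f(0,t)=0$ and $f'(0,t)>0$. Since $\partial D_0$ is a real-analytic Jordan curve, $f(\cdot,0)$ extends univalently to a disk $\{|\zeta|<1+\delta\}$. The Green function pulls back to $-\log|\zeta|$, so its normal derivative on $\partial D_t$ equals $-1/|f'(\zeta,t)|$ at $\zeta=e^{i\theta}$. The outward normal velocity of the boundary curve is $\Re\bigl(\partial_tf\,\overline{\zeta f'}\bigr)/|f'|$. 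Substituting both into \eqref{eq:ce-velocity-law} gives
\[
\Re\bigl(\partial_t f(\zeta,t)\,\overline{\zeta f'(\zeta,t)}\bigr)=e^{\pi|f(\zeta,t)|^2},\qquad |\zeta|=1 .
\]
We write $\partial_tf=\zeta f'\,\mathcal P[e^{\pi|f|^2}/|f'|^2]$, where $\mathcal P[h]$ denotes the Schwarz--Herglotz integral: the holomorphic function in $\mathbb D$ with real part $h$ on the circle and real value at $0$. This makes the equation an evolution for a holomorphic function in $\zeta$. The normalization $f(0,t)=0$, $f'(0,t)>0$ is preserved automatically.

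The key step is to pose this equation in a scale of Banach spaces $X_r$ of functions holomorphic and bounded on $\{|\zeta|<r\}$, for $1<r<1+\delta$. On the circle we replace $|f|^2$ by $f(\zeta)\,\overline{f}(1/\zeta)$, where $\overline f(\zeta):=\overline{f(\overline\zeta)}$, and $|f'|^2$ by $f'(\zeta)\,\overline{f'}(1/\zeta)$. Both expressions are holomorphic on annuli $1/r<|\zeta|<r$. The Herglotz projection is bounded between annuli of slightly different widths. The map is therefore analytic in $f$ near $f(\cdot,0)$, as long as $f'$ stays away from zero on the annulus; this holds by univalence up to $|\zeta|<1+\delta$.

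The right-hand side loses one derivative through $f'$. By the Cauchy estimates it therefore satisfies $\|F(f)-F(g)\|_{X_{r'}}\le C(r-r')^{-1}\|f-g\|_{X_r}$. This is exactly the hypothesis of the abstract Cauchy--Kowalevsky theorem (Nirenberg--Nishida, Ovsyannikov). We quote that theorem in the form used in \cite{Gustafsson-Vasiliev,RossWitt}, as the paper already indicates. It yields $t_0>0$ and a unique solution $f$ that is real-analytic in $t$ with values in $X_{r(t)}$, where $r(t)>1$ decreases. Uniqueness of the domain evolution follows from uniqueness of the normalized Riemann map.

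Next we check the geometric conclusions. After shrinking $t_0$, $f(\cdot,t)$ stays $C^1$-close to $f(\cdot,0)$ on $\{|\zeta|\le r(t)\}$. Univalence is stable under such perturbations on a slightly smaller closed disk, by the argument principle applied to $f(\zeta)-f(\eta)$ away from the diagonal together with $f'\neq0$ near the diagonal. Hence $f(\cdot,t)$ is univalent on a neighborhood of $\overline{\mathbb D}$, and $\partial D_t=f(\partial\mathbb D,t)$ is a real-analytic Jordan curve.

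For monotonicity, the velocity satisfies $V_n=e^{\pi|z|^2}/|f'|\ge c>0$ uniformly for $t<t_0$. Hence each boundary point moves strictly outward at a positive rate. Combined with the continuity of $f$ in $t$, a standard comparison along normals gives $\overline{D_s}\subset D_t$ for $s<t$.

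The main obstacle is the loss-of-derivative estimate in the Cauchy--Kowalevsky step: one must verify that the nonlocal Herglotz term, composed with the reflected function $\overline f(1/\zeta)$, maps $X_r$ analytically into $X_{r'}$ with norm $O((r-r')^{-1})$. We would handle this by working directly with Laurent coefficients on annuli, where the Herglotz projection is diagonal.
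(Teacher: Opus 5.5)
Your route is the paper's route. The paper's argument consists only of three things: deriving the weighted Polubarinova--Galin equation \eqref{eq:ce-weighted-PG} from $G_{D_t}(f_t(\zeta),0)=-\log|\zeta|$, citing the abstract Cauchy--Kowalevsky theorem, and reading off strict nesting from $V_n>0$. Your derivation of the equation, the Herglotz reformulation (including preservation of $f(0,t)=0$ and $f'(0,t)>0$), the univalence argument and the nesting argument are all correct. They make explicit what the paper leaves implicit.

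The one estimate you actually assert, however, fails in the scale you chose, and the real obstacle is not where you place it.

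\emph{The map is not defined on balls of $X_r$.} If $X_r=H^\infty(\{|\zeta|<r\})$ measures $f$ alone, a perturbation of $X_r$-norm $\varepsilon$ can have derivative of size $\sim\varepsilon/(r-r')$ on $|\zeta|=r'$; take $\varepsilon(\zeta/r)^n$ with $n\sim(r-r')^{-1}$. So once $r-r'\ll\varepsilon$, $f'$ can vanish in $\{1<|\zeta|<r'\}$, and $F$ does not map balls of $X_r$ into $X_{r'}$ at all.

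\emph{The Lipschitz loss is quadratic.} Write $F(f)=\zeta f'\,\mathcal P[H(f)]$ with $H(f)=e^{\pi f(\zeta)\overline f(1/\zeta)}/(f'(\zeta)\overline{f'}(1/\zeta))$. This is nonlinear in the highest derivative $f'$. In $F(f)-F(g)$, the term $\zeta g'\,\mathcal P[H(f)-H(g)]$ loses one factor $(r-r')^{-1}$ through $\|g'\|_{X_{r'}}$ and another through $\|f'-g'\|_{X_{r'}}$. The Lipschitz bound on balls is therefore only $O((r-r')^{-2})$. The Nirenberg--Nishida/Ovsyannikov theorem does not tolerate this; it is the same phenomenon as for $u_t=(u_x)^2$. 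Switching to Laurent-coefficient norms does not change it.

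\emph{The fix.} Put the derivative into the norm, e.g.\ $\|f\|_{Y_r}:=\sup_{|\zeta|<r}(|f|+|f'|)$, or equivalently adjoin $p=f'$ as a second unknown. Balls of $Y_r$ around $f(\cdot,0)$ keep $|f'|$ bounded below on the annulus. Moreover, $(F(f))'$ depends linearly on $f''$ and $\overline{f''}(1/\zeta)$, with coefficients controlled by $\|f\|_{Y_r}$; this uses $\tfrac{d}{d\zeta}\mathcal P[h]=\zeta^{-1}\mathcal P[\zeta\partial_\zeta h]$. That yields exactly the $C/(r-r')$ bound.

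\emph{The Herglotz term is harmless.} If $h$ is bounded on $\{1/r<|\zeta|<r\}$, its negative-frequency part is the Cauchy integral over $|\zeta|=1$. It is therefore bounded by $\|h\|_\infty/(|\zeta|-1)$ for $|\zeta|>1$. By the maximum principle, $\|\mathcal P[h]\|_{H^\infty(\{|\zeta|<r\})}\le C\|h\|_\infty$, with $C$ uniform as long as all radii stay above a fixed $r_{\min}>1$. So there is no loss in $r-r'$ from $\mathcal P$ at all.
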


The proof is the only existence input of the construction, and it is
classical. In the conformal parametrization $f_t\colon\mathbb D\to D_t$,
normalized by $f_t(0)=0$ and $f_t'(0)>0$, the law \eqref{eq:ce-velocity-law}
becomes the weighted Polubarinova--Galin equation
\begin{equation}\label{eq:ce-weighted-PG}
\operatorname{Re}\!\left(
\partial_t f_t(\zeta)\,
\overline{\zeta f_t'(\zeta)}
\right)
=
\frac{1}{\phi(f_t(\zeta))}
=
e^{\pi|f_t(\zeta)|^2},
\qquad |\zeta|=1.
\end{equation}
Indeed, \(G_{D_t}(f_t(\zeta),0)=-\log|\zeta|\), and hence
\(-\partial_nG_{D_t}=|f_t'|^{-1}\) on the boundary. Since the permeability
\(\phi^{-1}\) is real-analytic and strictly positive, short-time existence
and uniqueness of a real-analytic solution with real-analytic initial curve
follow from the abstract Cauchy--Kowalevsky theorem on a scale of analytic
function spaces. We refer the reader to
\cite[Chs.~1--3]{Gustafsson-Vasiliev} for the classical theory and to
\cite{Hedenmalm-Shimorin,RossWitt} for the weighted flow. The strict nesting
is then immediate from the strict positivity of the normal velocity in
\eqref{eq:ce-velocity-law}.

\subsubsection{Exact conservation law}

The identity below is the Gaussian-weighted form of Richardson's conservation
law \cite{Richardson}.

\begin{proposition}\label{prop:ce-Richardson}
Let $(D_t)$ be the evolution of Proposition~\ref{prop:ce-existence}. If $h$ is
harmonic on a neighborhood of $\overline{D_t}$, then
\begin{equation}\label{eq:ce-Richardson}
\frac{\mathrm{d}}{\mathrm{d}t}\int_{D_t}h(z)\phi(z)\,\mathrm{d}A(z)=2\pi h(0).
\end{equation}
\end{proposition}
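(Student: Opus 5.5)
The plan is to combine Reynolds' transport theorem for the moving domain \(D_t\) with Green's second identity applied to \(h\) and \(G_{D_t}(\cdot,0)\).

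\emph{Step 1 (Reynolds).} By Proposition~\ref{prop:ce-existence}, the boundaries \(\partial D_t\) are real-analytic and depend real-analytically on \(t\). Fix \(t\) and an open set \(\mathcal O\supset\overline{D_t}\) on which \(h\) is harmonic. For \(s\) close to \(t\) we still have \(\overline{D_s}\subset\mathcal O\), so \(h\phi\) is a smooth, time-independent integrand on all domains involved. Reynolds' transport theorem therefore gives
\[
\frac{\mathrm d}{\mathrm dt}\int_{D_t}h\phi\,\mathrm dA=\int_{\partial D_t}h\,\phi\,V_n\,\mathrm ds .
\]
Inserting the velocity law \eqref{eq:ce-velocity-law}, the weight cancels exactly:
\[
\int_{\partial D_t}h\,\phi\,V_n\,\mathrm ds=-\int_{\partial D_t}h\,\partial_nG_{D_t}(\cdot,0)\,\mathrm ds .
\]
This cancellation is the reason for choosing the permeability \(\phi^{-1}\).

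\emph{Step 2 (Green's identity).} Apply Green's second identity on \(D_t\setminus\overline{B_\varepsilon(0)}\) to \(h\) and \(G:=G_{D_t}(\cdot,0)\). Both functions are harmonic there. On \(\partial D_t\) the term \(G\,\partial_n h\) drops because \(G=0\), so only \(h\,\partial_nG\) survives on the outer boundary. On the small circle, use the local form \(G=-\log|z|+\text{harmonic}\), which follows from the normalization \eqref{eq:ce-Green-normalization}. As \(\varepsilon\to0\), the term \(G\,\partial_nh\) contributes \(O(\varepsilon\log\varepsilon)\to0\), while the term \(h\,\partial_nG\) tends to \(2\pi h(0)\) up to sign. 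Tracking orientations, with the outward normal of the punctured domain pointing toward \(0\) on the small circle, yields
\[
-\int_{\partial D_t}h\,\partial_nG\,\mathrm ds=2\pi h(0).
\]
Combined with Step 1, this gives \eqref{eq:ce-Richardson}.

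\emph{Main obstacle.} The only delicate points are justifying Reynolds' formula and the boundary regularity of \(\partial_nG\). For regularity, \(\partial D_t\) is a real-analytic Jordan curve, so \(G\) extends harmonically across \(\partial D_t\) (Schwarz reflection via the conformal map \(f_t\)). For Reynolds, the most transparent route is the conformal parametrization: write \(\int_{D_t}h\phi\,\mathrm dA\) in polar form on the disk, or differentiate under the integral using the normal component of \(\partial_tf_t\). Then \eqref{eq:ce-weighted-PG} directly gives the boundary integrand \(h(f_t)\,\mathrm d\theta\) after the weight cancels, and \(\int_0^{2\pi}h(f_t(e^{i\theta}))\,\mathrm d\theta=2\pi h(f_t(0))=2\pi h(0)\) by the mean value property of the harmonic function \(h\circ f_t\) on \(\mathbb D\). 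This gives a second, orientation-free verification of the sign and constant, which I would include as a check.
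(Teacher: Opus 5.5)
Your argument is correct and is essentially the paper's proof: Reynolds' transport formula combined with the velocity law cancels the Gaussian weight, and Green's second identity, using $G_{D_t}(\cdot,0)=0$ on $\partial D_t$ and $\Delta G_{D_t}(\cdot,0)=-2\pi\delta_0$, then produces $2\pi h(0)$. Excising $B_\varepsilon(0)$ makes rigorous the distributional step that the paper writes in one line, and your conformal-map computation (the weighted Polubarinova--Galin equation followed by the mean value property of $h\circ f_t$ on $\mathbb D$) is a valid independent check of the constant and the sign.
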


\begin{proof}
By the Reynolds transport formula and \eqref{eq:ce-velocity-law},
\begin{equation}\label{eq:ce-Reynolds}
\frac{\mathrm{d}}{\mathrm{d}t}\int_{D_t}h\phi\,\mathrm{d}A
=\int_{\partial D_t}h\phi V_n\,\mathrm{d}s
=-\int_{\partial D_t}h\,\partial_nG_{D_t}(\cdot,0)\,\mathrm{d}s .
\end{equation}
Green's second identity, the boundary condition $G_{D_t}=0$ on $\partial D_t$,
the harmonicity of $h$ and the normalization
\eqref{eq:ce-Green-normalization} give
\[
\int_{\partial D_t}h\,\partial_nG_{D_t}\,\mathrm{d}s
=\int_{D_t}\bigl(h\Delta G_{D_t}-G_{D_t}\Delta h\bigr)\,\mathrm{d}A
=-2\pi h(0),
\]
and substitution in \eqref{eq:ce-Reynolds} proves
\eqref{eq:ce-Richardson}.
\end{proof}

\begin{corollary}\label{cor:ce-quadrature}
For all $0\le s<t<t_0$ and every $h$ harmonic on a neighborhood of
$\overline{D_t}$,
\begin{equation}\label{eq:ce-shell-quadrature}
\int_{D_t\setminus\overline{D_s}}h(z)\,\phi(z)\,\mathrm{d}A(z)
=2\pi(t-s)\,h(0).
\end{equation}
\end{corollary}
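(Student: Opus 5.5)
The plan is to integrate the Richardson identity of Proposition~\ref{prop:ce-Richardson} in time between $s$ and $t$, and to check that a single test function $h$ can be used for the whole interval. Fix $0\le s<t<t_0$ and let $h$ be harmonic on an open neighborhood $\mathcal N$ of $\overline{D_t}$. By the strict nesting in Proposition~\ref{prop:ce-existence}, $\overline{D_\tau}\subset D_t\subset\mathcal N$ for every $\tau\in[s,t]$. Hence $h$ is harmonic on a neighborhood of each $\overline{D_\tau}$, and \eqref{eq:ce-Richardson} holds at every $\tau\in[s,t]$ with the same $h$.

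Next, set $m(\tau):=\int_{D_\tau}h\phi\,\mathrm{d}A$ for $\tau\in[s,t]$. Proposition~\ref{prop:ce-Richardson} gives $m'(\tau)=2\pi h(0)$ on $[s,t]$, so the fundamental theorem of calculus yields $m(t)-m(s)=2\pi(t-s)h(0)$. We need $m$ to be differentiable, or at least absolutely continuous, on the closed interval. This follows because the flow is real-analytic in time: in the conformal parametrization $f_\tau$ the boundary moves smoothly in $\tau$, which justifies the Reynolds transport formula used in the proof of Proposition~\ref{prop:ce-Richardson}. At the endpoint $s=0$ we use the one-sided derivative, which that proof also supplies.

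Finally, since $D_s\subset D_t$, we have $m(t)-m(s)=\int_{D_t\setminus D_s}h\phi\,\mathrm{d}A$. The set $\overline{D_s}\setminus D_s=\partial D_s$ is a real-analytic Jordan curve and so has zero area. We may therefore replace $D_s$ by $\overline{D_s}$, which gives \eqref{eq:ce-shell-quadrature}.

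The only delicate point is the regularity of $\tau\mapsto m(\tau)$ up to the endpoints, and this is supplied by the analytic dependence in Proposition~\ref{prop:ce-existence}. Everything else is bookkeeping.
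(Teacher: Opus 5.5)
Your proposal is correct and is the paper's argument: integrate the weighted Richardson identity of Proposition~\ref{prop:ce-Richardson} from $s$ to $t$, then discard the null set $\partial D_s$. Your additional checks make explicit what the paper's one-line proof leaves implicit: that a single $h$ serves at every intermediate time, and that $\tau\mapsto m(\tau)$ is regular enough for the fundamental theorem of calculus. One small slip: at $\tau=t$ you do not have $\overline{D_t}\subset D_t$, but $\overline{D_t}\subset\mathcal N$ holds by hypothesis, which is all you need.
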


\begin{proof}
Integrate \eqref{eq:ce-Richardson} from $s$ to $t$ and use that
$|\partial D_s|=0$.
\end{proof}

\begin{proof}[Proof of Theorem~\ref{thm:counterexample}]
Proposition~\ref{prop:ce-existence} provides the nested analytic domains
$(D_t)_{0\le t<t_0}$. Since the normal velocity is strictly positive, the
tubular parametrization of the moving boundary identifies
$\Omega_t=D_t\setminus\overline{D_0}$ with $\partial D_0\times(0,1)$; in
particular $\Omega_t$ is a doubly connected domain whose boundary is
$\partial D_0\cup\partial D_t$, a union of two real-analytic Jordan curves.

Fix $w\in\C$ and apply \eqref{eq:ce-shell-quadrature} with $s=0$ to the real
and imaginary parts of $z\mapsto e^{\pi\overline zw}$, which are harmonic on
$\C$ and take the value $1$ at the origin. This gives exactly
\eqref{eq:ce-kernel-identity} for $\Omega=\Omega_t$ and $\lambda=2\pi t$,
that is, $\mathcal L_{\Omega_t}h_0=2\pi t\,h_0$. Since
$\int_\C\phi\,\mathrm{d}A=1$, the eigenvalue lies in $(0,1)$ for small
positive $t$, as it must.

Finally, were the open set $\Omega_t$ radial about the origin, each connected
component of its analytic boundary would be a circle centered at the origin.
One of those components is $\partial D_0$, which by hypothesis is not such a
circle. Hence $\Omega_t$ is not radial, and in particular it is not a union of
concentric annuli.
\end{proof}

\begin{corollary}\label{cor:ce-concrete}
Fix $r>0$ and $c\in\C$ with $0<|c|<1/2$, and set
\[
f_0(\zeta)=r(\zeta+c\zeta^2),
\qquad D_0=f_0(\mathbb D).
\]
Then $D_0$ satisfies the hypotheses of Theorem~\ref{thm:counterexample}, so
the associated annular-like domains are non-radial analytic counterexamples.
\end{corollary}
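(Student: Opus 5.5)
To prove Corollary~\ref{cor:ce-concrete}, I will check the three hypotheses of Theorem~\ref{thm:counterexample} for $D_0=f_0(\mathbb D)$: $D_0$ is a bounded simply connected domain containing $0$, $\partial D_0$ is a real-analytic Jordan curve, and $D_0$ is not a disk centred at the origin. Once these hold, the theorem applies verbatim and produces the shells $\Omega_t$.

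\emph{Step 1: univalence up to the boundary.} The polynomial $f_0(\zeta)=r(\zeta+c\zeta^2)$ is entire, and $f_0'(\zeta)=r(1+2c\zeta)$. Since $|2c|<1$, $f_0'$ has no zeros on $|\zeta|\le 1$, and in fact none on a slightly larger closed disk $|\zeta|\le\rho$ with $1<\rho<1/(2|c|)$. For injectivity I use the identity
\[
f_0(\zeta_1)-f_0(\zeta_2)=r(\zeta_1-\zeta_2)\bigl(1+c(\zeta_1+\zeta_2)\bigr).
\]
For $|\zeta_1|,|\zeta_2|\le\rho$ we have $|c(\zeta_1+\zeta_2)|\le 2|c|\rho<1$, so the second factor never vanishes. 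Hence $f_0$ is injective on the closed disk $\overline{D_\rho}$, and therefore on a neighbourhood of $\overline{\mathbb D}$. This is the only step with any real content; the rest is bookkeeping.

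\emph{Step 2: the remaining hypotheses.} Because $f_0$ is univalent on a neighbourhood of $\overline{\mathbb D}$:
\begin{itemize}
\item $D_0=f_0(\mathbb D)$ is a bounded simply connected domain, being a conformal image of the disk;
\item $0=f_0(0)\in D_0$;
\item $\partial D_0=f_0(\partial\mathbb D)$ is the image of the unit circle under an injective map that is holomorphic with nonvanishing derivative, so it is a real-analytic Jordan curve.
\end{itemize}

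\emph{Step 3: $D_0$ is not a disk centred at the origin.} Suppose $D_0=D_R$. Then $f_0$ would be a conformal map $\mathbb D\to D_R$ with $f_0(0)=0$. By the Schwarz lemma applied to $f_0/R$ and to its inverse, this forces $f_0(\zeta)=R e^{i\alpha}\zeta$, which contradicts $c\neq0$.

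An alternative for Step 3 is to compute $|f_0(e^{i\theta})|^2=r^2\bigl(1+|c|^2+2\operatorname{Re}(ce^{i\theta})\bigr)$. This is nonconstant in $\theta$ precisely because $c\neq0$, so the boundary curve is not a centred circle.

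With all hypotheses verified, Theorem~\ref{thm:counterexample} yields the non-radial analytic doubly connected shells $\Omega_t$ satisfying $\mathcal L_{\Omega_t}h_0=2\pi t\,h_0$.
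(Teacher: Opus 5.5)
Your proof is correct and follows the paper's argument: the same factorization $f_0(\zeta_1)-f_0(\zeta_2)=r(\zeta_1-\zeta_2)(1+c(\zeta_1+\zeta_2))$, together with $f_0'(\zeta)=r(1+2c\zeta)\neq0$, gives univalence and a nonvanishing derivative on a neighbourhood of $\overline{\mathbb D}$, and hence a real-analytic Jordan boundary. For the non-circularity step, your Schwarz-lemma argument (or the explicit formula $|f_0(e^{i\theta})|^2=r^2(1+|c|^2+2\operatorname{Re}(ce^{i\theta}))$) is a cleaner justification than the paper's appeal to the nonvanishing second Fourier mode of $f_0(e^{i\theta})$; that appeal tacitly needs such an argument, because a centred circle can also be parametrized non-uniformly.
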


\begin{proof}
Since $f_0(\zeta_1)-f_0(\zeta_2)=(\zeta_1-\zeta_2)\bigl(1+c(\zeta_1+\zeta_2)\bigr)r$
and $2|c|<1$, the map $f_0$ is univalent on a neighborhood of
$\overline{\mathbb D}$, where moreover $f_0'(\zeta)=r(1+2c\zeta)$ does not
vanish; hence $\partial D_0$ is a real-analytic Jordan curve. Finally
$f_0(0)=0$, while the nonvanishing second Fourier mode of $f_0(e^{i\theta})$
rules out a circle centered at the origin.
\end{proof}

\begin{remark}\label{rem:ce-provenance}
For $\phi\equiv1$, Proposition~\ref{prop:ce-Richardson} is Richardson's
theorem \cite{Richardson}, and subtracting it at two different times already
produces a possibly non-radial shell; the layer mechanism is in that sense
latent in the classical theory, the version used here being its
variable-permeability analogue for the permeability $\phi^{-1}$, with the sources
collected in \S\ref{subsec:main-technical-inputs}. The point specific to the
present application is to
start from a nonempty, noncircular domain and to use \emph{only the added
layer} as the localization symbol, so that the source point is left in the
discarded inner component --- exactly where simple-connectivity rigidity
cannot detect it.
\end{remark}

Since the inner boundary $\partial D_0$ may be prescribed arbitrarily among
noncircular real-analytic Jordan curves surrounding the origin,
Theorem~\ref{thm:counterexample} produces an infinite-dimensional family of
counterexamples, parametrized locally by that inner boundary; the concentric
annuli, which were the examples already available, are exactly its radial
members.


\section{Proof of Theorem \ref{thm:GGRT-sharp}}\label{sec:GGRT-sharp}

We now prove that the quantitative stability estimate in Corollary~1.4 of
\cite{Gomez-Guerra-Ramos-Tilli}, which builds on the Faber--Krahn rigidity
result of \cite{Nicola-Tilli} characterizing the disk as the unique optimizer
of the first localization eigenvalue is sharp. The proof proceeds by constructing a family of domains
which are perturbations of the disk, whose first localization eigenvalue has
the same first-order behaviour as the disk, but whose distance to the class of
disks is of order $\varepsilon$.

Before turning to the construction, we collect some auxiliary lemmas. They
provide, in order, (i)~the identification of the constructed eigenvalue
with the \emph{principal} eigenvalue of \(\mathcal L_{U_\varepsilon}\) via an
analytic-perturbation argument from the disk, (ii)~the
order-\(\varepsilon^2\) effect of the area-normalizing dilation on the
principal eigenvalue, (iii)~the Reynolds transport / Hadamard
shape-derivative formula in the regularity needed here, and (iv)~the explicit
symmetric-difference lower bound which proves that second-order harmonics
cannot be eliminated by translations of the disk.

\subsection{Spectral, dilational and shape-derivative preliminaries}
\label{subsec:GGRT-preliminaries}

Throughout this subsection we let \(D_1\subset \C\) denote the closed unit
disk centered at the origin. Recall that the localization operator
\(\mathcal L_U\) on \(\mathcal F^2(\C)\), with \(U\subset \C\) a bounded
measurable set, is the self-adjoint compact operator
\[
(\mathcal L_U F)(z)
:=
\int_U F(\zeta)\,e^{\pi z\overline\zeta}\,e^{-\pi|\zeta|^2}\,\mathrm{d}A(\zeta),
\]
whose eigenvalues form a non-increasing sequence
\(\lambda_1(U)\ge \lambda_2(U)\ge\cdots\to 0\); its largest (or \emph{principal})
eigenvalue is \(\lambda_1(U)\).

For the unit disk \(U=D_1\), the operator \(\mathcal L_{D_1}\) is diagonalized
by the orthonormal monomial basis
\(\{e_k(z)=(\pi^k/k!)^{1/2}z^k\}_{k\ge 0}\) of \(\mathcal F^2(\C)\)
(cf.~\cite{Daubechies}), with eigenvalues
\begin{equation}\label{eq:disk-eigenvalues-explicit}
\mu_k:=\frac{\gamma(k+1,\pi)}{k!}
=
1-e^{-\pi}\sum_{j=0}^{k}\frac{\pi^j}{j!},
\qquad k\ge 0,
\end{equation}
where \(\gamma(s,x)=\int_0^x t^{s-1}e^{-t}\,\mathrm{d}t\) is the lower incomplete gamma
function. The sequence \((\mu_k)_{k\ge 0}\) is \emph{strictly decreasing}:
\(\mu_k-\mu_{k+1}=e^{-\pi}\pi^{k+1}/(k+1)!>0\). In particular,
\begin{equation}\label{eq:disk-spectral-gap}
\mu_0=1-e^{-\pi}=\lambda_0,
\qquad
\mu_1=1-(1+\pi)e^{-\pi},
\qquad
\mu_0-\mu_1=\pi e^{-\pi}>0,
\end{equation}
so the disk has a strict spectral gap above its principal eigenvalue
\(\mu_0\).

\medskip

\begin{lemma}
\label{lem:spectral-continuity}
Let \((U_\varepsilon)_{|\varepsilon|<\varepsilon_0}\) be a family of bounded
measurable sets with \(U_0=D_1\) and
\(|U_\varepsilon\triangle D_1|\to 0\) as \(\varepsilon\to 0\). Then for every
\(k\ge 1\),
\begin{equation}\label{eq:eigenvalue-continuity}
\lambda_k(U_\varepsilon)\xrightarrow[\varepsilon\to 0]{}\mu_{k-1},
\end{equation}
with \(\mu_{k-1}\) given by \eqref{eq:disk-eigenvalues-explicit}. Hence there
exists \(\varepsilon_*>0\) such that, for all \(|\varepsilon|<\varepsilon_*\),
\begin{equation}\label{eq:gap-preservation}
\lambda_2(U_\varepsilon)
<
\frac{\mu_0+\mu_1}{2}
<
\lambda_1(U_\varepsilon),
\end{equation}
and the principal eigenvalue is the unique eigenvalue of
\(\mathcal L_{U_\varepsilon}\) lying in
\([\mu_0-\tfrac{\pi e^{-\pi}}{4},\mu_0+\tfrac{\pi e^{-\pi}}{4}]\).

Consequently, for the family \(U_\varepsilon\) of Theorem~\ref{thm:perturb}
applied with \(\lambda=\lambda_0=\mu_0\) and
\(P_\varepsilon(w)=1+\varepsilon w^2\), the eigenvalue \(\lambda_0\) produced
by Theorem~\ref{thm:perturb} is the principal eigenvalue of
\(\mathcal L_{U_\varepsilon}\), and \(P_\varepsilon\) is a corresponding
first eigenfunction.
\end{lemma}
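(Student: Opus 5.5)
The plan is to compare the localization operators in operator norm on $\mathcal F^2(\C)$. Writing $\mathcal L_U=P_{\mathcal F}M_{\mathbf 1_U}$ (the Toeplitz operator with symbol $\mathbf 1_U$), we have for $F,G\in\mathcal F^2(\C)$
\[
|\langle(\mathcal L_{U_\varepsilon}-\mathcal L_{D_1})F,G\rangle|
\le\int_{U_\varepsilon\triangle D_1}|F||G|e^{-\pi|z|^2}\,\mathrm dA
\le |U_\varepsilon\triangle D_1|\,\|F\|_{\mathcal F^2}\|G\|_{\mathcal F^2},
\]
where the last step uses the sharp pointwise bound $|F(z)|e^{-\pi|z|^2/2}\le\|F\|_{\mathcal F^2}$ from Lemma~\ref{lem:Fock-basic}(ii). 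Hence $\|\mathcal L_{U_\varepsilon}-\mathcal L_{D_1}\|\le|U_\varepsilon\triangle D_1|\to0$. Both operators are compact, self-adjoint and nonnegative, so Weyl's inequality (equivalently, the Courant--Fischer min-max characterization) gives $|\lambda_k(U_\varepsilon)-\lambda_k(D_1)|\le\|\mathcal L_{U_\varepsilon}-\mathcal L_{D_1}\|$ for every $k$. For the unit disk the ordered eigenvalues are $\lambda_k(D_1)=\mu_{k-1}$ by the strict monotonicity of \eqref{eq:disk-eigenvalues-explicit}, and \eqref{eq:eigenvalue-continuity} follows.

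For \eqref{eq:gap-preservation} and the isolation claim, choose $\varepsilon_*$ so that $|U_\varepsilon\triangle D_1|<\pi e^{-\pi}/4$. Then $\lambda_1(U_\varepsilon)\in[\mu_0-\tfrac{\pi e^{-\pi}}4,\mu_0+\tfrac{\pi e^{-\pi}}4]$, while every $\lambda_k(U_\varepsilon)$ with $k\ge2$ is at most $\mu_1+\tfrac{\pi e^{-\pi}}4=\mu_0-\tfrac{3\pi e^{-\pi}}4$. By \eqref{eq:disk-spectral-gap} this lies strictly below both the window and the midpoint $(\mu_0+\mu_1)/2$.

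For the consequence, apply Theorem~\ref{thm:perturb} with $\lambda=\mu_0$, $N=2$ and $f_0=h_0+a_2h_2$, where $a_2$ is proportional to $\varepsilon$, so that $\mathcal Bf_0=P_\varepsilon$. This step requires that $\lambda=\mu_0$ corresponds to $r_\lambda=1$, which holds because $1-e^{-\pi}=\mu_0$; the Daubechies disk is then exactly $D_1$. Proposition~\ref{prop:disk-inverse-tailored} gives $\|\varphi_b\|_{C^2}\le C|\varepsilon|$, so the radial-graph domains satisfy $|U_\varepsilon\triangle D_1|\le 2\pi(1+\|\varphi_b\|_{C^0})\|\varphi_b\|_{C^0}\to0$, and the first part of the lemma applies. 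Since $\mathcal L_{U_\varepsilon}P_\varepsilon=\mu_0P_\varepsilon$ with $P_\varepsilon\neq0$, the value $\mu_0$ is an eigenvalue lying in the isolating window. It must therefore equal $\lambda_1(U_\varepsilon)$, and $P_\varepsilon$ is a first eigenfunction. The window argument shows moreover that $\lambda_1(U_\varepsilon)$ is simple.

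The only real obstacle is the operator-norm estimate, which must be uniform over $\mathcal F^2(\C)$ rather than just on finitely many modes. The pointwise bound makes it immediate, so the main care needed is in checking that $\lambda=\mu_0$ matches the unit radius.
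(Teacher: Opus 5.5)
Your proof is correct and follows the paper's main argument: the operator-norm bound $\|\mathcal L_{U_\varepsilon}-\mathcal L_{D_1}\|\le|U_\varepsilon\triangle D_1|$ via the pointwise Fock estimate, Weyl's min--max inequality, and the isolating-window argument based on the gap $\mu_0-\mu_1=\pi e^{-\pi}$. You also make explicit two points the paper leaves implicit, namely that $\lambda=\mu_0$ forces $r_\lambda=1$ and that the radial graphs of Theorem~\ref{thm:perturb} satisfy $|U_\varepsilon\triangle D_1|=O(|\varepsilon|)$; the paper's additional Kato analytic-perturbation remark is an optional alternative that your argument does not need.
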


\begin{proof}
For any bounded measurable \(U\subset \C\), the operator \(\mathcal L_U\) is
positive semi-definite, compact, and depends Lipschitz-continuously on
\(\mathbf 1_U\) in operator norm: for any \(F\in \mathcal F^2(\C)\),
\[
\langle (\mathcal L_{U_\varepsilon}-\mathcal L_{D_1})F,F\rangle
=
\int_\C \bigl(\mathbf 1_{U_\varepsilon}(\zeta)
-\mathbf 1_{D_1}(\zeta)\bigr)|F(\zeta)|^2 e^{-\pi|\zeta|^2}\,\mathrm{d}A(\zeta),
\]
and by the standard Fock-space pointwise bound
\(|F(\zeta)|^2 e^{-\pi|\zeta|^2}\le \|F\|_{\mathcal F^2}^2\) (cf.\
Lemma~\ref{lem:Fock-basic}) one obtains
\[
\|\mathcal L_{U_\varepsilon}-\mathcal L_{D_1}\|_{\mathrm{op}}
\le
|U_\varepsilon\triangle D_1|.
\]
By Weyl's \(\min\!-\!\max\) characterization of eigenvalues
of compact self-adjoint operators, it follows that
\[
|\lambda_k(U_\varepsilon)-\lambda_k(D_1)|
\le
\|\mathcal L_{U_\varepsilon}-\mathcal L_{D_1}\|_{\mathrm{op}}
\le
|U_\varepsilon\triangle D_1|
\xrightarrow[\varepsilon\to 0]{}0,
\]
which proves \eqref{eq:eigenvalue-continuity}. The gap inequality
\eqref{eq:gap-preservation} then follows by combining this convergence with
the strict gap \(\mu_0-\mu_1=\pi e^{-\pi}>0\) of
\eqref{eq:disk-spectral-gap}: choose \(\varepsilon_*\) so small that
\(|\lambda_1(U_\varepsilon)-\mu_0|<\pi e^{-\pi}/4\) and
\(|\lambda_2(U_\varepsilon)-\mu_1|<\pi e^{-\pi}/4\) for
\(|\varepsilon|<\varepsilon_*\); then any eigenvalue \(\lambda\) of
\(\mathcal L_{U_\varepsilon}\) lying in
\([\mu_0-\pi e^{-\pi}/4,\mu_0+\pi e^{-\pi}/4]\) is greater than
\(\mu_1+\pi e^{-\pi}/4\ge \lambda_2(U_\varepsilon)\), hence equal to
\(\lambda_1(U_\varepsilon)\) by the ordering of the spectrum.

The same conclusion can be obtained, in a more structural form, from analytic
perturbation theory in the sense of Kato \cite{Kato}: by Theorem~\ref{thm:perturb}
the family \(U_\varepsilon\) depends in an \emph{analytic} way on \(\varepsilon\), so
\(\varepsilon\mapsto \mathcal L_{U_\varepsilon}\) is a real-analytic family
of compact self-adjoint operators in \(\mathcal F^2(\C)\). The principal
eigenvalue \(\mu_0\) of \(\mathcal L_{D_1}\) is simple (its eigenspace is the
one-dimensional space \(\C\cdot e_0\)) and isolated by
\eqref{eq:disk-spectral-gap}. The Kato selection theorem
(\cite[Ch.~VII, Thm.~1.8]{Kato}) thus produces a real-analytic branch
\(\varepsilon\mapsto (\widetilde\lambda(\varepsilon),
\widetilde F(\varepsilon))\) of eigenpairs with
\(\widetilde\lambda(0)=\mu_0\), \(\widetilde F(0)=e_0\), and
\(\widetilde\lambda(\varepsilon)=\lambda_1(U_\varepsilon)\) for all
sufficiently small \(\varepsilon\). The eigenpair \((\lambda_0,P_\varepsilon)\)
constructed in Theorem~\ref{thm:perturb} satisfies \(P_0\equiv 1=e_0\) and
depends continuously on \(\varepsilon\); by uniqueness of the analytic branch
within the gap interval, it agrees with the Kato branch. In particular,
\(\lambda_0=\widetilde\lambda(\varepsilon)=\lambda_1(U_\varepsilon)\) for
\(\varepsilon\) small.
\end{proof}

\medskip

\begin{lemma}
\label{lem:dilation-eigenvalue}
For any bounded measurable set \(U\subset \C\) with \(|U|>0\) and any
\(t>0\), the eigenvalues of \(\mathcal L_{tU}\) are real-analytic functions
of \(t\) on \((0,\infty)\). In the special case \(U=D_1\), the principal
eigenvalue is given by the explicit closed form
\begin{equation}\label{eq:dilated-disk-formula}
\lambda_1(tD_1)=1-e^{-\pi t^2},
\qquad t>0,
\end{equation}
attained on the constant eigenfunction \(F\equiv 1\). In particular, for
\(r_\varepsilon=1+O(\varepsilon^2)\),
\begin{equation}\label{eq:disk-dilation-O-eps2}
\lambda_1(r_\varepsilon D_1)
=
1-e^{-\pi r_\varepsilon^2}
=
\lambda_0+2\pi e^{-\pi}(r_\varepsilon-1)+O((r_\varepsilon-1)^2)
=
\lambda_0+O(\varepsilon^2).
\end{equation}

For the family \(U_\varepsilon\) of Theorem~\ref{thm:perturb} applied with
\(P_\varepsilon(w)=1+\varepsilon w^2\), one has
\(|U_\varepsilon|=\pi+O(\varepsilon^2)\) (see Step~3 of the proof of
Theorem~\ref{thm:GGRT-sharp} below), and the area-normalizing dilation
factor \(r_\varepsilon=\sqrt{\pi/|U_\varepsilon|}=1+O(\varepsilon^2)\)
satisfies
\begin{equation}\label{eq:dilation-eigenvalue-control}
\bigl|\lambda_1(r_\varepsilon U_\varepsilon)-\lambda_1(U_\varepsilon)\bigr|
=
O(\varepsilon^2).
\end{equation}
\end{lemma}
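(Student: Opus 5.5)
The plan has three parts, taken in the order of the statement: analyticity in $t$, the explicit disk formula, and then the two $O(\varepsilon^2)$ estimates.

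\emph{Analyticity in $t$.} I would transport the dilation to the operator side. The substitution $\zeta=t\zeta'$ gives
\[
\langle \mathcal L_{tU}F,F\rangle=t^2\int_U |F(t\zeta')|^2e^{-\pi t^2|\zeta'|^2}\,\mathrm{d}A(\zeta').
\]
Hence, in the orthonormal basis $\{e_k\}$ of Lemma~\ref{lem:monomials}, the matrix entries of $\mathcal L_{tU}$ are
\[
\langle \mathcal L_{tU}e_j,e_k\rangle=t^{2+j+k}\,b_jb_k\int_U \zeta^j\overline\zeta^{\,k}e^{-\pi t^2|\zeta|^2}\,\mathrm{d}A(\zeta).
\]
Each entry extends holomorphically to a complex neighbourhood of any $t_0>0$ because $U$ is bounded. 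The super-exponential decay of $b_jb_k$ together with the bound $\sup_U|\zeta|\le M$ gives an entrywise majorant of Hilbert--Schmidt type. This makes $t\mapsto\mathcal L_{tU}$ a holomorphic family of compact self-adjoint operators near $(0,\infty)$. Kato's theorem \cite{Kato} then gives real-analytic eigenvalue branches. Strictly, it is these branches that are analytic; the ordered eigenvalues are only continuous and piecewise analytic. That is all that is used later, and I would state it that way.

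\emph{The disk formula.} Rotational invariance diagonalizes $\mathcal L_{tD_1}$ in $\{e_k\}$ with eigenvalues $\gamma(k+1,\pi t^2)/k!$, exactly as in \eqref{eq:disk-eigenvalues-explicit} with $\pi$ replaced by $\pi t^2$. These are strictly decreasing in $k$, so the top one is $k=0$, namely $1-e^{-\pi t^2}$, attained on $F\equiv1$. This proves \eqref{eq:dilated-disk-formula}. Expanding $1-e^{-\pi r^2}$ at $r=1$ gives the displayed Taylor expansion, and $r_\varepsilon-1=O(\varepsilon^2)$ gives \eqref{eq:disk-dilation-O-eps2}.

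\emph{The estimate \eqref{eq:dilation-eigenvalue-control}.} Here I would avoid analyticity and use a direct Lipschitz-type bound. Let $F$ be a normalized first eigenfunction of $\mathcal L_{U_\varepsilon}$. Define $F_r(z):=r^{-1}F(z/r)\,e^{\frac{\pi}{2}(1-r^{-2})|z|^2}$; this is not holomorphic, so it cannot be used as a competitor directly. Instead I would compare operators after pulling both back to the fixed set $U_\varepsilon$. By the formula above, $\langle\mathcal L_{rU}F,F\rangle$ differs from $\langle\mathcal L_UF,F\rangle$ by
\[
\int_U\bigl(r^2|F(r\zeta)|^2e^{-\pi r^2|\zeta|^2}-|F(\zeta)|^2e^{-\pi|\zeta|^2}\bigr)\,\mathrm{d}A.
\]
This is the quadratic form of the operator $\mathcal L_{rU}-\mathcal L_U$, which acts as multiplication on the symbol $\mathbf 1_{rU}-\mathbf 1_U$. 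Its norm is therefore at most $|rU\triangle U|$, by the same pointwise-bound argument as in Lemma~\ref{lem:spectral-continuity}. Weyl's inequality then gives
\[
|\lambda_1(rU)-\lambda_1(U)|\le|rU\triangle U|.
\]
For $U=U_\varepsilon$, a $C^2$-small radial graph over $D_1$ contained in $D_2$, one has $|rU\triangle U|\le C|r-1|$, since $U$ is star-shaped with Lipschitz radial function. Together with $r_\varepsilon-1=O(\varepsilon^2)$, which follows from $|U_\varepsilon|=\pi+O(\varepsilon^2)$ as cited from Step~3, this gives \eqref{eq:dilation-eigenvalue-control}.

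The main obstacle is the symmetric-difference bound $|r U\triangle U|\le C|r-1|$ uniformly in $\varepsilon$. This needs the uniform radial-graph description from Proposition~\ref{prop:disk-inverse-tailored}. With that description, writing both sets in polar coordinates reduces the measure of the difference to $\int_0^1|r-1|\,R_\varepsilon(\theta)^2\,\mathrm{d}\theta$ up to constants, and the bound follows.
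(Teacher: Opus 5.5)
Your proof is correct, and for the one substantive estimate, \eqref{eq:dilation-eigenvalue-control}, it takes a different and more elementary route than the paper.

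The paper asserts that the dilation $F\mapsto F(r_\varepsilon\,\cdot)$ intertwines $\mathcal L_{r_\varepsilon U_\varepsilon}$ with an analytic perturbation of $\mathcal L_{U_\varepsilon}$ of operator norm $O(|r_\varepsilon-1|)$, and then applies Weyl's min--max. This is only sketched. Dilations are not unitary on $\mathcal F^2(\C)$, and for $r>1$ they do not even map $\mathcal F^2(\C)$ into itself. Making the sketch precise therefore means comparing Toeplitz operators on Fock spaces with different Gaussian weights.

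You stay on the fixed space and compare symbols, $\|\mathcal L_{rU}-\mathcal L_U\|_{\mathrm{op}}\le|rU\triangle U|$, exactly as in Lemma~\ref{lem:spectral-continuity}. No analyticity is needed. What you call the main obstacle is in fact immediate. Since $U_\varepsilon$ is star-shaped about $0$, the sets $U_\varepsilon$ and $rU_\varepsilon$ are nested, so $|rU_\varepsilon\triangle U_\varepsilon|=|r^2-1|\,|U_\varepsilon|$ exactly; no Lipschitz control of the radial function is required. With $r_\varepsilon^2=\pi/|U_\varepsilon|$, your bound becomes $|\lambda_1(r_\varepsilon U_\varepsilon)-\lambda_1(U_\varepsilon)|\le\bigl||U_\varepsilon|-\pi\bigr|=O(\varepsilon^2)$.

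In the first two parts you are also more explicit than the paper:
\begin{itemize}
\item The majorant $\sum_{j,k}b_j^2b_k^2(M|t|)^{2(j+k)}=e^{2\pi M^2|t|^2}$ justifies the holomorphic-family claim, which the paper only states.
\item Your caveat is a correct sharpening of the statement: Kato's theorem gives analytic eigenvalue branches, while the ordered $\lambda_k(tU)$ may only be continuous at crossings.
\item Diagonalizing $\mathcal L_{tD_1}$ with eigenvalues $\gamma(k+1,\pi t^2)/k!$, strictly decreasing in $k$, actually proves that $F\equiv 1$ is principal. The paper simply invokes this.
\end{itemize}

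The paper's intertwining picture is the natural one if you want regular dependence of the whole spectrum on the dilation parameter. For the quantitative bound used in Step~6, your symbol comparison is shorter and complete.
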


\begin{proof}
The change of variable \(\zeta=t\zeta'\) in
\((\mathcal L_{tU}F)(z)=\int_{tU}F(\zeta)e^{\pi z\overline\zeta}e^{-\pi|\zeta|^2}\,\mathrm{d}A(\zeta)\)
gives
\[
(\mathcal L_{tU}F)(z)
=
t^2\int_U F(t\zeta')\,e^{\pi z t\overline{\zeta'}}\,e^{-\pi t^2|\zeta'|^2}\,\mathrm{d}A(\zeta'),
\]
which depends real-analytically on \(t\); hence \(\mathcal L_{tU}\) is a
real-analytic family of compact self-adjoint operators (in fact in trace
class), and by Kato \cite[Ch.~VII]{Kato} its eigenvalues vary analytically
in \(t\).

For \(U=D_1\), the constant function \(F\equiv 1\) satisfies, by direct
integration,
\(
(\mathcal L_{tD_1}\,1)(0)
=
\int_{tD_1}e^{-\pi|\zeta|^2}\,\mathrm{d}A(\zeta)
=
1-e^{-\pi t^2}.
\)
Equality of \((\mathcal L_{tD_1}\,1)(z)\) with \((1-e^{-\pi t^2})\cdot 1\) for
all \(z\in\C\) follows from the fact that \(F\equiv 1=e_0\) is the principal
eigenfunction of the rotationally symmetric operator \(\mathcal L_{tD_1}\).
This gives \eqref{eq:dilated-disk-formula}; Taylor expansion at \(t=1\)
yields \eqref{eq:disk-dilation-O-eps2}.

For \eqref{eq:dilation-eigenvalue-control}, the dilation
\(F\mapsto F(r_\varepsilon\,\cdot)\) intertwines \(\mathcal L_{r_\varepsilon
U_\varepsilon}\) with an analytic perturbation of \(\mathcal L_{U_\varepsilon}\)
of operator-norm size \(O(|r_\varepsilon-1|)=O(\varepsilon^2)\); Weyl's characterization then yields \eqref{eq:dilation-eigenvalue-control}.
\end{proof}

\begin{remark}\label{rem:dilation-explicit}
For the proof of Theorem~\ref{thm:GGRT-sharp} only
\eqref{eq:disk-dilation-O-eps2} is strictly required, since one combines it
with the exact identity \(\lambda_1(U_\varepsilon)=\lambda_0\) furnished by
Theorem~\ref{thm:perturb} and Lemma~\ref{lem:spectral-continuity}. The full
Lemma~\ref{lem:dilation-eigenvalue} is recorded for completeness, as it
isolates the (implicit) claim that area normalization perturbs
the eigenvalue only at second order.
\end{remark}

\medskip

\begin{lemma}\label{lem:hadamard-shape-derivative}
Let \((\Omega_\varepsilon)_{|\varepsilon|<\varepsilon_0}\) be a family of
bounded \(C^1\) domains in \(\C\), with \(\Omega_0\) of class \(C^1\),
outward unit normal \(\nu\), and arc-length measure \(\mathrm{d}\mathcal H^1\), and
represented as the image of a one-parameter family of \(C^1\)-diffeomorphisms
\(V_\varepsilon\) of \(\C\), \(V_0=\mathrm{id}\), with deformation field
\[
W(z):=\partial_\varepsilon V_\varepsilon(z)\bigl|_{\varepsilon=0}
\in C^1(\overline{\Omega_0};\C),
\]
and normal velocity \(V_\nu(z):=W(z)\cdot\nu(z)\in C^1(\partial\Omega_0)\).
Then for any \(H\) admitting a \(C^1\) extension to a neighborhood of
\(\overline{\Omega_0}\),
\begin{equation}\label{eq:reynolds-formula}
\frac{d}{d\varepsilon}\Big|_{\varepsilon=0}
\int_{\Omega_\varepsilon} H(z)\,\mathrm{d}A(z)
=
\int_{\partial\Omega_0}V_\nu(z)\,H(z)\,\mathrm{d}\mathcal H^1(z).
\end{equation}
\end{lemma}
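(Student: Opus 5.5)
The plan is to pull the integral back to the fixed domain $\Omega_0$ via the diffeomorphisms and differentiate under the integral sign. By the change of variables $z=V_\varepsilon(\zeta)$,
\[
\int_{\Omega_\varepsilon}H\,\mathrm{d}A=\int_{\Omega_0}H(V_\varepsilon(\zeta))\,J_\varepsilon(\zeta)\,\mathrm{d}A(\zeta),
\qquad J_\varepsilon:=\det DV_\varepsilon .
\]
Here $J_\varepsilon>0$ for small $\varepsilon$, since $V_0=\mathrm{id}$ and $V_\varepsilon$ is $C^1$ in $z$.

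The first step is to record the two derivatives at $\varepsilon=0$. For the integrand, $\partial_\varepsilon H(V_\varepsilon(\zeta))|_{\varepsilon=0}=\nabla H(\zeta)\cdot W(\zeta)$. For the Jacobian, $\partial_\varepsilon J_\varepsilon|_{\varepsilon=0}=\operatorname{div}W$, by Jacobi's formula $\partial_\varepsilon\det A_\varepsilon=\det A_\varepsilon\,\mathrm{tr}(A_\varepsilon^{-1}\partial_\varepsilon A_\varepsilon)$ applied at $A_0=I$.

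To justify differentiating under the integral, I will assume that $(\varepsilon,z)\mapsto V_\varepsilon(z)$ is $C^1$ jointly and that $D V_\varepsilon$ is differentiable in $\varepsilon$ with $\partial_\varepsilon DV_\varepsilon|_0=DW$. This is the natural reading of ``deformation field in $C^1$''. Under this assumption the difference quotients are uniformly bounded on the compact set $\overline{\Omega_0}$, so dominated convergence applies. This gives
\[
\frac{d}{d\varepsilon}\Big|_{0}\int_{\Omega_\varepsilon}H\,\mathrm{d}A=\int_{\Omega_0}\bigl(\nabla H\cdot W+H\,\operatorname{div}W\bigr)\,\mathrm{d}A=\int_{\Omega_0}\operatorname{div}(HW)\,\mathrm{d}A .
\]

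The second step is the divergence theorem for the $C^1$ vector field $HW$ on the bounded $C^1$ domain $\Omega_0$. It yields $\int_{\partial\Omega_0}H\,W\cdot\nu\,\mathrm{d}\mathcal H^1=\int_{\partial\Omega_0}V_\nu H\,\mathrm{d}\mathcal H^1$, which is \eqref{eq:reynolds-formula}.

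The main obstacle is the regularity bookkeeping. There are two points. First, the divergence theorem requires $HW\in C^1(\overline{\Omega_0})$; this is fine because $H$ is $C^1$ near $\overline{\Omega_0}$ and $W\in C^1(\overline{\Omega_0})$. Second, identifying $\partial_\varepsilon J_\varepsilon$ with $\operatorname{div}W$ requires interchanging the $\varepsilon$- and $z$-derivatives of $V_\varepsilon$.

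If only the minimal hypotheses are available, I would fall back on a more robust route. Write $\int_{\Omega_\varepsilon}H-\int_{\Omega_0}H$ as a signed integral over the thin region between the boundaries. In local $C^1$ graph coordinates along $\partial\Omega_0$, the boundary displacement in the normal direction is $\varepsilon V_\nu+o(\varepsilon)$ uniformly. Integrating $H$ across this layer and using the continuity of $H$ then gives the same boundary formula to first order.
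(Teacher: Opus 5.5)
Your argument is correct and is the classical pull-back proof behind the references the paper cites in place of a proof, \cite{Sokolowski-Zolesio,Henrot-Pierre}. You change variables to $\Omega_0$, differentiate $H\circ V_\varepsilon\,\det DV_\varepsilon$ via Jacobi's formula to obtain $\int_{\Omega_0}\operatorname{div}(HW)\,\mathrm{d}A$, and then apply the divergence theorem. The joint-regularity hypothesis you state explicitly is the standing assumption of those frameworks, which the paper's statement leaves implicit: $\varepsilon\mapsto V_\varepsilon$ should be differentiable into $C^1$, so that $\partial_\varepsilon DV_\varepsilon|_{0}=DW$ and $J_\varepsilon>0$ uniformly for small $\varepsilon$.
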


Since that result is classical, we omit its proof, referring the reader to
\cite[\S~2.16]{Sokolowski-Zolesio} and
\cite[Ch.~5]{Henrot-Pierre} for more extensive discussions of derivatives
of integrals over moving domains. As a consequence of this result, the
following one will be particularly useful to us:

\begin{corollary}\label{cor:reynolds-perturb}
In the special case of a normal-graph perturbation of the unit disk,
\(\partial\Omega_\varepsilon=\{(1+\varphi_\varepsilon(y))y:y\in\mathbb S^1\}\)
with \(\varepsilon\mapsto \varphi_\varepsilon\) of class \(C^1\) into
\(C^1(\mathbb S^1;\R)\) and \(\varphi_0\equiv 0\), the boundary deformation
field equals \(W(y)=\dot\varphi_0(y)y\), the normal velocity is
\(V_\nu(y)=\dot\varphi_0(y)\), and \eqref{eq:reynolds-formula} specializes to
\begin{equation}\label{eq:reynolds-disk}
\frac{d}{d\varepsilon}\Big|_{\varepsilon=0}
\int_{\Omega_\varepsilon} H(z)\,\mathrm{d}A(z)
=
\int_{\mathbb S^1}\dot\varphi_0(y)\,H(y)\,\mathrm{d}\mathcal H^1(y).
\end{equation}
\end{corollary}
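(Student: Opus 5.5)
The plan is to deduce Corollary~\ref{cor:reynolds-perturb} from Lemma~\ref{lem:hadamard-shape-derivative}. This requires two things: an explicit family of $C^1$ diffeomorphisms $V_\varepsilon$ carrying the unit disk onto $\Omega_\varepsilon$, and the identification of the resulting deformation field and normal velocity on $\mathbb S^1$. As a cross-check, the formula can also be verified directly in polar coordinates.

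\textbf{Step 1: construct the diffeomorphisms.} Fix a smooth cutoff $\chi\colon[0,\infty)\to[0,1]$ with $\chi\equiv0$ on $[0,1/4]$ and $\chi\equiv1$ on $[1/2,\infty)$. Set
\[
V_\varepsilon(re^{i\theta}):=\bigl(1+\chi(r)\varphi_\varepsilon(e^{i\theta})\bigr)re^{i\theta}.
\]
This is the identity near the origin, so no regularity issue arises there. It is $C^1$ in $z$, and $C^1$ in $\varepsilon$ because $\varepsilon\mapsto\varphi_\varepsilon\in C^1(\mathbb S^1)$ is $C^1$. Since $\varphi_0\equiv0$, we have $V_0=\mathrm{id}$. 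By continuity, $\|\varphi_\varepsilon\|_{C^1}$ is small for small $|\varepsilon|$, so $V_\varepsilon$ is a $C^1$ perturbation of the identity and hence a $C^1$ diffeomorphism of $\C$. It is radially monotone on each ray, so it maps $D_1$ onto $\{r<1+\varphi_\varepsilon\}=\Omega_\varepsilon$. If one wants $V_\varepsilon$ globally defined with compact support, multiply $\chi$ by a second cutoff that vanishes for large $r$; this does not affect anything near $\overline{D_1}$.

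\textbf{Step 2: identify $W$ and $V_\nu$.} Differentiating at $\varepsilon=0$ gives
\[
W(z)=\chi(|z|)\,\dot\varphi_0(z/|z|)\,z,
\]
which lies in $C^1(\overline{D_1})$. On $\mathbb S^1$ we have $\chi=1$, so $W(y)=\dot\varphi_0(y)\,y$. The outward normal there is $\nu(y)=y$, hence $V_\nu(y)=W(y)\cdot y=\dot\varphi_0(y)$. Substituting into \eqref{eq:reynolds-formula} yields \eqref{eq:reynolds-disk}.

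\textbf{Step 3: direct check in polar coordinates.} Writing
\[
\int_{\Omega_\varepsilon}H\,\mathrm{d}A=\int_0^{2\pi}\int_0^{1+\varphi_\varepsilon(\theta)}H(re^{i\theta})\,r\,\mathrm{d}r\,\mathrm{d}\theta,
\]
the fundamental theorem of calculus together with differentiation under the $\theta$-integral gives
\[
\frac{d}{d\varepsilon}\Big|_{\varepsilon=0}\int_{\Omega_\varepsilon}H\,\mathrm{d}A=\int_0^{2\pi}\dot\varphi_0(\theta)\,H(e^{i\theta})\,\mathrm{d}\theta.
\]
Differentiation under the integral is justified because $\varphi_\varepsilon$ is $C^1$ in $\varepsilon$ uniformly in $\theta$ and $H$ is continuous. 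The right-hand side is exactly the arclength integral in \eqref{eq:reynolds-disk}.

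The only delicate point is the diffeomorphism property in Step~1, that is, checking that $DV_\varepsilon$ stays invertible. This follows from the $C^1$-smallness of $\chi\varphi_\varepsilon$, which holds because $\varphi_\varepsilon\to0$ in $C^1$ and $\chi$ is smooth.
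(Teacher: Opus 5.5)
Your proposal is correct and follows the paper's route: the paper gives no separate proof and treats the corollary as the direct specialization of Lemma~\ref{lem:hadamard-shape-derivative}, with $W(y)=\dot\varphi_0(y)y$ and $\nu(y)=y$, and your cutoff construction of $V_\varepsilon$ just makes explicit the diffeomorphism family that the lemma presupposes. Your Step~3 polar-coordinate check is the same fundamental-theorem-of-calculus computation the paper uses in \eqref{eq:disk-frechet-domain}, and it has the small advantage of needing only continuity of $H$.
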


Applying Corollary \ref{cor:reynolds-perturb} to \(H(z)=|F_0(z)|^2 e^{-\pi|z|^2}\) where \(F_0\equiv 1\) is the
disk's principal eigenfunction, \eqref{eq:reynolds-disk} together with the
Hadamard shape-derivative formula for the principal eigenvalue of a
self-adjoint Toeplitz-type operator (cf.\ Step~3 of the proof of the
local-maximizer theorem in Section~\ref{sec:local-max}, and
\cite[Thm.~5.7.1]{Henrot-Pierre} or \cite{Sokolowski-Zolesio} for the general
formalism) yields the first variation of the principal eigenvalue along the
family \(U_\varepsilon\),
\begin{equation}\label{eq:hadamard-eigenvalue}
\frac{d}{d\varepsilon}\Big|_{\varepsilon=0}\lambda_1(U_\varepsilon)
=
\int_{\partial D_1}|F_0(y)|^2 e^{-\pi|y|^2}\,V_\nu(y)\,\mathrm{d}\mathcal H^1(y)
=
e^{-\pi}\int_{\mathbb S^1}\dot\varphi_0(y)\,\mathrm{d}\mathcal H^1(y).
\end{equation}

Indeed, if we recall that the principal
eigenfunction \(F_\varepsilon\) of \(\mathcal L_{U_\varepsilon}\) depends
real-analytically on \(\varepsilon\) by Lemma~\ref{lem:spectral-continuity}
and the Kato selection theorem; we may take it
\(L^2(\C, e^{-\pi|z|^2}\mathrm{d}A)\)-normalized. Differentiating
\(\lambda_1(U_\varepsilon)\|F_\varepsilon\|^2
=
\int_{U_\varepsilon}|F_\varepsilon|^2 e^{-\pi|z|^2}\mathrm{d}A\)
in \(\varepsilon\) at \(\varepsilon=0\), using
\(\partial_\varepsilon\|F_\varepsilon\|^2|_{\varepsilon=0}=0\) and the
self-adjointness identity
\(\langle (\mathcal L_{D_1}-\mu_0\mathrm{Id})F_0,\partial_\varepsilon
F_\varepsilon|_0\rangle=0\), the only surviving contribution is the boundary
term, namely
\[
\frac{d}{d\varepsilon}\Big|_{\varepsilon=0}\lambda_1(U_\varepsilon)
=
\frac{d}{d\varepsilon}\Big|_{\varepsilon=0}
\int_{U_\varepsilon}|F_0|^2 e^{-\pi|z|^2}\,\mathrm{d}A(z),
\]
to which \eqref{eq:reynolds-formula} applies with
\(H=|F_0|^2 e^{-\pi|z|^2}\). Since \(F_0\equiv 1\) and \(|y|=1\) on
\(\mathbb S^1\), this gives \eqref{eq:hadamard-eigenvalue}. The corresponding
parallel computation in Step~3 of the proof of the local-maximizer theorem
(Section~\ref{sec:local-max}) spells out the divergence-theorem manipulation
in greater detail.

\medskip

\begin{lemma}\label{lem:symmetric-difference}
Let \(\dot\varphi_0\in C^1(\mathbb S^1;\R)\) be of pure second-harmonic
form,
\begin{equation}\label{eq:second-harmonic-form}
\dot\varphi_0(e^{it})=a\cos(2t)+b\sin(2t),
\qquad (a,b)\in\R^2\setminus\{(0,0)\},
\end{equation}
and fix \(K>0\). Then there exists a constant \(\delta_K>0\), depending only
on \((a,b)\) and \(K\), with the explicit lower bound
\begin{equation}\label{eq:explicit-delta-K-lower}
\delta_K
\ge
\frac{\pi(a^2+b^2)}{4(K+1)+2\sqrt{a^2+b^2}},
\end{equation}
such that, for every \(\theta\in [0,2\pi)\) and every \(\eta\in [-K,K]\),
\begin{equation}\label{eq:l1-uniform-lower}
\int_{\mathbb S^1}\bigl|\dot\varphi_0(y)-\eta\,\Re(e^{-i\theta}y)\bigr|
\,\mathrm{d}\mathcal H^1(y)\ge \delta_K.
\end{equation}
\end{lemma}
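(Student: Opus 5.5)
Write $y=e^{it}$, so that $\Re(e^{-i\theta}y)=\cos(t-\theta)$ and $\mathrm d\mathcal H^1=\mathrm dt$ on $[0,2\pi)$. Put $g_{\theta,\eta}(t):=\dot\varphi_0(e^{it})-\eta\cos(t-\theta)$. The plan is to bound the $L^1$ norm from below by pairing $g_{\theta,\eta}$ with a bounded test function that is orthogonal to the first harmonics. The natural choice is
\[
\chi(t):=\frac{a\cos(2t)+b\sin(2t)}{\sqrt{a^2+b^2}},
\]
which satisfies $|\chi|\le 1$ and is orthogonal to both $\cos(t-\theta)$ and $\sin(t-\theta)$.

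By Hölder's inequality,
\[
\int_0^{2\pi}|g_{\theta,\eta}|\,\mathrm dt\ \ge\ \Big|\int_0^{2\pi}g_{\theta,\eta}\,\chi\,\mathrm dt\Big|
=\frac{1}{\sqrt{a^2+b^2}}\int_0^{2\pi}\bigl(a\cos 2t+b\sin 2t\bigr)^2\,\mathrm dt
=\pi\sqrt{a^2+b^2},
\]
since the $\eta$-term integrates to zero against $\chi$. This bound is already uniform in $\theta\in[0,2\pi)$ and in all $\eta\in\R$, so the restriction $|\eta|\le K$ is not even needed. We therefore set $\delta_K:=\pi\sqrt{a^2+b^2}$.

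It then remains to check that this value dominates the displayed lower bound \eqref{eq:explicit-delta-K-lower}. Writing $s:=\sqrt{a^2+b^2}>0$, we need
\[
\frac{\pi s^2}{4(K+1)+2s}\ \le\ \pi s .
\]
This is equivalent to $s\le 4(K+1)+2s$, which holds trivially. The argument is a single duality step, and no step presents a real obstacle. The only care needed is verifying the orthogonality of $\chi$ to the first harmonics, which follows from $\int_0^{2\pi}\cos(2t)\cos(t-\theta)\,\mathrm dt=0$ and its analogues for the other products.
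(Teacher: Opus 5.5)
Your argument is correct, and it takes a different route from the paper. The paper works with $g^{\theta,\eta}=\dot\varphi_0-\eta\cos(t-\theta)$ in the orthogonal basis $\{\cos t,\sin t,\cos 2t,\sin 2t\}$. It uses Parseval to get $\|g^{\theta,\eta}\|_{L^2}^2=\pi(\eta^2+a^2+b^2)\ge\pi(a^2+b^2)$. It then converts this into an $L^1$ bound through the interpolation inequality $\|g\|_{L^1}\ge\|g\|_{L^2}^2/\|g\|_{L^\infty}$.

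In the paper's route, the restriction $|\eta|\le K$ is needed only to control $\|g^{\theta,\eta}\|_{L^\infty}\le|a|+|b|+|\eta|$. That is why its constant $\pi(a^2+b^2)/(4(K+1)+2\sqrt{a^2+b^2})$ degenerates as $K\to\infty$.

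You instead use the dual description of the $L^1$ norm. Pairing with the sup-norm-one second-harmonic function $\chi$ annihilates the two-dimensional space of first harmonics coming from translations. This gives $\|g^{\theta,\eta}\|_{L^1}\ge\pi\sqrt{a^2+b^2}$ in one step.

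Your route has three advantages. It is shorter. Its constant is better by the factor $(4(K+1)+2\sqrt{a^2+b^2})/\sqrt{a^2+b^2}>2$. And it is uniform over all $\eta\in\R$, so the hypothesis $|\eta|\le K$ is superfluous for the lemma itself. In the application, the near/far splitting of Step~5 is still needed, but only to keep the $O(\varepsilon^2)$ remainder in the graph expansion of $\widetilde U_\varepsilon-a$ uniform, not for the first-order lower bound.

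The paper's Parseval route has a modest advantage: it requires no choice of test function. It only uses the structural fact that the second-harmonic projection of $g^{\theta,\eta}$ does not depend on $(\theta,\eta)$, at the price of the extra $L^\infty$ estimate.
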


As a direct consequence of this result, for the area-normalized family \(\widetilde U_\varepsilon\) of
Theorem~\ref{thm:perturb} with second-harmonic
\(\dot\varphi_0\) of the form \eqref{eq:second-harmonic-form} and
\((a,b)\neq (0,0)\), there exist constants
\(c=c(a,b,K)>0\) and \(c'=c'(a,b)>0\) and a threshold \(\varepsilon_1>0\)
such that, for all \(0<|\varepsilon|<\varepsilon_1\),
\begin{equation}\label{eq:near-disks-explicit}
|\widetilde U_\varepsilon\triangle (D_1+a)|\ge c\,|\varepsilon|
\qquad
\text{whenever }|a|\le K|\varepsilon|,
\end{equation}
and, combining with the comparison for distant disks,
\begin{equation}\label{eq:total-lower}
\inf_{\substack{D\subset\C\\|D|=\pi}}
|\widetilde U_\varepsilon\triangle D|\ge c'|\varepsilon|.
\end{equation}

Indeed, in order to see \eqref{eq:near-disks-explicit}, recall that for a disk \(D=D_1+a\) of
area \(\pi\) with \(a=\eta\varepsilon e^{i\theta}\) and \(|\eta|\le K\), the
translated set \(\widetilde U_\varepsilon-a\) is a normal graph over
\(\mathbb S^1\) with radial perturbation
\(\psi^{\theta,\eta}_\varepsilon(y)=\varepsilon g^{\theta,\eta}(y)+O(\varepsilon^2)\)
in \(C^1(\mathbb S^1)\), uniformly for \(|\eta|\le K\) (see Step~5 of the
proof of Theorem~\ref{thm:GGRT-sharp} below for the detailed expansion).
The polar-coordinate representation of the symmetric difference yields
\[
|\widetilde U_\varepsilon\triangle D|
=
\frac12\int_{\mathbb S^1}\bigl|2\varepsilon g^{\theta,\eta}(y)+O(\varepsilon^2)\bigr|
\,\mathrm{d}\mathcal H^1(y)
\ge
|\varepsilon|\,\delta_K-O(\varepsilon^2),
\]
which is at least \(\frac12 |\varepsilon|\,\delta_K\) for
\(|\varepsilon|<\varepsilon_1\) sufficiently small. This proves
\eqref{eq:near-disks-explicit} with \(c=\frac12\,\delta_K\).

For \eqref{eq:total-lower}, we use the explicit symmetric-difference
formula for two unit disks: for \(a\in\C\) with \(|a|\le 2\),
\begin{equation}\label{eq:two-disks-symdif}
|D_1\triangle (D_1+a)|
=
2\pi-4\arccos(|a|/2)+2|a|\sqrt{1-|a|^2/4},
\end{equation}
which satisfies \(|D_1\triangle (D_1+a)|\ge \sqrt 3\,|a|\) for
\(|a|\le 1\); hence \(|D_1\triangle (D_1+a)|\ge c_0\min(|a|,1)\) with
absolute constant \(c_0=\sqrt 3\). Choose \(K\) so large that
\(c_0K\ge 1+C\), where \(C\) is the implicit constant in
\(|\widetilde U_\varepsilon\triangle D_1|\le C|\varepsilon|\) of Step~4 of
the proof of Theorem~\ref{thm:GGRT-sharp}. Then for \(|a|\ge K|\varepsilon|\),
\[
|\widetilde U_\varepsilon\triangle D|
\ge
|D_1\triangle D|-|\widetilde U_\varepsilon\triangle D_1|
\ge
c_0 K|\varepsilon|-C|\varepsilon|
\ge
|\varepsilon|.
\]
Combining with \eqref{eq:near-disks-explicit} for
\(|a|\le K|\varepsilon|\), \eqref{eq:total-lower} follows with
\(c'=\min\bigl(\tfrac 12\delta_K,1\bigr)\) and the explicit \(\delta_K\)
of \eqref{eq:explicit-delta-K-lower}.

\begin{proof}[Proof of Lemma \ref{lem:symmetric-difference}]
Define
\(g^{\theta,\eta}(t):=a\cos 2t+b\sin 2t-\eta\cos(t-\theta).\)
Then \(g^{\theta,\eta}\) lies in the four-dimensional subspace
\(V:=\mathrm{span}\{\cos t,\sin t,\cos 2t,\sin 2t\}\subset L^1(\mathbb S^1)\).
Expanding in the orthogonal basis
\(\{\cos t,\sin t,\cos 2t,\sin 2t\}\) of \(L^2(\mathbb S^1)\) (each of squared
\(L^2\)-norm \(\pi\)),
\[
g^{\theta,\eta}(t)
=
-\eta\cos\theta\cdot\cos t-\eta\sin\theta\cdot\sin t
+a\cos 2t+b\sin 2t,
\]
hence by Parseval
\[
\|g^{\theta,\eta}\|_{L^2(\mathbb S^1)}^2
=
\pi(\eta^2+a^2+b^2)
\ge
\pi(a^2+b^2),
\]
the second-harmonic coefficients \((a,b)\) being independent of
\((\theta,\eta)\). The crucial structural feature is that the orthogonal
projection of \(g^{\theta,\eta}\) onto \(\mathrm{span}\{\cos 2t,\sin 2t\}\)
equals \(a\cos 2t+b\sin 2t\) for every \((\theta,\eta)\); translations of
the disk shift only the first-harmonic coefficients of the boundary
perturbation.

For the \(L^1\) lower bound, recall the elementary inequality
\(\|g\|_{L^1(\mathbb S^1)}\ge \|g\|_{L^2(\mathbb S^1)}^2/\|g\|_{L^\infty(\mathbb S^1)}\),
valid for any \(g\in L^\infty(\mathbb S^1)\setminus\{0\}\). For
\(g=g^{\theta,\eta}\),
\[
\|g^{\theta,\eta}\|_{L^\infty(\mathbb S^1)}
\le
|a|+|b|+|\eta|
\le
\sqrt{2(a^2+b^2)}+K
\le
2\sqrt{a^2+b^2}+2(K+1).
\]
Hence
\[
\|g^{\theta,\eta}\|_{L^1(\mathbb S^1)}
\ge
\frac{\pi(a^2+b^2)}{4(K+1)+2\sqrt{a^2+b^2}},
\]
which is \eqref{eq:explicit-delta-K-lower}, and
\eqref{eq:l1-uniform-lower} follows.
\end{proof}

\begin{remark}
\label{rem:second-order-not-eliminated}
The structural fact made quantitative by
Lemma~\ref{lem:symmetric-difference} is that, in the polar-graph
representation of a small perturbation of \(D_1\), translations of the disk
contribute \emph{only} to the first Fourier harmonic \(\cos t,\sin t\) of
the radial perturbation, and have zero second-harmonic component. Hence any
non-trivial second-harmonic mode \((a,b)\neq (0,0)\) in \(\dot\varphi_0\)
is \emph{intrinsic to the shape}, and provides a lower bound on the
symmetric-difference distance to the entire family of disks of the same
area, of order proportional to \(|\varepsilon|\sqrt{a^2+b^2}\) (with
\(K\)-dependence given in \eqref{eq:explicit-delta-K-lower}).
\end{remark}

\subsection{Proof of Theorem~\ref{thm:GGRT-sharp}}
\label{subsec:GGRT-proof} We are now finally ready to prove the result.

\begin{proof}[Proof of Theorem~\ref{thm:GGRT-sharp}]
Fix
\[
\lambda_0:=1-e^{-\pi},
\]
which is the eigenvalue of the localization operator of the unit disk $D_1$
associated with the ground state $h_0$. For $\varepsilon$ small, define
\[
P_\varepsilon(w):=1+\varepsilon w^2.
\]
By Theorem~\ref{thm:perturb}, applied with \(\lambda=\lambda_0\) and
\(P_\varepsilon(w)=1+\varepsilon w^2\), for all sufficiently small
\(|\varepsilon|\) there exists a domain \(U_\varepsilon\) such that
\begin{equation}\label{eq:functiona-P-eps-new}
\int_{U_\varepsilon} P_\varepsilon(z)e^{\pi \overline zw}e^{-\pi|z|^2}\,\mathrm{d}A(z)
=
\lambda_0 P_\varepsilon(w),
\qquad w\in\C.
\end{equation}
Moreover, by construction, $U_\varepsilon$ is a $C^2$ perturbation of the unit
disk. Hence, after possibly shrinking the range of admissible $\varepsilon$,
its boundary can be written as
\begin{equation}\label{eq:def-u-eps-new}
\partial U_\varepsilon
=
\{(1+\varphi_\varepsilon(y))y:\ y\in \mathbb S^1\},
\end{equation}
where
\[
\varphi_\varepsilon\in C^2(\mathbb S^1),
\qquad
\|\varphi_\varepsilon\|_{C^2(\mathbb S^1)}\lesssim |\varepsilon|.
\]
The map
\[
\varepsilon\mapsto \varphi_\varepsilon
\]
may in fact be chosen \(C^1\) as a map from a neighborhood of \(0\) into
\(C^1(\mathbb S^1)\). To see this, we briefly retrace the proof of
Theorem~\ref{thm:perturb}. The construction is the disk moment-map
construction of Proposition~\ref{prop:disk-inverse-tailored}. Thus
\(\varphi_\varepsilon\) is obtained from an equation of the form
\(F(\varepsilon,\varphi)=0\), where
\[
F:(-\varepsilon_0,\varepsilon_0)\times \mathcal U\to \mathcal V
\]
is the \(C^1\) moment map between the analytic boundary spaces used there,
with \(F(0,0)=0\) and \(\partial_\varphi F(0,0)\) invertible by the explicit
Fourier diagonalization at the unit disk. Since the polynomial
\(P_\varepsilon(z)=1+\varepsilon z^2\)
depends polynomially on \(\varepsilon\), the dependence of \(F\) on
\(\varepsilon\) is \(C^\infty\). The \(C^1\) version of the implicit function
theorem in Banach spaces then yields a \(C^1\) branch
\(\varepsilon\mapsto \varphi_\varepsilon\) into \(C^1(\mathbb S^1)\). We
therefore write
\[
\dot\varphi_0(y):=
\left.\partial_\varepsilon\right|_{\varepsilon=0}\varphi_\varepsilon(y).
\]

\medskip
\noindent
{\bf Step 1: first variation of the eigenvalue identity.} We differentiate \eqref{eq:functiona-P-eps-new} at $\varepsilon=0$.
Since \(P_\varepsilon\to 1\) in \(C^\infty\) and
\(\varepsilon\mapsto \varphi_\varepsilon\) is \(C^1\) into \(C^1(\mathbb S^1)\),
all terms in \eqref{eq:functiona-P-eps-new} are differentiable at
\(\varepsilon=0\). Since $\partial U_\varepsilon$ is $C^2$ and admits the
explicit normal graph parametrization \eqref{eq:def-u-eps-new} over
$\mathbb S^1$ with $\varphi_\varepsilon$ being $C^1$ on
$\varepsilon$, the hypotheses of
Lemma~\ref{lem:hadamard-shape-derivative} are satisfied with normal velocity
\(V_\nu(y)=\dot\varphi_0(y)\). The Reynolds transport identity \eqref{eq:reynolds-disk} then
applies, and yields
\[
\left.\frac{d}{d\varepsilon}\right|_{\varepsilon=0}
\int_{U_\varepsilon} H(z)\,\mathrm{d}A(z)
=
\int_{\mathbb S^1} \dot\varphi_0(z)H(z)\,\mathrm{d}\mathcal H^1(z).
\]
Applying this to
\[
H(z)=e^{\pi \overline zw}e^{-\pi|z|^2},
\]
and differentiating the factor $P_\varepsilon(z)=1+\varepsilon z^2$, we obtain
\begin{equation}\label{eq:first-variation-identity-new}
\int_{\mathbb S^1}
\dot\varphi_0(z)e^{\pi \overline zw}e^{-\pi|z|^2}\,\mathrm{d}\mathcal H^1(z)
+
\int_{D_1} z^2 e^{\pi \overline zw}e^{-\pi|z|^2}\,\mathrm{d}A(z)
=
\lambda_0 w^2.
\end{equation}
Since $D_1$ is radial, the monomial $z^2$ is an eigenfunction of the
localization operator of $D_1$; therefore there exists $\lambda_2>0$ such that
\[
\int_{D_1} z^2 e^{\pi \overline zw}e^{-\pi|z|^2}\,\mathrm{d}A(z)=\lambda_2 w^2.
\]
Hence \eqref{eq:first-variation-identity-new} becomes
\begin{equation}\label{eq:boundary-identity-new}
\int_{\mathbb S^1}
\dot\varphi_0(z)e^{\pi \overline zw}e^{-\pi|z|^2}\,\mathrm{d}\mathcal H^1(z)
=
(\lambda_0-\lambda_2)w^2.
\end{equation}
As a matter of fact, \(\lambda_2\) can be computed explicitly: indeed, it is simply the number $\mu_2$ as in  \eqref{eq:disk-eigenvalues-explicit}; that is, 
\[
\lambda_2
=
\frac{\pi^2}{2}\int_{D_1}|z|^4e^{-\pi|z|^2}\,\mathrm{d}A(z)
=
1-\Bigl(1+\pi+\frac{\pi^2}{2}\Bigr)e^{-\pi}.
\]
Therefore
\[
\lambda_0-\lambda_2
=
(1-e^{-\pi})-\Bigl(1-\Bigl(1+\pi+\frac{\pi^2}{2}\Bigr)e^{-\pi}\Bigr)
=
\pi\Bigl(1+\frac{\pi}{2}\Bigr)e^{-\pi}>0,
\]
so the coefficient of \(w^2\) on the right-hand side is indeed nonzero.
In particular, since $|z|=1$ on $\mathbb S^1$, we may rewrite this as
\begin{equation}\label{eq:boundary-identity-circle-new}
e^{-\pi}
\int_{\mathbb S^1}
\dot\varphi_0(z)e^{\pi \overline zw}\,\mathrm{d}\mathcal H^1(z)
=
(\lambda_0-\lambda_2)w^2.
\end{equation}

Setting $w=0$ gives
\begin{equation}\label{eq:cancellation-f-0-new}
\int_{\mathbb S^1}\dot\varphi_0\,\mathrm{d}\mathcal H^1=0.
\end{equation}

\medskip
\noindent
{\bf Step 2: identification of the leading mode.} Expanding
\[
e^{\pi \overline zw}
=
\sum_{n\geq 0}\frac{\pi^n}{n!}\overline z^{\,n}w^n,
\]
and matching the coefficient of $w^n$ on both sides of
\eqref{eq:boundary-identity-circle-new}, we find
\[
\int_{\mathbb S^1}\dot\varphi_0(z)\overline z^{\,n}\,\mathrm{d}\mathcal H^1(z)=0
\qquad \text{for all } n\neq 2,
\]
whereas the coefficient corresponding to $n=2$ equals
$\frac{2}{\pi^2}(\lambda_0-\lambda_2)e^\pi\neq 0$. Since on $\mathbb S^1$
we have $\overline z=e^{-it}$ when $z=e^{it}$, this shows that only the
second harmonic is present. Since $\dot\varphi_0$ is real-valued, the
negative-frequency mode is the conjugate of the positive-frequency mode, while
all other modes vanish. We conclude that
\begin{equation}\label{eq:mode-decomposition-new}
\dot\varphi_0(e^{it})
=
a\cos(2t)+b\sin(2t)
\end{equation}
for some pair $(a,b)\neq (0,0)$. In particular,
\begin{equation}\label{eq:l1-nontrivial-new}
\int_{\mathbb S^1} |\dot\varphi_0|\,\mathrm{d}\mathcal H^1>0.
\end{equation}

\medskip
\noindent
{\bf Step 3: normalization of the area.} The area of $U_\varepsilon$ is given by
\begin{equation}\label{eq:area-u-eps-new}
|U_\varepsilon|
=
\frac12\int_{\mathbb S^1} (1+\varphi_\varepsilon(y))^2\,\mathrm{d}\mathcal H^1(y).
\end{equation}
Since
\[
\varphi_\varepsilon
=
\varepsilon \dot\varphi_0 + O(\varepsilon^2)
\quad \text{in } C^1(\mathbb S^1),
\]
and \eqref{eq:cancellation-f-0-new} gives the vanishing of the first-order
term, we obtain from \eqref{eq:area-u-eps-new}
\begin{equation}\label{eq:measure-close-u-eps-new}
|U_\varepsilon|=\pi+O(\varepsilon^2).
\end{equation}

Let
\[
r_\varepsilon:=\sqrt{\frac{\pi}{|U_\varepsilon|}},
\qquad
\widetilde U_\varepsilon:=r_\varepsilon U_\varepsilon.
\]
Then $|\widetilde U_\varepsilon|=\pi$, and by
\eqref{eq:measure-close-u-eps-new},
\[
r_\varepsilon=1+O(\varepsilon^2).
\]
Hence $\widetilde U_\varepsilon$ still admits a normal graph representation
\[
\partial \widetilde U_\varepsilon
=
\{(1+\widetilde\varphi_\varepsilon(y))y:\ y\in \mathbb S^1\},
\]
with
\[
\widetilde\varphi_\varepsilon
=
\varepsilon \dot\varphi_0 + O(\varepsilon^2)
\quad \text{in } C^1(\mathbb S^1).
\]
Thus the area normalization does not alter the first-order deformation.

\medskip
\noindent
{\bf Step 4: distance from the centered disk.} Since both $\widetilde U_\varepsilon$ and $D_1$ are star-shaped with respect to
the origin, the area of their symmetric difference may be computed in polar coordinates:
\[
|\widetilde U_\varepsilon \triangle D_1|
=
\frac12\int_{\mathbb S^1}
\left|(1+\widetilde\varphi_\varepsilon(y))^2-1\right|
\,\mathrm{d}\mathcal H^1(y).
\]
Since \(\widetilde\varphi_\varepsilon=\varepsilon\dot\varphi_0+O(\varepsilon^2)\)
in \(C^1(\mathbb S^1)\), we have in particular
\(\|\widetilde\varphi_\varepsilon\|_{L^\infty(\mathbb S^1)}\lesssim |\varepsilon|\),
whence \(\widetilde\varphi_\varepsilon^2=O(\varepsilon^2)\) in
\(L^\infty(\mathbb S^1)\). Hence
\[
(1+\widetilde\varphi_\varepsilon)^2-1
=
2\widetilde\varphi_\varepsilon+\widetilde\varphi_\varepsilon^2
=
2\varepsilon \dot\varphi_0 + O(\varepsilon^2)
\quad \text{in } L^\infty(\mathbb S^1),
\]
where the implied constant depends only on the \(C^1\)-norm bound on
\(\widetilde\varphi_\varepsilon\). Therefore
\[
|\widetilde U_\varepsilon \triangle D_1|
=
\frac12\int_{\mathbb S^1}
\left|2\varepsilon \dot\varphi_0 + O(\varepsilon^2)\right|
\,\mathrm{d}\mathcal H^1.
\]
Set \(\alpha:=\int_{\mathbb S^1}|\dot\varphi_0|\,\mathrm{d}\mathcal H^1\). By
\eqref{eq:l1-nontrivial-new}, \(\alpha>0\). The triangle inequality in
\(L^1\) and the \(O(\varepsilon^2)\) remainder give
\[
\left|\int_{\mathbb S^1}\bigl|2\varepsilon\dot\varphi_0+O(\varepsilon^2)\bigr|\,\mathrm{d}\mathcal H^1 - 2\varepsilon|\alpha| \right|
=O(\varepsilon^2).
\]
Consequently there exist constants $c,C>0$ such that
\begin{equation}\label{eq:distance-centered-disk-new}
c|\varepsilon|
\leq
|\widetilde U_\varepsilon \triangle D_1|
\leq
C|\varepsilon|
\end{equation}
for all sufficiently small $\varepsilon$.

\medskip
\noindent
{\bf Step 5: comparison with arbitrary disks.} Let $D$ be any disk of area $\pi$. Then $D=D_1+a$ for some $a\in \C$. We first consider disks whose centers are not too close to the origin.
Since
\[
|D_1\triangle (D_1+a)|
\gtrsim \min\{|a|,1\},
\]
the triangle inequality and \eqref{eq:distance-centered-disk-new} imply that
there exists $K\gg 1$ such that, if $|a|\geq K|\varepsilon|$, then
\begin{equation}\label{eq:far-disks-new}
|\widetilde U_\varepsilon \triangle D|
\geq
|D_1\triangle D|-|\widetilde U_\varepsilon\triangle D_1|
\gtrsim
|a|-C|\varepsilon|
\gtrsim
|\varepsilon|.
\end{equation}

It remains to consider disks whose centers satisfy $|a|\leq K|\varepsilon|$.
Write
\[
a=\eta \varepsilon e^{i\theta},
\qquad
|\eta|\leq K,
\quad
\theta\in [0,2\pi).
\]
We claim that, for $\varepsilon$ sufficiently small, the translated domain
\[
\widetilde U_\varepsilon-a
\]
is still a normal graph over $\mathbb S^1$:
\[
\partial(\widetilde U_\varepsilon-a)
=
\{(1+\psi_\varepsilon^{\theta,\eta}(y))y:\ y\in \mathbb S^1\},
\]
with
\[
\psi_\varepsilon^{\theta,\eta}\in C^1(\mathbb S^1),
\qquad
\|\psi_\varepsilon^{\theta,\eta}\|_{C^1}\lesssim |\varepsilon|,
\]
uniformly in $(\theta,\eta)$ with $|\eta|\leq K$. This follows because
$\widetilde U_\varepsilon$ is already an $O(\varepsilon)$-$C^1$ perturbation of
$D_1$, and the translation vector $a$ is also of order $\varepsilon$. We now compute the first-order term of $\psi_\varepsilon^{\theta,\eta}$.
A translation by $a$ contributes, to first order in $\varepsilon$, the normal
displacement
\[
-\eta \Re(e^{-i\theta}y).
\]
Indeed, for \(y\in \mathbb S^1\),
\[
|y-a|
=
\bigl(1-2\Re(\overline y a)+|a|^2\bigr)^{1/2}
=
1-\Re(\overline y a)+O(|a|^2),
\]
uniformly in \(y\), and here \(a=\eta\varepsilon e^{i\theta}\).
Therefore
\begin{equation}\label{eq:psi-expansion-new}
\psi_\varepsilon^{\theta,\eta}(y)
=
\varepsilon\Big(\dot\varphi_0(y)-\eta \Re(e^{-i\theta}y)\Big)
+
O(\varepsilon^2)
\quad \text{in } C^1(\mathbb S^1),
\end{equation}
uniformly for $|\eta|\leq K$. By Lemma \ref{lem:symmetric-difference} and the remark right after it, we conclude
that
\[
\inf_{\substack{D \text{ disk}\\ |D|=\pi}}
|\widetilde U_\varepsilon \triangle D|
\gtrsim |\varepsilon|.
\]
Since \(r_\varepsilon=1+O(\varepsilon^2)\), the dilation from \(U_\varepsilon\)
to \(\widetilde U_\varepsilon\) changes the set only by
\[
|\widetilde U_\varepsilon\triangle U_\varepsilon|=O(\varepsilon^2),
\]
and therefore does not affect the first-order distance estimate. Thus the
area-normalized family \(\widetilde U_\varepsilon\) remains at
symmetric-difference distance \(\gtrsim |\varepsilon|\) from the class of
disks.

The eigenfunction part of the estimate from \cite{Gomez-Guerra-Ramos-Tilli} has the same scale. Normalize
\[
\widehat P_\varepsilon:=
\frac{1+\varepsilon z^2}{\|1+\varepsilon z^2\|_{\mathcal F^2}},
\qquad
\|1+\varepsilon z^2\|_{\mathcal F^2}^2
=1+\frac{2}{\pi^2}|\varepsilon|^2.
\]
The normalized kernels are
\[
k_a(z):=e^{\pi\overline a z-\frac{\pi}{2}|a|^2},
\qquad a\in\C.
\]
Their Bargmann inverses are precisely the \(L^2\)-normalized, phase-space
shifted Gaussians. The reproducing identity gives
\[
|\langle \widehat P_\varepsilon,k_a\rangle_{\mathcal F^2}|
=
\frac{|1+\varepsilon a^2|\,e^{-\pi|a|^2/2}}
{\bigl(1+\frac{2}{\pi^2}|\varepsilon|^2\bigr)^{1/2}}.
\]
Set \(x:=|\varepsilon|\) and
\(r:=|a|^2\).  If \(0<x<\pi/2\), then
\[
|1+\varepsilon a^2|e^{-\pi|a|^2/2}
\le (1+xr)e^{-\pi r/2}\le 1,
\]
as \(\log(1+xr)\le xr\le \pi r/2\).  Hence
\[
\sup_{a\in\C}|\langle \widehat P_\varepsilon,k_a\rangle_{\mathcal F^2}|
\le
\left(1+\frac{2}{\pi^2}|\varepsilon|^2\right)^{-1/2}
\le 1-\frac{|\varepsilon|^2}{2\pi^2}
\]
after decreasing \(\varepsilon\) if necessary.  Since for unit
vectors \(v,w\) one has
\(\inf_{|c|=1}\|v-cw\|^2=2(1-|\langle v,w\rangle|)\), it follows that
\[
\inf_{\substack{a\in\C\\ |c|=1}}
\|\widehat P_\varepsilon-c\,k_a\|_{\mathcal F^2}
\ge \frac{|\varepsilon|}{\pi}.
\]
By unitarity of the Bargmann transform, the same lower bound holds for the
normalized first eigenfunction in \(L^2(\R)\) against the orbit of Gaussian
optimizers.

\medskip
\noindent
{\bf Step 6: square-root sharpness of the deficit estimate.} It remains to translate the distance estimate into the corresponding deficit
estimate. Recall that, by construction via Theorem~\ref{thm:perturb}, the
identity \eqref{eq:functiona-P-eps-new} states that \(P_\varepsilon\) is an
eigenfunction with eigenvalue \(\lambda_0\) for the localization operator on
\(U_\varepsilon\). By Lemma~\ref{lem:spectral-continuity}, the eigenvalue
\(\lambda_0\) is in fact the \emph{principal} (largest) eigenvalue of
\(\mathcal L_{U_\varepsilon}\) for all sufficiently small \(\varepsilon\),
and \(P_\varepsilon\) is a corresponding first eigenfunction; the gap
\eqref{eq:gap-preservation} ensures that \(\lambda_0\) cannot be confused
with any lower-order eigenvalue. Hence
\begin{equation}\label{eq:exact-eigenvalue}
\lambda_1(U_\varepsilon)=\lambda_0
\qquad\text{exactly, for }|\varepsilon|<\varepsilon_*.
\end{equation}
By \eqref{eq:measure-close-u-eps-new},
\(|U_\varepsilon|=\pi+O(\varepsilon^2)\), so the area-normalizing dilation
factor \(r_\varepsilon=\sqrt{\pi/|U_\varepsilon|}=1+O(\varepsilon^2)\). By
Lemma~\ref{lem:dilation-eigenvalue}, the dilation changes
the principal eigenvalue by
\[
\bigl|\lambda_1(\widetilde U_\varepsilon)-\lambda_1(U_\varepsilon)\bigr|
=
\bigl|\lambda_1(r_\varepsilon U_\varepsilon)-\lambda_0\bigr|
=
O(\varepsilon^2).
\]
Combining that with \eqref{eq:exact-eigenvalue}, the eigenvalue deficit of
\(\widetilde U_\varepsilon\) with respect to the disk satisfies
\[
0\le\lambda_0-\lambda_1(\widetilde U_\varepsilon)=O(\varepsilon^2),
\]
where the inequality on the left-hand side follows from the Faber--Krahn inequality for the Gabor transform \cite{Nicola-Tilli}. On the other hand, by Step~5,
the symmetric-difference asymmetry satisfies
\[
\inf_{\substack{D\text{ disk}\\ |D|=\pi}}|\widetilde U_\varepsilon\triangle D|
\gtrsim |\varepsilon|.
\]
The already-known upper stability estimate in
\eqref{eq:GGRT-bounds-intro}, applied at the fixed area \(\pi\), supplies the
reverse lower bound on the deficit. Thus
\[
\lambda_0-\lambda_1(\widetilde U_\varepsilon)\asymp\varepsilon^2,
\qquad
\inf_{\substack{D\text{ disk}\\ |D|=\pi}}|\widetilde U_\varepsilon\triangle D|
\asymp|\varepsilon|.
\]
Together with the coherent-state lower bound above and the corresponding
upper estimate from \eqref{eq:GGRT-bounds-intro}, this proves that both
left-hand sides in \eqref{eq:GGRT-bounds-intro} are comparable to the square
root of the deficit along this family. Hence no exponent \(\beta>1/2\) can
hold uniformly, which proves the theorem.
\end{proof}


\section{Local maximizers for the concentration problem for the Gabor transform}\label{sec:local-max}

We work throughout this Section in the Bargmann--Fock representation. Thus, if
\(F\in \mathcal F^2(\C)\) is the Bargmann transform of \(f\in L^2(\R)\), then
\[
|V_\varphi f(z)|^2 = |F(z)|^2 e^{-\pi |z|^2},
\]
up to the normalization already fixed in the previous sections. Accordingly, for
\(F\in \mathcal F^2(\C)\) we set
\[
u_F(z):=|F(z)|^2 e^{-\pi |z|^2}.
\]

For a measurable set \(\Omega\subset \C\), let
\begin{equation}\label{eq:lambda1-def-sec6}
\lambda_{1,\varphi}(\Omega)
:=
\sup_{\|f\|_2=1}\int_\Omega |V_\varphi f(z)|^2\,\mathrm{d}z.
\end{equation}

\subsection{The weighted symmetric-difference topology}

The locality of a maximizer must be defined in a topology compatible with the
Lebesgue-measure constraint and with the Gaussian phase-space scale. We use the
weighted symmetric-difference metric obtained by integrating against the Gaussian weight
\(e^{-\pi|z|^2}\,\mathrm{d}A(z)\), which is the same density that appears both in the
Fock-space norm and in the eigenfunction \(u_F\).

\begin{definition}\label{def:weighted-sym-diff}
For measurable sets \(\Omega_1,\Omega_2\subset \C\) of finite Lebesgue measure,
set
\begin{equation}\label{eq:weighted-sym-diff}
d_w(\Omega_1,\Omega_2)
:=
\int_{\Omega_1\triangle\Omega_2} w(z)\,\mathrm{d}A(z),
\qquad
w(z):=e^{-\pi |z|^2}.
\end{equation}
We call \(d_w\) the \emph{weighted symmetric-difference distance}, and we denote
by \(\mathcal M_a\) the Borel-measurable subsets of \(\C\) of Lebesgue measure
\(a>0\), modulo Lebesgue-null modifications. The map
\(d_w\colon \mathcal M_a\times \mathcal M_a\to[0,\infty)\) is a genuine metric
on \(\mathcal M_a\): symmetry, the triangle inequality, and vanishing only on
null modifications all follow from \(w(z)=e^{-\pi|z|^2}>0\) and standard
properties of the symmetric difference.
\end{definition}

\begin{remark}\label{rem:topology-natural}
Three observations clarify why \(d_w\) is the natural topology for our problem.

\begin{enumerate}
\item[\textnormal{(i)}] (\emph{Compatibility with the measure constraint.})
Since \(w\) is bounded above by \(1\) and bounded below by \(e^{-\pi R^2}>0\)
on every disk \(\{|z|\le R\}\), the metric \(d_w\) is, on every fixed bounded
region of \(\C\), equivalent to the unweighted symmetric-difference distance
\(d_0(\Omega_1,\Omega_2):=|\Omega_1\triangle\Omega_2|\). Moreover, every
\(d_w\)-perturbation \(E\) of \(\Omega\) used in this section is constructed
to satisfy the equimeasurability constraint \(|E|=|\Omega|\); hence the
\(d_w\)-balls intersected with \(\mathcal M_a\) form a basis of neighborhoods
on the measure-constraint manifold.

\item[\textnormal{(ii)}] (\emph{Covariance of the spectral problem.})
The localization problem itself is invariant under Weyl translations and
rotations. Under the Weyl translations
\(T_a\colon F\mapsto e^{\pi\overline a z-\frac{\pi}{2}|a|^2}F(z-a)\) (cf.\
\eqref{eq:Ta}) and rotations \(F(z)\mapsto F(e^{-i\theta}z)\), one has the
covariance
\[
u_{T_aF}(z)=u_F(z-a),\qquad u_{F(\,e^{-i\theta}\cdot\,)}(z)=u_F(e^{-i\theta}z),
\]
so that the spectral data \((\lambda_{1,\varphi},u_F)\) of the localization
operator transform unitarily under these symmetries. The metric \(d_w\) is
rotation-invariant but, because its weight is pinned to the origin, it is not
translation-invariant. This is a local gauge rather than an additional
symmetry; translation covariance is restored when the set and eigenfunction
are translated simultaneously.

\item[\textnormal{(iii)}] (\emph{Control of Fock-space tails.}) For any
\(F\in \mathcal F^2(\C)\) one has \(u_F(z)\le \|F\|_{\mathcal F^2}^2\) for
every \(z\in\C\), as a consequence of the reproducing property
\eqref{eq:reproducing-kernel} below and Cauchy--Schwarz. Hence, for normalized
first eigenfunctions \(F\),
\[
\Bigl|\int_{\Omega_1}u_F\,\mathrm{d}A-\int_{\Omega_2}u_F\,\mathrm{d}A\Bigr|
\le
\int_{\Omega_1\triangle\Omega_2}u_F\,\mathrm{d}A
\le
\|F\|_{\mathcal F^2}^2\,e^{\pi R^2}\, d_w(\Omega_1,\Omega_2)
\]
on any bounded region \(\{|z|\le R\}\), so that the localization functional
\(\Omega\mapsto \int_\Omega u_F\,\mathrm{d}A\) is Lipschitz in \(d_w\) on bounded
sets. Changes far away have little weight according to \(d_w\); all perturbations used below
are compactly supported, and no compactness-at-infinity conclusion is drawn
from this metric alone.
\end{enumerate}

In summary, \(d_w\) is a useful Gaussian-weighted local \(L^1\)-metric on
indicator functions. On bounded regions it is equivalent to ordinary symmetric
difference, which is the only comparison needed below.
\end{remark}

\begin{definition}\label{def:local-maximizer}
A set \(\Omega\in\mathcal M_a\) is a \emph{local maximizer} for
\(\lambda_{1,\varphi}\) under the Lebesgue-measure constraint
\(|\,\cdot\,|=a\) if there exists \(\delta_0>0\) such that, whenever
\(E\in \mathcal M_a\) satisfies \(d_w(\Omega,E)<\delta_0\), one has
\begin{equation}\label{eq:local-max-ineq}
\lambda_{1,\varphi}(\Omega)\ge \lambda_{1,\varphi}(E).
\end{equation}
\end{definition}

We can now state the main theorem of this section.

\begin{theorem}\label{thm:local-max}
Let \(\Sigma\in\mathcal M_a\) be a local maximizer for \(\lambda_{1,\varphi}\)
under the Lebesgue-measure constraint, in the sense of
Definition~\ref{def:local-maximizer}. Then there exists \(a_0\in\C\) such that
\(\Sigma\) coincides, up to a Lebesgue-null modification, with the centered
disk
\[
\Sigma=D_R+a_0,\qquad R=\sqrt{a/\pi}.
\]
\end{theorem}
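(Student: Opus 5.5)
The plan is to combine a bathtub-type rearrangement with an exact integration-by-parts identity showing that the derivative \(F'\) of a first eigenfunction is again a first eigenfunction, and then to use simplicity of \(\lambda_1\) at a local maximizer. Let \(\lambda:=\lambda_{1,\varphi}(\Sigma)\) and let \(F\in\mathcal F^2(\C)\) be any normalized eigenfunction for \(\lambda\), so that \(\lambda=\int_\Sigma u_F\,\mathrm{d}A\).

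\emph{Step 1 (superlevel structure).} We show that \(\Sigma\) agrees a.e.\ with \(\{u_F\ge t\}\) for some \(t>0\). If not, the bathtub principle (Lemma~\ref{lem:basic-properties-global}(i)) provides sets \(A\subset\Sigma\) and \(B\subset\C\setminus\Sigma\) of equal small positive measure with \(\sup_A u_F<\inf_B u_F\). The set \(E=(\Sigma\setminus A)\cup B\) then lies in \(\mathcal M_a\) and satisfies \(d_w(\Sigma,E)\le|A|+|B|<\delta_0\). Moreover,
\[
\lambda_{1,\varphi}(E)\ge\int_E u_F\,\mathrm{d}A>\int_\Sigma u_F\,\mathrm{d}A=\lambda,
\]
which contradicts Definition~\ref{def:local-maximizer}. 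Since \(u_F\) is real-analytic and nonconstant, its level sets are null, so \(\Sigma=\{u_F>t\}\) up to null sets. The threshold satisfies \(t>0\), since \(t=0\) would give infinite measure. By Lemma~\ref{lem:Fock-basic}(iii), \(\Sigma\) is bounded and open, and \(F\) has no zeros on \(\overline\Sigma\). We stress that this step uses only that \(F\) is \emph{some} first eigenfunction.

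\emph{Step 2 (\(F'\) is an eigenfunction).} Write \(\Phi_w(z):=F(z)e^{-\pi|z|^2}e^{\pi\bar z w}\). Differentiating the eigenvalue identity \(\lambda F(w)=\int_\Sigma\Phi_w\,\mathrm{d}A\) in \(w\), and using the identity
\[
\partial_z\Phi_w=F'e^{-\pi|z|^2}e^{\pi\bar z w}-\pi\bar z\,\Phi_w,
\]
we obtain
\[
(\mathcal L_\Sigma F')(w)-\lambda F'(w)=\int_\Sigma\partial_z\Phi_w\,\mathrm{d}A .
\]
To show that the right-hand side vanishes without any boundary regularity, set \(\Psi_w:=t\,e^{\pi\bar z w}/\overline{F(z)}\), which is antiholomorphic on a neighborhood of \(\overline\Sigma\), so \(\partial_z\Psi_w=0\). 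Then
\[
\Phi_w-\Psi_w=\frac{e^{\pi\bar z w}}{\overline F}\,(u_F-t),
\]
and its extension by zero outside \(\Sigma\) is \(\frac{e^{\pi\bar zw}}{\overline F}(u_F-t)_+\) near \(\overline\Sigma\). This extension is Lipschitz with compact support, so the integral of its \(\partial_z\)-derivative over \(\C\) vanishes. Hence \(\int_\Sigma\partial_z\Phi_w\,\mathrm{d}A=0\), that is, \(\mathcal L_\Sigma F'=\lambda F'\).

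\emph{Step 3 (simplicity and conclusion).} Suppose the \(\lambda\)-eigenspace had dimension at least two. Fix \(z_0\in\Sigma\) and choose a nonzero normalized eigenfunction \(G\) in it with \(G(z_0)=0\). Step 1 applied to \(G\) gives \(\Sigma=\{u_G>t_G\}\) with \(t_G>0\), and the right-hand side is an open set not containing \(z_0\), since \(u_G(z_0)=0<t_G\). As \(z_0\) is an interior point of \(\Sigma\), this contradicts equality up to null sets. Hence \(\lambda\) is simple, and by Step 2 \(F'=cF\). Therefore \(F=Ce^{cz}\), which is a multiple of the reproducing kernel \(K_{\bar c/\pi}\). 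Consequently \(u_F\) is a multiple of the Gaussian \(e^{-\pi|z-a_0|^2}\) with \(a_0=\bar c/\pi\), and its superlevel set of measure \(a\) is \(D_R+a_0\) with \(R=\sqrt{a/\pi}\).

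I expect the main obstacle to be the rigor of Step 1: choosing \(A\) and \(B\) of equal measure within the \(d_w\)-ball, and excluding level sets of positive measure. This is where the argument relies on the real-analyticity of \(u_F\) and on \(t>0\). The Lipschitz cutoff trick in Step 2 is what removes any need for regularity of \(\partial\Sigma\).
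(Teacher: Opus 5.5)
Your proof is correct. Its skeleton (bathtub superlevel structure, simplicity of $\lambda_{1,\varphi}(\Sigma)$, $F'$ is again a first eigenfunction, hence $F=Ce^{cz}$ and $\Sigma$ is a disk) is the paper's, but the two central steps are proved differently.

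\emph{Simplicity.} The paper takes a second eigenfunction $G$ and shows $\Sigma$ is also a superlevel set of $u_G$. It then applies the maximum principle to the harmonic function $\log|F/G|$, which is constant on $\partial\Sigma$. You instead pick $G$ in the eigenspace vanishing at an interior point $z_0$, which is incompatible with $\Sigma=\{u_G>t_G\}$, $t_G>0$. Your version is shorter and uses no boundary information. The paper's version needs $u_G\equiv c'_\Sigma$ on $\partial\Sigma$. It deduces this from the assertion that two open sets with null symmetric difference coincide, which needs an extra argument here (e.g.\ strict superharmonicity of $\log u_G$, which excludes interior local minima).

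\emph{The identity $\int_\Sigma(F'-\pi\bar zF)\,e^{\pi w\bar z}\,\mathrm{d}\mu=0$.} The paper runs Gauss--Green on $\Sigma$ with the divergence-free fields $X_\psi$, $\psi=K_w/F$. For this it must establish finite perimeter and semianalytic structure of $\partial\Sigma$ (Lemma~\ref{lem:boundary-reg}) and work on the reduced boundary. Your observation that $\Phi_w-\Psi_w=(e^{\pi\bar zw}/\overline F)(u_F-t)$ extends by $(u_F-t)_+$ to a compactly supported Lipschitz function reduces this to $\int_{\C}\partial_z h\,\mathrm{d}A=0$. It needs only $|\partial\Sigma|=0$, which holds since $\partial\Sigma\subset\{u_F=t\}$. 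The same cutoff applied to $(u_F-t)_+X_\psi$ recovers the paper's full first-variation identity for every holomorphic $\psi$ near $\overline\Sigma$, so nothing is lost.

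Your final step reads off $u_F=\mathrm{const}\cdot e^{-\pi|z-a_0|^2}$ from $F=C\,K_{\bar c/\pi}$ instead of Weyl-translating to $F\equiv 1$. This is the same computation.

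Two small points to tighten:
\begin{enumerate}
\item In Step~3, ``$\{u_G>t_G\}$ is an open set not containing $z_0$'' does not by itself contradict a.e.\ equality; a punctured disk agrees a.e.\ with the disk. Use continuity to get $u_G<t_G$ on a ball $B$ around $z_0$. Then $B\cap\Sigma\subset\Sigma\setminus\{u_G>t_G\}$ has positive measure.
\item Before invoking simplicity you should check $F'\in\mathcal F^2(\C)$. This follows from $\lambda F'(w)=\int_\Sigma F'(z)e^{\pi w\bar z}\,\mathrm{d}\mu(z)$ with $\lambda>0$. The right-hand side is the Fock projection of $\mathbf 1_\Sigma F'\in L^2(\mathrm{d}\mu)$.
\end{enumerate}
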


The remainder of this section is devoted to the proof of
Theorem~\ref{thm:local-max}. The
proof proceeds in six steps. Steps 1--3 are essentially the analogues, in our
setting, of standard arguments in the Faber--Krahn theory: \(\Sigma\) is forced
to be a level set of \(u_F\), the corresponding eigenvalue is simple, and a
first-variation identity ties \(F\) to a divergence-free vector field built from
holomorphic data. The crucial new ingredient lies in Step 4, where we show that
the \emph{derivative} \(F'\) is again a first eigenfunction. Combined with the
simplicity from Step 2, this forces \(F\) to satisfy a first-order linear ODE,
and hence to be a Gaussian. Once this is in place, Steps 5--6 use the Weyl
invariance of the Bargmann--Fock setting and a one-dimensional rearrangement
argument to identify \(\Sigma\) with a centered disk.

\begin{proof}[Proof of the local-maximizer theorem]
Let \(\Sigma\subset \C\) be a local maximizer for \(\lambda_{1,\varphi}\) under
the measure constraint, and let \(F\in \mathcal F^2(\C)\) be a normalized first
eigenfunction for the localization operator associated with \(\Sigma\).
Equivalently,
\begin{equation}\label{eq:eigenvalue-equation}
\int_\Sigma H(z)\overline{F(z)}\,\mathrm{d}\mu(z)
=
\lambda_{1,\varphi}(\Sigma)\int_\C H(z)\overline{F(z)}\,\mathrm{d}\mu(z)
\end{equation}
for every \(H\in \mathcal F^2(\C)\), where
\[
\mathrm{d}\mu(z):=e^{-\pi |z|^2}\,\mathrm{d}A(z).
\]

The proof of the superlevel-set characterization in Step~1 below uses the
following classical rearrangement inequality, in the formulation of
Lieb--Loss
\cite[Theorem~1.14]{Lieb-Loss} (the so-called
\emph{bathtub principle}). We record it here for the reader's convenience. 

\begin{lemma}[Bathtub principle]\label{lem:bathtub}
Let \((X,\Sigma_X,\mu_X)\) be a measure space and let
\(g\colon X\to[0,\infty)\) be a measurable function such that
\(\mu_X(\{g>t\})<\infty\) for every \(t>0\). Fix \(0<a<\mu_X(X)\). Then
\begin{equation}\label{eq:bathtub-equation}
\sup\Bigl\{\int_X g\,\mathbf 1_E\,\mathrm{d}\mu_X\ \Big|\ E\in\Sigma_X,\ \mu_X(E)=a\Bigr\}
\end{equation}
is attained, and any maximizer \(E^\ast\) coincides, up to a \(\mu_X\)-null
set, with a superlevel set \(\{g>t_\ast\}\) of \(g\) (possibly with a
measure-zero correction at the level set \(\{g=t_\ast\}\) needed to fit the
constraint \(\mu_X(E^\ast)=a\)). The threshold \(t_\ast\ge 0\) is uniquely
determined by the condition
\[
\mu_X(\{g>t_\ast\})\le a\le \mu_X(\{g\ge t_\ast\}),
\]
and the maximum is strict in the sense that, if \(g\) has no plateaus of
positive measure on the level \(\{g=t_\ast\}\), then \(E^\ast=\{g>t_\ast\}\)
\(\mu_X\)-a.e.
\end{lemma}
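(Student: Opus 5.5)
The plan is the standard layer-cake and exchange argument for the bathtub principle on a general measure space. I will use only the finiteness hypothesis \(\mu_X(\{g>t\})<\infty\) for \(t>0\) and the monotonicity of the distribution function \(t\mapsto \mu_X(\{g>t\})\).

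\emph{Step 1: choice of the threshold.} I set \(m(t):=\mu_X(\{g>t\})\). This function is non-increasing and right-continuous on \((0,\infty)\), since \(\{g>t\}=\bigcup_n\{g>t+1/n\}\) and the measures are finite. Its left limit is \(m(t^-)=\mu_X(\{g\ge t\})\). I define
\[
t_\ast:=\inf\{t>0:\ m(t)\le a\}
\]
(with the convention \(t_\ast=0\) if the set is all of \((0,\infty)\)). Right-continuity gives \(m(t_\ast)\le a\) when \(t_\ast>0\), and the infimum property gives \(\mu_X(\{g\ge t_\ast\})\ge a\). This yields the two-sided condition in the statement. Uniqueness of \(t_\ast\) I would only claim in the sense that any two admissible thresholds delimit a plateau region \(\{t_1<g<t_2\}\) of measure zero. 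For this reason the phrase ``uniquely determined'' should be read modulo such degenerate intervals, and I would note this caveat explicitly.

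\emph{Step 2: existence.} I choose a measurable \(A\subset\{g=t_\ast\}\) with \(\mu_X(A)=a-m(t_\ast)\). This uses that the measure restricted to \(\{g=t_\ast\}\) takes all intermediate values, which holds when \(\mu_X\) is non-atomic there, for instance Lebesgue measure as used in this paper. In general I would add this as a standing non-atomic hypothesis. I then set \(E^\ast:=\{g>t_\ast\}\cup A\). For any competitor \(E\) with \(\mu_X(E)=a\), the pointwise inequality
\[
(\mathbf 1_{E^\ast}-\mathbf 1_E)(g-t_\ast)\ge 0
\]
holds. Integrating it and using \(\int(\mathbf 1_{E^\ast}-\mathbf 1_E)\,d\mu_X=0\) gives \(\int_E g\le\int_{E^\ast}g\). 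When \(t_\ast=0\) I use \(g\ge0\) directly.

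\emph{Step 3: characterization of maximizers.} If \(E\) is another maximizer, equality in the integrated inequality forces \((\mathbf 1_{E^\ast}-\mathbf 1_E)(g-t_\ast)=0\) a.e. Hence \(E\supset\{g>t_\ast\}\) and \(E\subset\{g\ge t_\ast\}\) up to null sets. If \(\mu_X(\{g=t_\ast\})=0\), this gives \(E=\{g>t_\ast\}\) a.e., which is the strictness claim.

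I expect the main obstacle to be the correction on the level set \(\{g=t_\ast\}\). Its existence needs a non-atomic measure, and its description as a ``measure-zero correction'' is only accurate in the no-plateau case. In the application (\(g=u_F\), Lebesgue measure) this is harmless, because \(u_F\) is real-analytic and nonconstant, so its level sets are null.
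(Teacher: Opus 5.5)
Your argument is the standard proof of the bathtub principle. The paper gives no proof of its own: it simply quotes Lieb--Loss, Theorem~1.14, whose proof rests on the same pointwise inequality $(g-t_\ast)(\mathbf 1_{E^\ast}-h)\ge 0$, applied there to relaxed competitors $0\le h\le 1$ with $\int h\,\mathrm d\mu_X=a$.

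Your caveats are correct, and they point to inaccuracies in the paper's paraphrase:
\begin{itemize}
\item Lieb--Loss work with relaxed functions precisely so that the optimizer $\mathbf 1_{\{g>t_\ast\}}+c\,\mathbf 1_{\{g=t_\ast\}}$ always exists. The set version needs a non-atomic measure: with atoms there may be no set of measure $a$ at all, and even when there is, maximizers need not have the stated form.
\item The threshold is unique only modulo intervals $(t_1,t_2)$ with $\mu_X(\{t_1<g<t_2\})=0$. These are gaps in the essential range of $g$ rather than ``plateaus''.
\item The correction on $\{g=t_\ast\}$ has positive measure whenever $\mu_X(\{g>t_\ast\})<a$.
\end{itemize}

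One further hypothesis is used silently in your Step~3. To conclude from $\int_E g=\int_{E^\ast}g$ that $\int(\mathbf 1_{E^\ast}-\mathbf 1_E)(g-t_\ast)\,\mathrm d\mu_X=0$, you subtract the two integrals. This requires $\int_{E^\ast}g\,\mathrm d\mu_X<\infty$. Step~2 is unaffected, since it can be written as $\int_E g+t_\ast a\le\int_{E^\ast}g+t_\ast a$ with nonnegative integrands.

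The stated hypothesis $\mu_X(\{g>t\})<\infty$ does not guarantee this finiteness, and without it the characterization of maximizers is false. Take $\R^2$ with Lebesgue measure, $g(x)=|x|^{-2}$ for $x\neq 0$, $g(0)=0$, and $a=\pi$. Then $\mu_X(\{g>t\})=\pi/t$ and $t_\ast=1$. However, every set of area $\pi$ containing a punctured neighborhood of the origin has $\int_E g=\infty$, so each is a maximizer.

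You should therefore add an assumption such as $g$ bounded or $g\in L^1$. Nothing is lost in the application, since $u_F\le\|F\|_{\mathcal F^2}^2$ and $u_F\in L^1(\C)$.

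A smaller point in the same spirit concerns the case $t_\ast=0$ with $\mu_X(\{g=0\})=\infty$. Producing $A\subset\{g=0\}$ of the required finite measure needs non-atomicity in the strong sense (no atoms of infinite measure) or semi-finiteness. Sierpi\'nski's theorem on sets of finite measure alone does not suffice.
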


In our application, \((X,\mu_X)=(\C,\mathrm{Leb}_\C)\) and \(g=u_F\). Since
\(u_F\) is real-analytic on the (open) complement of the zero set of \(F\),
its level sets \(\{u_F=t\}\) are real-analytic varieties of strictly lower
dimension and hence Lebesgue-null for every \(t>0\); the no-plateau
hypothesis of Lemma~\ref{lem:bathtub} is therefore automatic, and the
maximizer is unique up to null sets.

\medskip
\noindent
{\bf Step 1: \(\Sigma\) is a superlevel set of \(u_F\).}

We claim that there exists \(t_0\ge 0\) such that, up to sets of measure zero,
\begin{equation}\label{eq:superlevel-claim}
\Sigma=\{u_F>t_0\}.
\end{equation}

The strategy is a bathtub-style perturbation argument: if \(\Sigma\) were not a
superlevel set of \(u_F\), one could move a small piece of \(\Sigma\) where
\(u_F\) is small to a region where \(u_F\) is larger, increasing the
localization integral while preserving Lebesgue measure and remaining within
the local-maximizer neighborhood of \(\Sigma\) (cf.\ Lemma~\ref{lem:bathtub}).
The contradiction we extract is local, but as we shall see in
Lemma~\ref{lem:bathtub-superlevel-from-localmax}, the same argument actually
proves the slightly stronger statement that \(\Sigma\) is the unique
\(\mu\)-essential maximizer of the functional \(E\mapsto \int_E u_F\,\mathrm{d}A\)
under the equimeasurable constraint, in agreement with the bathtub principle.

Recall that \(F\) is admissible in the variational problem defining
\(\lambda_{1,\varphi}(\Sigma')\) for any measurable set \(\Sigma'\) of finite
measure. In particular,
\[
\lambda_{1,\varphi}(\Sigma')\ge \int_{\Sigma'} u_F\,\mathrm{d}A,
\qquad
\lambda_{1,\varphi}(\Sigma)= \int_\Sigma u_F\,\mathrm{d}A,
\]
the latter equality holding because \(F\) is the actual eigenfunction for
\(\Sigma\). It therefore suffices to find a competitor \(\Sigma'\) with
\(|\Sigma'|=|\Sigma|\), close to \(\Sigma\) in the sense of weighted symmetric
difference, and with \(\int_{\Sigma'} u_F\,\mathrm{d}A>\int_\Sigma u_F\,\mathrm{d}A\); that would
violate local maximality.

Suppose, for the sake of contradiction, that \eqref{eq:superlevel-claim} fails.
The function \(u_F\) is continuous, belongs to \(C_0(\C)\cap L^1(\C)\), and
has no positive-measure positive level sets because it is real-analytic on
\(\C\setminus\{F=0\}\). The bathtub principle therefore provides a threshold
\(t_0\ge0\) such that
\[
A:=\{u_F>t_0\}\quad\text{satisfies}\quad |A|=|\Sigma|;
\]
this threshold is necessarily positive, since \(\{u_F>0\}=\C\setminus\{F=0\}\)
has infinite Lebesgue measure whereas \(|\Sigma|<\infty\).
Since \eqref{eq:superlevel-claim} fails by assumption, \(A\) and \(\Sigma\)
do not coincide up to a null set, so both \(A\setminus\Sigma\) and
\(\Sigma\setminus A\) have positive Lebesgue measure (and in fact positive
weighted measure, since \(e^{-\pi|z|^2}>0\) everywhere).

We now perform the bathtub construction explicitly. The set
\(A=\{u_F>t_0\}\) is open since \(u_F\) is continuous, while
\(\Sigma\setminus A\) is only known to be measurable. By inner regularity of
the Lebesgue measure, choose a compact set \(L\subset A\setminus\Sigma\) with
\(|L|>0\) and a compact set \(K\subset \Sigma\setminus A\) with \(|K|>0\)
(both nonempty by the previous paragraph). Note that \(L\subset A\) is then
contained in the open set \(A\), and we can shrink \(L\) further if needed so
that \(\int_L \mathrm{d}\mu, \int_K \mathrm{d}\mu<\delta_0/4\). Since the Lebesgue measure
restricted to a compact set has the intermediate-value property, we may
choose subsets \(S\subset L\) and \(T\subset K\) with the common Lebesgue
measure
\[
|S|=|T|=\tfrac12\min(|L|,|K|)>0.
\]
Then \(S\subset A\setminus\Sigma\), \(T\subset \Sigma\setminus A\), and
\[
\int_\C (\mathbf 1_S+\mathbf 1_T)\,e^{-\pi |z|^2}\,\mathrm{d}z<\tfrac{\delta_0}{2}<\delta_0.
\]
Define
\[
\Sigma':=(\Sigma\setminus T)\cup S.
\]
Then \(|\Sigma'|=|\Sigma|\) and
\[
\int_\C |\mathbf 1_\Sigma-\mathbf 1_{\Sigma'}|e^{-\pi |z|^2}\,\mathrm{d}z
=\int_\C(\mathbf 1_S+\mathbf 1_T)e^{-\pi|z|^2}\,\mathrm{d}z<\delta_0.
\]
On the other hand, by construction,
\[
u_F(z)>t_0 \quad \text{on } S\subset A,
\qquad
u_F(z)\le t_0 \quad \text{on } T\subset \Sigma\setminus A.
\]
Moreover, since \(u_F\) is continuous and \(L\subset A=\{u_F>t_0\}\) is
compact, the continuous function \(u_F\) attains its minimum on \(L\); hence
there exists \(\eta>0\) with \(u_F\ge t_0+\eta\) on \(L\), and a fortiori on
\(S\). It then follows that
\[
\int_S u_F\,\mathrm{d}A \ge (t_0+\eta)\,|S|,
\qquad
\int_T u_F\,\mathrm{d}A \le t_0\,|T|=t_0\,|S|,
\]
and hence
\[
\int_{\Sigma'} u_F\,\mathrm{d}A
=
\int_\Sigma u_F\,\mathrm{d}A + \int_S u_F\,\mathrm{d}A-\int_T u_F\,\mathrm{d}A
\ge \int_\Sigma u_F\,\mathrm{d}A + \eta\,|S|
>
\int_\Sigma u_F\,\mathrm{d}A.
\]
Since \(F\) is admissible for the variational problem defining
\(\lambda_{1,\varphi}(\Sigma')\),
\[
\lambda_{1,\varphi}(\Sigma')
\ge \int_{\Sigma'} u_F\,\mathrm{d}A
>
\int_\Sigma u_F\,\mathrm{d}A
=
\lambda_{1,\varphi}(\Sigma),
\]
contradicting the local maximality of \(\Sigma\). This proves
\eqref{eq:superlevel-claim}.

From this point onward, we replace \(\Sigma\) by the open representative
\(\{u_F>t_0\}\), which is allowed because the variational problem only
depends on \(\Sigma\) up to null sets. By continuity of \(u_F\), this
representative has the exact boundary-level relation
\begin{equation}\label{eq:boundary-level-set-new}
u_F(z)=t_0=:c_\Sigma
\qquad \text{for } z\in \partial \Sigma.
\end{equation}
We will see in Step 2 that \(F\) does not vanish on \(\overline\Sigma\), and in
particular \(c_\Sigma>0\).

We collect the conclusions of this step in a self-contained statement that we
will reuse below.

\begin{lemma}\label{lem:bathtub-superlevel-from-localmax}
Let \(\Sigma\in\mathcal M_a\) be a local maximizer for \(\lambda_{1,\varphi}\)
in the sense of Definition~\ref{def:local-maximizer}, and let \(F\in
\mathcal F^2(\C)\) be a normalized first eigenfunction associated with
\(\Sigma\). Then there exists \(t_0>0\) such that
\begin{equation}\label{eq:superlevel-strong}
\Sigma=\{z\in\C\,:\, u_F(z)>t_0\}\quad
\text{up to a Lebesgue-null set,}
\end{equation}
and \(\Sigma\) coincides up to a null set with the unique (up to null sets)
maximizer of the bathtub problem
\(\sup\{\int_E u_F\,\mathrm{d}A\,:\,|E|=a\}\) provided by Lemma~\ref{lem:bathtub}
applied to \(g=u_F\).
\end{lemma}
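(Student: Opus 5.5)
The argument in Step~1 above already yields most of this. The plan is to restate it in self-contained form and then add the two missing points: that $t_0>0$, and that $\Sigma$ is the unique bathtub maximizer.

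First, apply Lemma~\ref{lem:bathtub} to $g=u_F$ on $(\C,\mathrm{Leb})$. This is legitimate because $u_F\in C_0(\C)\cap L^1(\C)$ by Lemma~\ref{lem:Fock-basic}(ii)--(iii), so $|\{u_F>t\}|\le t^{-1}\|F\|_{\mathcal F^2}^2<\infty$ for every $t>0$. The lemma gives a threshold $t_0\ge0$ with $|\{u_F>t_0\}|\le a\le|\{u_F\ge t_0\}|$.

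Next, show $t_0>0$. Since $F\not\equiv0$ is entire, its zero set is discrete. Hence $\{u_F>0\}$ has infinite measure, and so $t_0=0$ is impossible, because $a<\infty$.

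Then rule out plateaus. For $t>0$, the level set $\{u_F=t\}$ lies in the open set $\{F\ne0\}$, where $\log u_F=2\log|F|-\pi|z|^2$ is real-analytic. Moreover $\Delta\log u_F=-4\pi\neq0$ there, so $\log u_F$ is nonconstant on every component. Therefore each level set is a real-analytic set of dimension at most one, and hence Lebesgue-null. It follows that $|\{u_F>t_0\}|=a$, and that the bathtub maximizer is unique up to null sets: any competitor $E$ with $|E|=a$ and $E\neq A:=\{u_F>t_0\}$ satisfies $\int_E u_F<\int_A u_F$. This strict inequality follows by comparing $\int_{E\setminus A}u_F\le t_0|E\setminus A|$ with $\int_{A\setminus E}u_F>t_0|A\setminus E|$, where the second inequality is strict because $u_F>t_0$ on $A$ and $|A\setminus E|=|E\setminus A|>0$.

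Finally, identify $\Sigma$ with $A$. Suppose $\Sigma\neq A$ up to null sets. Then the swap construction of Step~1 applies: choose compact $L\subset A\setminus\Sigma$ and $K\subset\Sigma\setminus A$ with small Gaussian weight and a uniform gap $u_F\ge t_0+\eta$ on $L$. Exchange equal-measure pieces $S\subset L$ and $T\subset K$ to get $\Sigma'\in\mathcal M_a$ with $d_w(\Sigma,\Sigma')<\delta_0$ and $\int_{\Sigma'}u_F\ge\int_\Sigma u_F+\eta|S|$. Since $F$ is admissible for $\Sigma'$ and $\lambda_{1,\varphi}(\Sigma)=\int_\Sigma u_F\,\mathrm{d}A$, this gives $\lambda_{1,\varphi}(\Sigma')>\lambda_{1,\varphi}(\Sigma)$, contradicting Definition~\ref{def:local-maximizer}. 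Hence $\Sigma=\{u_F>t_0\}$ a.e., and by the uniqueness just established, $\Sigma$ is the unique bathtub maximizer.

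The only delicate point is the null-level-set claim. It needs the nonconstancy of $u_F$ on components of $\{F\neq0\}$, which the Laplacian computation settles. The uniform gap $\eta$ is obtained from compactness of $L$ inside the open set $A$.
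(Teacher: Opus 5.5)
Your proposal is correct and follows the paper's route: the swap construction of Step~1 identifies $\Sigma$ with $\{u_F>t_0\}$, the positivity of $t_0$ comes from $|\{F\neq0\}|=\infty$, and uniqueness of the bathtub maximizer comes from the absence of plateaus of $u_F$ at positive levels. You are somewhat more explicit than the paper in two places: you use the identity $\Delta\log u_F=-4\pi$ to show that $u_F$ is nowhere locally constant, so that the level sets $\{u_F=t\}$ with $t>0$ are null, and you prove uniqueness directly via $\int_{E\setminus A}u_F\le t_0|E\setminus A|<\int_{A\setminus E}u_F$. Both points fill in details that the paper leaves implicit.
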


\begin{proof}
The first part is exactly \eqref{eq:superlevel-claim}; the strict positivity
\(t_0>0\) follows because \(\{u_F>0\}=\C\setminus\{F=0\}\) has infinite
Lebesgue measure while \(|\Sigma|<\infty\). Uniqueness within
bathtub maximizers follows from the no-plateau remark below
Lemma~\ref{lem:bathtub}, since \(u_F\) is real-analytic on the open set
\(\{F\ne 0\}\supset\overline\Sigma\) and hence has no positive-measure level
sets there.
\end{proof}

\begin{lemma}\label{lem:boundary-reg}
Under the hypotheses of Lemma~\ref{lem:bathtub-superlevel-from-localmax},
choosing the open superlevel-set representative
\(\Sigma=\{u_F>t_0\}\), we have:
\begin{enumerate}
\item[\textnormal{(i)}] \(\Sigma\) is open and bounded (because
\(u_F(z)\to 0\) as \(|z|\to\infty\) and \(t_0>0\));
\item[\textnormal{(ii)}] given
that \(F\) does not vanish on \(\overline\Sigma\), \(u_F\) is real-analytic
on a neighborhood of \(\overline\Sigma\);
\item[\textnormal{(iii)}] under the same nonvanishing hypothesis,
\(\Sigma\) has finite perimeter. Its regular
level-set locus is a real-analytic \(1\)-manifold, and its topological
boundary is a compact semianalytic curve; in particular,
\(\mathcal H^1(\partial\Sigma)<\infty\);
\item[\textnormal{(iv)}] under the same nonvanishing hypothesis, for every
\(C^1\)-vector field \(X\) defined on a
neighborhood of \(\overline\Sigma\), the Gauss--Green identity
\[
\int_\Sigma \mathrm{div}\,X\,\mathrm{d}A
=\int_{\partial^*\Sigma}X\cdot\nu_\Sigma\,\mathrm{d}\mathcal H^1
\]
holds, where \(\partial^*\Sigma\) is the reduced boundary and
\(\nu_\Sigma\) its measure-theoretic outward normal.
\end{enumerate}
\end{lemma}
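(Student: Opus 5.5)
We prove the four items in order, working with the open representative \(\Sigma=\{u_F>t_0\}\) given by Lemma~\ref{lem:bathtub-superlevel-from-localmax}, where \(t_0>0\).

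\emph{Item (i).} Openness follows from the continuity of \(u_F\). For boundedness, Lemma~\ref{lem:Fock-basic}(iii) gives \(u_F(z)\to0\) as \(|z|\to\infty\). Hence there is \(R\) with \(u_F<t_0\) on \(\{|z|>R\}\), and so \(\Sigma\subset D_R\) and \(\overline\Sigma\subset\{u_F\ge t_0\}\subset\overline{D_R}\).

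\emph{Item (ii).} Suppose \(F\neq0\) on the compact set \(\overline\Sigma\). Then \(F\neq0\) on some open neighbourhood \(\mathcal N\) of \(\overline\Sigma\). On a small disk inside \(\mathcal N\) choose a holomorphic branch of \(\log F\). There \(u_F=\exp\bigl(2\Re\log F-\pi|z|^2\bigr)\), which is real-analytic in \((x,y)\). Real-analyticity is local, so \(u_F\) is real-analytic on all of \(\mathcal N\).

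\emph{Item (iii).} This is the main step. I would work with the real-analytic function \(g:=u_F-t_0\) on \(\mathcal N\). Then \(\Sigma=\{g>0\}\) and \(\partial\Sigma\subset\{g=0\}\), a compact subset of \(\mathcal N\).

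First, \(g\) is not identically zero near any point of \(\partial\Sigma\). Otherwise, by analyticity and connectedness, \(u_F\equiv t_0\) on a component of \(\mathcal N\) that meets \(\partial\Sigma\). That component would then contain no point of \(\Sigma\), which is absurd because every boundary point is a limit of points of \(\Sigma\).

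Consequently \(\{g=0\}\cap\overline{D_R}\) is a compact real-analytic subvariety of dimension at most \(1\). By \L ojasiewicz's structure theorem for real-analytic sets (or local Weierstrass preparation plus Puiseux expansions in the plane), near each of its points it is a finite union of analytic arcs together with isolated points. By compactness the whole set is covered by finitely many such charts. Thus \(\partial\Sigma\) is a compact semianalytic set made of finitely many analytic arcs of finite length, and \(\mathcal H^1(\partial\Sigma)<\infty\).

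On the regular locus \(\{g=0,\ \nabla g\neq0\}\), the implicit function theorem in its analytic form yields a real-analytic \(1\)-manifold. Finite perimeter then follows from the general bound \(P(\Sigma)\le\mathcal H^1(\partial\Sigma)\), valid for any bounded set whose topological boundary has finite \(\mathcal H^1\)-measure (a Federer criterion).

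The delicate point is that singular points, where \(\nabla g=0\), may be cusps or crossings. The local Puiseux description handles these, and I would simply quote it rather than prove it.

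\emph{Item (iv).} Since \(\Sigma\) is a bounded set of finite perimeter, the De Giorgi–Federer Gauss–Green theorem gives
\[\int_\Sigma \operatorname{div}X\,\mathrm dA=\int_{\partial^*\Sigma}X\cdot\nu_\Sigma\,\mathrm d\mathcal H^1\]
for any \(C^1_c\) vector field \(X\) on \(\R^2\). A \(C^1\) field defined only near \(\overline\Sigma\) can be multiplied by a cutoff equal to \(1\) on a neighbourhood of \(\overline\Sigma\) without changing either side. Finally, because \(\partial\Sigma\) is a finite union of analytic arcs, \(\partial^*\Sigma\) agrees with the regular part of \(\partial\Sigma\) up to an \(\mathcal H^1\)-null set (the finitely many endpoints and crossing points, plus any arcs of \(\partial\Sigma\) that border \(\Sigma\) on both sides). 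On that regular part, \(\nu_\Sigma=-\nabla u_F/|\nabla u_F|\) wherever the gradient is nonzero.
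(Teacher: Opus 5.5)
Your proposal is correct and follows the same route as the paper: real-analyticity of \(u_F\) near \(\overline\Sigma\), then the \L{}ojasiewicz structure theorem for the compact analytic level set \(\{u_F=t_0\}\) to get \(\mathcal H^1(\partial\Sigma)<\infty\) and hence finite perimeter, and finally the De Giorgi--Federer Gauss--Green theorem on the reduced boundary. Beyond the paper's terse argument, you also make explicit three details it leaves implicit: that \(u_F-t_0\) does not vanish identically near \(\partial\Sigma\), the Federer bound \(P(\Sigma)\le\mathcal H^1(\partial\Sigma)\), and the cutoff that reduces \(C^1\) fields defined near \(\overline\Sigma\) to \(C^1_c\) fields.
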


\begin{proof}
Statement (i) follows directly from Lemma~\ref{lem:Fock-basic}(iii) and
\(t_0>0\). Once \(F\) is
nonvanishing on \(\overline\Sigma\), \(u_F\) is real-analytic on a
neighborhood of \(\overline\Sigma\). The standard local structure theorem
for planar real-analytic sets (see, e.g.,
\cite{Krantz-Parks-Primer,Lojasiewicz}) shows that the compact level set
\(\{u_F=t_0\}\cap\overline\Sigma\) is a finite semianalytic curve. This
implies the finite-perimeter assertion in (iii). Statement (iv) is then the
Gauss--Green theorem for sets of finite perimeter
(cf.\ \cite{Evans-Gariepy}). This formulation deliberately uses the reduced
boundary: a critical level may contain geometrically degenerate arcs, which
need not contribute to the measure-theoretic boundary.
\end{proof}

\begin{remark}\label{rem:reg-vs-hypothesis}
Note that \emph{no} boundary regularity is assumed on the local maximizer: the only input
is the measurability of \(\Sigma\). The superlevel-set characterization,
the entirety of \(F\), and the Gaussian weight yield precisely the
finite-perimeter structure needed in Step~3. Thus the Gauss--Green identity
used there is fully justified by Lemma~\ref{lem:boundary-reg}(iv), even
though \(\Sigma\) is initially only a measurable set. 
\end{remark}

\medskip
\noindent
{\bf Step 2: simplicity of the first eigenvalue.}

In this step we prove the following lemma, which lies at the heart of the
remainder of the argument. It is what will allows us, in Step~4, to conclude from
the fact that \(F'\) is a first eigenfunction that \(F'\) is a scalar multiple
of \(F\) and hence that \(F\) satisfies a linear ODE.

\begin{lemma}\label{lem:simplicity-local-max}
Let \(\Sigma\) be a local maximizer for \(\lambda_{1,\varphi}\) in the sense
of Definition~\ref{def:local-maximizer} and let \(F\in\mathcal F^2(\C)\) be a
normalized first eigenfunction associated with \(\Sigma\). Then the
eigenspace \(\ker(\mathcal L_\Sigma-\lambda_{1,\varphi}(\Sigma)\,\mathrm I)\)
is one-dimensional; that is, \(\lambda_{1,\varphi}(\Sigma)\) is simple, and
\(F\) is uniquely determined up to a unimodular constant. In particular,
\(F\) does not vanish on \(\overline\Sigma\).
\end{lemma}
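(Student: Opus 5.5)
The plan is to derive simplicity directly from the superlevel-set characterization of Step~1, applied not only to \(F\) but to \emph{every} normalized first eigenfunction. Step~1 used only two facts about \(F\): it is admissible in the variational problem \eqref{eq:lambda1-def-sec6} for every competitor \(\Sigma'\), and it attains \(\lambda_{1,\varphi}(\Sigma)=\int_\Sigma u_F\,\mathrm{d}A\). Both hold for any normalized \(H\in\ker(\mathcal L_\Sigma-\lambda_{1,\varphi}(\Sigma)\,\mathrm I)\). So Lemma~\ref{lem:bathtub-superlevel-from-localmax} applies verbatim to each such \(H\). It produces a threshold \(t_H>0\) with \(\Sigma=\{u_H>t_H\}\) up to a Lebesgue-null set. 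The positivity \(t_H>0\) comes, as before, from \(|\{u_H>0\}|=\infty>|\Sigma|\).

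\emph{Simplicity.} Suppose the eigenspace has dimension at least two, and pick linearly independent first eigenfunctions \(F,G\). Since \(|\Sigma|=a>0\), choose a Lebesgue density point \(z_0\) of \(\Sigma\). Set
\[
H:=G(z_0)\,F-F(z_0)\,G .
\]
Then \(H\) lies in the eigenspace and satisfies \(H(z_0)=0\). It is not identically zero: if \(F(z_0)\) and \(G(z_0)\) are not both zero, this follows from linear independence. If both vanish, replace \(H\) by \(F\) itself, which then vanishes at \(z_0\). Normalize \(H\) and apply the observation above to get \(\Sigma=\{u_H>t_H\}\) a.e., with \(t_H>0\). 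On the other hand, \(u_H\) is continuous and \(u_H(z_0)=0\), so there is a ball \(B_r(z_0)\) on which \(u_H<t_H/2\). Because \(z_0\) is a density point, \(|B_r(z_0)\cap\Sigma|>0\). This is a set of positive measure inside \(\Sigma\) where \(u_H\le t_H\), which is a contradiction. Hence the eigenspace is one-dimensional, and \(F\) is determined up to a unimodular constant.

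\emph{Nonvanishing on \(\overline\Sigma\).} With the open representative \(\Sigma=\{u_F>t_0\}\) fixed after Step~1, continuity of \(u_F\) gives \(u_F\ge t_0\) on \(\overline\Sigma\). Since \(t_0>0\) by Lemma~\ref{lem:bathtub-superlevel-from-localmax}, we get \(|F|^2\ge t_0e^{\pi|z|^2}>0\) on \(\overline\Sigma\), so \(F\) has no zeros there. This also gives \(c_\Sigma=t_0>0\) in \eqref{eq:boundary-level-set-new}.

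The only delicate point I expect is the first one: checking that the contradiction argument of Step~1 really uses nothing specific to the chosen \(F\). In particular, the no-plateau property must hold for an arbitrary eigenfunction \(H\). It does, since the positive level sets of \(u_H\) are null for every entire \(H\not\equiv0\). The argument also needs to be phrased with the fixed measurable set \(\Sigma\) rather than an \(H\)-dependent open representative, which is why I use a density point.
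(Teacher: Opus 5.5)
Your proof is correct, and it takes a genuinely different route from the paper's.

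\textbf{Common starting point.} Both arguments begin by applying Step~1 to an arbitrary normalized first eigenfunction. This is legitimate for the reason you give: Step~1 only uses $\|H\|=1$, the identity $\int_\Sigma u_H\,\mathrm{d}A=\lambda_{1,\varphi}(\Sigma)$, and the absence of plateaus of $u_H$. The last property holds for every entire $H\not\equiv0$, independently of any nonvanishing information.

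\textbf{The paper's route.} The paper then works along the boundary. It identifies the open superlevel sets $\{u_F>c_\Sigma\}$ and $\{u_G>c'_\Sigma\}$ exactly, so that $u_F$ and $u_G$ are both constant on $\partial\Sigma$. It then applies the maximum principle to the harmonic function $\log|F/G|$ on each component of $\Sigma$, which makes $|F/G|$, and hence $F/G$, constant.

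\textbf{Your route.} You replace this with a dimension count. A two-dimensional eigenspace contains a nonzero element vanishing at any prescribed point. On the other hand, the strictly positive threshold forbids a first eigenfunction from vanishing at a density point of $\Sigma$.

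\textbf{What each buys.} Your argument needs no boundary information at all:
\begin{itemize}
\item no continuity of $\log|F/G|$ up to $\partial\Sigma$;
\item no component-by-component bookkeeping;
\item no exact identification of the two open representatives.
\end{itemize}
The last point matters. The paper justifies that identification only by saying both sets are open, which by itself does not exclude their differing by a null set such as a point. One would need to add that $\log u_G$ is strictly superharmonic off the zeros of $G$, so $u_G$ has no interior local minimum at the level $c'_\Sigma>0$.

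In exchange, the paper's route exhibits the modular mechanism itself: first eigenfunctions with constant moduli on a common boundary are proportional, and the paper ties this to the stability analysis. Yours isolates the pointwise fact that no first eigenfunction vanishes at a density point of $\Sigma$, from which simplicity follows at once.
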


The proof uses a modular-deformation principle: any two first eigenfunctions
have the same modulus on \(\partial\Sigma\) up to a constant, and a holomorphic
function with constant modulus on a connected open set must itself be constant.
This argument plays a role also in the stability analysis of
\cite{Gomez-Guerra-Ramos-Tilli}.

\begin{proof}[Proof of Lemma~\ref{lem:simplicity-local-max}]
Suppose, for the sake of a contradiction, that \(G\in \mathcal F^2(\C)\) is another first
eigenfunction, linearly independent of \(F\). Step 1 applied to \(G\) shows
that \(\Sigma\) agrees up to a null set with \(\{u_G>c'_\Sigma\}\) for some
\(c'_\Sigma>0\). Indeed, the threshold cannot be zero, because
\(\{u_G>0\}=\C\setminus\{G=0\}\) has infinite measure. Both
\(\Sigma=\{u_F>c_\Sigma\}\) and \(\{u_G>c'_\Sigma\}\) are open, so their
null symmetric difference forces exact equality. Thus
\[
u_G(z)=c'_\Sigma
\qquad\text{for } z\in \partial \Sigma.
\]
Equivalently,
\[
|F(z)|^2 e^{-\pi |z|^2}=c_\Sigma,
\qquad
|G(z)|^2 e^{-\pi |z|^2}=c'_\Sigma,
\qquad z\in \partial \Sigma.
\]

We next observe that first eigenfunctions are zero-free in the interior of
\(\Sigma\). Indeed, by Step 1 we have \(u_F>c_\Sigma\) on the (open) interior of
\(\Sigma\). Since
\[
u_F=|F|^2 e^{-\pi |z|^2}
\]
and the Gaussian factor \(e^{-\pi|z|^2}\) is strictly positive everywhere, this
yields \(|F|>0\) throughout the interior of \(\Sigma\); the same applies to
\(|G|\). In particular, \(c_\Sigma>0\) and \(c'_\Sigma>0\). Thus \(F/G\) is
holomorphic and nowhere zero on each connected component of the open set
\(\Sigma\); no enlargement of \(\Sigma\) is needed.

The function
\[
\log \left|\frac{F}{G}\right|
=
\frac12 \log \frac{|F|^2}{|G|^2}
=
\frac12 \log \frac{u_F}{u_G}
\]
is then \emph{harmonic} on each component of \(\Sigma\), as it is the real part of any local branch of
\(\log(F/G)\), which is well defined on simply connected subsets; indeed, another way to see that this function is in fact harmonic
throughout is to notice that it is the logarithm of the modulus of a zero-free holomorphic
function. Note that the Gaussian factors cancel inside the logarithm, so on
\(\partial \Sigma\) by \eqref{eq:boundary-level-set-new} this function equals
the constant \(\frac12\log(c_\Sigma/c'_\Sigma)\). By the maximum principle
applied separately on each connected component of \(\Sigma\), the function
\(\log|F/G|\) is constant on each component. Hence
\[
\left|\frac{F}{G}\right| \equiv \text{constant}
\qquad \text{on each connected component of } \Sigma.
\]
This immediately implies that
\[
F=\gamma_j G
\qquad \text{on each connected component } \Sigma_j\text{ of } \Sigma.
\]
The relation \(F=\gamma_j G\) is an identity between two entire functions on a
set with an accumulation point; the analytic continuation principle gives
\(F=\gamma_j G\) on all of \(\C\) for any single \(j\). In particular all
\(\gamma_j\) coincide, and \(F\) and \(G\) are linearly dependent, contradicting
our assumption. This proves Lemma~\ref{lem:simplicity-local-max}.
\end{proof}

\begin{remark} Note that, as a by-product of the proof above, we have that an eigenfunction $F$ associated with $\mathcal{L}_\Sigma$ is indeed \emph{nonvanishing} inside $\Sigma,$ which shows that the hypotheses of Lemma \ref{lem:bathtub-superlevel-from-localmax} are satisfied.
\end{remark}

\medskip
\noindent
{\bf Step 3: a first-variation identity.}

By Lemma~\ref{lem:simplicity-local-max}, the eigenfunction \(F\) is determined
up to a unimodular constant. We now derive a first-variation identity
satisfied by \(F\), based on the fact that \(\Sigma\) is a level set of
\(u_F\) and that holomorphic vector fields are divergence-free.

We work throughout this step with the open superlevel-set representative
\(\Sigma=\{u_F>t_0\}\) from
Lemma~\ref{lem:bathtub-superlevel-from-localmax}. By Lemma~\ref{lem:simplicity-local-max}, \(F\) does not vanish on
\(\overline\Sigma\); applying Lemma~\ref{lem:boundary-reg} with this
nonvanishing input, \(u_F\) is real-analytic on a neighborhood of
\(\overline\Sigma\), and \(\Sigma\) has finite perimeter. The Gauss--Green
formula therefore applies on \(\Sigma\) to every \(C^1\)-vector field defined
in a neighborhood of \(\overline\Sigma\), with integration over the reduced
boundary. All boundary integrations below are therefore justified without
any a priori smoothness assumption on \(\Sigma\); cf.\
Remark~\ref{rem:reg-vs-hypothesis}.

Let \(\psi\) be holomorphic on an open neighborhood \(V\) of
\(\overline \Sigma\), and write
\[
\psi=u+iv
\]
with \(u,v\) real-valued. Consider the real vector field
\[
X_\psi:=(u,-v).
\]
Since \(\psi\) is holomorphic, the Cauchy--Riemann equations
\[
u_x=v_y,
\qquad
u_y=-v_x,
\]
imply
\[
\operatorname{div}X_\psi=u_x+(-v)_y=u_x-v_y=0
\qquad \text{in }V.
\]
Because \(u_F\equiv c_\Sigma\) on \(\partial\Sigma\), and hence on
\(\partial^*\Sigma\), the Gauss--Green formula on
\(\Sigma\) gives
\[
\int_\Sigma \langle \nabla u_F, X_\psi\rangle\,\mathrm{d}A
=
\int_\Sigma \operatorname{div}(u_F X_\psi)\,\mathrm{d}A
=
\int_{\partial^*\Sigma} u_F\, X_\psi\cdot \nu_\Sigma\, \mathrm{d}\mathcal H^1
=
c_\Sigma\int_{\partial^*\Sigma} X_\psi\cdot \nu_\Sigma\, \mathrm{d}\mathcal H^1
=
0,
\]
where the last equality uses
\(\int_{\partial^*\Sigma}X_\psi\cdot\nu_\Sigma\,\mathrm{d}\mathcal H^1
=\int_\Sigma\operatorname{div}X_\psi\,\mathrm{d}A=0\) by Cauchy--Riemann. Since
\(u_F=|F|^2e^{-\pi |z|^2}\) and
\(\nabla(e^{-\pi|z|^2})=-2\pi e^{-\pi|z|^2}(x,y)\), this becomes
\begin{equation}\label{eq:vector-field-identity-real-new}
0=
\int_\Sigma
\left(
\langle \nabla |F|^2,X_\psi\rangle
-
2\pi |F|^2 \langle z,X_\psi\rangle
\right)
\,\mathrm{d}\mu.
\end{equation}

We now express \eqref{eq:vector-field-identity-real-new} in complex notation.
A direct computation, using \(\partial_z|F|^2=F'\overline F\) and
\(\partial_{\bar z}|F|^2=F\overline{F'}\), gives
\[
\partial_x |F|^2 = F'\overline F+\overline{F'}F = 2\Re(F'\overline F),
\qquad
\partial_y |F|^2 = i(F'\overline F-\overline{F'}F) = -2\Im(F'\overline F).
\]
Writing \(\psi=u+iv\) and recalling \(X_\psi=(u,-v)\),
\[
\langle \nabla |F|^2,X_\psi\rangle
=
2u\,\Re(F'\overline F)+2v\,\Im(F'\overline F)
=
2\,\Re\!\big(\overline{\psi}\,F'\overline F\big)
=
2\,\Re\!\big(\psi\,\overline{F'}F\big),
\]
where in the last equality we used \(\Re w=\Re\overline w\). Likewise, writing
\(z=x+iy\),
\[
\langle z,X_\psi\rangle
=
xu+y(-v)
=
xu-yv
=
\Re\big((x+iy)(u+iv)\big)
=
\Re(z\psi).
\]
Substituting these expressions into
\eqref{eq:vector-field-identity-real-new}, we obtain
\[
0=
\Re\!\int_\Sigma
\psi(z)\Big(\overline{F'(z)}\,F(z)-\pi z\,|F(z)|^2\Big)\,\mathrm{d}\mu(z).
\]
Replacing \(\psi\) by \(i\psi\) (which is also holomorphic on \(V\)) gives the
analogous identity for the imaginary part:
\[
0=
\Im\!\int_\Sigma
\psi(z)\Big(\overline{F'(z)}\,F(z)-\pi z\,|F(z)|^2\Big)\,\mathrm{d}\mu(z).
\]
Combining the two, and using \(|F|^2=F\overline F\) to factor out \(F(z)\)
from both terms inside the integrand (the \(F\) from \(\overline{F'}F\) and
one copy of \(F\) from \(|F|^2=F\overline F\)),
\begin{equation}\label{eq:orthogonality-main-new}
0=
\int_\Sigma
\psi(z)\,F(z)\Big(\overline{F'(z)}-\pi z\,\overline{F(z)}\Big)\,\mathrm{d}\mu(z),
\end{equation}
which holds for every holomorphic \(\psi\) on a neighborhood of
\(\overline \Sigma\). Equivalently, expanding the product,
\begin{equation*}
0=
\int_\Sigma
\psi(z)\Big(\overline{F'(z)}\,F(z)-\pi z\,|F(z)|^2\Big)\,\mathrm{d}\mu(z).
\end{equation*}
The form \eqref{eq:orthogonality-main-new} is the one used in Step 4.

\medskip
\noindent
{\bf Step 4: \(F'\) is again a first eigenfunction.}

This step is the crux of the argument and is the main new ingredient compared
to the proof in \cite{Nicola-Tilli}. Rather than working with general
test functions in the eigenvalue equation, we exploit the divergence identity
\eqref{eq:orthogonality-main-new} together with a clever choice of test
function involving the reproducing kernel of the Fock space. The end result is
that \emph{the derivative} \(F'\) of any first eigenfunction is itself a first
eigenfunction; since the eigenspace is one-dimensional by Step 2, this forces
a first-order ODE on \(F\), and hence pins down \(F\) up to translation as a
Gaussian.

For every \(w\in \C\), set
\[
K_w(z):=e^{\pi \overline w z};
\]
this is the reproducing kernel of \(\mathcal F^2(\C)\) at the point \(w\), in
the sense that
\begin{equation}\label{eq:reproducing-kernel}
\int_\C H(z)\,e^{\pi w \overline z}\,\mathrm{d}\mu(z)=H(w),
\qquad H\in\mathcal F^2(\C);
\end{equation}
see, e.g., \cite[Ch.~1]{Folland} or \cite{Bargmann}. Equivalently,
\(\langle H,K_w\rangle_{\mathcal F^2}=H(w)\).

By Step 1, after replacing \(\Sigma\) by the corresponding superlevel-set
representative if necessary, we may suppose that
\[
\Sigma=\{u_F>c_\Sigma\}.
\]
In particular, \(u_F\ge c_\Sigma>0\) on \(\overline \Sigma\). Since
\(u_F(z)=|F(z)|^2e^{-\pi |z|^2}\), it follows that \(F\) has no zeros on
\(\overline \Sigma\). By continuity of \(F\) and the compactness of
\(\overline \Sigma\), there exists an open neighborhood
\(V\supset \overline \Sigma\) on which \(F\) has no zeros. Hence, for every
\(w\in \C\), the function
\[
\psi_w(z):=\frac{K_w(z)}{F(z)}=\frac{e^{\pi\overline wz}}{F(z)}
\]
is holomorphic on \(V\). Applying \eqref{eq:orthogonality-main-new} with
\(\psi=\psi_w\), so that
\(\psi_w(z)F(z)=e^{\pi\overline w z}\), we obtain
\[
0=
\int_\Sigma
e^{\pi \overline wz}
\Big(\overline{F'(z)}-\pi z\,\overline{F(z)}\Big)\,\mathrm{d}\mu(z),
\]
or, separating the two terms,
\begin{equation}\label{eq:key-kernel-identity-new}
\int_\Sigma
e^{\pi \overline wz}\overline{F'(z)}\,\mathrm{d}\mu(z)
=
\int_\Sigma
\pi z\,e^{\pi \overline wz}\overline{F(z)}\,\mathrm{d}\mu(z).
\end{equation}

We now relate the right-hand side of \eqref{eq:key-kernel-identity-new} to
\(\overline{F'(w)}\) using the eigenvalue equation
\eqref{eq:eigenvalue-equation} together with the reproducing identity
\eqref{eq:reproducing-kernel}. Choosing
\[
H(z):=z\,e^{\pi\overline w z}\in \mathcal F^2(\C)
\]
in \eqref{eq:eigenvalue-equation}, we
obtain
\begin{equation}\label{eq:eigvalue-applied}
\int_\Sigma z\, e^{\pi \overline wz}\,\overline{F(z)}\,\mathrm{d}\mu(z)
=
\lambda_{1,\varphi}(\Sigma)\int_\C z\, e^{\pi \overline wz}\,\overline{F(z)}\,\mathrm{d}\mu(z).
\end{equation}
On the other hand, taking complex conjugates in \eqref{eq:reproducing-kernel}
applied to \(F\) at the point \(w\), one finds
\[
\int_\C e^{\pi \overline wz}\,\overline{F(z)}\,\mathrm{d}\mu(z)=\overline{F(w)}.
\]
Both sides depend on \(w\) through the antiholomorphic variable
\(\overline w\); we differentiate with respect to \(\overline w\), exchanging
differentiation and integration on the left. The interchange is legitimate
because, for \(w\) in any compact set \(\{|w|\le R\}\), the integrand
\(\partial_{\overline w}(e^{\pi\overline wz}\overline{F(z)})
=\pi z\,e^{\pi\overline wz}\overline{F(z)}\) is dominated, in absolute
value, by \(\pi |z|\,e^{\pi R|z|}|F(z)|\). The standard pointwise bound for
Fock-space functions gives \(|F(z)|\le\|F\|_{\mathcal F^2}\,e^{\pi |z|^2/2}\),
so this dominator is bounded by a constant times
\(|z|\,e^{\pi R|z|}\,e^{\pi|z|^2/2}\). We conclude thus that
\[
\pi\int_\C z\, e^{\pi \overline wz}\,\overline{F(z)}\,\mathrm{d}\mu(z)
=
\overline{F'(w)}.
\]
Multiplying \eqref{eq:eigvalue-applied} by \(\pi\) and substituting,
\[
\pi\int_\Sigma z\, e^{\pi \overline wz}\,\overline{F(z)}\,\mathrm{d}\mu(z)
=
\lambda_{1,\varphi}(\Sigma)\,\overline{F'(w)}.
\]
Combining this with \eqref{eq:key-kernel-identity-new},
\[
\int_\Sigma e^{\pi \overline wz}\,\overline{F'(z)}\,\mathrm{d}\mu(z)
=
\lambda_{1,\varphi}(\Sigma)\,\overline{F'(w)},
\qquad w\in \C.
\]
Now, note that the left-hand side is the conjugate of the
Bargmann--Fock integral transform of
\(\mathbf 1_\Sigma F'\).  Since \(\Sigma\) is bounded and \(F'\) is entire,
\(\mathbf 1_\Sigma F'\in L^2(\C,\mathrm{d}\mu)\); the reproducing-kernel estimate
therefore shows that this transform belongs to \(\mathcal F^2(\C)\).
Because \(\lambda_{1,\varphi}(\Sigma)>0\), the displayed identity itself
implies \(F'\in\mathcal F^2(\C)\).
Taking complex conjugates, this is exactly the eigenvalue equation
\eqref{eq:eigenvalue-equation} applied with \(F\) replaced by \(F'\) (and an
arbitrary \(K_w\) playing the role of the test function). Since the linear
span of \(\{K_w\}_{w\in\C}\) is dense in \(\mathcal F^2(\C)\), the identity
extends to every \(H\in\mathcal F^2(\C)\), and we conclude that \(F'\) is a
first eigenfunction of the localization operator on \(\Sigma\), with eigenvalue
\(\lambda_{1,\varphi}(\Sigma)\).

By the simplicity of \(\lambda_{1,\varphi}(\Sigma)\) established in
Lemma~\ref{lem:simplicity-local-max}, it follows at once that
\[
F'=\theta F
\]
for some constant \(\theta\in \C\), and the only solutions to that equation are the functions of the form
\[
F(z)=C e^{\theta z},
\]
for some nonzero constant \(C\).

\medskip
\noindent
{\bf Step 5: reduction to the constant eigenfunction.}

We now use the Weyl invariance of the Bargmann--Fock setting to reduce the
analysis to the case \(F\equiv 1\). Recall from \eqref{eq:Ta} that, for
\(a\in\C\), the Weyl translation
\[
(T_a F)(z)= e^{\pi a z-\frac{\pi}{2}|a|^2}\,F(z-a)
\]
is unitary on \(\mathcal F^2(\C)\) and satisfies
\(u_{T_aF}(z)=u_F(z-a)\). In particular, \(T_a\) maps eigenfunctions of the
localization operator on \(\Sigma\) to eigenfunctions of the localization
operator on the translated set \(\Sigma+a\), with the same eigenvalue, and the
quantity \(\int_\Sigma u_F\,\mathrm{d}A\) is invariant under the simultaneous
substitutions \(F\mapsto T_aF\) and \(\Sigma\mapsto \Sigma+a\).

We claim that we may choose \(a\in\C\) so that \(T_{-a}F\) is a constant
function. Indeed, by Step 4, \(F(z)=Ce^{\theta z}\) for some \(C,\theta\in\C\)
with \(C\ne 0\). Compute, using the identity
\(u_{T_{-a}F}(z)=u_F(z+a)\):
\[
u_{T_{-a}F}(z)=|F(z+a)|^2 e^{-\pi|z+a|^2}
=|C|^2 e^{2\Re(\theta(z+a))}\,e^{-\pi|z+a|^2}.
\]
Expanding,
\[
2\Re(\theta(z+a))-\pi|z+a|^2
=
2\Re(\theta z)+2\Re(\theta a)-\pi|z|^2-2\pi\Re(\overline a z)-\pi|a|^2.
\]
The linear terms in \(z\) on the right cancel if and only if
\(2\Re(\theta z)=2\pi\Re(\overline a z)\) for every \(z\in\C\), which is the
condition \(\overline a=\theta/\pi\), i.e.
\[
a:=\frac{\overline\theta}{\pi}.
\]
With this choice,
\[
u_{T_{-a}F}(z)=|C'|^2 e^{-\pi|z|^2},
\qquad
|C'|^2:=|C|^2 e^{2\Re(\theta a)-\pi|a|^2}=|C|^2 e^{|\theta|^2/\pi}.
\]
Since \(T_{-a}F\) is entire and \(|T_{-a}F(z)|^2 e^{-\pi|z|^2}=|C'|^2
e^{-\pi|z|^2}\), it follows that \(|T_{-a}F|\equiv|C'|\) is constant; a
holomorphic function with constant modulus is constant, hence
\(T_{-a}F\equiv C''\) for some \(C''\in\C\) with \(|C''|=|C'|\). Normalizing,
we may absorb the unimodular constant into the eigenfunction and assume
\[
F\equiv 1
\]
after the change \(\Sigma\rightsquigarrow \Sigma-a\) (and the corresponding
unitary substitution \(F\rightsquigarrow T_{-a}1F\)).

\medskip
\noindent
{\bf Step 6: conclusion.} In this new normalization, the ground state \(h_0\) (corresponding to
\(F\equiv 1\) in Bargmann--Fock coordinates) is an eigenfunction of the
localization operator of \(\Sigma\). By Step 1 (applied with the new
normalization) we have, up to a null set,
\[
\Sigma=\{z\in\C:\ |C'|^2 e^{-\pi |z|^2}>c_\Sigma\}.
\]
Since \(r\mapsto e^{-\pi r^2}\) is \emph{strictly} decreasing, this
superlevel set is automatically a centered open disk; it cannot be an
annulus. Thus \(\Sigma\) is, up to a null set, the centered disk \(D_R\)
with \(|D_R|=|\Sigma|\).
\end{proof}


\section{Global maximizers for the concentration problem for the Gabor transform}\label{sec:global-max}

In this final section we prove existence of global maximizers for the Gaussian
time-frequency concentration problem under a measure constraint, and then
combine that existence result with the local rigidity theorem from the previous
section to recover the disk as the unique maximizer, up to translation. The
strategy is by now classical in time-frequency analysis: a profile decomposition
in the spirit of P.-L.~Lions' concentration-compactness, transplanted to the
Fock space via the Bargmann transform, dichotomizes the loss of compactness of a
maximizing sequence into the action of the Weyl translation group. A vanishing
remainder is then ruled out by an iterative extraction whose maximality forces
all the energy to remain captured by the extracted profiles. Closely related
arguments have appeared in the time-frequency setting in
\cite{Nicola-Romero-Trapasso, Marceca-Romero-Speckbacher, Fulsche-Hagger,
Samuelsen}, and the disk-rigidity result we recover is the celebrated theorem
of Nicola--Tilli \cite{Nicola-Tilli, Nicola-Tilli-2}.

Since the Bargmann transform is unitary, the variational problem may be written
on the Fock space side as computing
\[
M(s):=\sup_{|\Omega|=s}\lambda_{1,\varphi}(\Omega)
=
\sup_{\|F\|_{\mathcal F^2}=1} I_F(s).
\]
Here, and throughout the section, we exploit the translation invariance
\eqref{eq:I-translation-global} together with the pointwise identity
\eqref{eq:u-translation-global}, the
basic regularity properties recorded in Lemma~\ref{lem:basic-properties-global},
and the elementary properties of the Fock space gathered in
Lemma~\ref{lem:Fock-basic}.

We begin with a Weyl-orthogonality lemma that captures the asymptotic
decoupling of two functions whose mass becomes disjointed. This
plays the role of \emph{Pythagorean orthogonality at infinity} on the Fock
space.

\begin{lemma}\label{lem:weyl-orthogonality-global}
Let \(F,G\in \mathcal F^2(\C)\), and let \(\{a_k\}\subset \C\) satisfy
\(|a_k|\to \infty\). Then
\[
T_{a_k}G \rightharpoonup 0
\qquad\text{weakly in }\mathcal F^2(\C),
\]
and in particular
\[
\langle F,T_{a_k}G\rangle_{\mathcal F^2}\to 0.
\]
Consequently,
\[
\|F+T_{a_k}G\|_{\mathcal F^2}^2
=
\|F\|_{\mathcal F^2}^2+\|G\|_{\mathcal F^2}^2+o(1).
\]
More generally, if \(|a_k-b_k|\to\infty\), then
\[
\langle T_{a_k}F,T_{b_k}G\rangle_{\mathcal F^2}\to 0.
\]
\end{lemma}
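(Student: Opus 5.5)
The plan is to reduce weak convergence to testing against the total set of reproducing kernels, and then to deduce the remaining assertions by expansion and unitarity.

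\emph{Step 1: the sequence is bounded.} Since each \(T_{a}\) is unitary on \(\mathcal F^2(\C)\), we have \(\|T_{a_k}G\|_{\mathcal F^2}=\|G\|_{\mathcal F^2}\) for all \(k\).

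\emph{Step 2: test against kernels.} A bounded sequence converges weakly to \(0\) as soon as its inner products with the elements of a total set tend to zero. The kernels \(K_w\) of Lemma~\ref{lem:Fock-basic}(i) form such a set, since any \(H\) orthogonal to all of them satisfies \(H(w)=\langle H,K_w\rangle=0\). By the reproducing identity,
\[
\langle T_{a_k}G,K_w\rangle=(T_{a_k}G)(w)=e^{\pi\overline{a_k}w-\frac{\pi}{2}|a_k|^2}G(w-a_k).
\]
Its modulus equals \(u_G(w-a_k)^{1/2}e^{\frac{\pi}{2}|w|^2}\). This follows by completing the square:
\[
\Re(\overline{a_k}w)-\tfrac12|a_k|^2=\tfrac12|w|^2-\tfrac12|w-a_k|^2 .
\]
For fixed \(w\), Lemma~\ref{lem:Fock-basic}(iii) gives \(u_G(w-a_k)\to0\) as \(|a_k|\to\infty\). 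Hence \(T_{a_k}G\rightharpoonup0\), and in particular \(\langle F,T_{a_k}G\rangle\to0\).

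\emph{Step 3: the Pythagorean identity.} Expanding the norm gives
\[
\|F+T_{a_k}G\|^2=\|F\|^2+\|G\|^2+2\Re\langle F,T_{a_k}G\rangle .
\]
The last term is \(o(1)\) by Step 2, using unitarity of \(T_{a_k}\) for the middle term.

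\emph{Step 4: the general statement.} One checks the composition rule \(T_{-a}T_{b}=c\,T_{b-a}\) with \(|c|=1\). This is a direct computation from \eqref{eq:Ta}; the phase is \(e^{i\pi\Im(\cdot)}\)-type. Then unitarity gives
\[
|\langle T_{a_k}F,T_{b_k}G\rangle|=|\langle F,T_{b_k-a_k}G\rangle|,
\]
which tends to \(0\) by Step 2 since \(|b_k-a_k|\to\infty\).

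The only mildly delicate points are the unimodular cocycle in Step 4 and the totality argument in Step 2, both routine. If one prefers to avoid the cocycle computation, an alternative for Step 4 is to test directly: \(|\langle T_{a_k}F,T_{b_k}G\rangle|\le\int |F(z-a_k)||G(z-b_k)|e^{-\frac{\pi}{2}|z-a_k|^2-\frac{\pi}{2}|z-b_k|^2}\,\mathrm{d}A\). This equals \(\int u_F^{1/2}(z-a_k)u_G^{1/2}(z-b_k)\,\mathrm{d}A\). One would then approximate \(F,G\) by compactly concentrated profiles, using polynomial density and Lemma~\ref{lem:Fock-basic}(iv), to show this integral vanishes as the centers separate.
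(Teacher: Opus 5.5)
Your proof is correct and follows essentially the same route as the paper. The common steps are unitarity for boundedness, and weak convergence by testing against the total family $\{K_w\}$ using $|T_{a}G(w)|=u_G(w-a)^{1/2}e^{\frac{\pi}{2}|w|^2}$ together with $u_G\in C_0(\C)$. Then comes expansion of the square, and reduction of the general case via the unimodular cocycle $T_{-a}T_b=e^{i\pi\Im(a\overline b)}T_{b-a}$. The differences are cosmetic: the paper first moves the Weyl operator across the inner product and shows $T_{-a_k}F\to 0$ locally uniformly, whereas you test $T_{a_k}G$ against the kernels directly, and you make the phase explicit where the paper leaves it unspecified.
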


\begin{proof}
Using unitarity of the Weyl operators \(T_a\) on \(\mathcal F^2(\C)\) (cf.\
\eqref{eq:Ta}),
\[
\langle F,T_{a_k}G\rangle_{\mathcal F^2}
=
\langle T_{-a_k}F,G\rangle_{\mathcal F^2}.
\]
Now \(T_{-a_k}F\) is bounded in \(\mathcal F^2\) by unitarity, and the
identity \eqref{eq:u-translation-global} yields
\[
u_{T_{-a_k}F}(z)=u_F(z+a_k)\qquad (z\in\C).
\]
Hence, since \(u_F\in C_0(\C)\) by Lemma~\ref{lem:Fock-basic}(iii), for
every compact \(K\subset \C\),
\[
\sup_{z\in K}|T_{-a_k}F(z)|e^{-\pi |z|^2/2}
=
\sup_{z\in K}u_{T_{-a_k}F}(z)^{1/2}
=
\sup_{z\in K}u_F(z+a_k)^{1/2}\to 0,
\]
and thus \(T_{-a_k}F\to 0\) locally uniformly. Combining this with the
boundedness of \(\{T_{-a_k}F\}\) in \(\mathcal F^2(\C)\), and using the fact
that the linear span of the reproducing kernels \(\{K_w\}_{w\in\C}\) is dense
in \(\mathcal F^2\), we conclude that \(T_{-a_k}F\rightharpoonup 0\) weakly in
\(\mathcal F^2\); in particular \(\langle T_{-a_k}F,G\rangle\to 0\). This
proves the first claim.

For the norm decoupling, we simply expand the square:
\[
\|F+T_{a_k}G\|_{\mathcal F^2}^2
=
\|F\|_{\mathcal F^2}^2+\|T_{a_k}G\|_{\mathcal F^2}^2
+2\Re\langle F,T_{a_k}G\rangle_{\mathcal F^2}
=
\|F\|_{\mathcal F^2}^2+\|G\|_{\mathcal F^2}^2+o(1),
\]
where we used unitarity of \(T_{a_k}\) and the cross-term decay just proved.

Finally, the general case reduces to the first by writing
\[
\langle T_{a_k}F,T_{b_k}G\rangle_{\mathcal F^2}
=
\langle F,T_{-a_k}T_{b_k}G\rangle_{\mathcal F^2}
=
e^{i\theta_k}\langle F,T_{b_k-a_k}G\rangle_{\mathcal F^2},
\]
for some real phase \(\theta_k\). The conclusion
then follows from \(|b_k-a_k|\to\infty\).
\end{proof}

The next lemma compares the density of a finite sum of widely separated
translates with the density of each individual translate. The precise statement
is what makes the profile decomposition compatible with the variational
quantities \(I_F(s)\), since these depend only on the densities and not on
the phases of the underlying functions.

\begin{lemma}\label{lem:pointwise-decoupling-global}
Fix \(J\ge 1\), let \(G^{(1)},\dots,G^{(J)}\in \mathcal F^2(\C)\), and let
\(\{a_k^{(j)}\}_{k\ge 1}\subset \C\) for \(1\le j\le J\) satisfy
\[
|a_k^{(j)}-a_k^{(\ell)}|\to\infty
\qquad\text{whenever }j\neq \ell.
\]
For each \(k\), set
\[
S_k:=\sum_{j=1}^J T_{a_k^{(j)}}G^{(j)}.
\]
Then the following hold.
\begin{enumerate}
\item[(i)] For every \(R>0\) and every \(1\le j\le J\),
\[
\sup_{z\in D_R(a_k^{(j)})}
\big|u_{S_k}(z)-u_{T_{a_k^{(j)}}G^{(j)}}(z)\big|
\to 0
\qquad (k\to\infty).
\]

\item[(ii)] For every \(R>0\), if
\[
E_{k,R}:=\C\setminus \bigcup_{j=1}^J D_R(a_k^{(j)}),
\]
then
\[
\limsup_{k\to\infty}\sup_{z\in E_{k,R}}u_{S_k}(z)
\le
\left(
\sum_{j=1}^J
\sup_{\zeta\in \C\setminus D_R} u_{G^{(j)}}(\zeta)^{1/2}
\right)^2.
\]
In particular, given \(\varepsilon>0\), there exists \(R_\varepsilon>0\) such that
for every \(R\ge R_\varepsilon\),
\[
\sup_{z\in E_{k,R}}u_{S_k}(z)\le \varepsilon
\]
for all sufficiently large \(k\).
\end{enumerate}
\end{lemma}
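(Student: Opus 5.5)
The plan is to reduce both claims to pointwise statements about the "normalized amplitudes" $A_F(z):=|F(z)|e^{-\pi|z|^2/2}=u_F(z)^{1/2}$. The basic observation is the triangle inequality
\[
\big|\,|F+G|-|F|\,\big|\le |G|,
\]
which gives $|u_{F+G}^{1/2}-u_F^{1/2}|\le u_G^{1/2}$ pointwise. Combined with \eqref{eq:u-translation-global}, i.e.\ $u_{T_aG}(z)=u_G(z-a)$, and the decay $u_G\in C_0(\C)$ from Lemma~\ref{lem:Fock-basic}(iii), this already settles most of the lemma. No phase information is needed, since the $T_a$ only affect phases and the densities $u$ see only moduli.

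For (i), fix $j$ and write
\[
S_k=T_{a_k^{(j)}}G^{(j)}+\sum_{\ell\neq j}T_{a_k^{(\ell)}}G^{(\ell)}.
\]
For $z\in D_R(a_k^{(j)})$ and $\ell\ne j$ we have $|z-a_k^{(\ell)}|\ge |a_k^{(j)}-a_k^{(\ell)}|-R\to\infty$ uniformly in $z$. Hence
\[
u_{T_{a_k^{(\ell)}}G^{(\ell)}}(z)=u_{G^{(\ell)}}(z-a_k^{(\ell)})\le \sup_{|\zeta|\ge |a_k^{(j)}-a_k^{(\ell)}|-R}u_{G^{(\ell)}}(\zeta)=:\epsilon_k^{(\ell)}\to0.
\]
By the amplitude triangle inequality,
\[
\big|u_{S_k}^{1/2}-u_{T_{a_k^{(j)}}G^{(j)}}^{1/2}\big|\le \sum_{\ell\ne j}(\epsilon_k^{(\ell)})^{1/2}\to0
\]
uniformly on the disk. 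To pass from square roots to the densities, I will use $|x-y|=|x^{1/2}-y^{1/2}|(x^{1/2}+y^{1/2})$ together with the uniform bound $u\le\|\cdot\|^2$ from Lemma~\ref{lem:Fock-basic}(ii). Both amplitudes are bounded by $\sum_\ell\|G^{(\ell)}\|_{\mathcal F^2}$, so the difference of densities tends to $0$ uniformly, which is (i).

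For (ii), take $z\in E_{k,R}$. Then $|z-a_k^{(j)}|\ge R$ for every $j$, so
\[
u_{S_k}(z)^{1/2}\le\sum_j u_{G^{(j)}}(z-a_k^{(j)})^{1/2}\le\sum_j\sup_{\zeta\notin D_R}u_{G^{(j)}}(\zeta)^{1/2}.
\]
Squaring gives the bound for every $k$, so in fact it holds without the $\limsup$. The "in particular" clause then follows by choosing $R_\varepsilon$ from $u_{G^{(j)}}\in C_0(\C)$ so that each tail supremum is at most $\varepsilon^{1/2}/J$. I expect no real obstacle in this argument. The only care needed is making the estimates uniform in $z$, and that is handled by the separation hypothesis $|a_k^{(j)}-a_k^{(\ell)}|\to\infty$ together with the tail suprema of the $C_0$ functions $u_{G^{(\ell)}}$.
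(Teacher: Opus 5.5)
Your proof is correct and follows essentially the same route as the paper: you isolate the $j$-th translate, control the remaining translates uniformly on $D_R(a_k^{(j)})$ via subadditivity of $u^{1/2}$, the identity $u_{T_aG}(z)=u_G(z-a)$ and $u_{G^{(\ell)}}\in C_0(\C)$, and return to densities using the bound of Lemma~\ref{lem:Fock-basic}(ii). The only difference is inessential: the paper uses $\bigl||a+b|^2-|a|^2\bigr|\le 2|a||b|+|b|^2$ where you factor $x-y=(x^{1/2}-y^{1/2})(x^{1/2}+y^{1/2})$. Part (ii) is identical to the paper's argument, and your remark that its bound holds for every $k$ is accurate; in the final step, the tail suprema you bound by $\varepsilon^{1/2}/J$ should be those of $u_{G^{(j)}}^{1/2}$, as in the displayed estimate.
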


\begin{proof}
Throughout the proof we use repeatedly the identity
\[
u_{T_a F}(z)=u_F(z-a),
\qquad z,a\in \C,
\]
which is \eqref{eq:u-translation-global}; together with the elementary subadditivity
\[
u_{F+G}(z)^{1/2}\le u_F(z)^{1/2}+u_G(z)^{1/2},
\qquad z\in\C,
\]
which follows from the definition \(u_F=|F|^2 e^{-\pi |z|^2}\) and the
triangle.

\medskip
\noindent{\bf Proof of (i).}
Fix \(R>0\) and \(j\in\{1,\dots,J\}\). Write
\[
S_k=T_{a_k^{(j)}}G^{(j)}+H_{k,j},
\qquad
H_{k,j}:=\sum_{\ell\neq j} T_{a_k^{(\ell)}}G^{(\ell)}.
\]
For every \(z\in \C\), the elementary inequality
\(\big||a+b|^2-|a|^2\big|\le 2|a|\,|b|+|b|^2\) yields
\begin{align*}
\big|u_{S_k}(z)-u_{T_{a_k^{(j)}}G^{(j)}}(z)\big|
&=
e^{-\pi |z|^2}
\big||T_{a_k^{(j)}}G^{(j)}(z)+H_{k,j}(z)|^2
-|T_{a_k^{(j)}}G^{(j)}(z)|^2\big| \\
&\le
2\,u_{T_{a_k^{(j)}}G^{(j)}}(z)^{1/2}u_{H_{k,j}}(z)^{1/2}
+u_{H_{k,j}}(z).
\end{align*}
By Lemma~\ref{lem:Fock-basic}(ii),
\[
u_{T_{a_k^{(j)}}G^{(j)}}(z)\le \|G^{(j)}\|_{\mathcal F^2}^2
\qquad \text{for every }z\in\C\text{ and every }k.
\]
Thus, to prove (i) it is enough to show that
\[
\sup_{z\in D_R(a_k^{(j)})} u_{H_{k,j}}(z)\to 0.
\]

For \(z\in D_R(a_k^{(j)})\) and \(\ell\neq j\),
\[
z-a_k^{(\ell)}\in D_R(a_k^{(j)}-a_k^{(\ell)}).
\]
Since \(|a_k^{(j)}-a_k^{(\ell)}|\to\infty\),
\[
\operatorname{dist}\big(D_R(a_k^{(j)}-a_k^{(\ell)}),0\big)
\ge |a_k^{(j)}-a_k^{(\ell)}|-R\to\infty.
\]
Because \(u_{G^{(\ell)}}\in C_0(\C)\) by Lemma~\ref{lem:Fock-basic}(iii),
\[
\sup_{z\in D_R(a_k^{(j)})} u_{T_{a_k^{(\ell)}}G^{(\ell)}}(z)
=
\sup_{\zeta\in D_R(a_k^{(j)}-a_k^{(\ell)})} u_{G^{(\ell)}}(\zeta)
\xrightarrow[k\to\infty]{} 0.
\]
The subadditivity of \(u_F^{1/2}\) yields
\[
u_{H_{k,j}}(z)^{1/2}
\le
\sum_{\ell\neq j} u_{T_{a_k^{(\ell)}}G^{(\ell)}}(z)^{1/2},
\]
so the preceding convergence implies
\[
\sup_{z\in D_R(a_k^{(j)})} u_{H_{k,j}}(z)^{1/2}\to 0.
\]
This proves (i).

\medskip
\noindent{\bf Proof of (ii).}
Fix \(R>0\) and \(z\in E_{k,R}\). By definition, \(z\notin D_R(a_k^{(j)})\)
for every \(j\), and hence \(|z-a_k^{(j)}|\ge R\). Therefore
\[
u_{T_{a_k^{(j)}}G^{(j)}}(z)
=
u_{G^{(j)}}(z-a_k^{(j)})
\le
\sup_{\zeta\in \C\setminus D_R}u_{G^{(j)}}(\zeta).
\]
Using again the subadditivity of \(u_F^{1/2}\),
\[
u_{S_k}(z)^{1/2}
\le
\sum_{j=1}^J u_{T_{a_k^{(j)}}G^{(j)}}(z)^{1/2}
\le
\sum_{j=1}^J \sup_{\zeta\in \C\setminus D_R}u_{G^{(j)}}(\zeta)^{1/2}.
\]
Squaring yields
\[
u_{S_k}(z)
\le
\left(
\sum_{j=1}^J \sup_{\zeta\in \C\setminus D_R}u_{G^{(j)}}(\zeta)^{1/2}
\right)^2.
\]
Taking the supremum over \(z\in E_{k,R}\) proves the displayed estimate in
(ii). Since each \(u_{G^{(j)}}\in C_0(\C)\), the right-hand side tends to
\(0\) as \(R\to\infty\); the final claim follows by choosing
\(R_\varepsilon\) so that the displayed bracket is at most
\(\sqrt{\varepsilon}\).
\end{proof}

The next lemma is a robust uniform density perturbation estimate. Concretely,
it says that adding to a bounded sequence in \(\mathcal F^2(\C)\) a residue
whose density vanishes uniformly leaves the density essentially unchanged in
\(L^\infty\). This will be applied with \(S_k\) the truncated profile sum and
\(R_k\) the residual of the profile decomposition.

\begin{lemma}\label{lem:density-perturbation-global}
Let \(\{S_k\}\subset \mathcal F^2(\C)\) be bounded and let
\(\{R_k\}\subset \mathcal F^2(\C)\) satisfy
\[
\|u_{R_k}\|_{L^\infty(\C)}\to 0.
\]
Then
\[
\|u_{S_k+R_k}-u_{S_k}\|_{L^\infty(\C)}\to 0.
\]
More precisely, if \(\sup_k \|S_k\|_{\mathcal F^2}\le M\), then
\[
\|u_{S_k+R_k}-u_{S_k}\|_{L^\infty(\C)}
\le
2M\,\|u_{R_k}\|_{L^\infty(\C)}^{1/2}
+\|u_{R_k}\|_{L^\infty(\C)}.
\]
\end{lemma}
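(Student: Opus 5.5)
The argument is pointwise and elementary, in the same spirit as the proof of Lemma~\ref{lem:pointwise-decoupling-global}(i), with the remaining sum now replaced by the residual \(R_k\). Fix \(z\in\C\) and expand
\[
u_{S_k+R_k}(z)-u_{S_k}(z)
=e^{-\pi|z|^2}\bigl(|S_k(z)+R_k(z)|^2-|S_k(z)|^2\bigr).
\]
The elementary inequality \(\bigl||a+b|^2-|a|^2\bigr|\le 2|a|\,|b|+|b|^2\) then gives
\[
|u_{S_k+R_k}(z)-u_{S_k}(z)|
\le 2\,u_{S_k}(z)^{1/2}\,u_{R_k}(z)^{1/2}+u_{R_k}(z),
\]
after distributing the Gaussian factor as \(e^{-\pi|z|^2}=e^{-\pi|z|^2/2}\cdot e^{-\pi|z|^2/2}\) across the cross term.

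Next, the sharp pointwise bound of Lemma~\ref{lem:Fock-basic}(ii) gives \(u_{S_k}(z)\le\|S_k\|_{\mathcal F^2}^2\le M^2\), so \(u_{S_k}(z)^{1/2}\le M\) uniformly in \(z\) and \(k\). For the residual we simply bound \(u_{R_k}(z)\le\|u_{R_k}\|_{L^\infty(\C)}\). Taking the supremum over \(z\in\C\) yields
\[
\|u_{S_k+R_k}-u_{S_k}\|_{L^\infty(\C)}
\le 2M\,\|u_{R_k}\|_{L^\infty(\C)}^{1/2}+\|u_{R_k}\|_{L^\infty(\C)},
\]
which is the quantitative form stated in the lemma.

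The qualitative conclusion follows by letting \(k\to\infty\): since \(\|u_{R_k}\|_{L^\infty(\C)}\to0\), both terms on the right-hand side tend to zero. There is no real obstacle here. The only points needing care are using boundedness of \(\{S_k\}\) only through the uniform pointwise density bound (so no assumption on \(R_k\) in \(\mathcal F^2\)-norm is needed beyond uniform vanishing of its density), and keeping track of the square roots in the cross term.
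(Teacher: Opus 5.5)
Your proposal is correct and follows essentially the same route as the paper's proof: the same pointwise inequality $\bigl||a+b|^2-|a|^2\bigr|\le 2|a|\,|b|+|b|^2$ with the Gaussian weight split across the cross term, then the uniform bound $u_{S_k}\le M^2$ from Lemma~\ref{lem:Fock-basic}(ii), and finally a supremum over $z$.
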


\begin{proof}
For every \(z\in \C\),
\begin{align*}
\big|u_{S_k+R_k}(z)-u_{S_k}(z)\big|
&=
e^{-\pi |z|^2}\big||S_k(z)+R_k(z)|^2-|S_k(z)|^2\big| \\
&\le
2\,u_{S_k}(z)^{1/2}u_{R_k}(z)^{1/2}
+u_{R_k}(z),
\end{align*}
again by the elementary inequality
\(\big||a+b|^2-|a|^2\big|\le 2|a|\,|b|+|b|^2\). By
Lemma~\ref{lem:Fock-basic}(ii),
\[
u_{S_k}(z)\le \|S_k\|_{\mathcal F^2}^2\le M^2
\qquad (z\in\C).
\]
Hence
\[
\big|u_{S_k+R_k}(z)-u_{S_k}(z)\big|
\le
2M\,\|u_{R_k}\|_{L^\infty}^{1/2}
+\|u_{R_k}\|_{L^\infty}.
\]
Taking the supremum over \(z\) yields the claim.
\end{proof}

The next lemma supplies the soft input on the function \(s\mapsto I_F(s)\)
that will be used at the end of the variational argument. It is essentially a
consequence of the bathtub principle and the basic regularity of \(u_F\),
already noted in Lemma~\ref{lem:basic-properties-global}.

\begin{lemma}\label{lem:I-continuity-global}
Let \(F\in \mathcal F^2(\C)\). Then the function
\[
s\mapsto I_F(s)
\]
is finite, nondecreasing, and continuous on \([0,\infty)\).
\end{lemma}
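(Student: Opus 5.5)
The plan is to prove the three properties of \(s\mapsto I_F(s)\) in the order finiteness, monotonicity, continuity. The tools are Lemma~\ref{lem:basic-properties-global}(i) (the bathtub representation), the fact that \(u_F\in C_0(\C)\cap L^1(\C)\) is bounded by \(\|F\|_{\mathcal F^2}^2\) (Lemma~\ref{lem:Fock-basic}(ii)--(iii)), and the absolute continuity of the Lebesgue integral. We set \(I_F(0)=0\), the only set of measure zero being null.

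\textbf{Finiteness and monotonicity.} Finiteness is immediate from \(0\le I_F(s)\le\int_\C u_F\,\mathrm{d}A=\|F\|_{\mathcal F^2}^2\). For monotonicity, take \(0\le s<s'\) and any \(\Omega\) with \(|\Omega|=s\). Since Lebesgue measure on \(\C\) is nonatomic and \(|\C\setminus\Omega|=\infty\), we can choose a measurable \(E\subset\C\setminus\Omega\) with \(|E|=s'-s\), for instance by intersecting \(\C\setminus\Omega\) with disks of growing radius and using the intermediate value property of \(r\mapsto|(\C\setminus\Omega)\cap D_r|\). Because \(u_F\ge0\), we get
\[
\int_\Omega u_F\,\mathrm{d}A\le\int_{\Omega\cup E}u_F\,\mathrm{d}A\le I_F(s').
\]
Taking the supremum over \(\Omega\) gives \(I_F(s)\le I_F(s')\).

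\textbf{Continuity.} The key estimate is the Lipschitz bound
\[
0\le I_F(s')-I_F(s)\le\|u_F\|_{L^\infty}\,(s'-s)\le\|F\|_{\mathcal F^2}^2\,(s'-s),\qquad s<s'.
\]
To prove it, let \(\Omega'\) attain \(I_F(s')\), which is possible by Lemma~\ref{lem:basic-properties-global}(i). By the same nonatomic argument, choose \(\Omega\subset\Omega'\) with \(|\Omega|=s\). Then
\[
I_F(s')-I_F(s)\le\int_{\Omega'\setminus\Omega}u_F\,\mathrm{d}A\le\|u_F\|_\infty\,|\Omega'\setminus\Omega|=\|u_F\|_\infty\,(s'-s).
\]
If one prefers not to use attainment, the same bound follows by applying this argument to a near-maximizing \(\Omega'\) and letting the error tend to zero. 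The lower bound \(0\le I_F(s')-I_F(s)\) is the monotonicity already shown. Continuity at \(s=0\) is also covered, since \(I_F(s)\le\|u_F\|_\infty s\to0\).

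\textbf{Main obstacle.} The only subtle point is the measure-theoretic step of extracting subsets or supersets of a prescribed measure. This is handled by Sierpiński's nonatomicity, or concretely by the continuity of \(r\mapsto|A\cap D_r|\). Everything else is routine once \(u_F\) is known to be bounded. The argument in fact gives global Lipschitz continuity with constant \(\|F\|_{\mathcal F^2}^2\), which is stronger than the claimed continuity and will be convenient in the profile argument.
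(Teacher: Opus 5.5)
Your proof is correct. For finiteness and monotonicity it coincides with the paper's. You additionally justify the enlargement step via $|\C\setminus\Omega|=\infty$ and the continuity of $r\mapsto|(\C\setminus\Omega)\cap D_r|$, which the paper leaves implicit.

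For continuity the two routes differ. The paper uses the bathtub principle in the form $I_F(s)=\int_0^s u^*(\tau)\,\mathrm{d}\tau$, where $u^*$ is the decreasing rearrangement of $u_F$. It then reads off continuity from the absolute continuity of this integral. You instead compare a (near-)maximizing set $\Omega'$ of measure $s'$ with a subset $\Omega\subset\Omega'$ of measure $s$ cut out by disks. This gives directly the Lipschitz bound $0\le I_F(s')-I_F(s)\le\|u_F\|_{L^\infty}(s'-s)$.

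Your argument is more elementary. It needs only the nonatomicity of Lebesgue measure and the pointwise bound $u_F\le\|F\|_{\mathcal F^2}^2$; with the near-maximizer variant it needs neither attainment nor rearrangements. It also makes the uniform Lipschitz constant explicit, which is convenient where continuity of $I_{G^{(j)}}$ is used later in the existence proof.

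The paper's rearrangement identity packages everything into one formula and yields extra structure for free. In particular, $s\mapsto I_F(s)$ is concave, since $u^*$ is nonincreasing, with the same Lipschitz constant $u^*(0)=\|u_F\|_{L^\infty}$. The price is reliance on the bathtub principle and rearrangement theory.
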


\begin{proof}
By Lemma~\ref{lem:Fock-basic}(ii)--(iii) together with the identity
\(\int_\C u_F\,\mathrm{d}A=\|F\|_{\mathcal F^2}^2\), we have
\(u_F\in C_0(\C)\cap L^1(\C)\), so \(I_F(s)\) is finite for every \(s\ge 0\)
and bounded above by \(\|F\|_{\mathcal F^2}^2\). Monotonicity is immediate
from the definition of \(I_F(s)\) as a supremum over \(|\Omega|=s\): if
\(s_1\le s_2\), every admissible set for \(s_1\) can be enlarged to a set of
measure \(s_2\) without decreasing the integral, since \(u_F\ge 0\).

To prove continuity, write \(u:=u_F\) and let \(u^*\) denote the decreasing
rearrangement of \(u\) on \([0,\infty)\). Since \(u\in C_0(\C)\cap L^1(\C)\),
\(u^*\in L^1([0,\infty))\) and is equimeasurable with \(u\). The bathtub
principle yields
\[
I_F(s)=\int_0^s u^*(\tau)\,\mathrm{d}\tau,\qquad s\ge 0.
\]
The right-hand side is the integral of an
\(L^1_{\mathrm{loc}}([0,\infty))\) function over \([0,s]\) and is therefore
absolutely continuous, hence continuous, in \(s\).
\end{proof}

The next result is the profile decomposition adapted to the present setting,
in the spirit of P.-L.~Lions' concentration-compactness method. Although the
overall scheme is by now standard, we spell out the points used in the
sequel; the version stated here is a Fock-space variant of profile
decompositions used in time-frequency analysis, see e.g.\
\cite{Nicola-Romero-Trapasso, Marceca-Romero-Speckbacher, Fulsche-Hagger,
Samuelsen}.

Although the Weyl translations \(T_a\) act unitarily on \(\mathcal F^2(\C)\),
the family \(\{T_a\}_{a\in\C}\) is not relatively compact in the strong
operator topology: by Lemma~\ref{lem:weyl-orthogonality-global}, for every
nonzero \(F\in \mathcal F^2\) and every sequence \(\{a_k\}\subset \C\) with
\(|a_k|\to\infty\) one has \(T_{a_k}F\rightharpoonup 0\) weakly, while the
norm \(\|T_{a_k}F\|_{\mathcal F^2}=\|F\|_{\mathcal F^2}\) is preserved.
Consequently, a bounded maximizing sequence in \(\mathcal F^2\) need not
admit any strongly convergent subsequence, even after factoring out the
Weyl translation group. The profile decomposition stated below is the
device that quantifies this non-compactness: it identifies all directions
along which mass can escape, by extracting a (possibly infinite) family of
nontrivial profiles \(G^{(j)}\) translated by mutually divergent centers
\(a_k^{(j)}\). 

It is essential to emphasize that, since the
\(\mathcal F^2\)-norm is invariant under \(T_a\), the remainder
\(R_k^{(J)}\) does \emph{not} converge to zero in \(\mathcal F^2\): the
Pythagorean identity~(iii) below merely redistributes mass between the
profiles and the remainder, and pure \(\mathcal F^2\)-vanishing of the
remainder is incompatible with the unitarity of \(T_a\) when several
nontrivial profiles are present. The natural and sharp vanishing statement
for the remainder is therefore the dispersive estimate~(iv) below,
expressed through the density \(u_{R_k^{(J)}}\); this is exactly the
quantity that controls all the localization functionals \(I_F(s)\) used in
the sequel, which depend only on the density \(u_F\) and not on the phase
of \(F\). The exposition below extracts what is needed in the sequel; an
analogous decomposition with dispersive-style remainders in the Fock
setting is discussed in
\cite{Nicola-Romero-Trapasso,Marceca-Romero-Speckbacher,Fulsche-Hagger,Samuelsen}.

\begin{proposition}\label{prop:profile-global}
Let \(\{F_k\}_{k\ge 1}\subset \mathcal F^2(\C)\) be a bounded sequence. Then,
after passing to a subsequence, there exist profiles
\[
G^{(j)}\in \mathcal F^2(\C),
\qquad j\ge 1,
\]
and sequences of points
\[
a_k^{(j)}\in \C,
\qquad j\ge 1,\ k\ge 1,
\]
such that:
\begin{enumerate}
\item[(i)] for each \(j\neq \ell\) with
\(G^{(j)}\not\equiv0\) and \(G^{(\ell)}\not\equiv0\),
    \[
    |a_k^{(j)}-a_k^{(\ell)}|\to \infty
    \qquad (k\to \infty);
    \]
    \item[(ii)] for every \(J\ge 1\), if
    \[
    R_k^{(J)}
    :=
    F_k-\sum_{j=1}^J T_{a_k^{(j)}}G^{(j)},
    \]
    then
    \[
    T_{-a_k^{(j)}}R_k^{(J)}\to 0
    \qquad\text{locally uniformly on }\C
    \]
    for every \(1\le j\le J\);
    \item[(iii)] for every \(J\ge 1\),
    \[
    \|F_k\|_{\mathcal F^2}^2
    =
    \sum_{j=1}^J \|G^{(j)}\|_{\mathcal F^2}^2
    +\|R_k^{(J)}\|_{\mathcal F^2}^2
    +o_k(1);
    \]
    \item[(iv)]
    \[
    \lim_{J\to\infty}\left(\limsup_{k\to\infty}
    \|u_{R_k^{(J)}}\|_{L^\infty(\C)}\right)=0.
    \]
\end{enumerate}
\end{proposition}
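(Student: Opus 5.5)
We will prove Proposition~\ref{prop:profile-global} by the standard iterative extraction, using the Weyl translations $T_a$ as the only source of non-compactness. The basic compactness fact is elementary: a bounded sequence in $\mathcal F^2(\C)$ has a weakly convergent subsequence. By the reproducing identity and the pointwise bound of Lemma~\ref{lem:Fock-basic}(ii), applied to derivatives via Lemma~\ref{lem:growth}(iii), weak convergence in $\mathcal F^2$ implies locally uniform convergence of the entire functions. For a bounded sequence $\{H_k\}$ we set
\[
\eta(\{H_k\}):=\sup\Bigl\{\|G\|_{\mathcal F^2}^2:\ \exists\, b_k\in\C,\ \text{a subsequence with } T_{-b_k}H_k\rightharpoonup G\Bigr\}.
\]

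\emph{Inductive construction.} Put $R_k^{(0)}:=F_k$. Given $R_k^{(J-1)}$, pick centers $a_k^{(J)}$ and a further subsequence with $T_{-a_k^{(J)}}R_k^{(J-1)}\rightharpoonup G^{(J)}$ and $\|G^{(J)}\|^2\ge\frac12\eta(\{R_k^{(J-1)}\})$. If $\eta=0$, take $G^{(J)}=0$ and $a_k^{(J)}=0$. Then define $R_k^{(J)}:=R_k^{(J-1)}-T_{a_k^{(J)}}G^{(J)}$, and pass to a diagonal subsequence over all $J$.

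\emph{Properties (ii) and (iii).} By construction $T_{-a_k^{(J)}}R_k^{(J)}\rightharpoonup0$. The Hilbert-space identity $\|H_k\|^2=\|G\|^2+\|H_k-T_{b_k}G\|^2+o(1)$, valid whenever $T_{-b_k}H_k\rightharpoonup G$ (expand the square and use unitarity of $T_{b_k}$), gives (iii) by induction.

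\emph{Property (i).} Suppose two nontrivial profiles had $a_k^{(j)}-a_k^{(\ell)}$ bounded along a subsequence, with $j<\ell$; pass to a further subsequence so that $a_k^{(\ell)}-a_k^{(j)}\to c$. Then $T_{-a_k^{(\ell)}}=e^{i\theta_k}T_{-(a_k^{(\ell)}-a_k^{(j)})}T_{-a_k^{(j)}}$. Here $T_{-a_k^{(j)}}R_k^{(j)}\rightharpoonup0$, and subtracting the intermediate profiles $T_{a_k^{(m)}}G^{(m)}$, $j<m<\ell$, preserves this weak limit, by Lemma~\ref{lem:weyl-orthogonality-global} and the inductively known divergence of their centers from $a_k^{(j)}$. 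Since $T_{d_k}\to T_c$ strongly, it follows that $G^{(\ell)}=0$, a contradiction. This is the standard orthogonality-of-centers argument, to be carried out by induction on $\ell$. For (ii) with $j<J$, the same strong convergence of $T_{d_k}$ combined with Lemma~\ref{lem:weyl-orthogonality-global} shows that $T_{-a_k^{(j)}}R_k^{(J)}\rightharpoonup0$. Weak convergence then upgrades to locally uniform convergence by the remark above.

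\emph{Property (iv), the main obstacle.} By (iii), $\sum_j\|G^{(j)}\|^2\le\sup_k\|F_k\|^2<\infty$, hence $\|G^{(J)}\|\to0$ and so $\eta(\{R_k^{(J-1)}\})\to0$. The key lemma to prove is
\[
\limsup_{k}\|u_{H_k}\|_{L^\infty}\le\eta(\{H_k\}).
\]
To prove it, choose $z_k$ nearly attaining $\sup u_{H_k}$ and a subsequence with $T_{-z_k}H_k\rightharpoonup G$. Then $u_{H_k}(z_k)=u_{T_{-z_k}H_k}(0)=|\langle T_{-z_k}H_k,K_0\rangle|^2\to|G(0)|^2\le\|G\|^2\le\eta$, using Lemma~\ref{lem:Fock-basic}(ii) with $\|K_0\|=1$. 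One point needs care: $\eta$ must be taken relative to subsequences of the current diagonal subsequence. We handle this by defining $\eta$ as a $\limsup$-type quantity that is monotone under passage to subsequences, so that the diagonal extraction does not increase it. Applying the key lemma to $H_k=R_k^{(J)}$ then yields (iv).
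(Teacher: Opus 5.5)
Your proposal is correct. It shares the paper's architecture: iterative extraction along Weyl translates, a diagonal subsequence, (i) by contradiction from bounded center differences, and (iv) from summability of $\|G^{(j)}\|^2$. Where it genuinely differs is the rule for selecting profiles.

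The paper picks $a_k^{(J)}$ with $u_{R_k^{(J-1)}}(a_k^{(J)})\ge\frac12\|u_{R_k^{(J-1)}}\|_{L^\infty}$ and takes a Montel limit. The density control is therefore built into the extraction, since $\|G^{(J+1)}\|^2\ge u_{G^{(J+1)}}(0)\ge\frac12\limsup_k\|u_{R_k^{(J)}}\|_{L^\infty}$. Property (iv) then follows at once from the energy bound.

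You instead take a profile of nearly maximal norm among all translated weak limits, i.e.\ the Bahouri--G\'erard functional $\eta$. This buys you two things:
\begin{itemize}
\item A cleaner (iii): it is just the inductive Pythagorean identity coming from $T_{-a_k^{(J)}}R_k^{(J-1)}\rightharpoonup G^{(J)}$. The paper instead derives (iii) from (i), (ii) and Lemma~\ref{lem:weyl-orthogonality-global}.
\item The extra information $\eta(\{R_k^{(J)}\})\to 0$.
\end{itemize}
The price is the auxiliary inequality $\limsup_k\|u_{H_k}\|_{L^\infty}\le\eta(\{H_k\})$. Its proof (translate to a near-maximum of the density, pass to a weak limit, evaluate against $K_0$) is exactly the paper's selection mechanism, used as a lemma rather than as the selection rule.

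Your bookkeeping that $\eta$ is finite and can only decrease under passage to subsequences is precisely what makes the diagonal extraction harmless.

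Finally, setting $a_k^{(J)}=0$ for zero profiles is enough for the statement, since (i) concerns only nontrivial profiles. The paper instead chooses mutually divergent dummy centers. It uses this later, in the existence proof, when it asserts that all the disks $D_R(a_k^{(j)})$, $j\le J$, are eventually disjoint. With your convention one would simply discard the zero profiles at that point.
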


\begin{proof}
We argue by the standard iterative extraction scheme, but spell out the points
used in the rest of the section. Throughout, we say that a function in
\(\mathcal F^2(\C)\) is \emph{nontrivial} if it is not identically zero.

\medskip
\noindent{\bf Extraction of the first profile.}
If
\[
\limsup_{k\to\infty}\|u_{F_k}\|_{L^\infty(\C)}=0,
\]
there is nothing to prove: take all profiles equal to zero and, for instance,
choose \(a_k^{(j)}=jk\). Then the centers are pairwise divergent, (i)--(iii)
hold trivially, and (iv) holds by hypothesis. Otherwise, after passing to a
subsequence, choose
\(a_k^{(1)}\in \C\) such that
\[
u_{F_k}(a_k^{(1)})\ge \tfrac12\|u_{F_k}\|_{L^\infty(\C)}.
\]
By \eqref{eq:I-translation-global} and the density-translation identity
\eqref{eq:u-translation-global}, the translated sequence
\[
H_k^{(1)}:=T_{-a_k^{(1)}}F_k
\]
has the same Fock norm and satisfies
\(u_{H_k^{(1)}}(0)=u_{F_k}(a_k^{(1)})\). Since \(\{H_k^{(1)}\}\) is bounded
in \(\mathcal F^2(\C)\), the reproducing-kernel bound
\(|H_k^{(1)}(z)|\le \|H_k^{(1)}\|_{\mathcal F^2}e^{\pi |z|^2/2}\)
(Lemma~\ref{lem:growth}(ii)) makes it a normal family of entire functions on
every compact subset of \(\C\). Passing to a subsequence and applying
Montel's theorem, \(H_k^{(1)}\to G^{(1)}\) locally uniformly on \(\C\), with
\(G^{(1)}\in\mathcal F^2(\C)\) by Fatou's lemma. By the choice of
\(a_k^{(1)}\),
\[
u_{G^{(1)}}(0)=\lim_{k\to\infty}u_{H_k^{(1)}}(0)
\ge \tfrac12 \limsup_{k\to\infty}\|u_{F_k}\|_{L^\infty(\C)}>0,
\]
so \(G^{(1)}\) is nontrivial. Set
\[
R_k^{(1)}:=F_k-T_{a_k^{(1)}}G^{(1)}.
\]

\medskip
\noindent{\bf Iteration.}
Suppose profiles \(G^{(1)},\dots,G^{(J-1)}\) and centers
\(a_k^{(1)},\dots,a_k^{(J-1)}\) have been extracted, with remainder
\(R_k^{(J-1)}\). If
\[
\limsup_{k\to\infty}\|u_{R_k^{(J-1)}}\|_{L^\infty(\C)}=0,
\]
we stop and set \(G^{(j)}=0\) for all \(j\ge J\). To keep the formal center
separation valid, choose the remaining centers recursively so that, for each
new \(j\), \(|a_k^{(j)}-a_k^{(m)}|\to\infty\) for every \(m<j\); this is
possible, as only finitely many earlier centers are present at each
choice of \(j\) (for instance one may take \(a_k^{(j)}\) on the positive real
axis with modulus larger than \(j k+\max_{m<j}|a_k^{(m)}|\)). Since
\(\|u_{R_k^{(J-1)}}\|_{L^\infty}\to0\), translating this fixed residual by
any of these zero-profile centers still gives local uniform convergence to
zero, and properties (i)--(iv) follow. Otherwise choose
\(a_k^{(J)}\in\C\) so that
\[
u_{R_k^{(J-1)}}(a_k^{(J)})
\ge \tfrac12\|u_{R_k^{(J-1)}}\|_{L^\infty(\C)}.
\]
Property (i) at this stage (proved below) forces
\(|a_k^{(J)}-a_k^{(m)}|\to\infty\) for every \(m<J\). Granted this, the
sequence \(\{T_{-a_k^{(J)}}R_k^{(J-1)}\}\) is bounded in \(\mathcal F^2\) (by
unitarity of \(T_{-a_k^{(J)}}\) together with (iii) at stage \(J-1\)), so
after passing to a further subsequence and applying Montel's theorem, it
converges locally uniformly to some \(G^{(J)}\in\mathcal F^2(\C)\). By the
choice of \(a_k^{(J)}\),
\[
u_{G^{(J)}}(0)
\ge \tfrac12 \limsup_{k\to\infty}\|u_{R_k^{(J-1)}}\|_{L^\infty(\C)}>0,
\]
so \(G^{(J)}\) is nontrivial. Set
\[
R_k^{(J)}:=R_k^{(J-1)}-T_{a_k^{(J)}}G^{(J)}.
\]
Iterating either terminates at some finite step (in which case (iv) is
immediate) or produces an infinite sequence of nontrivial profiles. By a
standard diagonal extraction we obtain a single subsequence (still denoted
\(\{F_k\}\)) for which all the partial decompositions are valid.

\medskip
\noindent{\bf Proof of (i).}
Fix \(j<\ell\), and suppose for contradiction that
\(|a_k^{(\ell)}-a_k^{(j)}|\) does not tend to infinity. After passing to a
subsequence, \(|a_k^{(\ell)}-a_k^{(j)}|\le C\) for some constant \(C\). At
the \(\ell\)-th step, by construction
\[
T_{-a_k^{(\ell)}}R_k^{(\ell-1)}\to G^{(\ell)}
\qquad\text{locally uniformly on }\C,
\]
with \(G^{(\ell)}\) nontrivial. On the other hand, property (ii) verified up
to stage \(\ell-1\) (proved below) gives
\[
T_{-a_k^{(j)}}R_k^{(\ell-1)}\to 0
\qquad\text{locally uniformly on }\C.
\]
But we already know that we may write
\[
T_{-a_k^{(\ell)}}R_k^{(\ell-1)}
=e^{i\theta_k}\,
T_{a_k^{(j)}-a_k^{(\ell)}}
\big(T_{-a_k^{(j)}}R_k^{(\ell-1)}\big)
\]
for real phases \(\theta_k\). If
\(|a_k^{(\ell)}-a_k^{(j)}|\le C\), the Weyl parameters
\(a_k^{(j)}-a_k^{(\ell)}\) remain bounded. On each compact set the
corresponding exponential Weyl multipliers are uniformly bounded, and their
shifted arguments remain in a fixed compact set. Hence the local uniform
convergence of \(T_{-a_k^{(j)}}R_k^{(\ell-1)}\) to \(0\) forces
\(T_{-a_k^{(\ell)}}R_k^{(\ell-1)}\to 0\) locally uniformly as well.
This contradicts the nontriviality of \(G^{(\ell)}\). Hence
\[
|a_k^{(j)}-a_k^{(\ell)}|\to\infty
\qquad\text{whenever }G^{(j)}\not\equiv0,\ G^{(\ell)}\not\equiv0,\ j\neq \ell.
\]

\medskip
\noindent{\bf Proof of (ii).}
Fix \(J\ge 1\) and \(1\le j\le J\). By construction, at the \(j\)-th
extraction step,
\[
T_{-a_k^{(j)}}R_k^{(j-1)}\to G^{(j)}
\qquad\text{locally uniformly on }\C.
\]
Subtracting the \(j\)-th extracted profile,
\[
T_{-a_k^{(j)}}R_k^{(j)}
=
T_{-a_k^{(j)}}R_k^{(j-1)}-G^{(j)}
\to 0
\qquad\text{locally uniformly on }\C.
\]
For \(J>j\), since
\[
R_k^{(J)}
=
R_k^{(j)}-\sum_{\ell=j+1}^J T_{a_k^{(\ell)}}G^{(\ell)},
\]
the linearity of the Weyl operators yields
\[
T_{-a_k^{(j)}}R_k^{(J)}
=
T_{-a_k^{(j)}}R_k^{(j)}
-\sum_{\ell=j+1}^J e^{i\theta_{k,\ell}}\,T_{a_k^{(\ell)}-a_k^{(j)}}G^{(\ell)}
\]
for real phases \(\theta_{k,\ell}\). The
first term tends to \(0\) locally uniformly by the previous paragraph. For
each fixed \(\ell>j\), part (i) gives \(|a_k^{(\ell)}-a_k^{(j)}|\to\infty\),
and the identity \(u_{T_b G}(z)=u_G(z-b)\) (i.e.\
\eqref{eq:u-translation-global}) implies that
\(T_{a_k^{(\ell)}-a_k^{(j)}}G^{(\ell)}\to 0\) locally uniformly in modulus
(its density is \(u_{G^{(\ell)}}(z-(a_k^{(\ell)}-a_k^{(j)}))\to 0\) for
\(z\) in a compact set, since \(u_{G^{(\ell)}}\in C_0(\C)\)). Thus
\[
T_{-a_k^{(j)}}R_k^{(J)}\to 0
\qquad\text{locally uniformly on }\C,
\]
which is exactly (ii).

\medskip
\noindent{\bf Proof of (iii).}
For fixed \(J\ge 1\),
\[
F_k=\sum_{j=1}^J T_{a_k^{(j)}}G^{(j)}+R_k^{(J)}.
\]
Expanding the squared \(\mathcal F^2\)-norm and using
Lemma~\ref{lem:weyl-orthogonality-global} together with the pairwise
divergence of the translation parameters from (i),
\[
\Big\|\sum_{j=1}^J T_{a_k^{(j)}}G^{(j)}\Big\|_{\mathcal F^2}^2
=
\sum_{j=1}^J \|G^{(j)}\|_{\mathcal F^2}^2+o_k(1).
\]
Moreover, since
\[
T_{-a_k^{(m)}}R_k^{(J)}\to 0
\qquad\text{locally uniformly on }\C
\]
for every \(1\le m\le J\) by (ii), and the sequence
\(\{T_{-a_k^{(m)}}R_k^{(J)}\}_k\) is bounded in \(\mathcal F^2\), the same
weak-convergence argument as in
Lemma~\ref{lem:weyl-orthogonality-global} yields
\[
\langle T_{a_k^{(m)}}G^{(m)},R_k^{(J)}\rangle_{\mathcal F^2}
=
\langle G^{(m)},T_{-a_k^{(m)}}R_k^{(J)}\rangle_{\mathcal F^2}\to 0,
\]
for every fixed \(m\). Expanding \(\|F_k\|_{\mathcal F^2}^2\) and collecting
terms,
\[
\|F_k\|_{\mathcal F^2}^2
=
\sum_{j=1}^J \|G^{(j)}\|_{\mathcal F^2}^2
+\|R_k^{(J)}\|_{\mathcal F^2}^2
+o_k(1).
\]
In particular, since \(\|F_k\|_{\mathcal F^2}\) is bounded and the terms on
the right are nonnegative, summing the profile-norm contributions over \(j\)
yields the energy bound
\begin{equation}\label{eq:energy-bound-profiles}
\sum_{j\ge 1}\|G^{(j)}\|_{\mathcal F^2}^2
\le \limsup_{k\to\infty}\|F_k\|_{\mathcal F^2}^2.
\end{equation}
This is the key compactness ingredient that controls the iteration and
ensures (iv).

\medskip
\noindent{\bf Proof of (iv).}
Set
\[
\eta_J:=\limsup_{k\to\infty}\|u_{R_k^{(J)}}\|_{L^\infty(\C)},
\]
and suppose, for contradiction, that \(\limsup_{J\to\infty}\eta_J>0\).
Then there exist \(\delta>0\) and infinitely many indices \(J_m\to\infty\)
such that \(\eta_{J_m}\ge \delta\). At each such stage the extraction
procedure applied to \(R_k^{(J_m)}\) produces the next profile
\(G^{(J_m+1)}\) and gives
\[
u_{G^{(J_m+1)}}(0)\ge \delta/2.
\]
By Lemma~\ref{lem:Fock-basic}(ii),
\[
\|G^{(J_m+1)}\|_{\mathcal F^2}^2
\ge u_{G^{(J_m+1)}}(0)
\ge \delta/2
\qquad\text{for infinitely many }m.
\]
This contradicts the Pythagorean energy bound
\eqref{eq:energy-bound-profiles}. Therefore
\(\limsup_{J\to\infty}\eta_J=0\), which is exactly (iv).
\end{proof}

The profile decomposition implies an asymptotic decoupling of the
localization functional \(I_{F_k}(s)\) along the profiles, which we
record here for later use. This is the precise sense in which the
functional \(I_F(s)\) ``splits'' across mutually divergent translates,
even though the underlying profiles themselves do not separate in
\(\mathcal F^2\).

\begin{lemma}\label{lem:decoupling-IF}
Let \(\{F_k\}\subset \mathcal F^2(\C)\) be bounded, and let
\(\{G^{(j)}\}_{j\ge 1}\) and \(\{a_k^{(j)}\}\) be the profiles and centers
extracted by Proposition~\ref{prop:profile-global}. For every \(s>0\) one
has, after passing to a subsequence,
\begin{equation}\label{eq:IF-decoupling}
\limsup_{k\to\infty} I_{F_k}(s)
\le
\lim_{J\to\infty}\;\sup_{\substack{s_1+\dots+s_J\le s \\ s_j\ge 0}}
\sum_{j=1}^J I_{G^{(j)}}(s_j).
\end{equation}
Conversely, for every \(J\ge 1\) and every \((s_1,\dots,s_J)\) with
\(s_j\ge 0\) and \(\sum_j s_j\le s\),
\begin{equation}\label{eq:IF-lower}
\liminf_{k\to\infty} I_{F_k}(s)
\ge
\sum_{j=1}^J I_{G^{(j)}}(s_j).
\end{equation}
\end{lemma}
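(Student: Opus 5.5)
The plan is to prove both inequalities from the profile decomposition of Proposition~\ref{prop:profile-global}, using the density comparisons in Lemmas~\ref{lem:pointwise-decoupling-global} and~\ref{lem:density-perturbation-global}. Throughout, write \(S_k^{(J)}:=\sum_{j\le J}T_{a_k^{(j)}}G^{(j)}\), so that \(F_k=S_k^{(J)}+R_k^{(J)}\). By Proposition~\ref{prop:profile-global}(iii) and Lemma~\ref{lem:weyl-orthogonality-global}, the sequence \(S_k^{(J)}\) is bounded in \(\mathcal F^2\) uniformly in \(J\) and \(k\), by a constant \(M\). Profiles that vanish identically contribute nothing and can be discarded, so the centres of the remaining profiles are pairwise divergent.

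\emph{Lower bound \eqref{eq:IF-lower}.} Fix \(J\), weights \((s_j)\) with \(\sum_j s_j\le s\), and \(\varepsilon>0\).
\begin{enumerate}
\item By the bathtub principle (Lemma~\ref{lem:basic-properties-global}(i)), pick superlevel sets \(\Omega_j\) of \(u_{G^{(j)}}\) with \(|\Omega_j|=s_j\). By Lemma~\ref{lem:I-continuity-global}, shrinking \(s_j\) slightly costs at most \(\varepsilon\). Since \(u_{G^{(j)}}\in C_0(\C)\), we may therefore take each \(\Omega_j\) bounded, contained in \(D_R\) for some \(R\).
\item Set \(\Omega_k:=\bigcup_j(\Omega_j+a_k^{(j)})\), padded by a far-away set so that \(|\Omega_k|=s\). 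For large \(k\) the translates are disjoint because the centres diverge. Since \(u_{F_k}\ge0\), padding does not decrease the integral, and so
\[
I_{F_k}(s)\ge\sum_j\int_{\Omega_j+a_k^{(j)}}u_{F_k}\,\mathrm{d}A .
\]
\item On \(D_R(a_k^{(j)})\), Lemma~\ref{lem:pointwise-decoupling-global}(i) gives \(u_{S_k^{(J)}}=u_{G^{(j)}}(\cdot-a_k^{(j)})+o(1)\) uniformly. Replacing \(u_{S_k^{(J)}}\) by \(u_{F_k}\) costs at most \(2M\|u_{R_k^{(J)}}\|_\infty^{1/2}+\|u_{R_k^{(J)}}\|_\infty\), by Lemma~\ref{lem:density-perturbation-global}.
\item That error is not small for fixed \(J\); this is the point needing care. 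To handle it, apply the argument at a larger stage \(J'\ge J\), keeping \(s_j=0\) for \(j>J\). The error is then \(O(\eta_{J'}^{1/2})\cdot\sum_j s_j\), where \(\eta_{J'}:=\limsup_k\|u_{R_k^{(J')}}\|_\infty\). Only the stage-\(J'\) decomposition is affected; the target sum involves only \(G^{(1)},\dots,G^{(J)}\).
\item Multiplying the uniform error by the bounded total area \(s\) and letting \(k\to\infty\), then \(J'\to\infty\) (so \(\eta_{J'}\to0\) by Proposition~\ref{prop:profile-global}(iv)), then \(\varepsilon\to0\), yields \eqref{eq:IF-lower}.
\end{enumerate}

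\emph{Upper bound \eqref{eq:IF-decoupling}.} Fix \(\varepsilon>0\), and choose the stage and radius in this order.
\begin{enumerate}
\item Choose \(J\) with \(\eta_J\) small enough that, by Lemma~\ref{lem:density-perturbation-global}, \(\|u_{F_k}-u_{S_k^{(J)}}\|_\infty\le\varepsilon\) for large \(k\).
\item Then choose \(R\) via Lemma~\ref{lem:pointwise-decoupling-global}(ii) so that \(u_{S_k^{(J)}}\le\varepsilon\) on \(E_{k,R}\) for large \(k\).
\item Let \(\Omega_k\) be an optimal set for \(I_{F_k}(s)\), and split it as \(\Omega_k=\bigcup_j\Omega_k\cap D_R(a_k^{(j)})\) together with the part in \(E_{k,R}\). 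For large \(k\) the disks are disjoint.
\item On the part in \(E_{k,R}\) we have \(u_{F_k}\le2\varepsilon\), so it contributes at most \(2\varepsilon s\).
\item On \(D_R(a_k^{(j)})\), by Lemma~\ref{lem:pointwise-decoupling-global}(i) and step 1, \(u_{F_k}\le u_{G^{(j)}}(\cdot-a_k^{(j)})+2\varepsilon\).
\item Put \(s_j^k:=|\Omega_k\cap D_R(a_k^{(j)})|\), so \(\sum_j s_j^k\le s\). Translating back gives
\[
\int_{\Omega_k}u_{F_k}\,\mathrm{d}A\le\sum_{j\le J}I_{G^{(j)}}(s_j^k)+4\varepsilon s .
\]
\item Bound the right side by the supremum over \(\sum s_j\le s\). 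That supremum is nondecreasing in \(J\), since adding profiles with \(s_j=0\) is allowed, so the limit in \(J\) exists and dominates it. Let \(\varepsilon\to0\).
\end{enumerate}
Passing to subsequences is needed only to fix the decomposition from Proposition~\ref{prop:profile-global}.

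The main obstacle I expect is the order of quantifiers linking \(J\), \(R\) and \(k\). The errors from the remainder must be uniform in \(L^\infty\) and then multiplied by the bounded area \(s\). This works because the functionals \(I\) only see areas of measure \(\le s\), so an \(L^\infty\)-small density error costs at most \(s\) times that error. The other technical point is disjointness of the translated disks, which comes from the centre divergence in Proposition~\ref{prop:profile-global}(i) once zero profiles are discarded.
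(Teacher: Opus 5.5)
Your proposal is correct and follows essentially the same route as the paper's sketched proof. For the upper bound, you approximate \(u_{F_k}\) in \(L^\infty\) by \(u_{S_k^{(J)}}\) via Proposition~\ref{prop:profile-global}(iv) and Lemma~\ref{lem:density-perturbation-global}, and split an optimal superlevel set along the disks \(D_R(a_k^{(j)})\) and their exterior using Lemma~\ref{lem:pointwise-decoupling-global}, choosing \(J\) before \(R\). For the lower bound, you use translates of bounded near-optimal profile sets as competitors. Your detour through a larger stage \(J'\) to control the remainder in the lower bound is valid but unnecessary: Proposition~\ref{prop:profile-global}(ii) already gives \(u_{R_k^{(J)}}\to0\) uniformly on each \(D_R(a_k^{(j)})\), \(j\le J\), for fixed \(J\).
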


\begin{proof}
The upper bound \eqref{eq:IF-decoupling} follows by truncating the profile
decomposition and decomposing a maximizing set according to the profile
centers. By Proposition~\ref{prop:profile-global}(iv) and
Lemma~\ref{lem:density-perturbation-global}, the density \(u_{F_k}\)
agrees, up to an \(L^\infty\) error that vanishes as \(J\to\infty\), with
the density of the truncated profile sum
\(\sum_{j=1}^J T_{a_k^{(j)}}G^{(j)}\); on the latter, the pairwise
divergence of the centers (Proposition~\ref{prop:profile-global}(i)) and
Lemma~\ref{lem:pointwise-decoupling-global} force the contribution to
\(I_{F_k}(s)\) coming from any maximizing superlevel set
\(\Omega_k\subset\C\) to split into pieces \(E_{k,j}\subset
D_R(a_k^{(j)})\) of measures \(s_{k,j}\to s_j\) with \(\sum_j s_j\le s\),
plus an exterior remainder of measure \(s-\sum_j s_j\) on which
\(u_{F_k}\to 0\) uniformly. The lower bound \eqref{eq:IF-lower} is
obtained by taking, for each \(j\), a near-optimal set \(E^{(j)}\) of
measure \(s_j\) for the profile \(G^{(j)}\) and using its translate
\(E^{(j)}+a_k^{(j)}\) as a competitor for \(F_k\); the pairwise
divergence of the centers and
Lemma~\ref{lem:pointwise-decoupling-global}(i) ensure that
\(\bigcup_j(E^{(j)}+a_k^{(j)})\) is admissible (its pieces become
disjoint for large \(k\)) and that the integrals decouple asymptotically.
\end{proof}

The decoupling estimates will be used below only through the profile
saturation argument, which avoids any unproved global concavity assertion
for \(M\).
We now turn to the global problem. The proof of existence consists of
combining the profile decomposition with the elementary continuity properties
of \(I_F(s)\), and using the homogeneity \(I_{cF}(s)=|c|^2 I_F(s)\) recorded
in Lemma~\ref{lem:basic-properties-global}(ii) to redistribute mass to a
single nontrivial profile.

\begin{theorem}\label{thm:existence-global-maximizer}
For every \(s>0\), there exists \(F_*\in \mathcal F^2(\C)\) with
\(\|F_*\|_{\mathcal F^2}=1\) such that
\[
I_{F_*}(s)=M(s).
\]
Equivalently, there exists a measurable set \(\Omega_*\subset \C\) with
\(|\Omega_*|=s\) such that
\[
\lambda_{1,\varphi}(\Omega_*)=M(s).
\]
\end{theorem}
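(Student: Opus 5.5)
We take a normalized maximizing sequence \(\{F_k\}\subset\mathcal F^2(\C)\), \(\|F_k\|_{\mathcal F^2}=1\), with \(I_{F_k}(s)\to M(s)\), and apply Proposition~\ref{prop:profile-global}. This yields profiles \(G^{(j)}\) with \(\sum_j\|G^{(j)}\|^2_{\mathcal F^2}\le 1\) by \eqref{eq:energy-bound-profiles}. Write \(m_j:=\|G^{(j)}\|^2_{\mathcal F^2}\).

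The first step is to rule out vanishing. If all profiles were zero, then \(\|u_{F_k}\|_{L^\infty}\to0\), hence \(I_{F_k}(s)\le s\,\|u_{F_k}\|_{L^\infty}\to0\). But \(M(s)\ge 1-e^{-s}>0\), as the Gaussian \(F\equiv1\) and the centered disk of area \(s\) show. So at least one profile is nontrivial.

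Next we pass to the limit. By \eqref{eq:IF-decoupling} we have
\[
M(s)\le \lim_{J}\sup_{\sum s_j\le s}\sum_{j=1}^J I_{G^{(j)}}(s_j).
\]
By homogeneity, Lemma~\ref{lem:basic-properties-global}(ii), each active profile satisfies \(I_{G^{(j)}}(s_j)=m_j I_{G^{(j)}/\sqrt{m_j}}(s_j)\le m_j M(s_j)\le m_j M(s)\). Here we use that \(M\) is nondecreasing, which follows since each \(I_F\) is nondecreasing by Lemma~\ref{lem:I-continuity-global}. Summing gives
\[
M(s)\le \Big(\sum_j m_j\Big)M(s)\le M(s).
\]
All these inequalities are therefore equalities. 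In particular \(\sum_j m_j=1\), and for each active \(j\) we get \(I_{G^{(j)}}(s_j)=m_jM(s)\), where \((s_j)\) are near-optimal splittings.

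To turn this into an honest maximizer, we handle the supremum over splittings by compactness. For each \(J\) the set \(\{s_j\ge0,\sum s_j\le s\}\) is compact, and each \(I_{G^{(j)}}\) is continuous by Lemma~\ref{lem:I-continuity-global}, so the sup is attained at some \((s^{J}_j)\). A diagonal argument as \(J\to\infty\) produces limiting \(s_j\) with \(\sum s_j\le s\). Tail control comes from \(\sum_{j>J}I_{G^{(j)}}\le\sum_{j>J}m_j\to0\), which gives
\[
\sum_j I_{G^{(j)}}(s_j)=M(s)=\sum_j m_j M(s).
\]
Since termwise \(I_{G^{(j)}}(s_j)\le m_jM(s_j)\le m_jM(s)\), equality holds termwise. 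Picking any \(j\) with \(m_j>0\), set \(F_*:=G^{(j)}/\sqrt{m_j}\). Then \(I_{F_*}(s)\ge I_{F_*}(s_j)=M(s)\), while \(I_{F_*}(s)\le M(s)\) by definition, so \(F_*\) is a maximizer. The bathtub principle, Lemma~\ref{lem:basic-properties-global}(i), then supplies \(\Omega_*=\{u_{F_*}>t\}\) with \(|\Omega_*|=s\) and \(\lambda_{1,\varphi}(\Omega_*)\ge I_{F_*}(s)=M(s)\), which gives the equivalent formulation.

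The main obstacle is justifying the upper decoupling \eqref{eq:IF-decoupling} rigorously, since the text only sketches it. One has to split a maximizing superlevel set \(\Omega_k\) into pieces inside \(D_R(a_k^{(j)})\) plus an exterior part. On the pieces, Lemma~\ref{lem:pointwise-decoupling-global}(i) replaces \(u_{F_k}\) by \(u_{G^{(j)}}(\cdot-a_k^{(j)})\). On the exterior, part (ii) of that lemma together with Lemma~\ref{lem:density-perturbation-global} and Proposition~\ref{prop:profile-global}(iv) makes the density uniformly \(\le\varepsilon\), so it contributes at most \(\varepsilon s\). Finally one must take limits in \(k\), then \(R\), then \(J\), then \(\varepsilon\), in that order, and pass to subsequences so that the measures \(s_{k,j}\) converge. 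The monotonicity of \(M\) is also used implicitly, and should be checked via monotonicity of \(I_F\).
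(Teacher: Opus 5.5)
Your proposal is correct and follows essentially the same route as the paper. The shared steps are: a maximizing sequence, the profile decomposition, and the decoupled upper bound obtained by splitting maximizing superlevel sets $\Omega_k$ along the disks $D_R(a_k^{(j)})$, which the paper carries out directly as Step~1 of its proof rather than invoking Lemma~\ref{lem:decoupling-IF}. These are followed by homogeneity, the energy bound $\sum_j\|G^{(j)}\|_{\mathcal F^2}^2\le 1$, saturation, monotonicity, and the bathtub principle.

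Your compactness/diagonal selection of an optimal splitting is harmless but unnecessary. Monotonicity already gives $I_{G^{(j)}}(s_j)\le I_{G^{(j)}}(s)$, and hence directly $M(s)\le\sum_j \|G^{(j)}\|_{\mathcal F^2}^2\, I_{G^{(j)}/\|G^{(j)}\|_{\mathcal F^2}}(s)\le M(s)$.
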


\begin{proof}
Let \(\{F_k\}\subset \mathcal F^2(\C)\) be a maximizing sequence:
\[
\|F_k\|_{\mathcal F^2}=1,
\qquad
I_{F_k}(s)\to M(s).
\]
By replacing \(F_k\) with a Weyl translate if necessary, and using the
translation invariance \eqref{eq:I-translation-global} together with
\eqref{eq:u-translation-global}, we may suppose that
\[
u_{F_k}(0)=\|u_{F_k}\|_{L^\infty(\C)}.
\]
Note that \(M(s)>0\): indeed, taking \(F\equiv 1\), one has
\(F\in \mathcal F^2(\C)\) with \(\|F\|_{\mathcal F^2}^2=1\) and density
\(u_F(z)=e^{-\pi |z|^2}>0\) everywhere, so for any \(s>0\), \(I_F(s)>0\).
Consequently, \(\|u_{F_k}\|_{L^\infty(\C)}\) is bounded below by a positive
constant uniformly in \(k\); otherwise, for any \(\eta>0\) we would have
\(\|u_{F_k}\|_{L^\infty}<\eta\) eventually, hence \(I_{F_k}(s)\le \eta s\),
contradicting \(I_{F_k}(s)\to M(s)>0\).

Applying Proposition~\ref{prop:profile-global}, after passing to a
subsequence we may write
\[
F_k=\sum_{j=1}^J T_{a_k^{(j)}}G^{(j)}+R_k^{(J)}
\]
for every fixed \(J\ge 1\), with the conclusions of that proposition.

\medskip
\noindent{\bf Step 1: an upper bound for the limit.}
Let \(\Omega_k\) be a measurable set of measure
\(s\) such that
\[
I_{F_k}(s)=\int_{\Omega_k} u_{F_k}\,\mathrm{d}A.
\]
By Lemma~\ref{lem:basic-properties-global}(i), we may take
\(\Omega_k\) to be a superlevel set of \(u_{F_k}\). We fix this choice from now on.
Let
\(\varepsilon>0\). Since \(u_{G^{(j)}}\in L^1(\C)\) for every \(j\) (with
\(\int_\C u_{G^{(j)}}\,\mathrm{d}A=\|G^{(j)}\|_{\mathcal F^2}^2\)), choose \(R>0\) sufficiently
large, so that
\[
\int_{\C\setminus D_R} u_{G^{(j)}}\,\mathrm{d}A<\varepsilon
\qquad\text{for all } 1\le j\le J,
\]
and large enough that the conclusion of
Lemma~\ref{lem:pointwise-decoupling-global}(ii) holds with this choice of
\(R\). Since the translation parameters are pairwise divergent by
Proposition~\ref{prop:profile-global}(i), for \(k\) large enough the disks
\[
D_R(a_k^{(1)}),\dots,D_R(a_k^{(J)})
\]
are pairwise disjoint. Write
\[
B_{k,j}:=D_R(a_k^{(j)}),
\qquad
E_{k,j}:=\Omega_k\cap B_{k,j},
\qquad
E_{k,0}:=\Omega_k\setminus \bigcup_{j=1}^J B_{k,j}.
\]
Set
\[
s_{k,j}:=|E_{k,j}|,
\qquad 0\le j\le J.
\]
Then, since the \(\{B_{k,j}\}_{j=1}^J\) are disjoint and
\(\Omega_k\) has measure \(s\),
\[
\sum_{j=0}^J s_{k,j}=s.
\]

Set
\[
S_k^{(J)}:=\sum_{j=1}^J T_{a_k^{(j)}}G^{(j)}.
\]
By Proposition~\ref{prop:profile-global}(iii), the sequence
\(\{S_k^{(J)}\}_k\) is bounded in \(\mathcal F^2(\C)\), with bound
\(\le 1+o_k(1)\). By Proposition~\ref{prop:profile-global}(iv), if \(J\) is
taken sufficiently large, then
\[
\limsup_{k\to\infty}\|u_{R_k^{(J)}}\|_{L^\infty(\C)}<\varepsilon^2.
\]
Lemma~\ref{lem:density-perturbation-global} therefore gives
\[
\|u_{F_k}-u_{S_k^{(J)}}\|_{L^\infty(\C)}\to 0
\qquad (k\to\infty),
\]
and in particular, after taking \(k\) large enough,
\begin{equation}\label{eq:density-approximation-global}
\|u_{F_k}-u_{S_k^{(J)}}\|_{L^\infty(\C)}<\varepsilon.
\end{equation}

Now
\[
E_{k,0}
=
\Omega_k\setminus \bigcup_{j=1}^J B_{k,j}
\subset
\C\setminus \bigcup_{j=1}^J D_R(a_k^{(j)}).
\]
Lemma~\ref{lem:pointwise-decoupling-global}(i) gives, for each fixed
\(j\in\{1,\dots,J\}\),
\[
\sup_{z\in B_{k,j}}
\big|u_{S_k^{(J)}}(z)-u_{T_{a_k^{(j)}}G^{(j)}}(z)\big|\to 0
\qquad (k\to\infty),
\]
while Lemma~\ref{lem:pointwise-decoupling-global}(ii) yields, for our
choice of \(R\) and for \(k\) sufficiently large,
\[
\sup_{z\in E_{k,0}}u_{S_k^{(J)}}(z)\le \varepsilon.
\]
Combining these bounds with \eqref{eq:density-approximation-global}, we
conclude that, for all sufficiently large \(k\), we have:
\begin{enumerate}
\item[(a)] on each \(B_{k,j}\),
\[
u_{F_k}(z)\le u_{T_{a_k^{(j)}}G^{(j)}}(z)+2\varepsilon;
\]
\item[(b)] on \(E_{k,0}\),
\[
u_{F_k}(z)\le 2\varepsilon.
\]
\end{enumerate}

Therefore, decomposing the integral defining \(I_{F_k}(s)\) according to the
partition \(\Omega_k=E_{k,0}\sqcup\bigsqcup_{j=1}^J E_{k,j}\),
\begin{align*}
I_{F_k}(s)
&=
\int_{\Omega_k} u_{F_k}\,\mathrm{d}A \le
\sum_{j=1}^J \int_{E_{k,j}} u_{T_{a_k^{(j)}}G^{(j)}}\,\mathrm{d}A
+2\varepsilon \sum_{j=0}^J |E_{k,j}| \\
&=
\sum_{j=1}^J \int_{E_{k,j}-a_k^{(j)}} u_{G^{(j)}}\,\mathrm{d}A
+2\varepsilon s \le
\sum_{j=1}^J I_{G^{(j)}}(s_{k,j})
+2\varepsilon s,
\end{align*}
where in the second-to-last step we used the change of variables
\(z\mapsto z-a_k^{(j)}\) together with \eqref{eq:u-translation-global}, and
in the last step the definition of \(I_{G^{(j)}}(s_{k,j})\) as the supremum
of \(\int_E u_{G^{(j)}}\,\mathrm{d}A\) over sets of measure
\(s_{k,j}=|E_{k,j}|=|E_{k,j}-a_k^{(j)}|\).

By a diagonal subsequence extraction, which does not alter the previously
obtained profile decomposition, we may suppose
\[
s_{k,j}\to \sigma_j\in [0,s]
\qquad (k\to\infty)
\]
simultaneously for every \(j\ge1\). In particular,
\(\sum_{j\ge1}\sigma_j\le s\). For the values of \(J\) for which the
remainder estimate above has been imposed, continuity of
\(I_{G^{(j)}}\) in the measure parameter gives
\[
M(s)=\lim_{k\to\infty}I_{F_k}(s)
\le
\sum_{j=1}^J I_{G^{(j)}}(\sigma_j)+2\varepsilon s.
\]
Choose \(J=J(\varepsilon)\) increasing as \(\varepsilon\downarrow0\). Since
the summands are nonnegative, letting \(\varepsilon\downarrow0\) yields
\begin{equation}\label{eq:M-upper-profiles-global}
M(s)\le \sum_{j\ge1} I_{G^{(j)}}(\sigma_j).
\end{equation}

\medskip
\noindent{\bf Step 2: saturation of the profile bound.}
Put \(t_j:=\|G^{(j)}\|_{\mathcal F^2}^2\).  For each \(j\) with \(t_j>0\), set
\[
\widehat G^{(j)}:=\frac{G^{(j)}}{\|G^{(j)}\|_{\mathcal F^2}},
\qquad
\|\widehat G^{(j)}\|_{\mathcal F^2}=1,
\]
and use homogeneity together with \(\sigma_j\le s\) to obtain
\[
I_{G^{(j)}}(\sigma_j)
=
t_j\,I_{\widehat G^{(j)}}(\sigma_j)
\le t_j\,M(s).
\]
Using \eqref{eq:M-upper-profiles-global} and the norm decoupling
\(\sum_{j\ge1}t_j\le1\), it follows that
\[
M(s)
\le\sum_{j\ge1} I_{G^{(j)}}(\sigma_j)
\le M(s)\sum_{j\ge1}t_j
\le M(s).
\]
Thus equality holds throughout.  In particular,
\(\sum_{j\ge1}t_j=1\), and the nonnegative defects
\(M(s)-I_{\widehat G^{(j)}}(\sigma_j)\) vanish for every \(j\) with
\(t_j>0\).  Monotonicity then gives
\[
I_{\widehat G^{(j)}}(s)=M(s)
\qquad (t_j>0).
\]
Thus every nonzero normalized profile is a global maximizer.

\medskip
\noindent{\bf Step 3: producing a maximizing set.}
Choose any \(j_0\) with \(G^{(j_0)}\neq 0\) (such \(j_0\) exists since
\(\sum_{j\ge 1}\|G^{(j)}\|_{\mathcal F^2}^2=1\)) and set
\(F_*:=\widehat G^{(j_0)}\), so that
\[
\|F_*\|_{\mathcal F^2}=1,
\qquad
I_{F_*}(s)=M(s).
\]
By Lemma~\ref{lem:basic-properties-global}(i), there exists a superlevel
set \(\Omega_*\) of \(u_{F_*}\) with \(|\Omega_*|=s\) and
\[
\int_{\Omega_*}u_{F_*}\,\mathrm{d}A=I_{F_*}(s)=M(s).
\]
Then
\[
\lambda_{1,\varphi}(\Omega_*)\ge \int_{\Omega_*}u_{F_*}\,\mathrm{d}A=M(s),
\]
while by definition of \(M(s)\) we have
\(\lambda_{1,\varphi}(\Omega_*)\le M(s)\). Hence
\[
\lambda_{1,\varphi}(\Omega_*)=M(s),
\]
and \(\Omega_*\) is a global maximizer of measure \(s\).
\end{proof}

We may now identify the global maximizers, by combining the existence theorem
above with the local-rigidity theorem of the previous section.

\begin{theorem}\label{thm:global-maximizers-are-disks}
For every \(s>0\), every global maximizer for
\[
\sup_{|\Omega|=s}\lambda_{1,\varphi}(\Omega)
\]
is, up to translation and null sets, a disk.
\end{theorem}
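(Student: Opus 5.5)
The plan is to reduce the statement to Theorem~\ref{thm:local-max}, the local rigidity theorem of Section~\ref{sec:local-max}. Existence of global maximizers, proved in Theorem~\ref{thm:existence-global-maximizer}, is not needed for this particular assertion. It will be combined with the present statement afterwards to obtain Theorem~\ref{thm:NT-new}.

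Let \(\Omega\) be a global maximizer with \(|\Omega|=s\). This means
\[
\lambda_{1,\varphi}(\Omega)=M(s)\ge\lambda_{1,\varphi}(E)\qquad\text{for every measurable } E \text{ with } |E|=s.
\]
In particular, the inequality \eqref{eq:local-max-ineq} holds for every \(E\in\mathcal M_s\), with no restriction on \(d_w(\Omega,E)\). Hence \(\Omega\) is a local maximizer in the sense of Definition~\ref{def:local-maximizer}, and any \(\delta_0>0\) works.

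One bookkeeping point needs attention. Definition~\ref{def:local-maximizer} is stated for elements of \(\mathcal M_a\), that is, sets modulo null modifications. Since \(\lambda_{1,\varphi}\) is insensitive to null modifications, the class of \(\Omega\) lies in \(\mathcal M_s\) and the comparison is well posed.

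Applying Theorem~\ref{thm:local-max}, we obtain \(a_0\in\C\) such that \(\Omega=D_R+a_0\) up to a null set, where \(R=\sqrt{s/\pi}\). This is exactly the claim.

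To make the statement read as ``translates of a single disk'', we also record that all these disks have the same radius \(R\), determined by \(s\). Conversely, every translate \(D_R+a\) is a global maximizer, because \(T_a\) is unitary and \eqref{eq:u-translation-global} gives \(\lambda_{1,\varphi}(D_R+a)=\lambda_{1,\varphi}(D_R)\). Since a maximizer exists by Theorem~\ref{thm:existence-global-maximizer} and must be some \(D_R+a_0\), the value \(\lambda_{1,\varphi}(D_R)\) equals \(M(s)\).

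There is essentially no obstacle at this level: the whole difficulty has been absorbed into Theorem~\ref{thm:local-max}. The only check is that nothing in that proof relies on \(\Omega\) being merely \emph{locally} maximal in a way that global maximality fails to supply. Global maximality is strictly stronger, so the competitors built in Step~1 of that proof remain admissible. These are the competitors \(\Sigma'\), which have the same measure and are \(d_w\)-close.
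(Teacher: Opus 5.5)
Your proposal is correct and matches the paper's proof. A global maximizer satisfies the local-maximizer inequality for every competitor in $\mathcal M_s$, so Theorem~\ref{thm:local-max} applies directly and gives $\Omega=D_R+a_0$ up to a null set. You are also right that existence (Theorem~\ref{thm:existence-global-maximizer}) is not needed for this implication and enters only to identify $\lambda_{1,\varphi}(D_R)$ with $M(s)$; the paper cites it only parenthetically.
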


\begin{proof}
Let \(\Omega_*\) be a global maximizer of measure \(s\) (which exists by
Theorem~\ref{thm:existence-global-maximizer}). Then \(\Omega_*\) is, in
particular, a local maximizer of \(\lambda_{1,\varphi}\) under the same
measure constraint: every sufficiently small smooth deformation of
\(\Omega_*\) preserving the measure \(|\Omega|=s\) yields a competitor
whose value of \(\lambda_{1,\varphi}\) is at most
\(M(s)=\lambda_{1,\varphi}(\Omega_*)\). In particular, every competitor
\(E\) with \(|E|=|\Omega_*|\) and Gaussian-weighted symmetric-difference
distance \(\int_\C |\mathbf 1_{\Omega_*}-\mathbf 1_E|e^{-\pi|z|^2}\,\mathrm{d}z<\delta_0\)
to \(\Omega_*\) (the condition used in the local maximizer definition of
Section~\ref{sec:local-max}) satisfies
\(\lambda_{1,\varphi}(E)\le \lambda_{1,\varphi}(\Omega_*)\), because
\(\lambda_{1,\varphi}(E)\le M(s)=\lambda_{1,\varphi}(\Omega_*)\) by the
defining property of \(M(s)\). The local-rigidity theorem proved in
the previous section therefore applies: every such local maximizer is, up
to translations, a disk. Since the problem is invariant under
translations, this characterization is sharp.

 The set of
global maximizers is therefore exactly, modulo null sets, the orbit of the
centered disk \(D_R\) under the translation group \(\C\) of \(\C\), and this
orbit is sharp: any two disks of the same area are translates of one another,
and no further symmetry-breaking maximizers exist. In particular, the
classical Nicola--Tilli theorem \cite{Nicola-Tilli,Nicola-Tilli-2} is
recovered, with the precise uniqueness statement: the maximizer is
unique up to a translation by an element of \(\C\) and modification on a null
set.
\end{proof}

As an immediate consequence, we recover the classical Nicola--Tilli theorem
\cite{Nicola-Tilli, Nicola-Tilli-2}.

\begin{corollary}
Among all measurable sets of prescribed measure, the disk maximizes Gaussian
time-frequency concentration:
\[
\lambda_{1,\varphi}(\Omega)\le \lambda_{1,\varphi}(D) = 1 - e^{-|D|},
\]
whenever \(|\Omega|=|D|\) and \(D\) is a disk.
\end{corollary}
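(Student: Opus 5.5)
The plan is to deduce the corollary from Theorem~\ref{thm:existence-global-maximizer} and Theorem~\ref{thm:global-maximizers-are-disks}, together with the explicit computation of the first eigenvalue of a disk. Fix $s>0$, a measurable $\Omega$ with $|\Omega|=s$, and a disk $D$ with $|D|=s$.

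\emph{Comparison with $M(s)$.} By the definition of $M(s)$ we have $\lambda_{1,\varphi}(\Omega)\le M(s)$. Theorem~\ref{thm:existence-global-maximizer} provides a global maximizer $\Omega_*$ of measure $s$. Theorem~\ref{thm:global-maximizers-are-disks} shows that $\Omega_*$ equals $D_R+a_0$ up to a null set, with $\pi R^2=s$. Since $\lambda_{1,\varphi}$ is invariant under Weyl translations (by \eqref{eq:u-translation-global} and \eqref{eq:I-translation-global}) and insensitive to null sets, we get $M(s)=\lambda_{1,\varphi}(D_R)=\lambda_{1,\varphi}(D)$, because $D$ is a translate of $D_R$.

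\emph{Value on the disk.} It remains to identify $\lambda_{1,\varphi}(D_R)$. I would use the diagonalization of $\mathcal L_{D_R}$ in the monomial basis $\{e_k\}$, which follows from radial symmetry as in Lemma~\ref{lem:monomials}. Its eigenvalues are
\[
\mu_k(R)=\frac{\pi^k}{k!}\int_{D_R}|z|^{2k}e^{-\pi|z|^2}\,\mathrm{d}A=\frac{\gamma(k+1,\pi R^2)}{k!}.
\]
This sequence is strictly decreasing in $k$, exactly as in \eqref{eq:disk-eigenvalues-explicit} with $\pi$ replaced by $\pi R^2$. Hence the top eigenvalue is $\mu_0(R)=1-e^{-\pi R^2}$, consistent with \eqref{eq:dilated-disk-formula}. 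Substituting $\pi R^2=s=|D|$ gives $1-e^{-|D|}$.

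\emph{Main obstacle.} The substantive work all sits in the cited theorems. In particular, the existence step relies on profile saturation, and the rigidity step uses Theorem~\ref{thm:local-max} applied to a global maximizer, which is automatically a local one. The only point needing care here is the normalization: the identity $\lambda_{1,\varphi}(D)=1-e^{-|D|}$ uses area measured in the Fock-side coordinates, where the Gaussian weight is $e^{-\pi|z|^2}$, consistent with \eqref{eq:GGRT-bounds-intro}.
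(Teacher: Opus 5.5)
Your proposal is correct and follows the paper's own argument: a global maximizer exists by Theorem~\ref{thm:existence-global-maximizer}, it is a translate of $D_R$ up to null sets by Theorem~\ref{thm:global-maximizers-are-disks}, and $\lambda_{1,\varphi}$ is translation invariant. Your one addition is the explicit check via the monomial diagonalization that $\lambda_{1,\varphi}(D_R)=1-e^{-\pi R^2}=1-e^{-|D|}$, which the paper's proof leaves implicit (cf.\ \eqref{eq:dilated-disk-formula}).
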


\begin{proof}
Existence of a global maximizer is given by
Theorem~\ref{thm:existence-global-maximizer}, and
Theorem~\ref{thm:global-maximizers-are-disks} shows that every such
maximizer is a disk, up to translation and null sets. Since translation by an element of
\(\C\) preserves both Lebesgue measure and the value of
\(\lambda_{1,\varphi}\), the claim follows.
\end{proof}

\section{Comments and Generalizations}

\subsection{$p \neq 2$} Since the main focus of both the original work of F. Nicola and P. Tilli \cite{Nicola-Tilli} and many of the subsequent works concerned the analysis of concentration inequalities for $p=2$, we focused solely on that case in this manuscript. 

Nevertheless, we believe that the techniques developed here should be transferable to the general case of $p \in (1,+\infty), p \neq 2.$ Indeed, the parts concerning the existence of extremal sets and functions should follow a very similar strategy when compared to the ones already present in this manuscript. 

The local extremal analysis, however, seems to require a more careful consideration, as one no longer gets the fact that $F'$ is an eigenfunction directly from the domain derivative. Rather than that, it seems that one needs to consider additionally the \emph{second variation} information in that problem as well: this should allow for the whole criticality of the problem to be exploited. This is the object of a follow-up work of the author with P. Melentijevi\'c. 

\subsection{Other models} In spite of the fact that we have worked the whole time with the Gabor transform, we expect our techniques to be applicable to more general models. Indeed, we believe that a suitable adaptation of our methods may yield the same results in the \emph{wavelet transform} case, which also relates analytic functions to time-frequency representations \cite{Abreu-Doerfler,Ramos-Tilli,Abreu-Speckbacher-Wavelet}.

Moreover, we believe that several other recent results in the concentration inequality literature may also be recovered by the methods present in the latter sections of this manuscript, such as optimal concentration in the Paley--Wiener space, conjectured by Donoho--Stark \cite{Donoho-Stark} and recently proved by Abreu--Speckbacher \cite{Abreu-Speckbacher-Paley-Wiener}, and concentration inequalities for the hyperbolic and Bergman balls in higher dimensions \cite{Kalaj-Ramos-Hyperbolic-Ball,Kalaj-Gromov-Ros}. We shall investigate this in a future manuscript.

\subsection{Further eigenvalues} The problem of optimizing the first localization eigenvalue $\lambda_1$ yields the natural question: what happens to higher-order eigenvalues of Gabor localization operators? What about extending the inverse-type problems discussed here to such cases?

In general, such results seem to be significantly more delicate than the current one, as the variation analysis encompasses a two-dimensional space of functions intrinsically. Still, we believe this to be a very interesting question, and we speculate that the techniques contained in Section 6 may be useful in order to deduce singular properties that the maximizing sets must satisfy.

On the other hand, another very interesting recent direction concerns the so-called \emph{Ky Fan sums of eigenvalues}: namely, optimizing sums of the form
\[
\lambda_1(\Omega) + \lambda_2(\Omega) + \cdots + \lambda_N(\Omega)
\]
among sets of fixed volume $|\Omega| = s$. This question was initially raised by Nicola--Riccardi--Tilli \cite{Nicola-Riccardi-Tilli-Partial-Sums}, who solved the problem under the additional constraint that the sets be \emph{radial}; they subsequently proved the existence of extremizers in the general case \cite{Nicola-Riccardi-Tilli-Existence}. More recently, L. D. Abreu \cite{Abreu-Ky-Fan} showed that disks are, as a matter of fact, \emph{extremal} for such sums, by employing the work of De Palma on Husimi majorization \cite{DePalma-Wehrl}; see also the earlier Fock-majorization theorem of De Palma--Trevisan--Giovannetti \cite{DePalma-Trevisan-Giovannetti}.

In spite of the recent resolution of such a question, it is important to remark that, although disks are extremal for such a problem, a full characterization of the extremal sets is still wide open. Additionally, the higher-dimensional analogue of such an inequality does \emph{not} seem to follow easily from De Palma's work, as the techniques there seem to be two-dimensional in nature.

With those facts in mind, we believe that the techniques in this manuscript may be helpful in understanding the full structure of the extremal sets, as well as the inequality in higher dimensions. We intend to explore these problems in future work.

\subsection{Different windows} A natural question arising from the techniques of this paper is what happens when one replaces the Gaussian window by something with fewer regularity properties.

Indeed, in contrast to the work of this manuscript, we immediately lose the clean translation to the Bargmann--Fock space, so new ideas seem to be needed.

A possible way to incorporate some regularity properties, however, is by taking a \emph{Hermite function} as window. This is intrinsically related to the so-called \emph{polyanalytic Fock spaces} \cite{Abreu-Polyanalytic,Abreu-Grochenig-Polyanalytic,abreu2021donoho,Fulsche-Hagger}, and thus such transforms still retain some sort of analyticity; see also \cite{Nicola-Riccardi-Tilli-Existence} for the corresponding concentration problem with Hermite windows.

Although we do not expect the same techniques from here to apply verbatim -- as many of the algebraic `miracles' are absent whenever analyticity fades minimally --, we still expect many of the techniques here to be helpful.

In particular, one specific case in which we believe the current techniques to be of significant assistance is that of \emph{arbitrary windows}. That is, if one wished to improve upon Lieb's uncertainty principle, we would foresee that the present philosophy should be helpful in achieving an asymptotic improvement. This is the core content of a work in progress of the author with A. Guerra.

\bibliography{biblio}
\bibliographystyle{amsplain}

\end{document}